\DeclareSymbolFont{rsfs}{U}{rsfs}{m}{n}
\DeclareSymbolFontAlphabet{\mathscrsfs}{rsfs}
\numberwithin{equation}{section}
\theoremstyle{definition}
\newtheorem{theorem}{Theorem}[section]
\newtheorem{definition}[theorem]{Definition}
\newtheorem{lemma}[theorem]{Lemma}
\newtheorem{prop}[theorem]{Proposition}
\newtheorem{exmp}[theorem]{Example}
\newtheorem{corol}[theorem]{Corollary}
\newtheorem{remark}[theorem]{Remark}
\newtheorem{conj}[theorem]{Conjecture}
\DeclareMathOperator{\id}{id}
\DeclareMathOperator{\Hom}{Hom}
\def\normalbordisms{\scalecobordisms{1}\setlength\obscurewidth{4pt}\def\toff{0.2cm}\def\boff{0.3cm}}
\def\calign@preamble{%
   &\hfil\strut@
    \setboxz@h{\@lign$\m@th\displaystyle{##}$}%
    \ifmeasuring@\savefieldlength@\fi
    \set@field
    \hfil
    \tabskip\alignsep@
}
\let\cmeasure@\measure@
\patchcmd\cmeasure@{\divide\@tempcntb\tw@}{}{}{}
\patchcmd\cmeasure@{\divide\@tempcntb\tw@}{}{}{}
\patchcmd\cmeasure@{\ifodd\maxfields@
  \global\advance\maxfields@\@ne
  \fi}{}{}{}    
\newenvironment{calign}
{%
  \let\align@preamble\calign@preamble
  \let\measure@\cmeasure@
  \align
}
{%
  \endalign
}  
\def\inv{{\hspace{0.5pt}\text{-} \hspace{-0.5pt}1}} 
\newcommand{\Rarrow}[1]{
    \begin{aligned}
	    \begin{tikzpicture}
		    \node [scale=0.9] at (0.5,0.75) {\ensuremath{#1}};
		    \draw [double equal sign distance, -Implies] (0,0.5) -- (1,0.5);
	    \end{tikzpicture}
	\end{aligned}
		                }
\newcommand{\RLarrow}[2]{
    \begin{aligned}
	    \begin{tikzpicture}
		    \node [scale=0.9] at (0.5,0.75) {\ensuremath{#1}};
		    \draw [double equal sign distance, -Implies] (0,0.5) -- (1,0.5);
		    \draw [double equal sign distance, -Implies] (1,0) -- (0,0);
		    \node [scale=0.9] at (0.5,-0.35) {\ensuremath{#2}};
	    \end{tikzpicture}
	\end{aligned}
		                }
\def\quotient#1#2{
    \raise1ex\hbox{$#1$}\Big/\lower1ex\hbox{$#2$}   
}                                                   
\def\slashedarrowfill@#1#2#3#4#5{
  $\m@th\thickmuskip0mu\medmuskip\thickmuskip\thinmuskip\thickmuskip    
   \relax#5#1\mkern-7mu
   \cleaders\hbox{$#5\mkern-2mu#2\mkern-2mu$}\hfill                     
   \mathclap{#3}\mathclap{#2}
   \cleaders\hbox{$#5\mkern-2mu#2\mkern-2mu$}\hfill                     
   \mkern-7mu#4$
}                                                                       
\def\rightslashedarrowfill@{
  \slashedarrowfill@\relbar\relbar\mapstochar\rightarrow}               
\newcommand\xslashedrightarrow[2][]{
  \ext@arrow 0055{\rightslashedarrowfill@}{#1}{#2}}                     
\tikzset{tiny label/.style={
        shape=circle,
        text=black,
        draw=black,
        line width=1,
        fill=white,
        inner sep=0.05cm,
        minimum width=0.25cm,
        font=\small}}
\tikzset{tiny red label/.style={
        shape=circle,
        text=red,
        draw=red,
        line width=0.5,
        fill=white,
        inner sep=0.05cm,
        minimum width=0.25cm,
        font=\small}}
\newenvironment{tz}{\begin{aligned} \begin{tikzpicture}}{\end{tikzpicture} \end{aligned}}
\newcommand{\C}{\mathcal{C}}
\newcommand{\cP}{\mathcal{P}}
\newcommand{\cM}{\mathcal{M}}
\newcommand{\coend}{\mathcal{F}}
\newcommand{\Znc}{\mathcal{Z}}
\newcommand{\interchangor}{\text{int}}
\newcommand{\phileft}{\phi_l}
\newcommand{\phiright}{\phi_r}
\tikzset{double arrow scope/.style={every path/.style={double equal sign distance, -Implies}}}
\newcommand\I{\ensuremath{\mathrm{I}}}
\newcommand\II{\ensuremath{\mathrm{II}}}
\newcommand\III{\ensuremath{\mathrm{III}}}
\colorlet{LightGray}{black!15}
\colorlet{LightRed}{red!15}
\colorlet{LightBlue}{blue!30}
\colorlet{LightOrange}{orange!15}
\colorlet{DarkGreen}{green!50!black!50!}
\colorlet{LightGreen}{green!45}
\colorlet{VLG}{black!5}
\colorlet{VLB}{black!10}
\def\proarrow{\relbar\joinrel\mapstochar\joinrel\rightarrow}
\newcommand{\Bordor}{\ensuremath{\text{Bord}^{or}_{3,2,1}}}
\newcommand{\Bord}{\ensuremath{\text{Bord}^{or,nc}_{3,2,1}}}
\newcommand{\Bordncp}{\ensuremath{\text{Bord}^{p_1,nc}_{3,2,1}}}
\newcommand{\Bordsig}{\ensuremath{\text{Bord}^{sig}_{3,2,1}}}
\newcommand{\Bordncsig}{\ensuremath{\text{Bord}^{sig,nc}_{3,2,1}}}
\newcommand{\bBord}{\ensuremath{\textbf{Bord}^{or,nc}_{3,2,1}}}
\newcommand{\bBordncp}{\ensuremath{\textbf{Bord}^{p_1,nc}_{3,2,1}}}
\newcommand{\bBordncsig}{\ensuremath{\textbf{Bord}^{sig,nc}_{3,2,1}}}
\newcommand{\Vect}{\ensuremath{\text{Vect}}}
\newcommand{\Vecfd}{\ensuremath{\text{Vec}_k}}
\newcommand{\unit}{\ensuremath{\mathbbm{1}}}
\def\gap{\hspace{5pt}}
\tikzset{bot=true}
\tikzset{
    partial ellipse/.style args={#1:#2:#3}{
        insert path={+ (#1:#3) arc (#1:#2:#3)}
    }
}
\def\gpdashlength{0.4*\pgflinewidth}                          
\tikzset{                                                     
         gp path/.style={dash pattern=on 7.5*\gpdashlength    
                                     off 7.5*\gpdashlength}}  
\newcommand{\tinypants}{\hspace{0.2mm}\raisebox{-0.75mm}{\ensuremath{  
\begin{tikzpicture}[scale=0.1]                                         
		\draw (2.5,3) ellipse (1cm and 0.25cm);                        
		\draw [gp path] (1,0) [partial ellipse=0:180:1cm and 0.25cm];  
		\draw (1,0) [partial ellipse=180:360:1cm and 0.25cm];          
		\draw [gp path] (4,0) [partial ellipse=0:180:1cm and 0.25cm];  
		\draw (4,0) [partial ellipse=180:360:1cm and 0.25cm];          
		\draw (0,0) to [out=90, in=270, looseness=1] (1.5,3);          
		\draw (5,0) to [out=90, in=270, looseness=1] (3.5,3);          
		\draw (2,0) to [out=90, in=90, looseness=2.5] (3,0);           
\end{tikzpicture}                                                      
}                                                                      
}\hspace{-1mm}                                                         
}                                                                      
\newcommand{\tinycopants}{\hspace{0.2mm}\raisebox{-0.75mm}{\ensuremath{
\begin{tikzpicture}[scale=0.1]                                         
		\draw (1,3) ellipse (1cm and 0.25cm);                          
		\draw (4,3) ellipse (1cm and 0.25cm);                          
		\draw [gp path] (2.5,0) [partial ellipse=0:180:1cm and 0.25cm];
		\draw (2.5,0) [partial ellipse=180:360:1cm and 0.25cm];        
		\draw (1.5,0) to [out=90, in=270, looseness=1] (0,3);          
		\draw (3.5,0) to [out=90, in=270, looseness=1] (5,3);          
		\draw (2,3) to [out=270, in=270, looseness=2.5] (3,3);         
\end{tikzpicture}                                                      
}                                                                      
}\hspace{-1mm}                                                         
}                                                                      
\newcommand{\tinycap}{\hspace{0.2mm}\raisebox{-0.75mm}{\ensuremath{    
\begin{tikzpicture}[scale=0.18]                                        
		\draw (0,0) to [out=90, in=90, looseness=3] (2,0);             
		\draw [gp path] (1,0) [partial ellipse=0:180:1cm and 0.25cm];  
		\draw (1,0) [partial ellipse=180:360:1cm and 0.25cm];          
\end{tikzpicture}                                                      
}                                                                      
}\hspace{-1mm}                                                         
}                                                                      
\newcommand{\tinycup}{\hspace{0.2mm}\raisebox{-1.9mm}{\ensuremath{     
\begin{tikzpicture}[scale=0.18]                                        
		\draw (0,3) to [out=270, in=270, looseness=3] (2,3);           
		\draw (1,3) ellipse (1cm and 0.25cm);                          
\end{tikzpicture}                                                      
}                                                                      
}\hspace{-1mm}                                                         
}                                                                      
\newcommand*\xcobpos[2]{#1*\the\cobwidth,#2*\the\cobheight}
\tikzset{knot/clip radius=\obscurewidth}
\tikzset{knot/clip width=0.1*\obscurewidth, knot/end tolerance=2pt}
\tikzset{xmorphismlabel/.style={draw=red, thin, circle, inner sep=-100pt, minimum width=5pt, fill=white, font=\tiny}}
\tikzset{morphlabel/.style={draw=black, thin, rectangle, minimum width=7pt, fill=white, font=\scriptsize}}
\tikzset{morphismlabel/.style={draw=red, thin, circle, inner sep=-1pt, minimum width=7pt, fill=white}}
\tikzset{above strand label/.style={yshift=0.4\cobwidth, anchor=mid, font=\footnotesize}}
\tikzset{below strand label/.style={yshift=-0.4\cobwidth, anchor=mid, font=\footnotesize}}
\def\myred{red}
\def\myblue{blue}
\def\mygreen{green}
\def\mypurple{purple}
\def\mybrown{brown}
\def\myblack{black}
\def\colourA{\myred}
\def\colourB{\myblue}
\def\colourC{\mygreen}
\def\colourD{\mypurple}
\def\colourE{\mybrown}
\def\colourF{\myblack}
\tikzset{style 1/.style={draw=white, double distance=\cobordismlinewidth, line width=0.5\obscurewidth, double=\colourA}}
\tikzset{style 2/.style={draw=white, double distance=\cobordismlinewidth, line width=0.5\obscurewidth, double=\colourB}}
\tikzset{style 3/.style={draw=white, double distance=\cobordismlinewidth, line width=0.5\obscurewidth, double=\colourC}}
\tikzset{style 4/.style={draw=white, double distance=\cobordismlinewidth, line width=0.5\obscurewidth, double=\colourD}}
\tikzset{style 5/.style={draw=white, double distance=\cobordismlinewidth, line width=0.5\obscurewidth, double=\colourE}}
\tikzset{red strand/.style={draw=white, double distance=\cobordismlinewidth, line width=0.5\obscurewidth, double=\colourA}}
\tikzset{red strand blue back/.style={draw=\outermorphismcolor, double distance=\cobordismlinewidth, line width=0.5\obscurewidth, double=\colourA}}
\tikzset{red strand no back/.style={draw=\colourA}}
\tikzset{blue strand/.style={draw=white, double distance=\cobordismlinewidth, line width=0.5\obscurewidth, double=\colourB}}
\tikzset{green strand/.style={draw=white, double distance=\cobordismlinewidth, line width=0.5\obscurewidth, double=\colourC}}
\tikzset{purple strand/.style={draw=white, double distance=\cobordismlinewidth, line width=0.5\obscurewidth, double=\colourD}}
\tikzset{brown strand/.style={draw=white, double distance=\cobordismlinewidth, line width=0.5\obscurewidth, double=\colourE}}
\tikzset{black strand/.style={draw=white, double distance=\cobordismlinewidth, line width=0.5\obscurewidth, double=\colourF}}
\tikzset{invisible strand/.style={draw=none}}
\newcommand\templabel[1]{}
\begin{document}

\title{Non-compact 3d TQFT and non-semisimplicity}
\author{Theodoros Lagiotis}
\date{2025}

\maketitle

\begin{abstract}
We define a once extended non-compact 3-dimensional TQFT $\Znc$ from the data of a (potentially) non-semisimple modular tensor category. This is in the framework of generators and relations of \cite{bartlett_modular_2015}, having disallowed generating 2-morphisms whose source is the empty. Moreover, we show that the projective mapping class group representations this TQFT gives rise to, are dual to those of \cite{lyubashenko_invariants_1995} and \cite{de_renzi_mapping_2023}. We develop a method to decompose a closed 3-manifold in terms of 2-morphism generators. We use this to compute the value of $\Znc$ on 3-manifolds, explaining why it should recover Lyubashenko's 3-manifold invariants \cite{lyubashenko_invariants_1995}. Finally, we explain that the value of the non-compact TQFT on the solid torus recovers the data of a modified trace \cite{geer_modified_2009}.

\end{abstract}

\tableofcontents

\newpage
\pagenumbering{arabic} 

\section{Introduction}

In this thesis, we define a once extended non-compact 3-dimensional TQFT $\Znc$ from the data of a (potentially) non-semisimple modular tensor category $\C$, such that $\Znc(S^1)=\C$.  This is done in a framework of generators and relations, akin to \cite{bartlett_modular_2015}.  In Section \ref{modpres} we define the (conjectural) presentation of the cobordism bicategory we use to define the TQFT. This complies with Lurie's approach in \cite[Def 4.2.10]{lurie_classification_2009}, where the 2-morphisms having connected components with empty intersection with the source are discarded, hence the term `non-compact'. This work is closely related to the constructions of Kerler-Lyubashenko \cite{kerler_non-semisimple_2001} and De Renzi et al. \cite{de_renzi_3-dimensional_2022,de_renzi_extended_2021}. It should be loosely thought of as translating the above in an undecorated, 2-category setting. This is the non-semisimple analog of what \cite{bartlett_modular_2015} is to the Reshetikhin-Turaev TQFT \cite{turaev_quantum_2020} in the semisimple case. 

In section \ref{construction} we prove the following:

\begin{theorem}[\ref{main_th}]\label{main_th_intro}
    The assignment $\Znc$, defined in section \ref{genass}, is a well defined symmetric monoidal 2-functor.
\end{theorem}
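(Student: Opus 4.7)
The plan is to exploit the generators-and-relations presentation of $\Bordncsig$ that is set up in Section \ref{modpres}. Once a bicategory $\mathbf{B}$ is given by a presentation $(G \mid R)$ and a candidate assignment $\Znc$ has been defined on the generators $G$ (as in Section \ref{genass}), the universal property of the presentation reduces well-definedness of the 2-functor to checking that every relation $r \in R$ is sent by $\Znc$ to a valid equation of 2-morphisms in the target bicategory. The strategy therefore breaks into three blocks: (i) extend $\Znc$ from the generators to all formal composites by freeness; (ii) verify the relations; (iii) promote the 2-functor to a symmetric monoidal one by checking the monoidal coherence data.

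For (i), I would first write down the (strict or weak) free bicategory $F(G)$ on the generating data and define $\Znc|_{F(G)}$ by horizontal and vertical composition in the target, using the generating images provided in Section \ref{genass}. Since all generating 2-morphisms have non-empty source (this is exactly the \emph{non-compact} restriction inherited from Lurie's scheme), every formal 2-cell is expressible as a composite of generators, and the extension is uniquely determined. Symmetric monoidality at the level of generators only needs to be declared for the monoidal unit, the tensor product of generators, and the symmetry morphism, which in our bicategory reduces to disjoint union of bordisms; the images in the target are manifestly compatible.

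For (ii), which is the main obstacle, I would enumerate the families of relations from the presentation in Section \ref{modpres}: (a) the 1-morphism relations (e.g.\ associators and unitors of the pair-of-pants composition, and swallowtail/cusp identities for surface generators); (b) the 2-morphism relations coming from handle slides, Kirby moves adapted to the non-compact setting, and the modular $S,T$ relations; and (c) the relations reflecting the non-semisimple data of $\C$, including those that encode Lyubashenko's coend $\coend$ and the associated integrals. Each relation must be translated into an equality of 2-morphisms among profunctors/bimodule categories associated with $\C$. Most of these checks are local and reduce to axioms of a modular tensor category (pivotality, ribbon structure, modularity of the $S$-matrix of the coend), but the genuinely non-semisimple ones — in particular those involving the handle-attaching 2-morphism built from $\coend$ and the equations relating different pants decompositions of a handle — require invoking Lyubashenko's algebraic identities for the coend and its Hopf algebra structure. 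I expect this verification to consume the bulk of Section \ref{construction}.

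For (iii), I would compare the images of the two sides of a relation under tensor product with an identity and check naturality of the monoidal constraints, which for a presentation of $\Bordncsig$ is equivalent to checking compatibility on disjoint unions of generators and on the symmetry generator (the braiding cylinder). Because $\Znc$ is built so that tensor product on bordisms maps to Deligne (or relative) tensor product on the target, this step is essentially formal once (ii) is in place. The overall well-definedness then follows from the universal property of the presentation, completing the theorem modulo the conjectural status of the presentation itself, which is imported from Section \ref{modpres}.
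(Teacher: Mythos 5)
Your overall strategy coincides with the paper's: the proof is exactly the reduction, via the universal property of the presentation of Section \ref{presentations}, to checking that the assignment of Section \ref{genass} preserves every relation of Definition \ref{modpres}, and your points (i) and (iii) are handled there essentially as you describe (strict symmetric monoidal functors out of the generated 2-category are determined by images of generators satisfying the relations, and monoidality is formal because disjoint union goes to the Deligne--Kelly tensor product). So the skeleton is right and is essentially forced by the setup.

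However, your step (ii) --- which you correctly identify as the bulk of the work --- both misdescribes the relations to be checked and omits the content that makes the theorem nontrivial. The presentation contains no handle slides, Kirby moves, or ``modular $S,T$ relations''; the actual list is: inverses, pentagon \eqref{eq:pentagon} and triangle \eqref{eq:triangle}, the balancing/hexagon relations, the left and right rigidity relations (invertibility of $\phileft$ and $\phiright$ with the explicit inverses \eqref{explicit_phileft_inverse}, \eqref{explicit_phiright_inverse}), the ribbon relation \eqref{tortile}, the four adjunction snake identities for $(\eta,\epsilon,\epsilon^\dagger,\eta^\dagger)$ and the two for $(\mu^\dagger,\nu^\dagger)$, pivotality \eqref{piv_on_sphere}, modularity \eqref{MOD}, and the anomaly relation \eqref{Anom}. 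The genuinely hard verifications are: (a) the rigidity relations, which require computing the induced coaction $\delta_{X\otimes\coend\otimes Y}$ (Lemma \ref{coaction_weird}) and then a lengthy string-diagram argument using the antipode identities, the integral/cointegral relations of Lemma \ref{integral_antipode_relations}, and Kerler--Lyubashenko's non-degeneracy of the pairing $\Lambda^{co}\circ m$; (b) pivotality and modularity, which hinge on the projective cover $(P_\unit,\varepsilon_\unit)$, the morphism $\eta_\unit$ of Lemma \ref{coint_eta_eps}, and the composite $\mathcal{Z}(\mu)\circ\mathcal{Z}(\mu^\dagger)$ computed in Lemma \ref{mu_mu_dag_comp}. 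Crucially, the assignment itself contains an undetermined scalar $\mathscrsfs{D}$ (in $\mathcal{Z}(\epsilon^\dagger)$, $\mathcal{Z}(\eta^\dagger)$ and $\mathcal{Z}(\mu)$): the modularity relation is what forces $\mathscrsfs{D}=\sqrt{\zeta}=\sqrt{p_+p_-}$, and the anomaly relation then forces $\xi=\sqrt{p_+/p_-}$. A proof that does not carry out these computations has not established well-definedness, since for a generic choice of $\mathscrsfs{D}$ the relations \eqref{piv_on_sphere} and \eqref{MOD} simply fail. As written, your proposal is a correct plan but not yet a proof.
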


Assuming Conjecture \ref{conjecture}, we have:

\begin{corol}
    $\Znc$ is a non-compact 3d TQFT.
\end{corol}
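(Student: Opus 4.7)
The plan is to combine Theorem \ref{main_th_intro} with Conjecture \ref{conjecture} via the universal property of a presentation by generators and relations. First I would unwind what a non-compact 3d TQFT means in this setting: it is a symmetric monoidal 2-functor out of the non-compact oriented cobordism bicategory $\bBord$ into the chosen target bicategory (for instance $k$-linear categories, or the appropriate profunctor bicategory in which $\C$ naturally lives). What Theorem \ref{main_th_intro} provides, by contrast, is a symmetric monoidal 2-functor out of the presented bicategory constructed in Section \ref{modpres}; call it $\bBord^{\mathrm{pres}}$. The content of Conjecture \ref{conjecture} is precisely that the evident comparison 2-functor $\bBord^{\mathrm{pres}} \to \bBord$, sending each generating object/1-morphism/2-morphism to its geometric counterpart, is an equivalence of symmetric monoidal bicategories.

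Granting the conjecture, the argument is formal: choose a symmetric monoidal quasi-inverse $F \colon \bBord \to \bBord^{\mathrm{pres}}$ and set $\Znc := (\text{functor from Theorem \ref{main_th_intro}}) \circ F$. Composition of symmetric monoidal 2-functors is symmetric monoidal, so the resulting assignment on $\bBord$ is a symmetric monoidal 2-functor, i.e.\ a non-compact 3d TQFT in the sense required. One should note that the prohibition on 2-morphism generators with empty source (the feature flagged as ``non-compact'' in the introduction) is built into $\bBord^{\mathrm{pres}}$ and matches exactly the non-compactness condition defining $\bBord$, so the comparison 2-functor does land in the correct target and the composition makes sense.

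The only real content, therefore, is hidden inside Conjecture \ref{conjecture}: proving that the listed generators and relations genuinely present $\bBord$ as a symmetric monoidal bicategory, including the relations enforcing the non-compactness constraint. The hard part of that conjecture, which I expect to be the genuine obstacle rather than anything in the corollary itself, is checking completeness of the relations at the 2-morphism level using Morse and Cerf theory on 3-manifolds with boundary. Once Conjecture \ref{conjecture} is in hand, the corollary follows essentially by formal nonsense from Theorem \ref{main_th_intro}.
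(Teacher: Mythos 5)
Your argument is exactly the paper's (implicit) one: the corollary is stated as an immediate consequence of Theorem \ref{main_th_intro} together with Conjecture \ref{conjecture}, via precisely the formal step you describe of composing the functor on the presented 2-category with a symmetric monoidal quasi-inverse of the comparison equivalence. The only minor imprecision is that for anomalous $\C$ the relevant source is the signature bicategory $\bBordncsig$ rather than $\bBord$ itself (the purely oriented case requires $p_+/p_-=1$), but this does not change the structure of the argument.
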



We believe that a proof of Conjecture \ref{conjecture} will be straightforward once the presentation of \cite{bartlett_modular_2015} for the full 3d cobordism bicategory is verified. See \cite{sytilidis_presentations_2025} for the current progress - a proof for the presentation of the 3d cobordism bicategory, whose 2-morphisms are restricted to be invertible. One way to view Theorem \ref{main_th_intro} is as providing independent evidence for Conjecture \ref{conjecture}.

The perks of our approach are the following. First of all, the cobordism category is promoted to a 2-category, which takes away any need for decorations and more complicated definitions. This in turn, relates to the fact that it is easier to talk about `classification' of TQFTs. This is something we do not address in this thesis, but intend to consider in future work.

In addition, we tackle questions related to invariants associated to $\Znc$, providing further evidence for Conjecture \ref{conjecture}.
We compute the values of 2-morphisms meant to correspond to mapping class group (MCG) generators. From there, we relate the (would be) projective mapping class group representations obtained from this construction to those of \cite{lyubashenko_invariants_1995} and \cite{de_renzi_mapping_2023}. More specifically, if we call their (equivalent) representations $\bar{\rho}_X$, we have the following:

\begin{theorem}[\ref{mapping-class-th}]
    If $\bar{\rho}_{nc}$ is the projective mapping class group representation obtained from the TQFT $\mathcal{Z}$, then $\bar{\rho}_{nc}\cong\bar{\rho}_X^*$.
\end{theorem}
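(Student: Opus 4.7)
The plan is to reduce the isomorphism of projective MCG representations to a generator-by-generator comparison, leveraging the generators/relations presentation of $\bBordncsig$ that underlies the construction of $\Znc$. Concretely, for a closed surface $\Sigma = \Sigma_{g,n}$, I would first unpack what the vector space $\bar{\rho}_{nc}(\Sigma)$ looks like in the non-compact 2-categorical framework. Because 2-morphisms with empty source have been discarded, the vector space $\Znc(\Sigma)$ naturally presents as a Hom-space in $\C$ of the form $\Hom_\C(F_g, \unit)$ (or a tensor-power variant in the punctured case), where $F_g$ is the categorical object built from $g$ copies of the coend/Lyubashenko object assembled along the pair-of-pants and handle generators. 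By contrast, Lyubashenko's and De Renzi et al.'s representation $\bar{\rho}_X$ lives on $\Hom_\C(\unit, F_g)$. The prospective isomorphism $\bar{\rho}_{nc} \cong \bar{\rho}_X^*$ then becomes the natural pairing between these Hom-spaces, and the theorem reduces to checking that the MCG actions intertwine this pairing up to scalar.

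My second step is to compute $\Znc$ on the explicit MCG generators. A finite generating set for $\mathrm{MCG}(\Sigma_{g,n})$ in terms of Dehn twists around the standard Lickorish/Humphries curves can be expressed as 2-morphisms in $\bBordncsig$ using the handle, pants, and Dehn-twist generators from the conjectural presentation in Section \ref{modpres}. For each such generator I would apply the definition of $\Znc$ from Section \ref{genass} and simplify using the structural morphisms of $\C$. The output should be an endomorphism of $\Hom_\C(F_g, \unit)$ given by pre/post-composition with the Lyubashenko ribbon element, the coend $S$-morphism, braidings, and the modified trace pairing --- i.e.\ precisely the transposes of the operators that build up $\bar{\rho}_X$ via coend technology in \cite{lyubashenko_invariants_1995} and \cite{de_renzi_mapping_2023}.

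The third step is to assemble these computations into a natural transformation. For each generator $\phi$ of $\mathrm{MCG}(\Sigma)$ I would check that the diagram
\begin{equation*}
\begin{tikzcd}
\Hom_\C(\unit, F_g) \otimes \Hom_\C(F_g, \unit) \ar[r, "\bar{\rho}_X(\phi) \otimes \bar{\rho}_{nc}(\phi)"] \ar[d, "\mathrm{comp}"'] & \Hom_\C(\unit, F_g) \otimes \Hom_\C(F_g, \unit) \ar[d, "\mathrm{comp}"] \\
\mathrm{End}_\C(\unit) \ar[r, "\cdot \lambda_\phi"'] & \mathrm{End}_\C(\unit)
\end{tikzcd}
\end{equation*}
commutes up to the same projective scalar $\lambda_\phi$, so that the pairing is $\mathrm{MCG}$-equivariant and exhibits $\bar{\rho}_{nc}$ as the linear dual of $\bar{\rho}_X$. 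Because both representations are only projective, the scalars arising from framing anomalies and from the chosen signature/\texttt{csig} structure must also match; here I would invoke the same $p_1$-structure correction used in \cite{de_renzi_mapping_2023} to align the two projective classes.

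The main obstacle, I expect, is Step 2 for the higher-genus Dehn twists: translating a Dehn twist along a non-separating curve that runs through several handles into a word in the 2-morphism generators of $\bBordncsig$ is combinatorially involved, and one has to keep track of interchangors and coherence 2-cells very carefully in order to land on the transpose of the coend expression, rather than some cousin of it. A clean way to organise this computation is to first handle the torus case $\Sigma_{1,0}$ (where the $S$ and $T$ generators are already worked out in Section \ref{construction}), then the genus-$2$ generator involving the interchange of two handles, and finally to argue by induction on $g$ using the pants decomposition and the gluing axiom of $\Znc$ to reduce every Humphries generator to these base cases.
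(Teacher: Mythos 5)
Your Step 2 is essentially what the paper does: it expresses the Lickorish-type generators as the local composites built from $\theta$, $\epsilon^\dagger$, $\epsilon$ and $\phileft^{\pm 1}$, and computes their images under $\Znc$ to be (pre)composition with $\mathcal{T}$, $\mathcal{H}$ and $\mathcal{S}$ on $\coend^{\otimes g}$ (Lemma \ref{action_MCG_gens}). Your worry about higher-genus twists is also largely moot, since each Lickorish generator is supported in one handle or between two adjacent handles, so only the three local composites $\I$, $\II$, $\III^\inv$ applied at the $i$-th or $j$-th position are ever needed; no induction on genus is required.

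The genuine gap is in Steps 1 and 3. First, the state space is misidentified: $\Znc(\Sigma_g)$ is $\Hom_\C(\coend^{\otimes g},\unit)^*$, the \emph{linear dual} of the space $X^\prime_g=\Hom_\C(\coend^{\otimes g},\unit)$ on which $\bar{\rho}_X$ acts, not $\Hom_\C(\coend^{\otimes g},\unit)$ itself. Second, and more seriously, your proposed mechanism for exhibiting the duality --- the composition pairing $\Hom_\C(\unit,F_g)\otimes\Hom_\C(F_g,\unit)\to\mathrm{End}_\C(\unit)$ --- is degenerate in the non-semisimple setting (this is precisely the failure that motivates modified traces), so even if you verified equivariance of that pairing on generators, it would not identify $\bar{\rho}_{nc}$ with the linear dual representation $\bar{\rho}_X^*$. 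No such pairing argument is needed: because $\Znc(\Sigma_g)$ is by construction the dual vector space and the action is $f\mapsto f(-\circ x)$ with $x$ exactly the operator $\mathcal{T}$, $\mathcal{H}$ or $\mathcal{S}$ whose inverse implements $\bar{\rho}_X$ on the corresponding generator (the inverses being absorbed by the opposite choice of generating set), the identification $\bar{\rho}_{nc}=\bar{\rho}_X^*$ is definitional once the generator computation is done. The projective scalars also need no separate $p_1$-structure matching beyond the factor $1/\sqrt{p_+p_-}$ already recorded in the computation of $\mathcal{Z}(\III_i^\inv)$.
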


For a closed oriented 3-manifold $M$ we can compute the value of $\Znc$ on $M^*:=\left(M\setminus(D^3\sqcup D^3)\right)$. If $\mathcal{L}(M)$ is the Lyubashenko invariant of $M$ \cite{lyubashenko_invariants_1995}, using a decomposition of $M$ in terms of 2-morphism generators we show that:

\begin{theorem}[\ref{invariants-th}]
    $\Znc(M^*)=(1/\mathscrsfs{D})\cdot\mathcal{L}(M)\cdot\id_{\Znc^\prime(S^2)}$.
    
    In the case where $\C$ is semisimple, $\Znc(M^*)=\mathcal{RT}(M)\cdot\id_{\Znc^\prime(S^2)}$, the Reshetikhin-Turaev invariant of $M$ \cite{reshetikhin_invariants_1991}.
\end{theorem}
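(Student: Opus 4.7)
The plan is to exploit the fact that $M^* = M \setminus (D^3 \sqcup D^3)$ is a 2-morphism from $S^2$ to $S^2$ in the cobordism bicategory, and that $\Hom(\id_{\Znc'(S^2)}, \id_{\Znc'(S^2)})$ is the ground field acting by scalar multiplication on $\id_{\Znc'(S^2)}$. Thus computing $\Znc(M^*)$ amounts to extracting a single scalar, and the whole task is to show this scalar equals $\mathcal{L}(M)/\mathscrsfs{D}$.

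First, I would invoke the decomposition method advertised in the abstract: present $M^*$ as a composite (vertical and horizontal) of the 2-morphism generators used to define $\Znc$. Concretely, one starts from a Heegaard splitting (or surgery presentation) of $M$, removes two disjoint balls so the two $S^2$ boundary components appear, and then writes the resulting cobordism as a pasting diagram of pairs of pants, copairs of pants, caps, cups, cylinders with meridian compressions, and whatever decorated generators are used in section \ref{construction}. This is exactly the combinatorial input that the presentation of \Bord is designed to handle.

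Second, I would apply $\Znc$ to each generator using the assignment defined in section \ref{genass}. Because $\Znc(S^1) = \C$, this translates the topological composite into an algebraic expression built out of the coend of \C, the structure morphisms (braiding, twist, evaluation, coevaluation), and the categorical integral that implements the 2-handle attachments. The resulting scalar is by construction a tangle/ribbon graph invariant coming from a surgery-like presentation, living in $\End(\mathbf{1}_\C)=k$. The point is that this is the same recipe Lyubashenko uses in \cite{lyubashenko_invariants_1995}, except that the two extra $S^2$-caps are deliberately not filled in by $D^3$'s; tracking the normalizations from those missing caps relative to Lyubashenko's closed-manifold formula should produce precisely the prefactor $1/\mathscrsfs{D}$.

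Third, I would identify that scalar with $\mathcal{L}(M)/\mathscrsfs{D}$ by direct comparison of the two algebraic expressions on the same Heegaard/surgery data, and deduce the semisimple statement from the standard comparison $\mathcal{L}(M) = \mathscrsfs{D}\cdot \mathcal{RT}(M)$ that holds when $\C$ is a modular fusion category. I expect the main obstacle to be the careful bookkeeping in the second step: matching the output of the generator-by-generator evaluation with Lyubashenko's integral formula requires verifying that the choice of decomposition does not affect the scalar (invariance under the relations of the presentation, already guaranteed by Theorem~\ref{main_th_intro}) and that the contributions of the two boundary $S^2$'s combine to give exactly the factor $1/\mathscrsfs{D}$ rather than, say, $1/\mathscrsfs{D}^2$ or $1$. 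Once those normalizations are pinned down on a model example (e.g.\ $M=S^3$, or $M=S^2\times S^1$, whose Lyubashenko and RT invariants are known), the general case follows from the generator-level computation.
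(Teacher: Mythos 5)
Your proposal follows essentially the same route as the paper: present $M$ by Dehn surgery, decompose $M^*$ into the $2$-morphism generators ($\epsilon^\dagger$ for the surgery solid torus, $\eta$ and $\beta$ for the crossings of the knot diagram, $\epsilon$ for the $2$-handles), evaluate generator by generator in the skein/Bimod picture, and recognize the resulting string diagram as Lyubashenko's surgery formula, with the prefactor coming from the $\mathscrsfs{D}^\inv$ attached to each $\epsilon^\dagger$. The only ingredient the paper makes explicit that you leave implicit is Lemma \ref{conj-action-strings}, identifying the action of the mapping cylinder $I_{\varphi^\prime}$ on internal string diagrams (which is what actually produces the link coloured by the coend with the correct framing twists), and note that the paper proves the stated normalization only for surgery on a knot, deferring the $l$-component case (where the factor becomes $1/\mathscrsfs{D}^{l}$) to a remark --- so your worry about $1/\mathscrsfs{D}$ versus higher powers is well founded and is resolved by counting $\epsilon^\dagger$'s rather than by calibrating on examples.
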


Lastly, again following ideas from \cite{lurie_classification_2009}, the solid torus should give rise to a trace map, part of the `Calabi-Yau' structure on $\mathcal{Z}(S^1)$. 

\begin{theorem}[\ref{iso_exist_canon},\ref{mod-tr-thm}]
    Denoting $\cP:=\text{Proj}(\C)$, the full subcategory of projectives in $\C$,
    \begin{itemize}
        \item There exists a canonical isomorphism $$c_t\colon\displaystyle\int^{P\in\cP} \hspace{-20pt} \Hom_\C(P,P)\xrightarrow{\sim}\Znc(T^2)$$
        \item The map $t^\prime:=\Znc(S^1\times D^2)\circ c_t$ is a modified trace.
    \end{itemize}

\end{theorem}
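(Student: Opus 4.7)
The plan is to first identify $\Znc(T^2)$ with the projective coend via a generators-and-relations decomposition of the torus, and then to interpret the map induced by the solid torus as a trace and verify the modified trace axioms.

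\emph{Construction of $c_t$.} The torus, viewed as a 2-morphism in the cobordism bicategory, can be decomposed via the generators described in Section \ref{modpres}: cutting $T^2$ along a meridian and a longitude reduces it to a composition of (co)pants and (co)cylinders with an identity on $S^1$. Applying the 2-functor $\Znc$ and using $\Znc(S^1)=\C$ presents $\Znc(T^2)$ as a colimit of $\Hom$-spaces of $\C$, where the non-compactness constraint forces the colimit to run only over those objects of $\C$ that can be ``closed off'' using the allowed generators, namely the subcategory $\cP$ of projectives. Concretely, for each $P\in\cP$ the 3-manifold generator obtained by threading a strand labelled $P$ along one of the two core curves of $T^2$ produces a linear map $\Hom_\C(P,P)\to\Znc(T^2)$. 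Dinaturality of this family in $P$ reduces to the relation pushing a morphism across a pair of pants, which already holds in the bicategory; the universal property of the coend then yields the candidate map $c_t$.

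\emph{Invertibility.} To show that $c_t$ is an isomorphism, I would build an inverse directly from the dual pair-of-pants decomposition of $T^2\times[0,1]$ (``reading the cylinder in the transverse direction''), and verify that both composites collapse to identities using the bicategorical relations. Equivalently, one can identify $\int^{P\in\cP}\Hom_\C(P,P)$ with the projective-coend / Hochschild-type object attached to $\C$ by Lyubashenko and Shimizu, and observe that its universal property matches the one carried intrinsically by $\Znc(T^2)$ coming from pair-of-pants composition, so that $c_t$ is forced to be invertible.

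\emph{Modified trace.} The composite $t'=\Znc(S^1\times D^2)\circ c_t$ sends $f\in\Hom_\C(P,P)$ to the scalar obtained by applying $\Znc$ to the solid torus with $f$ inserted along its core. Cyclicity $t'_P(fg)=t'_Q(gf)$ for $g\colon Q\to P$ and $f\colon P\to Q$ is immediate: it is the dinaturality relation already built into the coend presentation of the source. The nontrivial axiom is the partial-trace identity $t'_{P\otimes V}(f)=t'_P((\id_P\otimes\mathrm{tr}_V)(f))$: both sides are realised as evaluations of $\Znc$ on solid tori decorated by coloured strands, and the identity is obtained by isotoping the $V$-labelled strand off the longitudinal core and into a small unknotted loop that is then absorbed as a categorical trace inside $\C$.

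\emph{Main obstacle.} The most delicate step is the partial-trace axiom. Since $\Znc$ is defined purely combinatorially, the 3-dimensional isotopy realising the partial trace must be expressed as a finite sequence of 2-morphism moves in the presentation, and the effect of each move must then be checked against the imposed relations, in the spirit of (though more intricate than) the computation of $\Znc(M^*)$ in Theorem \ref{invariants-th}. It is precisely at this point that the distinction between the full coend and the projective coend becomes essential: only over projectives does the closing-up cobordism used in the above isotopy exist within the non-compact bicategory, which is what makes $t'$ a modified rather than an ordinary trace.
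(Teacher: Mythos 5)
There are genuine gaps here, concentrated in the construction and the canonicity of $c_t$. First, the identification of $\Znc(T^2)$ with the projective coend is not forced by universal properties or by non-compactness: the paper computes $\Znc(T^2)=\Hom_\C(\coend,\unit)^*$ with $\coend$ the \emph{full} coend $\int^{X\in\C}X^\lor\otimes X$, so the restriction to $\cP$ on the source side does not come for free, and the two sides do not share an obvious universal property. The isomorphism $\int^{P\in\cP}\Hom_\C(P,P)\cong\Hom_\C(\coend,\unit)^*$ requires, at its core, a natural isomorphism $t^1_P\colon\Hom_\C(\unit,P)\xrightarrow{\sim}\Hom_\C(P,\unit)^*$ (equivalently a non-degenerate dinatural pairing, i.e.\ a trivialization of the Nakayama functor), which the paper builds from $\eta_\unit$, $\varepsilon_\unit$ and the normalization $\mathscrsfs{D}^{-1}$; your ``thread a $P$-labelled strand along the core'' picture never produces this datum, and without it the two sides are simply not identified. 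Second, the statement asserts canonicity, which you do not address: the paper's route is to exhibit both sides as circles of coherent dual pairs in $\text{Bimod}_k$, realize $c_t$ as part of a 1-morphism of dual pairs lying over $\id_\cP$, and then prove that the homotopy fiber of the forgetful functor $\text{CohDualPair}(\text{Bimod}_k)\to(\text{Bimod}_k^{d})^{\sim}$ over $\id_\cP$ is contractible (using that this functor is an equivalence and an isofibration). This is the substantive content of Theorem \ref{iso_exist_canon} and has no counterpart in your argument.

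For the modified trace, your cyclicity argument agrees with the paper's (it is automatic for a map out of the coend). But the partial trace axiom --- which you correctly flag as the delicate step --- is left as an unexecuted isotopy argument, and the distinction you draw between the full and projective coend is not where the difficulty lies. The paper avoids 3-dimensional isotopies entirely: once the explicit formula $t^\prime(f)=t_{P^\lor\otimes P}\left((\id_{P^\lor}\otimes f)\circ\text{coev}^R_P,\ \text{ev}_P\right)$ is extracted from the composite defining $t^\prime$, the partial trace identity follows from dinaturality of the pairing $t$ applied to the morphism $\id_{P^\lor}\otimes\text{ev}_X\otimes\id_P\colon P^\lor\otimes X^\lor\otimes X\otimes P\to P^\lor\otimes P$. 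This is a short algebraic argument, but it is only available once the pairing $t$ and the explicit formula for $t^\prime$ are in hand --- which traces back to the missing construction above.
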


\subsection{Survey of previous literature}

A quantum field theory is called topological if its action functional has no dependence on the spacetime metric \cite{witten_topological_1988}. The mathematical formalization of QFT is a notoriously difficult problem. However, the class of topological QFTs proves to be simple enough to allow for a mathematical definition.
Atiyah in \cite{atiyah_topological_1988}, following Segal's work \cite{tillmann_definition_2004}, proposed a definition of TQFT as a symmetric monoidal functor from a `geometric' cobordism category, to an `algebraic' category of vector spaces. This definition has undergone several generalizations since its first appearance. 
Given the above, an interesting question arises naturally: Are there examples of physical TQFTs that can be adapted to the mathematical formalism?

One of the first and most important examples of this bridging of the two notions came with the foundation of the area of quantum topology. In his seminal work \cite{witten_quantum_1989}, Witten recovered the Jones polynomial of a link, through Chern-Simons theory. The input data for the 3d Chern-Simons TQFT is a connected, compact Lie group $G$, and a level (which is an integer for simple $G$). Soon after, Reshetikhin and Turaev were able to obtain a mathematically rigorous construction of the associated 3-manifold invariants \cite{reshetikhin_ribbon_1990,reshetikhin_invariants_1991}, and using those, to define a mathematical TQFT \cite{turaev_quantum_2020}. Importantly, their construction uses the same input data of $G$ and a level. This is in the form of the category of representations $\C(G,q)$ of the corresponding quantum group at a root of unity $q$, determined by the level. In fact, the necessary algebraic input data for the Reshetikhin-Turaev (RT) construction is that of a semisimple modular tensor category. Objects and morphisms of this category can be used to decorate the cobordism category used in the RT construction.


Extending this construction to include non-semisimple MTCs has been a problem of interest since its appearance. This is interesting from a mathematical perspective of generalizing the result, but also from a physical one. Rational vertex operator algebras (VOAs) are not the only types of VOAs that arise in conformal field theory (CFT). Another interesting class is that of logarithmic VOA's, whose categories of representations are not semisimple. Some references for the connection to logarithmic CFT are \cite{fuchs_hopf_2011,fuchs_non-semisimple_2013,fuchs_coends_2016,fuchs_consistent_2017,creutzig_qft_2024}. There are of course more classes of non-semisimple theories in physics, so one could argue that from a physical point of view, non-semisimplicity is quite `natural'. 

The first attempts of generalizing the RT construction started with work of Hennings \cite{hennings_invariants_1996}, who was the first to define 3-manifold invariants from finite-dimensional factorizable ribbon Hopf algebras. Since semisimplicity was not required, this constituted the first construction from non-semisimple data. Lyubashenko generalized this construction by giving the definition of a non-semisimple MTC. He also used it to define projective mapping class group representations \cite{lyubashenko_invariants_1995}. Then, Kerler-Lyubashenko in \cite{kerler_non-semisimple_2001}, define a partial version of an extended TQFT. Their functor is defined out of a double category of cobordisms, whose objects are restricted to be connected surfaces, and the monoidal product to be given by the connected sum.

The question of whether there is a 3d TQFT construction from a non-semisimple MTC was answered some years later by \cite{bartlett_modular_2015}. They proved that once extended 3d TQFTs are classified by semisimple modular tensor categories. In this extended setting, instead of a decorated category of cobordisms, they considered an undecorated 2-category with objects 1-dimensional closed manifolds. The classification result is achieved by considering a `generators and relations' presentation of the 3d cobordism 2-category. Importantly, this suggests that semisimplicity is inevitable, if one considers the full cobordism 2-category as the source of the TQFT.

The work of \cite{de_renzi_3-dimensional_2022}, constructs a RT-type TQFT from the data of a (potentially) non-semisimple MTC. The decorated cobordism categories considered are subject to an admissibility condition. This reflects the comment above, regarding whether there can be non-semisimple TQFTs defined on all cobordisms. In \cite{de_renzi_mapping_2023}, these TQFTs were further examined by computing the resulting projective mapping class group representations, and showing their equivalence to those of Lyubashenko's.

In fact, De Renzi defined a 2-categorical extension of the above in \cite{de_renzi_extended_2021}, in the form of a once extended TQFT, using the so-called `extended universal construction'. Importantly, we note that this is still in the framework of decorated cobordism categories. In fact, 2-dimensional cobordisms with empty incoming boundary and no decorations are disallowed in this construction. In some ways, our work, even though in a different framework, provides an answer to De Renzi's question in \cite{de_renzi_extended_2021}, as to whether the admissibility condition for surfaces can be lifted. This is a result of considering the appropriate target for the TQFT.
The discrepancy between our choice of the target and De Renzi's makes a straightforward comparison of the two constructions somewhat difficult. However, we can somewhat compare the assignments on some generating 1- and 2-morphisms, as the assignment to the circle, although different, is similar: $\C$ vs. $\cP$. Accounting for this difference, we see that the assignments for the pairs of pants are the same: the tensor product functor and (inevitably) its biadjoint, which involves the Lyubashenko coend $\coend$ (\ref{coend_def}). We also see that the assignment of De Renzi's on the 3-ball involves the modified trace, something that happens in our construction as well --- $\nu^\dagger$ contains the data of the modified trace if the appropriate assignment to the $\tinycap$ is considered (see Remark \ref{cap-string-diag}). Based on these observations, we believe that a De Renzi type construction with Bimod as a target, when restricted to undecorated 1-morphisms should recover our construction.


Very recently there have been some 1-categorical `non-compact' TQFT constructions from non-semisimple data. This means that there is a non-compact condition imposed on the morphisms of the (1-)category of cobordisms considered. The first such construction appears in \cite{costantino_non_2023}, where such a non-compact 3d TQFT is constructed from the data of non-semisimple spherical categories, and thus generalizing the Turaev-Viro TQFT \cite{turaev_state_1992,barrett_spherical_1999} to the non-semisimple case. Soon after, there was a construction of a 4d TQFT \cite{costantino_skein_2023} using the data of a non-semisimple ribbon category, generalizing the Crane-Yetter TQFT \cite{crane_categorical_1993} in the non-semisimple setting.

\subsection{Intuition and outline of results}

Having reviewed the literature thus far, and especially if we are interested in a classification result, we are led to a natural question: How can we allow for non-semisimplicity in the once extended 3d setting?
This is the main motivation of this thesis, and what we eventually achieve.



To do this however, there is a crucial observation to be made.

Starting with a once extended 3d TQFT $Z$, one can compactify it on the circle $Z(S^1\times -)$ and obtain a fully extended 2d TQFT. This implies that $Z(S^1)$ will be 2-dualizable. However, for all the `2-vector space' candidate targets for such a 3d ETQFT that were considered in \cite[Appendix A]{bartlett_modular_2015}, the 2-dualizable objects are semisimple.

In order to overcome this problem, there are only so many things we can alter: the source, and the target 2-categories of the TQFT we consider.

To appropriately alter the source, we turn to the idea of non-compact TQFTs introduced by Lurie in \cite{lurie_classification_2009}. In this framework, we disallow certain 3d cobordisms (2-morphisms) that witness adjunctions for $\text{ev}_{Z(S^1)}$ and $\text{coev}_{Z(S^1)}$, and therefore lift the strong requirement of 2-dualizability on $Z(S^1)$.
Having gotten rid of the 2-dualizability, we also need to choose a suitable target 2-category, whose 1-dualizable objects are not semisimple. This is the bicategory Lincat we define in Section \ref{target}.

The necessary admissibility conditions for the decorated cobordism categories encountered in \cite{de_renzi_3-dimensional_2022,de_renzi_extended_2021} can be thought of as a reflection of the above fact. The admissibility condition boils down to requiring the existence of projective strands. The non-compactness condition matches this, since it corresponds to disallowing the 0-handle, which in the decorated setting translates to not allowing the inclusion of the empty skein.

With this in mind, and given a modular tensor category $\C$ in the sense of Definition \ref{MTC_def}, we can define the assignment $\mathcal{Z}$ in section \ref{genass}. The source of $\Znc$ is $\Bordncsig$, the (conjectural) presentation of the 2-category $\bBordncsig$, the signature extension of the 2-category of non-compact 3d cobordisms. Comparing it to \cite{bartlett_modular_2015}, we have disallowed the only 2-morphism generator with empty incoming boundary, as well as any relations involving it. Below, we give a table (\ref{genass_table_data}) to describe the algebraic data used to define $\Znc$. Note that at this point, we only mention non-invertible 2-morphisms of $\Bordncsig$, as the assignments for the invertible 2-morphisms (\ref{MC}, \ref{RC}, \ref{anomaly_gen}) are rather straightforward.

Some key ingrediends that appear are:

\begin{itemize}
    \item The distinguished object $\coend\in \C$, defined in \ref{coend_def}. It carries the structure of a Hopf algebra in $\C$.
    \item The Hopf algebra $\coend$ has additional structure, namely that of an integral $\Lambda\colon \unit\to \coend$ and a cointegral $\Lambda^{co}\colon \coend\to \unit$ (Definition \ref{integral_def}). These morphisms are crucial for our construction.
    \item The projective cover $(P_\unit,\varepsilon_\unit)$ of the unit of $\C$, along with the uniquely determined (Lemma \ref{coint_eta_eps}) morphism $\eta_\unit\colon \unit\to P_\unit$, exhibiting it as the injective envelope as well.
\end{itemize}

\scalecobordisms{0.5}

\begin{table}[h!]
\centering
 \begin{tabular}{||c | c||} 
 \hline
 Generators of $\Bordncsig$& Data used to define $\Znc$ \\ [0.5ex] 
 \hline\hline
$\begin{tz}
\node[Cyl, top, height scale=0]  at (0,0) {};
\end{tz}  $ & $\C$    \\ [0.5ex] 
 $\tinypants$&   $\otimes\colon\C\boxtimes\C\to \C$ \\ [0.5ex] 
 $\tinycopants$ &$\displaystyle\int^{y\in\C}\hspace{-15pt}-\otimes y^\lor\boxtimes y\colon\C\to \C\boxtimes\C$\\ [0.5ex] 
 $\tinycup$    & $-\otimes\unit\colon\Vecfd\to\C$ \\ [0.5ex] 
 $\tinycap$&   $\Hom_\C(-,\unit)^*\colon\C\to \Vecfd$\\ [0.5ex] 
 $\begin{tz} 
        \node[Pants, top, bot] (A) at (0,0) {};
        \node[Copants, bot, anchor=leftleg] at (A.leftleg) {};
    \end{tz}
    \quad
    \xRightarrow{\epsilon}
    \quad
    \begin{tz} 
        \node[Cyl, top, bot, tall] (A) at (0,0) {};
    \end{tz}$ & Involves the evaluation\\ [1ex] 
$\begin{tz} 
        \node[Cyl, tall, top, bot] (A) at (0,0) {};
        \node[Cyl, tall, top, bot] (B) at (2*\cobwidth, 0) {};
    \end{tz}
    \quad
    \xRightarrow{\eta}
    \quad
    \begin{tz} 
        \node[Pants, bot] (A) at (0,0) {};
        \node[Copants, top, bot, anchor=belt] at (A.belt) {};
    \end{tz}$& Involves the coevaluation \\ [1ex] 
    $\begin{tz} 
        \node[Cyl, top, bot, tall] (A) at (0,0) {};
    \end{tz}
    \quad
    \xRightarrow{\epsilon^\dagger}
    \quad
    \begin{tz} 
        \node[Pants, top, bot] (A) at (0,0) {};
        \node[Copants, bot, anchor=leftleg] at (A.leftleg) {};
    \end{tz}$ & Involves the integral $\Lambda\colon \unit\to \coend$ \\ [1ex] 
    $\begin{tz} 
        \node[Pants, bot] (A) at (0,0) {};
        \node[Copants, top, bot, anchor=belt] at (A.belt) {};
    \end{tz}
    \quad
    \xRightarrow{\eta^\dagger}
    \quad
    \begin{tz} 
        \node[Cyl, tall, top, bot] (A) at (0,0) {};
        \node[Cyl, tall, top, bot] (B) at (2*\cobwidth, 0) {};
    \end{tz}$& Involves the cointegral $\Lambda^{co}\colon\coend\to \unit$ \\ [1ex] 
    $\begin{tz}
        \node[Cup, top] (A) at (0,0) {};
        \node[Cap, bot] (B) at (0,-2*\cobheight) {};
    \end{tz}
    \quad
    \xRightarrow{\mu}
    \quad
    \begin{tz}
        \node[Cyl, top, bot, tall] (A) at (0,0) {};
    \end{tz}$ & Involves $\varepsilon_\unit\colon P_\unit\to \unit$ and $\eta_\unit\colon \unit\to P_\unit$ \\ [1ex]
    $\begin{tz}
        \node[Cyl, top, bot, tall] (A) at (0,0) {};
    \end{tz}
    \quad
    \xRightarrow{\mu^\dagger}
    \quad\begin{tz}
        \node[Cup, top] (A) at (0,0) {};
        \node[Cap, bot] (B) at (0,-2*\cobheight) {};
    \end{tz}$ & Unit of $\Hom_\C(-,\unit)^* \dashv -\otimes\unit$  \\ [1ex]
    $\begin{tz}
        \node[Cap, bot] (A) at (0,0) {};
        \node[Cup] at (0,0) {};
    \end{tz}
    \quad
    \xRightarrow{\nu^\dagger}
    \quad
    \begin{tz}
        \draw[green] (0,0) rectangle (0.6, 0.6);  
    \end{tz}$ & Counit of $\Hom_\C(-,\unit)^* \dashv -\otimes\unit$ \\ [1ex]
 \hline
 \end{tabular}
  \caption{Algebraic data assigned by $\Znc$ to generators of $\Bordncsig$}
    \label{genass_table_data}
\end{table}

We then proceed to show that $\mathcal{Z}$ preserves all the relations from \ref{modpres}. This implies that $\mathcal{Z}$ is a well defined symmetric monoidal functor from the 2-category generated by the presentation \ref{modpres} to Lincat.

We will highlight a few important points of our construction: 
\scalecobordisms{0.3}
\begin{itemize}
    \item State spaces for surfaces are given by $\Znc(\Sigma_g)=\Hom_\C(\coend^{\otimes g},1)^*$. We already see that these are dual to the state spaces of \cite{de_renzi_3-dimensional_2022}.
    \item It is a classical result of Crane and Yetter \cite{crane_algebraic_1999} that the once punctured torus is a Hopf algebra object. It is therefore only natural that it is sent to the coend $\coend$ by $\Znc$.
    \item Under $\Znc$, the 2-isomorphism trivializing $\begin{tz}
 \node[Pants, bot] (A) at (0,0) {};
 \node[Cyl, bot, anchor=top] (B) at (A.leftleg) {};
 \node[Copants, anchor=leftleg] (C) at (A.rightleg) {};
 \node[Cyl, top, bot, anchor=bottom] (D) at (C.rightleg) {};
 \node[Cap] (E) at (A.belt) {};
 \node[Cup] (F) at (C.belt) {};
\end{tz}$ gives rise to a trivialization of the Nakayama functor (Remark \ref{snake-nakayama}).
\end{itemize}


Having constructed the 2-functor $\Znc\colon\Bordncsig\to \text{Lincat}$, we obtain representations of a central extension of the mapping class group of surfaces. Equivalently, we obtain projective representations of the mapping class group. We compute the action of generators of the MCG and find that we obtain representations dual to those of \cite{lyubashenko_invariants_1995,de_renzi_mapping_2023}. To do this, we use descriptions of the corresponding Dehn twists as composites of generators of \ref{modpres}.

The outline of the thesis is as follows:

\begin{enumerate}
    \item In the first part of chapter \ref{setup} we review the definitions of extended non-compact cobordism categories and define the presentation of $\Bordncsig$. Following that, we give the necessary algebraic background on modular tensor categories.
    \item In the first part of chapter \ref{construction} we give an explicit definition of the assignment $\Znc$ that we sketched in table \ref{genass_table_data}. The second part of this chapter is devoted to proving that $\Znc$ preserves the relations of the presentation \ref{modpres}. This constitutes the proof of Theorem \ref{main_th_intro}.
    \item In the first part of chapter \ref{mcg_actions} we investigate the actions of composites that correspond to Dehn twists that generate the mapping class group. We then proceed to identify the corresponding projective representations with those of Lyubashenko and De Renzi et al.
    \item There is a 2-functor $\text{Lincat}\to \text{Bimod}$. In chapter \ref{Lincat-Bimod} we use this to compute the values of $\Znc$ in Bimod, thus obtaining a diagrammatic way to represent the action of 2-morphisms. We do this to facilitate computations in the next chapters.
    \item In chapter \ref{3-mfld-inv} we explain a procedure to extract a number from the non-compact TQFT $\Znc$, given a 3-manifold $M$. We verify that this number `coincides' with the Lyubashenko invariant for $M$. A corollary of this in the semisimple case is that the value of $M$ under the TQFT of \cite{bartlett_modular_2015} is the Reshetikhin-Turaev invariant of $M$.
    \item The aim of chapter \ref{mod-trace} is to explain why, and in what sense the value of $\Znc(S^1\times D^2)$ is a modified trace. This involves explaining why there is a canonical isomorphism $\Znc(T^2)\cong \displaystyle\int^{P\in\cP} \hspace{-20pt} \Hom_\C(P,P)$. We do this via arguments relating to the theory of 1-dualizability in bicategories.
\end{enumerate}

\subsection{Future directions}\label{converse}

As has already been alluded to, an important motivation for this work is to obtain a full classification of 3d non-compact TQFTs. Having established Theorem \ref{main_th_intro}, this amounts to considering the converse question: `Starting with a non-compact 3d TQFT $Z$ with target Lincat/Bimod, what is the induced structure on $Z(S^1)$?'

Because of the proof of Theorem \ref{main_th_intro} being very explicit, a lot of the induced structure is already clear. We expect that $Z(S^1)$ turns out to be some form of `generalized' MTC. This is due to the fact that rigidity will only be imposed on projective objects.

\subsection{Conventions and notation}

Fix $k$ to be a perfect field. This is to ensure that the results of \cite{kerler_non-semisimple_2001} hold. Note that this is a milder assumption than the algebraic closedness in other works. All vector spaces, algebras and categories are considered to be over $k$. By $\text{Vec}_k$, we denote the category of finite dimensional vector spaces and linear maps. In a monoidal category $\C$, we denote the left dual of an object $X$ by $X^\lor$, and consequently, we denote the left evaluation and coevaluation by $\text{ev}_X\colon X^\lor\otimes X\to\unit$ and $\text{coev}_X\colon\unit \to X\otimes X^\lor$ respectively. The right dualizability data only appears in chapter \ref{mod-trace}. Since the categories we work with are pivotal ($\text{piv}\colon X\xrightarrow{\sim} \left(X^\lor\right)^\lor$), we define it by: 
$$\text{ev}^R_X:= \text{ev}_{X^\lor}\circ(\text{piv}\otimes \id_{X^\lor}) 
\quad \textrm{and} \quad 
\text{coev}^R_X:= (\id_{X^\lor}\otimes \:\text{piv}^\inv)\circ \text{coev}_{X^\lor}.$$
As an exception, we denote the dual vector space of $V$ as $V^*$.

The convention for reading cobordisms and diagrams is from bottom to top. Internal string diagrams encountered in chapters \ref{Lincat-Bimod} and \ref{3-mfld-inv} are read from top to bottom. 

\subsection{Acknowledgements}

I would like to thank my advisor Pavel Safronov for his guidance throughout this project. I am grateful to Benjamin Ha\"ioun and Iordanis Romaidis for helpful discussions. I also want to extend my thanks to the authors of \cite{bartlett_modular_2015} for making their tikz code available. This research is part of my PhD thesis, undertaken at the university of Edinburgh.

\newpage

\section{Setup}\label{setup}

\subsection{The non-compact 3d cobordism 2-category}\label{cobordisms}

We give a sketch of a definition of the bicategory of non-compact 3d oriented cobordisms $\bBord$. For a detailed definition of the full bicategory of oriented cobordisms, we refer the reader to \cite{schommer-pries_classification_2014}.

\scalecobordisms{1}

\begin{itemize}
\item An object is a closed oriented $1$-manifold, i.e. a disjoint unions of a finite (possibly zero) number of circles.
\begin{equation}
\scalecobordisms{1}
\begin{tz}
\node[Cyl, top, height scale=0]  at (0,0) {};
\node[Cyl, top, height scale=0]  at (1.5,0) {};
\node[Cyl, top, height scale=0]  at (3,0) {};
\node at (4.5,0) {$\cdots$};
\node[Cyl, top, height scale=0]  at (6,0) {};
\end{tz}
\end{equation}
\item A $1$-morphism is a compact oriented $2$-dimensional cobordism between the objects. For instance, the cobordism depicted below is a $1$-morphism $S^\unit\to S^1\sqcup S^1$.
\begin{equation}
\begin{tz} 
\node[Copants, top, anchor=belt] at (0,0) {};
\end{tz}
\end{equation}
\noindent Composition is given by gluing manifolds along their common boundary.
\item A $2$-morphism $\Sigma\to \Sigma^\prime$ is a $3$-manifold $M$, potentially with corners, which is a cobordism between $\Sigma$ and $\Sigma^\prime$, with the property that every connected component of $M$ has nonempty intersection with $\Sigma$.


For example, a $3$-manifold realizing the $2$-morphism
\begin{equation}
\begin{tz} 
\node[Cyl, tall, top, bot] (A) at (0,0) {};
\node[Cyl, tall, top, bot] (B) at (2*\cobwidth, 0) {};
\end{tz}
\quad
\Longrightarrow
\quad
\begin{tz} 
\node[Pants, bot] (A) at (0,0) {};
\node[Copants, top, bot, anchor=belt] at (A.belt) {};
\end{tz}
\end{equation}
\noindent can be visualized as having appropriately 'drilled' out two solid cylinders from the solid `X' shape that is the target of the 2-morphism. This can be depicted as follows:


$$\begin{tz}
        \node[Pants, bot, belt scale=1.5] (A) at (0,0) {};
        \node[Copants, bot, anchor=belt, belt scale=1.5, top] (B) at (A.belt) {}; 
        \begin{scope}
                \node (i) at (B.leftleg) [above] {};
                \node (j) at (B.rightleg) [above] {};
                \node (i2) at (A.leftleg) [below] {};
                \node (j2) at (A.rightleg) [below] {};
                \draw (i.south) to[out=down, in=up] (B-belt.in-leftthird) to[out=down, in=up] (i2.north);
                \draw (j.south) to[out=down, in=up] (B-belt.in-rightthird) to[out=down, in=up] (j2.north);
        \end{scope}
\end{tz}$$

where we have removed the tubular neighborhoods of the two lines depicted above.

To give some intuition for the non-compactness condition, the 3-ball $D^3\colon S^2\to\empty$ is a 2-morphism in this category, but $D^3\colon \emptyset\to S^2$ is not.

\item The symmetric monoidal structure is given by disjoint union.
\end{itemize}

As mentioned in the introduction, the input data for our construction is that of a modular tensor category. In the case where the ratio $p_+/p_-\neq 1$ (as defined in Definition \ref{twistnondegen}), the resulting TQFT is anomalous. We will therefore work with a presentation that should be equivalent to a 'central extension' of $\bBord$. This is the 2-category of cobordisms equipped with signature, $\bBordncsig$. The definition is the same as for $\bBord$, except for the fact that 1- and 2- morphisms are equipped with some extra data:

    \begin{itemize}
        \item Objects are closed 1-dimensional manifolds $\Gamma$, equipped with a bounding 2-manifold $\tilde{\Gamma}$, which we choose to be a disjoint union of disks.
        \item 1-morphisms $(\Sigma,\tilde{\Sigma})\colon(\Gamma,\tilde{\Gamma})\to(\Gamma^\prime,\tilde{\Gamma}^\prime)$ are again 2-dimensional cobordisms $\Sigma$, equipped with a choice of a 3-manifold $\tilde{\Sigma}$ that bounds $\overline{\tilde{\Gamma}} \cup_{\Gamma} \Sigma \cup_{\Gamma^\prime} \tilde{\Gamma}^\prime$.
        \item 2-morphisms $(M,W)\colon(\Sigma,\tilde{\Sigma})\to(\Sigma^\prime,\tilde{\Sigma}^\prime)$ are 3-dimensional cobordisms $M\colon\Sigma\to\Sigma^\prime$ with corners, with the property that every connected component of $M$ has nonempty intersection with $\Sigma$, and equipped with a 4-manifold $W$ with  $\partial W= \overline{\tilde{\Sigma}} \cup_{\Sigma} M \cup_{\Sigma^\prime} \tilde{\Sigma}^\prime$.
    \end{itemize}

Note that we could have also defined $\bBordncsig$ similar to \cite{de_renzi_extended_2021}, equipping 1-morphisms with a Lagrangian in their first cohomology, and 2-morphisms with an integer (the signature of the corresponding bounding 4-manifold). 

\subsection{Presentation of the non-compact 3d cobordism 2-category}\label{presentations}

In this section we give a description of our source 2-category for the TQFT in terms of generators and relations. To be more precise, this is a conjectured equivalence between the 2-category of non-compact 3d cobordisms and the presentation we will provide later in this section.

In more detail, a presentation for a symmetric monoidal 2-category consists of
the following finite collection of data:

\begin{itemize}
    \item generating objects;
    \item generating 1-morphisms, whose sources and targets are composites of
generating objects;
    \item generating 2-morphisms, whose sources and targets are composites of
generating 1-morphisms;
    \item relations, which are equations between composites of generating 2-morphisms.
\end{itemize}

To such a presentation, we can associate a (fully-weak) symmetric monoidal
2-category that is generated by it, in the sense of \cite[page 161]{schommer-pries_classification_2014}.

Strict symmetric monoidal functors out of this generated symmetric monoidal 2-category are uniquely specified by the images of the generating objects, 1-morphisms and 2-morphisms, such that the relations of the presentation are satisfied.

\begin{conj}\label{conjecture}
    The non-compact 3d cobordism 2-category $\bBordncsig$, whose definition was sketched in Section \ref{cobordisms}, is equivalent to the 2-category obtained from the presentation we describe below.
\end{conj}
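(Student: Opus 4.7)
The plan is to reduce Conjecture \ref{conjecture} to the analogous (conjectured) presentation theorem for the full signature cobordism bicategory $\bBordsig$ due to \cite{bartlett_modular_2015}, exploiting the fact that $\Bordncsig$ is obtained from their presentation $\Bordsig$ by removing exactly one generating 2-morphism (the cap-to-empty generator, whose source is empty) together with every relation in which it explicitly appears. Granting the Bartlett et al.\ presentation, there is an equivalence $\Bordsig \xrightarrow{\sim} \bBordsig$, and the goal is to show that the composite
$$\Bordncsig \hookrightarrow \Bordsig \xrightarrow{\sim} \bBordsig$$
factors through the sub-bicategory $\bBordncsig \subseteq \bBordsig$ and induces an equivalence onto it.

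First I would check well-definedness: the image of any composite of retained generators is a 3-cobordism every component of which meets the source. This reduces to verifying, generator by generator, that none of the retained 2-morphism generators can introduce a connected component disjoint from its source; the only one that does is precisely the cap-to-empty generator that we removed. This gives the factorization $\Bordncsig \to \bBordncsig$.

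Next is essential surjectivity on 2-morphisms. Given a non-compact 3-cobordism $M\colon \Sigma \to \Sigma'$, the Bartlett et al.\ theorem expresses $M$ as some composite of generators of $\Bordsig$. The task is to show we may always choose such a decomposition that avoids the removed generator. Starting from a handle decomposition of $M$ and using the non-compactness hypothesis, any 0-handle lies in a component which eventually meets $\Sigma$, hence is linked by some 1-handle to a strand already connected to $\Sigma$. By reordering the handle decomposition so that this 1-handle is attached simultaneously to the 0-handle, the combined contribution can be realized via a pants-type generator issuing from the source-touching component rather than via the cap-to-empty generator. Iterating this absorption trick eliminates all uses of the forbidden generator.

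The main obstacle is fullness and faithfulness on 2-morphisms: two decompositions in $\Bordncsig$ of the same $M$ are, by the Bartlett et al.\ theorem, related by a zigzag of generating relations in $\Bordsig$, but that zigzag could a priori pass through intermediate composites that use the forbidden generator or invoke one of the omitted relations. What needs to be established is a ``non-compact Cerf theorem'': the subcomplex of non-compact handle decompositions is connected through the retained relations alone. Since every stage represents a non-compact cobordism, one expects that each problematic move can be locally replaced by an equivalent move in $\Bordncsig$, again using the absorption argument above; this is where the detailed combinatorics of \cite{bartlett_modular_2015} will have to be re-examined, and it is where the bulk of the technical effort should lie. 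Once that step is in place, essential surjectivity and faithfulness on 2-morphisms together with the fact that objects and 1-morphisms of $\bBordncsig$ coincide with those of $\bBordsig$ give the required equivalence, consistent with the author's remark that the conjecture should be ``straightforward'' once the full presentation theorem is verified.
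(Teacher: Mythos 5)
The paper does not prove this statement: it is stated and deliberately left as a conjecture. The author only remarks that a proof ``will be straightforward once the presentation of \cite{bartlett_modular_2015} for the full 3d cobordism bicategory is verified,'' notes that even that full presentation theorem is currently only established for the sub-bicategory with invertible 2-morphisms (\cite{sytilidis_presentations_2025}), and offers Theorem \ref{main_th_intro} as indirect evidence. So there is no proof in the paper to compare yours against, and your proposal should be judged on its own terms as an attempted proof of an open conjecture.

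On those terms, your reduction strategy is the natural one and matches the author's stated expectation, but it is a plan rather than a proof, and the gap sits exactly where you place it. Two points deserve emphasis. First, the whole argument is conditional on the equivalence $\Bordsig \xrightarrow{\sim} \bBordsig$, which is itself unproven in the generality you need; so even a complete execution of your outline would only reduce one conjecture to another. Second, and more seriously, the ``non-compact Cerf theorem'' step --- showing that any two decompositions of a non-compact cobordism avoiding the forbidden generator are connected by a zigzag of retained relations whose intermediate stages also avoid it --- is the entire mathematical content of the statement, and you acknowledge rather than supply it. The absorption trick you invoke for essential surjectivity (cancelling a 0-handle against a 1-handle that links its component to the source, realizing the pair via $\epsilon^\dagger$-type generators) is plausible, but the corresponding statement at the level of relations is much more delicate: a single application of an omitted relation in the middle of a zigzag can force a detour through a composite containing a closed-off component, and there is no a priori reason the local replacement you describe can be performed coherently and compatibly with the surrounding relations. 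Until that combinatorial analysis of the relation set of \cite{bartlett_modular_2015} is actually carried out, the conjecture remains open.
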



 The following presentation is identical to the signature presentation in \cite{bartlett_modular_2015}, apart from the fact that we have removed `$\nu$' from the set of generators, as well as any relations involving it. 
 Note that if we discard the generator $\xi$ (i.e $\xi=\id$), then we expect that the corresponding presentation is equivalent to $\bBord$.




\scalecobordisms{0.5}

\begin{definition} \label{modpres}
    
We define the \textbf{non-compact} signature presentation $\Bordncsig$ as follows:
\begin{itemize}
\item Generating object:
\[
\begin{tz}
        \node[Cyl, top, height scale=0]  at (0,0) {};
\end{tz}
\]
 
 \item Generating 1-morphisms:
    \[
    \begin{tz}
        \node[Pants, top, bot] (A) at (0,0) {};
        \node[Copants, top, bot] (B) at (2,0) {};
        \node[Cup, top] (C) at (4,0.1) {};
        \node[Cap, bot] (D) at (6,-0.1) {};
    \end{tz}
    \]
 \item Invertible generating 2-morphisms:
    \begin{equation}
    \begin{tz}
        \node[Pants, top, bot, wide] (A) at (0,0) {};
        \node[Pants,  bot, anchor=belt] (B) at (A.leftleg) {};    
        \node[Cyl, bot, anchor=top] at (A.rightleg) {}; 
    \end{tz}
    \quad
    \RLarrow{\alpha}{\alpha^\inv}
    \quad
    \begin{tz}
        \node[Pants, top, bot, wide] (A) at (0,0) {};
        \node[Pants,  bot, anchor=belt] (B) at (A.rightleg) {};    
        \node[Cyl, bot, anchor=top] at (A.leftleg) {}; 
    \end{tz} 
    \qquad
    \qquad
    \begin{tz}
        \node[Pants, top, bot] (A) at (0,0) {};
        \node[Cyl, bot, anchor=top] at (A.leftleg) {};
        \node[Cup] at (A.rightleg) {};  
    \end{tz}
    \quad
    \RLarrow{\rho}{\rho^\inv}
    \quad
    \begin{tz}
        \node[Cyl, bot, top, tall] at (0,0) {};
    \end{tz}
    \quad
    \RLarrow{\lambda^\inv}{\lambda}
    \quad
    \begin{tz}
        \node[Pants, top, bot] (A) at (0,0) {};
        \node[Cyl, bot, anchor=top] at (A.rightleg) {};
        \node[Cup] at (A.leftleg) {};   
    \end{tz} \label{MC}
    \end{equation}

    \begin{equation}
    \begin{tz}
        \node[Pants, top, bot] (A) at (0,0) {};
    \end{tz}
    \quad
    \RLarrow{\beta}{\beta^\inv}
    \quad
    \begin{tz}
        \node[Pants, top, bot] (A) at (0,0) {};
        \node[BraidB, anchor=topleft, bot] at (A.leftleg) {};
    \end{tz}
    \qquad
    \qquad
    \begin{tz}
        \node[Cyl, top, bot, tall] (A) at (0,0) {};
    \end{tz}
    \quad
    \RLarrow{\theta}{\theta^\inv}
    \quad
    \begin{tz}
        \node[Cyl, top, bot, tall] (A) at (0,0) {};
    \end{tz} \label{RC}
    \end{equation}

        \begin{equation}
        \begin{tikzpicture}
 \node[Cap] (A) at (0,0) {};
 \node[Cup] (B) at (0,0) {};
\end{tikzpicture}
    \quad
    \RLarrow{\xi}{\xi^\inv}
    \quad
            \begin{tikzpicture}
 \node[Cap] (A) at (0,0) {};
 \node[Cup] (B) at (0,0) {};
\end{tikzpicture} \label{anomaly_gen}
    \end{equation}

 \item Non-invertible generating 2-morphisms:
    \begin{equation}
    \begin{tz} 
        \node[Cyl, tall, top, bot] (A) at (0,0) {};
        \node[Cyl, tall, top, bot] (B) at (2*\cobwidth, 0) {};
    \end{tz}
    \quad
    \RLarrow{\eta}{\eta^\dagger}
    \quad
    \begin{tz} 
        \node[Pants, bot] (A) at (0,0) {};
        \node[Copants, top, bot, anchor=belt] at (A.belt) {};
    \end{tz}
    \qquad
    \qquad
    \begin{tz} 
        \node[Pants, top, bot] (A) at (0,0) {};
        \node[Copants, bot, anchor=leftleg] at (A.leftleg) {};
    \end{tz}
    \quad
    \RLarrow{\epsilon}{\epsilon^\dagger}
    \quad
    \begin{tz} 
        \node[Cyl, top, bot, tall] (A) at (0,0) {};
    \end{tz} \label{A1}
    \end{equation}
    
    \begin{equation} 
    \begin{tz}
        \draw[green] (0,0) rectangle (0.6, 0.6);  
    \end{tz}
    \quad
    \xLeftarrow{\nu^\dagger}
    \quad
    \begin{tz}
        \node[Cap, bot] (A) at (0,0) {};
        \node[Cup] at (0,0) {};
    \end{tz}
    \qquad
    \qquad
    \begin{tz}
        \node[Cup, top] (A) at (0,0) {};
        \node[Cap, bot] (B) at (0,-2*\cobheight) {};
    \end{tz}
    \quad
    \RLarrow{\mu}{\mu^\dagger}
    \quad
    \begin{tz}
        \node[Cyl, top, bot, tall] (A) at (0,0) {};
    \end{tz}
    \label{A2}
    \end{equation}

 \end{itemize}
 
The relations are as follows:
\begin{itemize}
\item (Inverses) Each of the invertible generating 2-morphisms $\omega$ satisfies \mbox{$\omega \circ \omega^\inv = \id$} and $\omega^\inv \circ \omega = \id$. 
\item (Monoidal) The generating 2-morphisms in \eqref{MC} obey the pentagon and unit equations:
\begin{equation} \label{eq:pentagon}
\begin{tz}[xscale=2.1, yscale=1.2]
\node (1) at (0,0)
{
$\begin{tikzpicture}
        \node [Pants, wider, top] (A) at (0,0) {};
        \node [Pants, wide, anchor=belt] (B) at (A.leftleg) {};
        \node [Cyl, tall, anchor=top] (C) at (A.rightleg) {};
        \node[Pants, anchor=belt] (D) at (B.leftleg) {};
        \node[Cyl, anchor=top] (E) at (B.rightleg) {};
        \selectpart[green] {(A-belt) (B-leftleg) (A-rightleg)};
        \selectpart[red] {(A-leftleg) (D-leftleg) (B-rightleg)};
\end{tikzpicture}$
};
\node (2) at (1,1)
{
$\begin{tikzpicture}
        \node [Pants, verywide, top] (A) at (0,0) {};
        \node [Pants, anchor=belt] (B) at (A.rightleg) {};
        \node [Cyl, anchor=top] (C) at (A.leftleg) {};
        \node[Pants, anchor=belt] (D) at (C.bottom) {};
        \node[Cyl, anchor=top] (E) at (B.leftleg) {};
        \node[Cyl, anchor=top] (F) at (B.rightleg) {};
        \selectpart[green] {(A-leftleg) (A-rightleg) (D-leftleg) (F-bottom)};
\end{tikzpicture}$
};
\node (3) at (2,1)
{
$\begin{tikzpicture}
        \node [Pants, verywide, top] (A) at (0,0) {};
        \node [Pants, anchor=belt] (B) at (A.leftleg) {};
        \node[Cyl, anchor=top] (C) at (A.rightleg) {};
        \node[Cyl, anchor=top] (D) at (B.leftleg) {};
        \node[Cyl, anchor=top] (E) at (B.rightleg) {};
        \node[Pants, anchor=belt] (F) at (C.bottom) {};
        \selectpart[green] {(A-belt) (B-leftleg) (A-rightleg)};
\end{tikzpicture}$
};
\node (4) at (3,0)
{
$\begin{tikzpicture}
        \node [Pants, wider, top] (A) at (0,0) {};
        \node [Cyl, tall, anchor=top] (B) at (A.leftleg) {};
        \node[Pants, anchor=belt, wide] (C) at (A.rightleg) {};
        \node[Cyl, anchor=top] (D) at (C.leftleg) {};
        \node[Pants, anchor=belt] (E) at (C.rightleg) {};
\end{tikzpicture}$
};
\node (5) at (1,-1)
{
$\begin{tikzpicture}
        \node [Pants, veryverywide, top] (A) at (0,0) {};
        \node [Pants, wide, anchor=belt] (B) at (A.leftleg) {};
        \node [Cyl, tall, anchor=top] (C) at (A.rightleg) {};
        \node[Pants, anchor=belt] (D) at (B.rightleg) {};
        \node[Cyl, anchor=top] (E) at (B.leftleg) {};
                \selectpart[green] {(A-belt) (B-leftleg) (A-rightleg)};
\end{tikzpicture}$
};
\node (6) at (2,-1)
{
$\begin{tikzpicture}
        \node [Pants, veryverywide, top] (A) at (0,0) {};
        \node [Pants, wide, anchor=belt] (B) at (A.rightleg) {};
        \node [Cyl, tall, anchor=top] (C) at (A.leftleg) {};
        \node[Pants, anchor=belt] (D) at (B.leftleg) {};
        \node[Cyl, anchor=top] (E) at (B.rightleg) {};
                \selectpart[green] {(A-rightleg) (D-leftleg) (B-rightleg)};
\end{tikzpicture}$
};
\begin{scope}[double arrow scope]
    \draw (1) --  node[above left, green]{$\alpha$} (2);
    \draw (2) --  node[above]{$\interchangor$} (3);
    \draw (3) --  node[above]{$\alpha$} (4);
    \draw (1) --  node[below left, red]{$\alpha$} (5);
    \draw (5) --  node[below]{$\alpha$} (6);
    \draw (6) --  node[below right]{$\alpha$} (4);
\end{scope}
\end{tz}
\end{equation}
\begin{equation}
\label{eq:triangle}
\begin{tz}[xscale=1, yscale=2, every to/.style={out=down,in=up}]
\node (1) at (-1,1)
{
$\begin{tikzpicture}
    \node (A) [Pants] at (0,0) {};
    \node (B) [Pants, wide, top, anchor=leftleg] at (A.belt) {};
    \node (C) [Cup] at (A.rightleg) {};
    \node [Cyl, anchor=top] (D) at (A.leftleg) {};
    \node [Cyl, tall, anchor=top] (E) at (B.rightleg) {};
        \selectpart[green] {(B-belt) (A-leftleg) (B-rightleg)};
    \selectpart[red] {(B-leftleg) (D-bottom) (A-rightleg)};
\end{tikzpicture}$
};
\node (2) at (1,1)
{
$\begin{tikzpicture}
    \node (A) [Pants] at (0,0) {};
    \node (B) [Pants, wide, top, anchor=rightleg] at (A.belt) {};
    \node (C) [Cup] at (A.leftleg) {};
    \node (D) [Cyl, anchor=top] at (A.rightleg) {};
    \node [Cyl, tall, anchor=top] at (B.leftleg) {};
    \selectpart[green] {(B-rightleg) (A-leftleg) (D-bottom)};
\end{tikzpicture}$
};
\node (3) at (0,0)
{
$\begin{tikzpicture}
    \node (A) [Pants, top] at (0,0) {};
\end{tikzpicture}$
};
\begin{scope}[double arrow scope]
    \draw (1) --  node[above, green]{$\alpha$} (2);
    \draw (2) --  node[below right]{$\lambda$} (3);
    \draw (1) --  node[below left, red]{$\rho$} (3);
\end{scope}
\end{tz}
\end{equation}
The 2-morphism $\interchangor$ at \eqref{eq:pentagon} is an interchanger, part of a canonical family of 2-morphisms in any monoidal 2-category that switches the 1-morphism composition
order of two tensored 1-morphisms  \cite{bartlett_quasistrict_2014}.
 \item (Balanced) The data \eqref{MC} and \eqref{RC} forms a braided monoidal object equipped with a compatible twist:
\begin{equation}
\label{eq:hexagon}
\begin{tz}[xscale=2.2, yscale=1.2]
\node (1) at (0,0)
{
$\begin{tikzpicture}
    \node [Pants, wide, top] (A) at (0,0) {};
    \node [Pants, anchor=belt] (B) at (A.leftleg) {};
    \node [Cyl, anchor=top] (C) at (A.rightleg) {};
        \selectpart[red]{(A-leftleg) (B-leftleg) (B-rightleg)};
\end{tikzpicture}$
};

\node (2) at (0.75,1)
{
$\begin{tikzpicture}
    \node [Pants, wide, top] (A) at (0,0) {};
    \node [Pants, anchor=belt] (B) at (A.rightleg) {};
    \node [Cyl, anchor=top] (C) at (A.leftleg) {};
        \selectpart[green]{(A-belt) (A-leftleg) (A-rightleg)};
\end{tikzpicture}$
};

\node (3) at (1.5,1)
{
$\begin{tikzpicture}
    \node [Pants, wide, top] (A) at (0,0) {};
    \node[BraidB, wide, anchor=topleft] (B) at (A.leftleg) {};
    \node [Pants, anchor=belt] (C) at (B.bottomright) {};
    \node [Cyl, anchor=top] (D) at (B.bottomleft) {};
\end{tikzpicture}$
};

\node (4) at (2.25,1)
{
$\begin{tikzpicture}
    \node [Pants, wide, top] (A) at (0,0) {};
    \node [Pants, anchor=belt] (C) at (A.leftleg) {};
    \node [Cyl, anchor=top] (D) at (A.rightleg) {};
    \node [BraidB, anchor=topleft] (E) at (C.rightleg) {};
    \node[Cyl, anchor=top] (F) at (C.leftleg) {};
    \node[BraidB, anchor=topleft] (G) at (F.bottom) {};
    \node[Cyl, anchor=top] (H) at (G.bottomright) {};
    \node[Cyl, anchor=top] (I) at (G.bottomleft) {};
    \node[Cyl, anchor=top, tall] (J) at (E.bottomright) {};
    \selectpart[green]{(A-belt) (C-leftleg) (D-bottom)};
\end{tikzpicture}$
};

\node (5) at (3,0)
{
$\begin{tikzpicture}
    \node [Pants, wide, top] (A) at (0,0) {};
    \node [Pants, anchor=belt] (C) at (A.rightleg) {};
    \node [Cyl, anchor=top] (D) at (A.leftleg) {};
    \node [BraidB, anchor=topleft] (E) at (C.leftleg) {};
    \node[Cyl, tall, anchor=top] (F) at (A.leftleg) {};
    \node[BraidB, anchor=topleft] (G) at (F.bottom) {};
    \node[Cyl, anchor=top] (H) at (E.bottomright) {};
\end{tikzpicture}$
};

\node (6) at (1,-1)
{
$\begin{tikzpicture}
    \node [Pants, wide, top] (A) at (0,0) {};
    \node [Pants, anchor=belt] (B) at (A.leftleg) {};
    \node [Cyl, anchor=top, tall] (C) at (A.rightleg) {};
    \node [BraidB, anchor=topleft] (D) at (B.leftleg) {};
        \selectpart[green]{(A-belt) (B-leftleg) (A-rightleg)};
\end{tikzpicture}$
};

\node (7) at (2.0,-1)
{
$\begin{tikzpicture}
    \node [Pants, wide, top] (A) at (0,0) {};
    \node [Pants, anchor=belt] (B) at (A.rightleg) {};
    \node [Cyl, anchor=top] (C) at (A.leftleg) {};
    \node[BraidB, anchor=topleft] (D) at (C.bottom) {};
    \node[Cyl, anchor=top] (E) at (B.rightleg) {};
    \selectpart[green]{(A-rightleg) (B-leftleg) (B-rightleg)};
\end{tikzpicture}$
};

\begin{scope}[double arrow scope]
    \draw (1) --  node[above left]{$\alpha$} (2);
    \draw (2) --  node[above]{$\beta$} (3);
    \draw[-=] (3) --  (4);
    \draw (4) --  node[above right]{$\alpha$} (5);
    \draw (1) --  node[below left, red]{$\beta$} (6);
    \draw (6) --  node[below]{$\alpha$} (7);
    \draw (7) --  node[below right]{$\beta$} (5);
\end{scope}
\end{tz}
\end{equation}
\begin{equation} \label{balanced1}
\begin{tz}[xscale=1.4, yscale=1.5]

\node (1) at (0,0)
{
$\begin{tikzpicture}
        \node[Pants, top] (A) at (0,0) {};
        \selectpart[green, inner sep=1pt]{(A-belt)};
        \selectpart[red] {(A-leftleg) (A-rightleg) (A-belt)};
\end{tikzpicture}$
};
\node (2) at (1,0)
{
$\begin{tikzpicture}
        \node[Pants, top] (A) at (0,0) {};
\end{tikzpicture}$
};

\node (3) at (0,-1)
{
$\begin{tikzpicture}
        \node[Pants, top] (A) at (0,0) {};
        \selectpart[green, inner sep=1pt]{(A-leftleg)};
\end{tikzpicture}$
};

\node (4) at (1,-1)
{
$\begin{tikzpicture}
        \node[Pants, top] (A) at (0,0) {};
        \selectpart[green, inner sep=1pt]{(A-rightleg)};
\end{tikzpicture}$
};

\begin{scope}[double arrow scope]
    \draw (1) -- node[above, green] {$\theta$} (2);
    \draw (1) -- node[left, red] {$\beta^2$} (3);
    \draw (3) -- node[below] {$\theta$} (4);
    \draw (4) -- node[right] {$\theta$} (2);
\end{scope}
\end{tz}
\end{equation}
\begin{equation}
\label{balanced2}
\begin{tz}[xscale=1.4, yscale=2]
\node (1) at (0,0)
{
$\begin{tikzpicture}
    \node[Cup, top] (C) at (0,0) {};
        \selectpart[green, inner sep=1pt]{(C-center)};
\end{tikzpicture}$
};
\node (2) at (1,0)
{
$\begin{tikzpicture}
    \node[Cup, top] (C) at (0,0) {};
    \selectpart[green, inner sep=1pt]{(C-center)};

  \end{tikzpicture}$
};
\begin{scope}[double arrow scope]
    \draw (1) --  node[above]{$\theta$} (2);
\end{scope}d
\end{tz}
\quad = \quad
\begin{tz}[xscale=1.4, yscale=2]
\node (1) at (0,0)
{
$\begin{tikzpicture}
    \node[Cup, top] (C) at (0,0) {};
\end{tikzpicture}$
};
\node (2) at (1,0)
{
$\begin{tikzpicture}
    \node[Cup, top] (C) at (0,0) {};
  \end{tikzpicture}$
};
\begin{scope}[double arrow scope]
    \draw (1) --  node[above]{$\id$} (2);
\end{scope}
\end{tz}
\end{equation}

 \item (Rigidity) Write $\phileft$ for the following composite:
 \begin{align} \label{defn_of_phileft}
 \phileft \quad&:=\quad
\begin{tz}
 \node[Pants, top, bot] (A) at (0,0) {};
 \node[Cyl, bot, anchor=top] (B) at (A.leftleg) {};
 \node[Copants, bot, anchor=leftleg] (C) at (A.rightleg) {};
 \node[Cyl, top, bot, anchor=bottom] (D) at (C.rightleg) {}; 
 \selectpart[green, inner sep=1pt] {(A-belt) (D-top)};
\end{tz}
\,\, \Rarrow{\eta} \,\,
\begin{tz}
 \node[Copants, top, wide, bot] (F) at (0,0) {};
 \node[Pants, bot, wide, anchor=belt] (G) at (F.belt) {};
 \node[Pants, bot, anchor=belt] (A) at (G.leftleg) {};
 \node[Cyl, bot, anchor=top] (B) at (A.leftleg) {};
 \node[Copants, bot, anchor=leftleg] (C) at (A.rightleg) {};
 \node[Cyl, bot, anchor=bottom] (X) at (C.rightleg) {}; 
 \selectpart[green] {(F-belt) (A-leftleg) (X-bottom)};
\end{tz}
\,\, \Rarrow{\alpha} \,\,
\begin{tz}
 \node[Copants, top, wide, bot] (F) at (0,0) {};
 \node[Pants, bot, wide, anchor=belt] (G) at (F.belt) {};
 \node[Pants, bot, anchor=belt] (A) at (G.rightleg) {};
 \node[Cyl, tall, bot, anchor=top] (B) at (G.leftleg) {};
 \node[Copants, bot, anchor=leftleg] (C) at (A.leftleg) {};
 \selectpart[green] {(G-rightleg) (A-leftleg) (A-rightleg) (C-belt)};
\end{tz}
\,\, \Rarrow{\epsilon} \,\,
\begin{tz}
        \node[Copants, top, bot] (A) at (0,0) {};
        \node[Pants, bot, anchor=belt] (B) at (A.belt) {};
\end{tz}
\intertext{The left rigidity relation says that $\phileft$ is invertible, with the following explicit inverse:}
\label{explicit_phileft_inverse}
\phileft^\inv \quad&=\quad  \begin{tz}
                \node[Copants, top] (A) at (0,0) {};
                \node[Pants, anchor=belt] (B) at (A.belt) {};
                \selectpart[green, inner sep=1pt] {(B-rightleg)};
\end{tz} 
\,\, \Rarrow{\epsilon^\dagger} \,\,
\begin{tz}
 \node[Copants, top, wide, bot] (F) at (0,0) {};
 \node[Pants, bot, wide, anchor=belt] (G) at (F.belt) {};
 \node[Pants, bot, anchor=belt] (A) at (G.rightleg) {};
 \node[Cyl, tall, bot, anchor=top] (B) at (G.leftleg) {};
 \node[Copants, bot, anchor=leftleg] (C) at (A.leftleg) {};
 \selectpart[green]{(F-belt) (G-leftleg) (A-rightleg)};
\end{tz}
\,\, \Rarrow{\alpha^\inv} \,\,
\begin{tz}
 \node[Copants, top, wide, bot] (F) at (0,0) {};
 \node[Pants, bot, wide, anchor=belt] (G) at (F.belt) {};
 \node[Pants, bot, anchor=belt] (A) at (G.leftleg) {};
 \node[Cyl, bot, anchor=top] (B) at (A.leftleg) {};
 \node[Copants, bot, anchor=leftleg] (C) at (A.rightleg) {};
 \node[Cyl, bot, anchor=bottom] (X) at (C.rightleg) {}; 
 \selectpart[green]{(F-leftleg) (F-rightleg) (G-leftleg) (G-rightleg)};
\end{tz}
\,\, \Rarrow{\eta^\dagger} \,\,
\begin{tz}
 \node[Pants, top, bot] (A) at (0,0) {};
 \node[Cyl, bot, anchor=top] (B) at (A.leftleg) {};
 \node[Copants, bot, anchor=leftleg] (C) at (A.rightleg) {};
 \node[Cyl, top, bot, anchor=bottom] (D) at (C.rightleg) {}; 
\end{tz}
\intertext{Similarly, write $\phiright$ for $\phileft$ rotated about the $z$-axis (See \cite[Appendix B]{bartlett_extended_2014} for what this means):}
 \label{defn_of_phiright}
 \phiright \quad&:=\quad
\begin{tz}
 \node[Pants, top, bot] (A) at (0,0) {};
 \node[Cyl, bot, anchor=top] (B) at (A.rightleg) {};
 \node[Copants, bot, anchor=rightleg] (C) at (A.leftleg) {};
 \node[Cyl, top, bot, anchor=bottom] (D) at (C.leftleg) {}; 
 \selectpart[inner sep=1pt, green] {(D-top) (A-belt)};
 \end{tz}
\,\, \Rarrow{\eta} \,\,
\begin{tz}
 \node[Copants, top, wide, bot] (F) at (0,0) {};
 \node[Pants, bot, wide, anchor=belt] (G) at (F.belt) {};
 \node[Pants, bot, anchor=belt] (A) at (G.rightleg) {};
 \node[Cyl, bot, anchor=top] (B) at (A.rightleg) {};
 \node[Copants, bot, anchor=rightleg] (C) at (A.leftleg) {};
 \node[Cyl, bot, anchor=bottom] (X) at (C.leftleg) {}; 
 \selectpart[green] {(F-belt) (A-rightleg) (X-bottom)};
\end{tz}
\,\, \Rarrow{\alpha^\inv} \,\,
\begin{tz}
 \node[Copants, top, wide, bot] (F) at (0,0) {};
 \node[Pants, bot, wide, anchor=belt] (G) at (F.belt) {};
 \node[Pants, bot, anchor=belt] (A) at (G.leftleg) {};
 \node[Cyl, tall, bot, anchor=top] (B) at (G.rightleg) {};
 \node[Copants, bot, anchor=rightleg] (C) at (A.rightleg) {};
 \selectpart[green] {(G-leftleg) (A-leftleg) (A-rightleg) (C-belt)};
\end{tz}
\,\, \Rarrow{\epsilon} \,\,
\begin{tz}
        \node[Copants, top, bot] (A) at (0,0) {};
        \node[Pants, bot, anchor=belt] (B) at (A.belt) {};
\end{tz}
\intertext{The right rigidity relation says that $\phiright$ is invertible, with the following explicit inverse :}
\label{explicit_phiright_inverse}
\phiright^\inv \quad&=\quad
\begin{tz}
                \node[Copants, top] (A) at (0,0) {};
                \node[Pants, anchor=belt] (B) at (A.belt) {};
                \selectpart[green, inner sep=1pt] {(B-leftleg)};
\end{tz} 
\,\, \Rarrow{\epsilon^\dagger} \,\,
\begin{tz}
 \node[Copants, top, wide, bot] (F) at (0,0) {};
 \node[Pants, bot, wide, anchor=belt] (G) at (F.belt) {};
 \node[Pants, bot, anchor=belt] (A) at (G.leftleg) {};
 \node[Cyl, tall, bot, anchor=top] (B) at (G.rightleg) {};
 \node[Copants, bot, anchor=rightleg] (C) at (A.rightleg) {};
 \selectpart[green]{(F-belt) (A-leftleg) (G-rightleg)};
\end{tz}
\,\, \Rarrow{\alpha} \,\,
\begin{tz}
 \node[Copants, top, wide, bot] (F) at (0,0) {};
 \node[Pants, bot, wide, anchor=belt] (G) at (F.belt) {};
 \node[Pants, bot, anchor=belt] (A) at (G.rightleg) {};
 \node[Cyl, bot, anchor=top] (B) at (A.rightleg) {};
 \node[Copants, bot, anchor=rightleg] (C) at (A.leftleg) {};
 \node[Cyl, top, bot, anchor=bottom] (X) at (C.leftleg) {}; 
 \selectpart[green]{(F-leftleg) (F-rightleg) (G-leftleg) (G-rightleg)};
\end{tz}
\,\, \Rarrow{\eta^\dagger} \,\,
\begin{tz}
 \node[Pants, top, bot] (A) at (0,0) {};
 \node[Cyl, bot, anchor=top] (B) at (A.rightleg) {};
 \node[Copants, bot, anchor=rightleg] (C) at (A.leftleg) {};
 \node[Cyl, top, bot, anchor=bottom] (D) at (C.leftleg) {}; 
\end{tz}
\end{align}

\item (Ribbon) The twist satisfies the following equation:
\begin{equation}
\label{tortile}
\begin{tz}[xscale=1.4, yscale=2]
\node (1) at (0,0)
{
$\begin{tikzpicture}
        \node[Pants] (A) at (0,0) {};
        \node[Cap] at (A.belt) {};
        \selectpart[green, inner sep=1pt]{(A-leftleg)};
\end{tikzpicture}$
};
\node (2) at (1,0)
{
$\begin{tikzpicture}
        \node[Pants] (A) at (0,0) {};
        \node[Cap] at (A.belt) {};
\end{tikzpicture}$
};
\begin{scope}[double arrow scope]
    \draw (1) -- node[above] {$\theta$} (2);
\end{scope}
\end{tz}
\quad = \quad
\begin{tz}[xscale=1.4, yscale=2]
\node (1) at (0,0)
{
$\begin{tikzpicture}
        \node[Pants] (A) at (0,0) {};
        \node[Cap] at (A.belt) {};
        \selectpart[green, inner sep=1pt]{(A-rightleg)};
\end{tikzpicture}$
};
\node (2) at (1,0)
{
$\begin{tikzpicture}
        \node[Pants] (A) at (0,0) {};
        \node[Cap] at (A.belt) {};
\end{tikzpicture}$
};
\begin{scope}[double arrow scope]
    \draw (1) -- node[above] {$\theta$} (2);
\end{scope}
\end{tz} 
\end{equation}

\item (Adjoints) The data \eqref{A1} expresses $\tikztinypants$ as the biadjoint of $\tikztinycopants$, while (part of the) \eqref{A2} data, expresses $\tikztinycap$ as the left adjoint of $\tikztinycup$. That is, the following equations hold: 
\begin{align}
\label{adj_eta_epsilon1}
\begin{aligned}
\begin{tikzpicture}[xscale=1.6, yscale=4]
\node (1) at (0,0)
{
$\begin{tikzpicture}
    \node [Pants, top] (A) at (0,0) {};
    \node[Cyl, anchor=top] (B) at (A.leftleg) {};
    \node[Cyl, anchor=top] (C) at (A.rightleg) {};
    \fixboundingbox
    \selectpart[green]{(A-leftleg) (A-rightleg) (B-bottom) (C-bottom)}
\end{tikzpicture}$
};
\node (2) at (1,0)
{
$\begin{tikzpicture}
    \node [Pants, bot, top] (A) at (0,0) {};
    \node [Copants, anchor=leftleg] (B) at (A.leftleg) {};
    \node [Pants, anchor=belt, bot] at (B.belt) {};
    \selectpart [green] {(A-leftleg) (A-belt) (A-rightleg) (B-belt)}; 
\end{tikzpicture}$
};
\node (3) at (2,0)
{
$\begin{tikzpicture}
    \node [Pants] (A) at (0,0) {};
    \node [Cyl, tall, bot, anchor=bot, top] at (A.belt) {};
\end{tikzpicture}$
};
\begin{scope}[double arrow scope]
    \draw (1) --  node[above]{$\eta$} (2);
    \draw (2) --  node[above]{$\epsilon$} (3);
\end{scope}
\end{tikzpicture}
\end{aligned}
\quad &= \quad
\begin{aligned}
\begin{tikzpicture}[xscale=1.6, yscale=4]
\node (1) at (0,0)
{
$\begin{tikzpicture}
    \node [Pants, top] (A) at (0,0) {};
\end{tikzpicture}$
};
\node (2) at (1,0)
{
$\begin{tikzpicture}
    \node [Pants, bot, top] (A) at (0,0) {};
\end{tikzpicture}$
};
\begin{scope}[double arrow scope]
    \draw (1) --  node[above]{$\id$} (2);
\end{scope}
\end{tikzpicture}
\end{aligned}
\\
\label{adj_eta_epsilon2}
\begin{aligned}
\begin{tikzpicture}[xscale=1.6, yscale=4]
\node (1) at (0,0)
{
$\begin{tikzpicture}
    \node [Copants] (A) at (0,0) {};
    \node[Cyl, bot, anchor=bot, top] (B) at (A.leftleg) {};
    \node[Cyl, bot, anchor=bot, top] (C) at (A.rightleg) {};
    \fixboundingbox
    \selectpart[green]{(B-bottom) (C-bottom) (B-top) (C-top)}
\end{tikzpicture}$
};
\node (2) at (1,0)
{
$\begin{tikzpicture}
    \node [Copants, bot, top] (A) at (0,0) {};
    \node [Pants, anchor=belt, bot] (B) at (A.belt) {};
    \node [Copants, anchor=leftleg] (C) at (B.leftleg) {};
    \selectpart [green] {(A-belt) (B-leftleg) (B-rightleg) (C-belt)}; 
\end{tikzpicture}$
};
\node (3) at (2,0)
{
$\begin{tikzpicture}
    \node [Copants, top] (A) at (0,0) {};
    \node [Cyl, tall, anchor=top] at (A.belt) {};
\end{tikzpicture}$
};
\begin{scope}[double arrow scope]
    \draw (1) --  node[above]{$\eta$} (2);
    \draw (2) --  node[above]{$\epsilon$} (3);
\end{scope}
\end{tikzpicture}
\end{aligned}
\quad &= \quad
\begin{aligned}
\begin{tikzpicture}[xscale=1.6, yscale=4]
\node (1) at (0,0)
{
$\begin{tikzpicture}
    \node [Copants, top] (A) at (0,0) {};
\end{tikzpicture}$
};
\node (2) at (1,0)
{
$\begin{tikzpicture}
    \node [Copants, bot, top] (A) at (0,0) {};
\end{tikzpicture}$
};
\begin{scope}[double arrow scope]
    \draw (1) --  node[above]{$\id$} (2);
\end{scope}
\end{tikzpicture}
\end{aligned}
\\
\label{adj_eta_epsilon_dag1}
\begin{aligned}
\begin{tikzpicture}[xscale=1.6, yscale=4]
\node (1) at (0,0)
{
$\begin{tikzpicture}
    \node [Pants] (A) at (0,0) {};
    \node [Cyl, tall, bot, anchor=bot, top] (B) at (A.belt) {};
    \selectpart [green] {(B-bottom) (B-top)};
\end{tikzpicture}$
};
\node (2) at (1,0)
{
$\begin{tikzpicture}
    \node [Pants, bot, top] (A) at (0,0) {};
    \node [Copants, anchor=leftleg] (B) at (A.leftleg) {};
    \node [Pants, anchor=belt, bot] (C) at (B.belt) {};
    \selectpart [green] {(C-leftleg) (C-rightleg) (A-leftleg) (A-rightleg)}; 
\end{tikzpicture}$
};
\node (3) at (2,0)
{
$\begin{tikzpicture}
    \node [Pants, top] (A) at (0,0) {};
    \node[Cyl, anchor=top] (B) at (A.leftleg) {};
    \node[Cyl, anchor=top] (C) at (A.rightleg) {};
    \fixboundingbox
\end{tikzpicture}$
};
\begin{scope}[double arrow scope]
    \draw (1) --  node[above]{$\epsilon^\dagger$} (2);
    \draw (2) --  node[above]{$\eta^\dagger$} (3);
\end{scope}
\end{tikzpicture}
\end{aligned}
\quad &= \quad
\begin{aligned}
\begin{tikzpicture}[xscale=1.6, yscale=4]
\node (1) at (0,0)
{
$\begin{tikzpicture}
    \node [Pants, top] (A) at (0,0) {};
\end{tikzpicture}$
};
\node (2) at (1,0)
{
$\begin{tikzpicture}
    \node [Pants, bot, top] (A) at (0,0) {};
\end{tikzpicture}$
};
\begin{scope}[double arrow scope]
    \draw (1) --  node[above]{$\id$} (2);
\end{scope}
\end{tikzpicture}
\end{aligned}
\\
\label{adj_eta_epsilon_dag2}
\begin{aligned}
\begin{tikzpicture}[xscale=1.6, yscale=4]
\node (1) at (0,0)
{
$\begin{tikzpicture}
    \node [Copants, top] (A) at (0,0) {};
    \node [Cyl, tall, anchor=top] (B) at (A.belt) {};
    \selectpart [green] {(B-bottom) (A-belt)};
\end{tikzpicture}$
};
\node (2) at (1,0)
{
$\begin{tikzpicture}
    \node [Copants, bot, top] (A) at (0,0) {};
    \node [Pants, anchor=belt, bot] (B) at (A.belt) {};
    \node [Copants, anchor=leftleg] (C) at (B.leftleg) {};
\selectpart [green] {(B-leftleg) (B-rightleg) (A-leftleg) (A-rightleg)}; 
\end{tikzpicture}$
};
\node (3) at (2,0)
{
$\begin{tikzpicture}
    \node [Copants] (A) at (0,0) {};
    \node[Cyl, bot, anchor=bot, top] (B) at (A.leftleg) {};
    \node[Cyl, bot, anchor=bot, top] (C) at (A.rightleg) {};
    \fixboundingbox
\end{tikzpicture}$
};
\begin{scope}[double arrow scope]
    \draw (1) --  node[above]{$\epsilon^\dagger$} (2);
    \draw (2) --  node[above]{$\eta^\dagger$} (3);
\end{scope}
\end{tikzpicture}
\end{aligned}
\quad &= \quad
\begin{aligned}
\begin{tikzpicture}[xscale=1.6, yscale=4]
\node (1) at (0,0)
{
$\begin{tikzpicture}
    \node [Copants, top] (A) at (0,0) {};
\end{tikzpicture}$
};
\node (2) at (1,0)
{
$\begin{tikzpicture}
    \node [Copants, bot, top] (A) at (0,0) {};
\end{tikzpicture}$
};
\begin{scope}[double arrow scope]
    \draw (1) --  node[above]{$\id$} (2);
\end{scope}
\end{tikzpicture}
\end{aligned}
\\
\label{adj_nu_mu_dag1}
\begin{aligned}
\begin{tikzpicture}[xscale=1.6, yscale=4, every to/.style={out=down,in=up}]
\node (A) at (0,0)
{
$\begin{tikzpicture}
    \node[Cyl, top] (X) at (0,0) {};
    \node[Cup] at (X.bottom) {};
    \selectpart[green]{(X-top) (X-bottom)}
\end{tikzpicture}$
};
\node (2) at (1,0)
{
$\begin{tikzpicture}
    \node (1) [Cup, top] at (0,0) {};
    \node (2) [Cap] at (0,-1.5\cobheight) {};
    \node (3) [Cup] at (0,-1.5\cobheight) {};
    \fixboundingbox
    \selectpart[green]{(3) (2)}
\end{tikzpicture}$
};
\node (3) at (2,0)
{
$\begin{tikzpicture}
    \node (1) [Cup, top] at (0,0) {};
    \node (3) [Cap, invisible] at (0,-1.5\cobheight) {};
    \fixboundingbox
\end{tikzpicture}$
};
\begin{scope}[double arrow scope]
    \draw (A) --  node[above]{$\mu^\dagger$} (2);
    \draw (2) --  node[above]{$\nu^\dagger$} (3);
\end{scope}
\end{tikzpicture}
\end{aligned}
\quad &= \quad
\begin{aligned}
\begin{tikzpicture}[xscale=1.6, yscale=4, every to/.style={out=down,in=up}]
\node (A) at (0,0)
{
$\begin{tikzpicture}
    \node (1) [Cup, top] at (0,0) {};
\end{tikzpicture}$
};
\node (2) at (1,0)
{
$\begin{tikzpicture}
    \node[Cup, top] at (0,0) {};
\end{tikzpicture}$
};
\begin{scope}[double arrow scope]
    \draw (A) --  node[above]{$\id$} (2);
\end{scope}
\end{tikzpicture}
\end{aligned}
\\
\label{adj_nu_mu_dag2}
\begin{aligned}
\begin{tikzpicture}[xscale=1.6, yscale=4, every to/.style={out=down,in=up}]
\node (A) at (0,0)
{
$\begin{tikzpicture}
    \node[Cyl, bot] (X) at (0,0) {};
    \node[Cap, top] at (X.top) {};
    \selectpart[green]{(X-top) (X-bottom)}
\end{tikzpicture}$
};
\node (2) at (1,0)
{
$\begin{tikzpicture}
    \node (1) [Cap, top] at (0,+1.5\cobheight) {};
    \node (2) [Cup] at (0,+1.5\cobheight) {};
    \node (3) [Cap] at (0,0) {};
    \fixboundingbox
    \selectpart[green]{(2) (1)}
\end{tikzpicture}$
};
\node (3) at (2,0)
{
$\begin{tikzpicture}
    \node (1) [Cap] at (0,0) {};
    \node (3) [Cap, invisible] at (0,+1.5\cobheight) {};
    \fixboundingbox
\end{tikzpicture}$
};
\begin{scope}[double arrow scope]
    \draw (A) --  node[above]{$\mu^\dagger$} (2);
    \draw (2) --  node[above]{$\nu^\dagger$} (3);
\end{scope}
\end{tikzpicture}
\end{aligned}
\quad &= \quad
\begin{aligned}
\begin{tikzpicture}[xscale=1.6, yscale=4, every to/.style={out=down,in=up}]
\node (A) at (0,0)
{
$\begin{tikzpicture}
    \node (1) [Cap, top] at (0,0) {};
\end{tikzpicture}$
};
\node (2) at (1,0)
{
$\begin{tikzpicture}
    \node[Cap, top] at (0,0) {};
\end{tikzpicture}$
};
\begin{scope}[double arrow scope]
    \draw (A) --  node[above]{$\id$} (2);
\end{scope}
\end{tikzpicture}
\end{aligned}
\end{align}

 \item (Pivotality) The following equation holds, together with its rotation about the $z$-axis $\eqref{piv_on_sphere} {}^z$:  \begin{equation} \label{piv_on_sphere}
\begin{tz}[xscale=1.4, yscale=2]
\node (1) at (0,0)
{
$\begin{tikzpicture}
 \node[Cap] (A) at (0,0) {};
 \node[Cup, bot=false] (B) at (0,0) {};
 \node[Cobordism Bottom End 3D] (AA) at (0,0) {};              
 \selectpart[green, inner sep=1pt] {(AA)};
\end{tikzpicture}$
};

\node (2) at (1,0)
{
$\begin{tikzpicture}
    \node [Pants] (A) at (0,0) {};
        \node[Cap] at (A.belt) {};
    \node[Copants, anchor=leftleg] (B) at (A.leftleg) {};
    \node[Cup] at (B.belt) {};
    \selectpart[green, inner sep=1pt] {(A-rightleg)};
\end{tikzpicture}$
};

\node (3) at (2,0)
{
$\begin{tikzpicture}
    \node [Pants] (A) at (0,0) {};
        \node[Cap] at (A.belt) {};
    \node[Cyl, tall, anchor=top] (B) at (A.leftleg) {};
    \node[Cup] (C) at (A.rightleg) {};
    \node[Copants, anchor=leftleg] (D) at (B.bottom) {};
    \node[Cap, bot=false] (E) at (D.rightleg) {};
    \node [Cobordism Bottom End 3D] (B) at (D.rightleg) {};
    \node[Cup] at (D.belt) {};
    \selectpart[green] {(A-rightleg) (B)};
\end{tikzpicture}$
};

\node (4) at (3,0)
{
$\begin{tikzpicture}
    \node [Pants] (A) at (0,0) {};
        \node[Cap] (X) at (A.belt) {};
    \node[Copants, anchor=leftleg] (B) at (A.leftleg) {};
    \node[Cup] at (B.belt) {};
    \selectpart[green] {(X-center) (A-leftleg) (A-rightleg) (B-belt)};
\end{tikzpicture}$
};

\node (5) at (4,0)
{
$\begin{tikzpicture}
    \node [Cap] at (0,0) {};
    \node[Cup] at (0,0) {};
    \end{tikzpicture}$
};

\begin{scope}[double arrow scope]
    \draw (1) -- node[above] {$\epsilon^\dagger$} (2);
    \draw (2) -- node[above] {$\mu^\dagger$} (3);
    \draw (3) -- node[above] {$\mu$} (4);
    \draw (4) -- node[above] {$\epsilon$} (5);
\end{scope}
\end{tz}
\quad = \quad 
\begin{tz}[xscale=1.4, yscale=2]
\node (1) at (0,0)
{
$\begin{tikzpicture}
 \node[Cap] (A) at (0,0) {};
 \node[Cup] (B) at (0,0) {};
\end{tikzpicture}$
};

\node (2) at (1,0)
{
$\begin{tikzpicture}
 \node[Cap] (A) at (0,0) {};
 \node[Cup] (B) at (0,0) {};
\end{tikzpicture}$
};

\begin{scope}[double arrow scope]
    \draw (1) -- node[above] {$\id$} (2);
\end{scope}
\end{tz}
\end{equation}

\item (Modularity) The following equation holds, together with its rotation about the $z$-axis $\eqref{MOD}^z$:\begin{equation}
\label{MOD}
\begin{tz}[xscale=2, yscale=2]
\node (1) at (0,-0.5)
{$\begin{tikzpicture}
        \node[Cyl, top, tall] (A) at (0,0) {};
\end{tikzpicture}$};

\node (2) at (1,0)
{$\begin{tikzpicture}
        \node[Pants, top] (A) at (0,0) {};
        \node[Copants, anchor=leftleg] (B) at (A.leftleg) {};
        \selectpart[green, inner sep=1pt]{(A-leftleg)};
        \selectpart[red, inner sep=1pt] {(A-rightleg)};
\end{tikzpicture}$};

\node (3) at (2,0)
{$\begin{tikzpicture}
        \node[Pants, top] (A) at (0,0) {};
        \node[Copants, anchor=leftleg] (B) at (A.leftleg) {};
\end{tikzpicture}$};

\node (4) at (3,-0.5)
{$\begin{tikzpicture}
        \node[Cyl, top, tall] (A) at (0,0) {};
\end{tikzpicture}$};

\node(5) at (1.5, -1)
{$\begin{tikzpicture}
        \node[Cup, top] (A) at (0,0) {};
        \node[Cap] (B) at (0, -1.5*\cobheight) {};
\end{tikzpicture}$};
    
\begin{scope}[double arrow scope]
    \draw (1) -- node [above] {$\epsilon^\dagger$} (2);
    \draw[] ([xshift=0pt] 2.0) to node [above, inner sep=1pt] {${\color{green}\theta}, \color{red}{\theta^\inv}$} (3);
    \draw (3) -- node [above] {$\epsilon$} (4);
    \draw (1) -- node [below left] {$\mu^\dagger$} (5);
    \draw (5) -- node [below right] {$\mu$} (4);
\end{scope}
\end{tz}
\end{equation}

\item (Anomaly) The following equation holds:
\begin{equation} \label{Anom}
\begin{tz}
        \node[Cap] (A) at (0,0) {};
        \node[Cup] (B) at (0,0) {};
        \node [Cobordism Bottom End 3D] (C) at (0,0) {};
        \selectpart[green, inner sep=1pt] {(C)};
\end{tz}
\,\,
\Rarrow{\epsilon^\dagger}
\,\,
\begin{tz}
        \node[Cap] (A) at (0,0) {};
        \node[Pants, anchor=belt] (B) at (A.center) {};
        \node[Copants, anchor=leftleg] (C) at (B.leftleg) {};
        \node[Cup] (D) at (C.belt) {};
        \selectpart[green, inner sep=1pt] {(B-rightleg)};
\end{tz}
\,\, \Rarrow{\theta} \,\,
\begin{tz}
        \node[Cap] (A) at (0,0) {};
        \node[Pants, anchor=belt] (B) at (A.center) {};
        \node[Copants, anchor=leftleg] (C) at (B.leftleg) {};
        \node[Cup] (B) at (C.belt) {};
        \selectpart[green] {(A-center) (B-leftleg) (B-rightleg) (C-belt)};
\end{tz}
\,\, \Rarrow{\epsilon} \,\,
\begin{tz}
        \node[Cap] (A) at (0,0) {};
        \node[Cup] (B) at (0,0) {};
\end{tz}
\quad = \quad
\begin{tz}
    \node[Cap] (A) at (0,0) {};
    \node[Cup] (B) at (0,0) {};
\end{tz}
\,\, \Rarrow{\xi} \,\,
\begin{tz}
    \node[Cap] (A) at (0,0) {};
    \node[Cup] at (0,0) {};
\end{tz}
 \end{equation}

\end{itemize}
\end{definition}

\begin{remark}\label{anom_root1}
    If we ask that the anomaly relation holds for $\xi^3$ instead of $\xi$, as well as impose another relation (\cite[Relation (40)]{bartlett_modular_2015}, the resulting presentation should be equivalent to the 2-category of non-compact 3d cobordisms equipped with $p_1$ structure, $\bBordncp$. We do not give a definition of $\bBordncp$, but we make a note of this as we could have defined the non-compact TQFT as a functor out of the $p_1$ presentation $\Bordncp$. The only difference is that we would have to consider a sixth root of the anomaly of the MTC $\sqrt[\leftroot{-2}\uproot{2}6]{p_+/p_-}$, instead of $\sqrt{p_+/p_-}$ (see Remark \ref{anom_root2}).
\end{remark}

\subsection{Finite ribbon categories and projectives}\label{finiteproj}

Following \cite{etingof_tensor_2015} we have the following definition:

\begin{definition}
A linear category $\mathcal{C}$ is \textbf{finite} if it is abelian and:
\begin{itemize}
    \item it has a finite number of isomorphism classes of simple objects,
    \item it has enough projectives,
    \item its objects have finite length,
    \item its morphism spaces have finite dimension.
\end{itemize}
\end{definition}


For now by $\C$ we denote a finite category.

Let $I$ be the set of isomorphism classes of simples. For $j\in I$, let $L_j$ be a representative of the $j$-th class of simples. Then $P_j\twoheadrightarrow L_j$ is a projective cover of $L_j$. If $\C$ is monoidal and $\unit$ is simple, we will denote its projective cover by $(P_\unit,\varepsilon_\unit)$, where $\varepsilon_\unit\colon P_\unit\to \unit$ is the canonical surjection.

 By $\text{Proj}(\C)$ we denote the full subcategory of projective objects in $\C$. This will turn out to be a tensor ideal once we additionally assume $\C$ to be rigid monoidal.

For such a $\C$, we have the following results:

\begin{itemize}
    \item The indecomposable projective objects in $\C$ are exactly the projective covers of simple objects, and for $j,j^\prime$ simples, $P_j\cong P_{j^\prime}$ if and only if $L_j\cong L_{j^\prime}$.
    \item For a finite linear category $\C$, $\text{Proj}(\C)$ is generated under direct sums    by the projective covers of simples $P_j$.
    \item     Let $L_j,L_{j'}\in\C$ be simple objects in $\C$ and $P_j$ be the projective cover of $L_j$, then \[ \Hom_\C(P_j,L_{j^\prime})\cong \begin{cases} 
      k & \text{if} \quad j= j^\prime \\
      0 & \text{otherwise} 
   \end{cases}
\]
\item  Let $P\in \text{Proj}(\C)$, then by the above we have:
    $$\Hom_\C(P,\unit)\cong\Hom_\C(\oplus_j P_j^{\oplus n_j},\unit)\cong \oplus_{n_j}\Hom_\C(P_j,\unit)\cong \oplus_{n_\unit}\Hom_\C(P_\unit,\unit).$$
\end{itemize}

See \cite{leinster_bijection_2015} for a reference.


Now let $V$ be a finite dimensional vector space and $X,Y\in \C$. The fact that $\C$ is linear allows us to define $V\otimes Y$ by the following universal property: 

\begin{equation}\label{copower_def}
    \Hom_\C(V\otimes Y,X)\cong \Hom_{\text{Vec}_k}(V,\Hom_\C(Y,X)).
\end{equation}

From this equivalence we obtain the following natural (counit) map:

\begin{equation}\label{evaluation-counit}
    \text{EV}_Y\colon \text{Hom}_\C(Y,X)\otimes Y \to X.
\end{equation}

\subsection{Coends}

We now assume that $\C$ is a finite tensor category in the sense of \cite{etingof_tensor_2015}. We remind that this definition includes rigidity. 


We review the definition of a coend, as well as some results that will be used later on.

\begin{definition}
Let $\mathcal{D}$ be a finite linear category and $B:\mathcal{C}^{op}\times\mathcal{C}\to\mathcal{D}$ be a bifunctor. A \textbf{dinatural transformation} from $B$ to a constant bifunctor is given by an object $Z\in\mathcal{D}$ (the image of the constant bifunctor) equipped with a family $\{d_X:B(X,X)\to Z\}_{X\in\mathcal{C}}$ of morphisms of $\mathcal{C}$ satisfying:

$$d_X\circ(f^\lor\otimes id_X)=d_Y\circ(id_{Y^\lor}\otimes f)$$

for every morphism $f:X\to Y$ in $\mathcal{C}$.

For simplicity, we will call this a dinatural transformation of source B.
\end{definition}

\begin{definition}
The \textbf{coend} of $B$ is a universal dinatural transformation of source $B$, i.e an object $\displaystyle\int^{X\in\C}\hspace{-20pt}B(X,X)\in\mathcal{D}$ with a dinatural family of structure morphisms ${d_X:B(X,X)\to \displaystyle\int^{X\in\C}\hspace{-20pt}B(X,X)}$ satisfying the following condition: for every dinatural transformation $(Z,d^\prime)$ of source $B$, there exists a unique morphism
$${\int d^\prime:\displaystyle\int^{X\in\C}\hspace{-20pt}B(X,X)\to Z}$$ 
such that the diagram:

\[\begin{tikzcd}
	{Y^\lor\otimes X} &&& {X^\lor\otimes X} \\
	\\
	\\
	{Y^\lor\otimes Y} &&& {\displaystyle\int^{X\in\C}\hspace{-20pt}B(X,X)} \\
	\\
	&&&&& Z
	\arrow["{f^\lor\otimes id_X}", from=1-1, to=1-4]
	\arrow["{id_{Y^\lor}\otimes f}"', from=1-1, to=4-1]
	\arrow["{d_Y}"', from=4-1, to=4-4]
	\arrow["{d_X}", from=1-4, to=4-4]
	\arrow["{\int \!\!d^\prime}", from=4-4, to=6-6]
	\arrow["{d^\prime_X}", from=1-4, to=6-6]
	\arrow["{d^\prime_Y}", from=4-1, to=6-6]
\end{tikzcd}\]

commutes for every morphism $f:X\to Y$ in $\mathcal{C}$.
\end{definition}

In practice, we sometimes will denote $\int d^\prime$ by $d^\prime$.


As explained in \cite{kerler_non-semisimple_2001}, $EV$ is dinatural, and according to \cite[Prop 5.1.5]{kerler_non-semisimple_2001}:

\begin{prop}\label{coYoneda}
    The natural morphism $\int\text{EV}_Y\colon\displaystyle\int^{Y} \hspace{-10pt}\text{Hom}_\C(Y,X)\otimes Y\to X$ is an isomorphism. This is true for $Y\in\C$ as well as $Y\in\text{Proj}(\C)$, due to the fact that $\C$ has enough projectives.
\end{prop}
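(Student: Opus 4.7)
The plan is to recognize Proposition \ref{coYoneda} as an instance of the (enriched) co-Yoneda lemma in the case $Y\in\C$, and then to reduce the projective version to the projective-object case via a resolution and right-exactness argument.

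First, for $Y\in\C$, I would verify the universal property of the coend directly. Given any $Z\in\C$ and any dinatural family $d'_Y\colon \Hom_\C(Y,X)\otimes Y\to Z$, define $\phi\colon X\to Z$ by applying the linear map $\Hom_\C(X,X)\to \Hom_\C(X,Z)$ corresponding to $d'_X$ under \eqref{copower_def} to the element $\id_X\in\Hom_\C(X,X)$. For uniqueness, any candidate $\phi$ satisfying $\phi\circ\text{EV}_Y=d'_Y$ must in particular satisfy $\phi\circ\text{EV}_X=d'_X$; tracing through \eqref{copower_def} at $\id_X$ pins $\phi$ down. For existence, apply the dinaturality of $d'$ to each morphism $g\colon Y\to X$ (the contravariant action on $\Hom_\C(-,X)$ is precomposition $g^{*}$, which sends $\id_X\mapsto g$): one obtains $d'_Y\circ(g^{*}\otimes\id_Y)=d'_X\circ(\id\otimes g)$, and tracking the image of $\id_X$ through both sides via \eqref{copower_def} yields exactly $d'_Y=\phi\circ\text{EV}_Y$.

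For $Y\in\text{Proj}(\C)$, write $\coend(X):=\displaystyle\int^{Y\in\text{Proj}(\C)}\hspace{-25pt}\Hom_\C(Y,X)\otimes Y$. When $X$ is itself projective, the previous argument restricted to projective indices applies verbatim, so $\coend(X)\to X$ is an isomorphism. For general $X$, the key point is that $\coend(-)$ is a right-exact functor of $X$: indeed, $\Hom_\C(Y,-)$ is exact for $Y$ projective, the copower $V\otimes -$ from $\text{Vec}_k$ to $\C$ is exact (since short exact sequences of vector spaces split, and the copower preserves finite direct sums as a left adjoint via \eqref{copower_def}), and the coend is a colimit, hence right exact. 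Since $\C$ has enough projectives, pick a projective presentation $P_1\to P_0\to X\to 0$; right-exactness yields an exact row $\coend(P_1)\to\coend(P_0)\to\coend(X)\to 0$, mapping down to $P_1\to P_0\to X\to 0$ with the left two vertical arrows isomorphisms by the projective case. The cokernel characterization then forces $\coend(X)\to X$ to be an isomorphism as well.

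The main subtlety is that the naive \emph{set $Y=X$} trick from the co-Yoneda lemma fails when $X$ is not projective, so one genuinely needs the right-exactness step combined with the existence of projective resolutions; the rest is a routine unfolding of the copower universal property \eqref{copower_def} and of dinaturality.
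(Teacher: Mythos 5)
Your argument is correct, and it fills in exactly the justification the paper leaves implicit: the paper does not prove this proposition but cites \cite[Prop 5.1.5]{kerler_non-semisimple_2001} and remarks that it is an instance of the co-Yoneda lemma, which is precisely your first paragraph, while your reduction of the projective-index case to projective $X$ via right-exactness of the coend and a projective presentation is the standard way to cash out the phrase ``due to the fact that $\C$ has enough projectives.'' No gaps; the only point worth stating explicitly is that the coend over $\text{Proj}(\C)$ exists as a finite colimit (reduce to the finitely many indecomposable projectives, as in Proposition \ref{coend_redux}), so that your right-exactness argument has an object to apply to.
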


\begin{remark}
    This is an instance of the so-called co-Yoneda lemma, or ninja Yoneda lemma, as encountered in \cite[Chapter 2]{loregian_coend_2021}.
\end{remark}

Now, using the fact that the object $G=\oplus_j P_j$ is a projective generator, \cite[Prop 5.1.7]{kerler_non-semisimple_2001} gives us the following result:

\begin{prop}\label{coend_redux}
    For a bifunctor $B\colon \mathcal{C}^{op}\times\mathcal{C}\to\mathcal{D}$ that is exact in each variable, the following coends are equivalent: 
    $$\int^{X\in\C}\hspace{-15pt}B(X,X)\cong\int^{P\in\text{Proj}(\C)}\hspace{-35pt}B(P,P)\cong\int^{\{P_j\}_{j=simple}}\hspace{-40pt}B(P_j,P_j).$$
\end{prop}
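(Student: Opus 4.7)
My plan is to establish the two isomorphisms separately, using in each case a universal-property argument that exploits the exactness of $B$ together with the enough-projectives hypothesis on $\C$.

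For the first isomorphism $\int^{X \in \C} B(X,X) \cong \int^{P \in \text{Proj}(\C)} B(P,P)$, the inclusion functor $\iota \colon \text{Proj}(\C) \hookrightarrow \C$ induces a canonical comparison map from right to left by restricting the universal dinatural family indexing the left-hand coend to projectives. To construct an inverse, I would invoke the co-Yoneda presentation (Proposition~\ref{coYoneda}), which exhibits every $X \in \C$ as the canonical colimit $\int^{P \in \text{Proj}(\C)} \Hom_\C(P, X) \otimes P$. Because $B$ is exact in each variable, applying $B$ preserves the relevant finite colimits, so $B(X,X)$ is itself naturally expressed as a colimit of terms $B(P, Q)$ with $P, Q \in \text{Proj}(\C)$. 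Composing these presenting maps with the universal dinatural maps $B(P,P) \to \int^{P} B(P,P)$ then produces the desired dinatural family $\{d_X \colon B(X,X) \to \int^{P} B(P,P)\}_{X \in \C}$; concretely, $d_X$ can be computed by choosing any projective presentation $P_1 \to P_0 \twoheadrightarrow X$ and chasing the induced morphisms using exactness of $B$ in each slot.

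For the second isomorphism, I would use Krull--Schmidt for the finite category $\C$: every projective object decomposes as a finite direct sum $P \cong \bigoplus_{j \in I} P_j^{\oplus n_j}$ of the indecomposable projectives. Consequently, $\text{Proj}(\C)$ is the additive closure (inside $\C$) of the full subcategory on $\{P_j\}_{j \in I}$. Coends of additive bifunctors are insensitive to passage to the additive closure of the indexing category: dinaturality applied to the biproduct inclusion and projection morphisms shows that any dinatural family on $\{P_j\}_{j \in I}$ extends uniquely to one on $\text{Proj}(\C)$, and conversely every dinatural family on $\text{Proj}(\C)$ is determined by its values on the indecomposables. This yields the identification $\int^{P \in \text{Proj}(\C)} B(P,P) \cong \int^{\{P_j\}} B(P_j, P_j)$.

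The main obstacle will be verifying for the first isomorphism that the proposed family $d_X$ is genuinely dinatural and independent of the chosen projective presentation of $X$. A clean way to handle this is to show that given two projective presentations $P \twoheadrightarrow X$ and $Q \twoheadrightarrow X$, the induced maps $B(X,X) \to \int^{R} B(R,R)$ coincide, by comparing them through the common refinement $P \oplus Q \twoheadrightarrow X$; the dinaturality squares for $d_X$ then reduce, via exactness of $B$ in each variable, to the dinaturality of the original family indexed over projectives.
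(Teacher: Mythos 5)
The paper does not actually prove this proposition: it simply invokes \cite[Prop 5.1.7]{kerler_non-semisimple_2001}, noting that $G=\oplus_j P_j$ is a projective generator. Your argument supplies essentially the standard proof underlying that citation, and it is correct in outline: the reduction to $\text{Proj}(\C)$ via projective presentations together with exactness of $B$, followed by the reduction to indecomposables via Krull--Schmidt and additivity of $B$, is exactly the right strategy, and you correctly identify well-definedness and dinaturality of the extended family $d_X$ as the point requiring care (the common-refinement trick handles it). One small imprecision worth flagging: since $B$ is \emph{contravariant} in its first variable, exactness there turns colimits into limits, so $B(X,X)$ is not literally ``a colimit of terms $B(P,Q)$'' obtained by pushing the co-Yoneda presentation through both slots. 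What actually happens, and what your concrete recipe implements, is that a presentation $P_1\to P_0\twoheadrightarrow X$ yields a span $B(P_0,P_0)\hookleftarrow B(X,P_0)\twoheadrightarrow B(X,X)$ (mono on the left by exactness of $B(-,P_0)$, epi on the right by exactness of $B(X,-)$), and one descends along the epimorphism, using dinaturality over projectives applied to $P_1\to P_0$ to kill the kernel. With that phrasing corrected, your proof goes through.
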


Using the above, we can reduce certain coends to be over projectives, or even indecomposable projectives. We will use this fact later on.

From now on we assume that $\C$ is additionally ribbon, and that the monoidal unit $\unit$ is simple.

For a finite ribbon category $\C$, it was shown by Majid \cite{majid_reconstruction_1991} and Lyubashenko \cite{lyubashenko_modular_1995}, \cite{lyubashenko_squared_1999}, that the coend 

\begin{equation}\label{coend_def}
    \coend:= \int^{X\in\C}\hspace{-15pt} X^\lor \otimes X
\end{equation}

exists. We denote its family of structure morphisms by $i_X\colon X^\lor \otimes X \to \coend$.


Additionally, they showed that the coend $\coend$ is a Hopf algebra object in $\C$.

The structure morphisms for the Hopf algebra structure will be denoted as:

\begin{align*}
    \text{Product} \quad m \colon \coend\otimes\coend\to \coend && \text{Unit} \quad u \colon \unit\to \coend \\
    \text{Coproduct} \quad \Delta \colon \coend\to \coend\otimes\coend && \text{Counit} \quad \varepsilon \colon \coend\to \unit \\
    \text{Antipode} \quad S \colon \coend\to\coend.
\end{align*}

and are defined as follows:

\tikzset{morphism/.style={draw, fill=white}}
\tikzset{box/.style={draw, font=\small, node on layer=foreground, fill=white, inner sep=0pt, minimum height=0.51cm, minimum width=1.5cm}}
\tikzset{halfbox/.style={box, minimum width=0.4cm}}
\tikzset{1halfbox/.style={box, minimum width=0.6cm}}
\tikzset{2halfbox/.style={box, minimum width=0.8cm}}
\tikzset{every node/.style={font=\tiny}}
\def\p{{\vphantom{(}}}

\begin{align}
    \label{algebra}
\begin{tz}[xscale=0.5,yscale=0.7]
\node [2halfbox] at (2,1.5) {$i_Y$};
\node [2halfbox] at (0,1.5) {$i_X$};
\node [box] at (1,3) {$m$};
\draw [purple strand] (0,1.5) to (0,3);
\draw [purple strand] (2,1.5) to (2,3);
\draw [purple strand] (1,3) to (1,4);
\draw (-0.5,0) node [below] {$X^\lor$} to (-0.5,1.5);
\draw (0.5,0) node [below] {$X$} to (0.5,1.5);
\draw (1.5,0) node [below] {$Y^\lor$} to (1.5,1.5);
\draw (2.5,0) node [below] {$Y$} to (2.5,1.5);
\node at (2,2.25) [right] {$\mathcal{F}$};
\node at (0,2.25) [left] {$\mathcal{F}$};
\end{tz}
\gap=\gap
\begin{tz}[xscale=0.4,yscale=0.7]
\draw [purple strand] (1.5,2) to (1.5,3) node [above] {$\mathcal{F}$};
\node [box] at (1.5,2) {$i_{X\otimes Y}$};
\draw [black strand] (2,0) node [below] {$Y^\lor$} to [out=up, in=down] (0,1.5);
\draw [black strand] (0,0) node [below] {$X^\lor$} to [out=up, in=down] (1,1.5);
\draw [black strand] (1,0) node [below] {$X$} to [out=up, in=down] (2,1.5);
\draw [black strand] (3,0) node [below] {$Y$} to [out=up, in=down] (3,2);
\draw [black strand] (2,1.5) to [out=up, in=down] (2,2);
\draw [black strand] (0,1.5) to [out=up, in=down] (0,2);
\draw [black strand] (1,1.5) to [out=up, in=down] (1,2);
\end{tz}
&&
    \begin{tz}[xscale=0.6,yscale=0.5]
\node [halfbox] at (0.5,0.5) {$u$};
\draw [purple strand] (0.5,0.5) to (0.5,2);
\node at (0.5,2) [above] {$\mathcal{F}$};
\end{tz}
\gap=\gap
\begin{tz}[xscale=0.6,yscale=0.5]
\node [halfbox] at (0.5,0.5) {$i_\unit$};
\draw [purple strand] (0.5,0.5) to (0.5,2);
\node at (0.5,2) [above] {$\mathcal{F}$};
     \end{tz}
\end{align}

\begin{align}
    \label{coalgebra}
\begin{tz}[xscale=0.5,yscale=0.7]
\node (1) [box] at (1,1) {$i_X$};
\node [box] at (1,2.5) {$\Delta$};
\draw [purple strand] (0,2.5) to (0,3.5) node [above] {$\coend$};
\draw [purple strand] (2,2.5) to (2,3.5) node [above] {$\mathcal{F}$};
\draw [purple strand] (1,1) to (1,2.5);
\draw (0,0) node [below] {$X^\lor$} to (0,1);
\draw (2,0) node [below] {$X$} to (2,1);
\node at (1,1.75) [right] {$\mathcal{F}$};
\end{tz}
\gap=\gap
\begin{tz}[xscale=0.5,yscale=0.7]
\draw [purple strand] (0,2.5) to (0,3.5) node [above] {$\mathcal{F}$};
\draw [purple strand] (2,2.5) to (2,3.5) node [above] {$\mathcal{F}$};
\node [2halfbox] at (2,2.5) {$i_X$};
\node [2halfbox] at (0,2.5) {$i_X$};
\draw [black strand] (2,0) node [below] {$X$} to (2,1);
\draw [black strand] (0,0) node [below] {$X^\lor$} to (0,1);
\draw [black strand] (0,1) to [out=up, in=up, looseness=0.9] (-0.5,2.5);
\draw [black strand] (2,1) to [out=up, in=up, looseness=0.9] (2.5,2.5);
\draw [black strand] (0.5,2.5) to [out=down, in=down, looseness=4] (1.5,2.5);
\node at (2,1) {\arrownode[black]};
\node [rotate=180] at (0,1) {\arrownode[black]};
\node at (0.54,2) {\arrownode[black]};
\end{tz}
&&
    \begin{tz}[xscale=0.8,yscale=0.7]
\node (1) [box] at (0.5,1) {$i_X$};
\node [halfbox] at (0.5,2.5) {$\varepsilon$};
\draw [purple strand] (0.5,1) to (0.5,2.5);
\draw (0,0) node [below] {$X^\lor$} to (0,1);
\draw (1,0) node [below] {$X$} to (1,1);
\node at (0.5,1.75) [right] {$\mathcal{F}$};
\end{tz}
\gap=\gap
\begin{tz}[xscale=0.8,yscale=0.7]
\draw (0,0) node [below] {$X^\lor$} to (0,1.5);
\draw (1,0) to (1,1.5);
\draw [black strand] (0,1.5) to [out=up, in=up, looseness=1.2] (1,1.5);
\draw [black strand] (1,0) node [below] {$X$} to (1,1.5);
\node at (1,1) {\arrownode[black]};
\node [rotate=180] at (0,1) {\arrownode[black]};
     \end{tz}
\end{align}

\begin{align}
    \begin{tz}[xscale=0.5,yscale=0.7]
\node (1) [box] at (1,1) {$i_X$};
\node [halfbox] at (1,2.5) {$S$};
\draw [purple strand] (1,2.5) to (1,3.5) node [above] {$\coend$};
\draw [purple strand] (1,1) to (1,2.5);
\draw (0,0) node [below] {$X^\lor$} to (0,1);
\draw (2,0) node [below] {$X$} to (2,1);
\node at (1,1.75) [right] {$\mathcal{F}$};
\end{tz}
\gap=\gap
\begin{tz}[xscale=0.5,yscale=0.7]
\draw [purple strand] (1,2.5) to (1,3.5) node [above] {$\mathcal{F}$};
\node [box] at (1,2.5) {$i_X$};
\draw [black strand] (0,0) node [below] {$X^\lor$} to (0,1);
\draw [black strand] (2.5,0.5) to [out=down, in=down, looseness=2] (2,1);
\draw [black strand] (2,0) node [below] {$X$} to [out=up, in=up, looseness=2] (2.5,0.5);
\draw [black strand] (0,2) to (0,2.5);
\draw [black strand] (2,2) to (2,2.5);
\draw [black strand] (2,1) to [out=up, in=down, looseness=0.5] (0,2);
\draw [black strand] (0,1) to [out=up, in=down, looseness=0.5] (2,2);
\end{tz}
\gap=\gap
\begin{tz}[xscale=0.5,yscale=0.7]
\draw [purple strand] (1,2.5) to (1,3.5) node [above] {$\mathcal{F}$};
\node [box] at (1,2.5) {$i_X$};
\node[halfbox] at (2,0.75) {$\vartheta_X$};
\draw [black strand] (0,0) node [below] {$X^\lor$} to (0,1);
\draw [black strand] (2,0) node [below] {$X$} to (2,1);
\draw [black strand] (0,2) to (0,2.5);
\draw [black strand] (2,2) to (2,2.5);
\draw [black strand] (2,1) to [out=up, in=down, looseness=0.5] (0,2);
\draw [black strand] (0,1) to [out=up, in=down, looseness=0.5] (2,2);
\end{tz}
\end{align}

Additionally, the coend $\coend$ has a Hopf pairing $\omega \colon \coend\otimes\coend\to \unit$. It is uniquely defined in the following way:

\begin{align}
    \label{omega_pairing}
\begin{tz}[xscale=0.5,yscale=0.7]
\node [2halfbox] at (2,1.5) {$i_Y$};
\node [2halfbox] at (0,1.5) {$i_X$};
\node [box] at (1,3) {$\omega$};
\draw [purple strand] (0,1.5) to (0,3);
\draw [purple strand] (2,1.5) to (2,3);
\draw (-0.5,0) node [below] {$X^\lor$} to (-0.5,1.5);
\draw (0.5,0) node [below] {$X$} to (0.5,1.5);
\draw (1.5,0) node [below] {$Y^\lor$} to (1.5,1.5);
\draw (2.5,0) node [below] {$Y$} to (2.5,1.5);
\node at (2,2.25) [right] {$\mathcal{F}$};
\node at (0,2.25) [left] {$\mathcal{F}$};
\end{tz}
\gap=\gap
\begin{tz}[xscale=0.4,yscale=0.7]
\draw [black strand] (2,0) node [below] {$Y^\lor$} to [out=up, in=down] (1,1);
\draw [black strand] (1,0) node [below] {$X$} to [out=up, in=down] (2,1);
\draw [black strand] (2,1) to [out=up, in=down] (1,2);
\draw [black strand] (1,1) to [out=up, in=down] (2,2);
\draw (0,0) node [below] {$X^\lor$} to (0,2);
\draw (3,0) node [below] {$Y$} to (3,2);
\draw [black strand] (0,2) to [out=up, in=up, looseness=1.2] (1,2);
\draw [black strand] (2,2) to [out=up, in=up, looseness=1.2] (3,2);
\end{tz}
\end{align}

\begin{definition}\label{MTC_def}
A braided finite tensor category $\C$ is \textbf{non-degenerate}, if the Hopf pairing $\omega$ is non-degenerate.
\end{definition}

For an object $Y\in \C$ we can define $\varrho_Y$ by the composite: 
\begin{equation}\label{canonical_coaction}
    \varrho_Y\colon Y\xrightarrow{\text{coev}_Y\otimes\id_Y} Y\otimes Y^\lor\otimes Y\xrightarrow{\id_Y\otimes i_Y} Y\otimes \coend.
\end{equation}

Given the definition of the comultiplication $\Delta$ of $\coend$, it is easy to see that $(Y,\varrho_Y)$ is a $\coend$-comodule. The morphism $\varrho_Y$ is also referred to as the canonical coaction. In particular, the coaction for the free comodule $X\otimes\coend$ is given by the comultiplication $\Delta$ of $\coend$. We will be using this fact implicitly in our computations. For a proof of this, see Corollary \ref{coaction_coend}.
By $\C^\coend$ we denote the category of right $\coend$ comodules in $\C$.

  An important fact we will make use of in our construction in Section \ref{construction}, is the following: 

\begin{lemma}\label{comod_equiv}
    For a braided finite tensor category $\C$, the functor
    $$\C\boxtimes\C\to \C^\coend, \quad X\boxtimes Y\mapsto (X\otimes Y,\id_V\otimes \varrho_Y)$$
    is an equivalence.
\end{lemma}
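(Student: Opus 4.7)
The plan is to realize this equivalence as a composite of two well-known equivalences passing through the Drinfeld center $Z(\C)$. First, I would verify that $F(X \boxtimes Y) := (X \otimes Y, \id_X \otimes \varrho_Y)$ is a well-defined $\coend$-comodule: coassociativity and counitality for $\id_X \otimes \varrho_Y$ reduce to the corresponding properties of $\varrho_Y$, which in turn follow from the coalgebra axioms for $\coend$ spelled out in \eqref{coalgebra}. Functoriality in each slot, together with the universal property of the Deligne tensor product, then promotes $F$ to an honest $k$-linear functor $\C \boxtimes \C \to \C^\coend$.

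Next, I would invoke the Majid--Lyubashenko identification $\C^\coend \simeq Z(\C)$, valid for any braided finite tensor category: a coaction $\varrho_M\colon M \to M \otimes \coend$ unpacks, via the dinatural structure of $\coend$ and the dualities in $\C$, into a half-braiding $c_{X,M}\colon X \otimes M \to M \otimes X$ making $M$ an object of the center, and conversely every half-braiding factors through $\coend$ by the coend's universal property. Then, since $\C$ is modular by Definition \ref{MTC_def} (i.e.\ $\omega$ of \eqref{omega_pairing} is non-degenerate), I would use the Müger-type equivalence $\C \boxtimes \C^{\mathrm{rev}} \xrightarrow{\sim} Z(\C)$ sending $X \boxtimes Y$ to $X \otimes Y$ equipped with the half-braiding assembled from the braiding of $\C$ on the two factors. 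Since $\C^{\mathrm{rev}}$ and $\C$ agree as $k$-linear abelian categories, composing yields $\C \boxtimes \C \simeq \C^\coend$.

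The final step is to check that the composite of the two equivalences above is naturally isomorphic to $F$. This reduces to verifying that the half-braiding extracted from $\id_X \otimes \varrho_Y$ via the Majid--Lyubashenko procedure coincides with the canonical ``braiding-on-the-$Y$-factor'' half-braiding on $X \otimes Y$; this follows from a direct string-diagrammatic computation using the definition \eqref{canonical_coaction} of $\varrho_Y$ together with the dinaturality of the structure maps $i_Y\colon Y^\lor \otimes Y \to \coend$.

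The main obstacle is the non-degeneracy step: without assuming $\omega$ non-degenerate, $F$ remains fully faithful (by a Yoneda-type argument tracking coinvariants under the coaction), but essential surjectivity genuinely fails, since extra half-braidings not coming from pairs $(X, Y)$ appear. Thus the modularity hypothesis built into our running setup is essential; it is what allows every comodule to be presented in the split form $X \otimes Y$ with trivial coaction on $X$.
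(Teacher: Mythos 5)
There is a genuine gap here, and it sits in the step you treat as the most routine. The paper does not prove this lemma at all: it cites it as a special case of Lyubashenko's squared-coalgebra theorem \cite{lyubashenko_squared_1999}, which holds for an arbitrary braided finite tensor category --- exactly the generality in which the lemma is stated. Your proof instead routes through two equivalences, and the first of them, ``$\C^\coend \simeq Z(\C)$ for any braided finite tensor category,'' is false. Take $\C = \mathrm{Rep}(S_3)$ with the trivial symmetric braiding. Then $\coend \cong \mathcal{O}(S_3)$ with the conjugation action, and unwinding the comodule condition shows $\C^\coend \simeq \mathrm{Rep}(S_3\times S_3)$, a semisimple category with $9$ simples, whereas $Z(\C) = \mathrm{Rep}(D(S_3))$ has $8$ simples; these are not equivalent even as abelian categories. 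What is true is that a $\coend$-comodule can be converted into a half-braiding \emph{once} the Hopf pairing $\omega$ identifies $\coend$ with $\coend^\lor$, i.e.\ precisely under the non-degeneracy you then need again for the M\"uger step. So in the modular case your chain of equivalences is not an independent proof but essentially the lemma composed with M\"uger's theorem, and outside the modular case its first link breaks.

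This also invalidates your closing paragraph. Essential surjectivity of $F$ does \emph{not} fail for degenerate $\C$: in the $\mathrm{Rep}(G)$ example above, every $\coend$-comodule is an object of $\mathrm{Rep}(G\times G) \simeq \C\boxtimes\C$, and the lemma holds. What fails for degenerate $\C$ is the comparison $\C\boxtimes\C^{\mathrm{rev}}\to Z(\C)$ --- a statement about the Drinfeld center, not about $\C^\coend$. You have conflated the coend $\coend = \int^X X^\lor\otimes X$ with its braiding-twisted comultiplication \eqref{coalgebra} (whose comodule category is always $\C\boxtimes\C$) with the algebraic object governing the center. A correct self-contained argument would instead work directly with the squared-coalgebra structure: establish full faithfulness by a co-Yoneda computation as in Proposition \ref{coYoneda}, and essential surjectivity by showing that any comodule $(M,\delta_M)$ splits as a colimit of objects in the image using the two ``halves'' of $\Delta$ --- which is exactly the content of \cite[Section 2.7]{lyubashenko_squared_1999} that the paper invokes.
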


This is a special case of a more general result of Lyubashenko’s, using the notion of squared coalgebras \cite[Section 2.7]{lyubashenko_squared_1999}. 



\subsection{Integrals and modular tensor categories}

\begin{definition}
A \textbf{modular tensor category} is a finite ribbon category that is non-degenerate.
\end{definition}

Note that this definition does not assume semisimplicity. See \cite{shimizu_non-degeneracy_2019} for other equivalent conditions to non-degeneracy of $\omega$.

We will now state some properties of modular tensor categories, that will be useful for our construction.


\begin{definition}\label{integral_def}
    A morphism $\Lambda\colon \unit\to\coend$ is called a (two-sided) \textbf{integral} of $\coend$ if it satisfies

    $$m\circ(\Lambda\otimes \id_\coend)=\Lambda\circ\varepsilon = m\circ(\id_\coend\otimes\Lambda).$$

    The dual notion is that of a (two-sided) \textbf{cointegral}, which is a morphism $\Lambda^{co}\colon \coend\to \unit$ that satisfies

    $$(\Lambda^{co}\otimes \id_\coend)\circ \Delta = \eta\circ \Lambda^{co} = (id_\coend\otimes\Lambda^{co})\circ \Delta. $$
\end{definition}

\begin{prop}\label{exint}
    If $\C$ is modular, integrals and cointegrals exist and are unique up to scalar.
\end{prop}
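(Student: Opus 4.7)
The plan is to deduce both existence and uniqueness (up to scalar) of two-sided integrals and cointegrals from general results on Hopf algebra objects in braided finite tensor categories, using the non-degeneracy of $\omega$ as the key input that upgrades one-sided integrals to two-sided ones.

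First I would recall the categorical generalization of Sweedler's integral theorem developed in \cite{kerler_non-semisimple_2001}: for any Hopf algebra object $H$ in a finite tensor category, there exist nonzero left and right integrals $\Lambda_\ell, \Lambda_r \colon \unit \to H$, each unique up to scalar, and dually nonzero left and right cointegrals $\Lambda^{co}_\ell, \Lambda^{co}_r \colon H \to \unit$, again unique up to scalar. The proof mirrors the classical Sweedler argument: one constructs integrals as the image of a suitable projector built from the regular $H$-action on itself, with uniqueness following from a Maschke-style rank-one argument using rigidity of $\C$. Applied to $H = \coend$, this immediately yields existence and uniqueness up to scalar of one-sided integrals and one-sided cointegrals.

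Next I would upgrade one-sidedness to two-sidedness by invoking the non-degeneracy of the Hopf pairing $\omega$ of \eqref{omega_pairing}. Non-degeneracy promotes the induced morphism $\omega^\flat \colon \coend \to \coend^\lor$ to an isomorphism compatible with the Hopf structures (up to twist by the antipode $S$); this is the statement that $\coend$ is factorizable in the sense of \cite{shimizu_non-degeneracy_2019}. Under $\omega^\flat$, left integrals of $\coend$ correspond to right cointegrals of $\coend^\lor$ and vice versa, and combined with the fact that $S$ swaps left and right integrals, one concludes that $\Lambda_\ell$ and $\Lambda_r$ must agree up to scalar, i.e. $\coend$ is unimodular. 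Hence the unique left integral is automatically two-sided. The analogous argument applied dually shows that the unique left cointegral is two-sided as well. Finally, $\omega^\flat$ itself can be used to transport the integral $\Lambda$ into a cointegral $\Lambda^{co}$ (and conversely), giving a conceptual correspondence between the two.

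The main obstacle is the second step: verifying that $\omega^\flat$ is indeed a morphism of Hopf algebras (up to $S$) requires a careful diagram chase using the explicit definitions \eqref{algebra}, \eqref{coalgebra} of the Hopf structure on $\coend$ together with \eqref{omega_pairing}, and is ultimately where the ribbon and non-degeneracy hypotheses are used in an essential way. Once factorizability is packaged in this form, the implications \emph{factorizable $\Rightarrow$ unimodular} and \emph{factorizable $\Rightarrow$ co-unimodular} (for Hopf algebras in finite tensor categories with simple unit) are formal and follow from the uniqueness-up-to-scalar statement of the first step.
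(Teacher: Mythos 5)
Your overall strategy --- reduce the statement to a general existence/uniqueness theorem for integrals of Hopf algebra objects, and use non-degeneracy of $\omega$ to supply the missing hypothesis --- has the same shape as the paper's argument, which simply observes that the proposition holds when $\C$ is unimodular and cites \cite[Lemma 5.2.8]{kerler_non-semisimple_2001} for the implication \emph{modular} $\Rightarrow$ \emph{unimodular}. However, your first step contains a genuine error. For a Hopf algebra object $H$ in a braided finite tensor category, the categorical Larson--Sweedler theorem of \cite{kerler_non-semisimple_2001} does \emph{not} produce nonzero integrals $\unit\to H$ unique up to scalar; it produces an integral based at an invertible \emph{object of integrals}, i.e.\ a morphism $\mathrm{Int}(H)\to H$, and for $H=\coend$ this object is the distinguished invertible object of $\C$, which is nontrivial exactly when $\C$ fails to be unimodular. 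In that case there is no nonzero morphism $\unit\to\coend$ satisfying the integral axiom at all, so step 1 cannot be quoted as stated. The paper itself flags this point in the proof of Lemma \ref{integral_antipode_relations}: ``we used the fact that $\C$ is modular, and hence the object of integrals appearing in \cite{kerler_non-semisimple_2001} is in fact, the tensor unit.''

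The consequence is that your step 2 is aimed at the wrong target. Non-degeneracy of $\omega$ is not needed merely to upgrade a one-sided integral $\unit\to\coend$ to a two-sided one; it is needed to show that the object of integrals is trivial, i.e.\ that integrals based at $\unit$ exist in the first place. Your factorizability argument --- that $\omega$ induces an isomorphism $\coend\to\coend^\lor$ compatible with the Hopf structures up to the antipode, exchanging integrals with cointegrals --- is essentially the content of \cite[Lemma 5.2.8]{kerler_non-semisimple_2001} and of Lemma \ref{modularity_parameter}, so the repair is straightforward: state the general theorem correctly (existence and uniqueness of an integral $\mathrm{Int}(\coend)\to\coend$), then use non-degeneracy to identify $\mathrm{Int}(\coend)\cong\unit$, after which existence, uniqueness up to scalar, two-sidedness, and the integral/cointegral correspondence all follow along the lines you describe.
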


\begin{remark}
    Proposition \ref{exint} holds when $\C$ is unimodular. But as shown in \cite[Lemma 5.2.8]{kerler_non-semisimple_2001}, modularity implies unimodularity.
\end{remark}

\begin{lemma}\label{integral_antipode_relations}
    Let $\Lambda$ and $\Lambda^{co}$ be a non-zero integral and a non-zero cointegral of $\coend$ respectively.
    Then the following hold:
    \begin{enumerate}
        \item $\Lambda^{co}\circ\Lambda \neq 0$
        \item $S\circ\Lambda=\Lambda$
        \item $\Lambda^{co}\circ m\circ(S\otimes \id_\coend)=\Lambda^{co}\circ m\circ(\id_\coend\otimes S)$
    \end{enumerate}
\end{lemma}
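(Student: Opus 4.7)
All three statements concern the unimodular Hopf algebra $\coend$ in $\C$. Since modularity implies unimodularity \cite[Lemma 5.2.8]{kerler_non-semisimple_2001}, both $\Lambda$ and $\Lambda^{co}$ are two-sided, and each is unique up to a nonzero scalar by Proposition \ref{exint}. My plan is to translate each claim into the graphical calculus of Hopf algebras internal to a braided category and then invoke a classical Hopf-algebraic argument. For (1), I would use the Frobenius property of the cointegral: for a unimodular Hopf algebra, the morphism $F \colon \coend \to \coend^\lor$ obtained by currying $\Lambda^{co} \circ m$ is an isomorphism. Applied to $\Lambda$, the integral axiom $m \circ (\Lambda \otimes \id_\coend) = \Lambda \circ \varepsilon$ collapses $F(\Lambda)$ to $(\Lambda^{co} \circ \Lambda) \cdot \varepsilon$. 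If $\Lambda^{co} \circ \Lambda = 0$, then $F(\Lambda) = 0$, contradicting injectivity of $F$ together with $\Lambda \neq 0$.

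For (2), I would first verify that $S \circ \Lambda$ is again a two-sided integral, using that $S$ is a braided anti-algebra morphism together with the integral axioms for $\Lambda$. Uniqueness then yields a scalar $c$ with $S \circ \Lambda = c \cdot \Lambda$. Iterating gives $S^2 \circ \Lambda = c^2 \Lambda$; in a unimodular Hopf algebra, $S^2$ fixes any two-sided integral, because the distinguished grouplike that implements $S^2$ through Radford's formula is the unit precisely when the algebra is unimodular. Hence $c^2 = 1$. To pin down the sign, I would use the ribbon/pivotal datum together with an explicit description of $\Lambda$ via the nondegenerate Hopf pairing $\omega$, exploiting that $\omega$ intertwines $S$ on the two factors up to an identifiable twist, forcing $c = 1$.

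For (3), I would recast the identity as a symmetry of the Frobenius bilinear form $(a,b) \mapsto \Lambda^{co} \circ m \circ (a \otimes b)$ under the antipode. The Nakayama identity for unimodular Frobenius Hopf algebras reads $\Lambda^{co} \circ m = \Lambda^{co} \circ m \circ \mathrm{braid} \circ (S^2 \otimes \id_\coend)$ in the appropriate braided sense. Combining this with the antipode axioms (that $S$ is an anti-algebra and anti-coalgebra morphism, suitably braided) together with the fundamental relation $m \circ (S \otimes \id_\coend) \circ \Delta = u \circ \varepsilon$, a braided diagrammatic rewrite transforms $\Lambda^{co} \circ m \circ (S \otimes \id_\coend)$ into $\Lambda^{co} \circ m \circ (\id_\coend \otimes S)$; statement (2) can be fed in when an antipode lands on the integral in the middle of the calculation. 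The main obstacle is step (2): pinning down $c = 1$ rather than merely $c^2 = 1$ is not a formal consequence of the Hopf axioms and requires genuine use of the ribbon structure. In (3) the logic is standard, but keeping the braiding bookkeeping straight in the graphical calculus is the principal technical burden.
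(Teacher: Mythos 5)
Your part (1) is fine: granting the standard fact that currying $\Lambda^{co}\circ m$ gives a monomorphism $\coend\to\coend^\lor$ (this is the non-degeneracy statement the paper cites from Kerler--Lyubashenko, Cor.~4.2.13), the computation $F(\Lambda)=(\Lambda^{co}\circ\Lambda)\cdot\varepsilon$ does force $\Lambda^{co}\circ\Lambda\neq 0$; this is essentially the content of the result the paper quotes (KL Thm.~4.2.5).

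The genuine gap is in part (2), and you have flagged it yourself: you only establish $S\circ\Lambda=c\,\Lambda$ with $c^2=1$, and your proposed route to $c=1$ (``the ribbon/pivotal datum together with an explicit description of $\Lambda$ via the nondegenerate Hopf pairing $\omega$\,'') is a gesture, not an argument --- no identity is written down that actually outputs the sign. Moreover the ribbon structure is not where the sign comes from. The correct mechanism is the categorical Radford-type formula: $S\circ\Lambda$ equals $\Lambda$ twisted by the distinguished grouplike/modular element of $\coend$ (concretely, $S\circ\Lambda=(\id_\coend\otimes\alpha)\circ\Delta\circ\Lambda$ for the appropriate character $\alpha$), and unimodularity --- which holds here because modularity implies it (KL Lemma~5.2.8) --- makes that twist trivial, giving $c=1$ on the nose rather than $c=\pm1$. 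This is exactly what the paper extracts from equation (a) of \cite[Lemma 3.13]{davydov_mathbbz2mathbbz-extensions_2013} after composing with $\id_\coend\otimes\Lambda^{co}$ and $\Lambda$. Without this step your proof of (2) is incomplete, and since your plan for (3) explicitly feeds (2) into the middle of the rewrite, the gap propagates. For (3) itself, your outline (Nakayama symmetry of the Frobenius form plus $\Lambda^{co}\circ S=\Lambda^{co}$, with the antipode axioms) is the right shape, but it is only an outline; the paper instead performs the rewrite in two explicit graphical steps using the identities of \cite[Figure 4.3(c),(d)]{kerler_non-semisimple_2001} precomposed with $S$, which avoids having to separately establish the braided Nakayama identity and the $S$-invariance of the cointegral. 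You would need to either prove those two auxiliary identities in the braided setting or locate citable versions of them before (3) is complete.
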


\begin{proof}
    Part (1) is proven in \cite[Theorem 4.2.5]{kerler_non-semisimple_2001} and (2) follows from equation (a) in \cite[Lemma 3.13]{davydov_mathbbz2mathbbz-extensions_2013}, after composing with $\id_\coend\otimes \Lambda^{co}$ from the left and $\Lambda$ from the right.

    We will now prove part (3):

\begin{align*}
    \begin{tz}[xscale=0.5,yscale=0.7]
\node [halfbox] at (0,1) {$S$};
\node [2halfbox] at (0.5,2) {$m$};
\node [halfbox] at (0.5,3) {$\Lambda^{co}$};
\draw [purple strand] (1,0) node [below] {$\coend$} to (1,2);
\draw [purple strand] (0,0) node [below] {$\coend$} to (0,2);
\draw [purple strand] (0.5,2) to (0.5,3);
\end{tz}
\gap=\gap
    \begin{tz}[xscale=0.5,yscale=0.7]
\node [halfbox] at (0,2) {$S$};
\node [2halfbox] at (0.5,3) {$m$};
\node [halfbox] at (0.5,4) {$\Lambda^{co}$};
\draw [purple strand] (1,1) to (1,3);
\draw [purple strand] (0,0) node [below] {$\coend$} to (0,3);
\draw [purple strand] (0.5,3) to (0.5,4);
\node [2halfbox] at (1.5,1) {$\Delta$};
\node [halfbox] at (3,2) {$S$};
\draw [purple strand] (2,1) to (2,3);
\draw [purple strand] (1.5,0) node [below] {$\coend$} to (1.5,1);
\node [2halfbox] at (2.5,3) {$m$};
\draw [purple strand] (2.5,3) to (2.5,4);
\node [halfbox] at (2.5,4) {$\Lambda^{co}$};
\draw [purple strand] (3,0) node [below] {$\coend$} to (3,3);
\end{tz}
\gap=\gap
    \begin{tz}[xscale=0.5,yscale=0.7]
\node [halfbox] at (1,1) {$S$};
\node [2halfbox] at (0.5,2) {$m$};
\node [halfbox] at (0.5,3) {$\Lambda^{co}$};
\draw [purple strand] (1,0) node [below] {$\coend$} to (1,2);
\draw [purple strand] (0,0) node [below] {$\coend$} to (0,2);
\draw [purple strand] (0.5,2) to (0.5,3);
\end{tz}
\end{align*}

Where in the first equality we used \cite[Figure 4.3(c)]{kerler_non-semisimple_2001}, precomposed with the antipode $S$, and in the second, \cite[Figure 4.3(d)]{kerler_non-semisimple_2001}, again precomposed with $S$. Note that for the above to be true, we used the fact that $\C$ is modular, and hence the object of integrals appearing in \cite{kerler_non-semisimple_2001} is in fact, the tensor unit.
    
\end{proof}

\begin{definition}\label{twistnondegen}
    The morphism $\mathcal{T}\colon\coend\to\coend$ is the unique morphism satisfying for all $X\in\C$ the following identity:

    $$\mathcal{T}\circ i_X = i_X\circ(id_{X^\lor}\otimes \mathcal{\vartheta}_X).$$

    For a modular tensor category $\C$ with an integral $\Lambda$, there exist non-zero constants $p_\pm$, such that:

    $$\varepsilon\circ\mathcal{T}^\pm\circ \Lambda = p_\pm \id_\unit.$$
    
\end{definition}





This lemma was proven in \cite[Thm. 5]{kerler_genealogy_1996}:

\begin{lemma}\label{modularity_parameter}
    Let $\C$ be modular and let $\Lambda$ and $\Lambda^{co}$ be an integral and a cointegral of $\coend$ satisfying $\Lambda^{co}\circ\Lambda = \id_\unit$. Then, there exists a non-zero coefficient $\zeta \in k^\times$ satisfying the equation $$\omega\circ (\Lambda\otimes \id_\coend) = \zeta\Lambda^{co}.$$
We call $\zeta$ the \textbf{modularity parameter} of $\Lambda$. 
\end{lemma}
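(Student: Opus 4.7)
The plan is to show that $f := \omega\circ(\Lambda\otimes\id_\coend)\colon\coend\to\unit$ is itself a two-sided cointegral of $\coend$, and then invoke uniqueness (Proposition \ref{exint}) to produce $\zeta\in k$ and finally verify $\zeta\neq 0$.

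First I would verify the right cointegral axiom $(f\otimes\id_\coend)\circ\Delta = u\circ f$. The key inputs are the defining Hopf pairing identity $\omega(ab,c) = \omega(a, c_{(1)})\,\omega(b, c_{(2)})$, the right integral property $\Lambda x = \varepsilon(x)\Lambda$, and the normalization $\omega(-,1_\coend) = \varepsilon$. Chaining them in Sweedler notation, for all $a,x\in\coend$,
\[
\omega(\Lambda, a_{(1)})\,\omega(x, a_{(2)}) \;=\; \omega(\Lambda x, a) \;=\; \varepsilon(x)\,\omega(\Lambda, a) \;=\; \omega(x, 1_\coend)\,\omega(\Lambda, a).
\]
Viewing both sides as the pairing $\omega(x,-)$ against a fixed element of $\coend$ and invoking non-degeneracy of $\omega$ to strip off $x$ yields $f(a_{(1)})\,a_{(2)} = f(a)\cdot 1_\coend$, which is precisely the right cointegral identity for $f$. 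The symmetric computation, starting from $x\Lambda = \varepsilon(x)\Lambda$ and the dual Hopf pairing axiom $\omega(a, bc) = \omega(a_{(1)}, b)\,\omega(a_{(2)}, c)$, produces the left cointegral identity, so that $f$ is two-sided.

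By Proposition \ref{exint}, cointegrals are unique up to scalar, so there exists a unique $\zeta\in k$ with $f = \zeta\Lambda^{co}$. To see $\zeta\neq 0$, note that non-degeneracy of $\omega$ means the morphism $\omega^\sharp\colon\coend\to\coend^\lor$, $a\mapsto\omega(a,-)$, is an isomorphism; since $\Lambda\neq 0$ (because $\Lambda^{co}\circ\Lambda = \id_\unit\neq 0$ forces $\Lambda\neq 0$), we conclude $f = \omega^\sharp(\Lambda)\neq 0$, and hence $\zeta\neq 0$.

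The only real subtlety is rewriting the Sweedler-style computation above in the genuinely string-diagrammatic language of (\ref{algebra})--(\ref{omega_pairing}), where one must invoke the dinaturality of the structure maps $i_X$ of $\coend$ and carefully track the braidings; this is routine once the strategy above is in place.
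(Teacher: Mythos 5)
The paper itself offers no proof of this lemma --- it is imported wholesale from \cite[Thm.~5]{kerler_genealogy_1996} --- so there is nothing internal to compare against; I can only assess your argument on its own terms. Your strategy is the standard one and it is sound: show that $f:=\omega\circ(\Lambda\otimes\id_\coend)$ is itself a cointegral, invoke the one-dimensionality of the space of cointegrals from Proposition \ref{exint} to get $f=\zeta\Lambda^{co}$, and deduce $\zeta\neq 0$ from $f=\omega^\sharp(\Lambda)\neq 0$, which follows from non-degeneracy together with $\Lambda\neq 0$ (itself forced by $\Lambda^{co}\circ\Lambda=\id_\unit$). All the ingredients you use are available in the paper: the Hopf-pairing compatibilities $\omega(ab,c)=\omega(a,c_{(1)})\,\omega(b,c_{(2)})$ and $\omega(-,u)=\varepsilon$ are part of $\omega$ being a Hopf pairing, the two-sided integral identity is Definition \ref{integral_def}, and uniqueness of cointegrals is Proposition \ref{exint}. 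Two small remarks. First, the braiding bookkeeping you defer at the end is genuinely harmless here precisely because one leg of the multiplication is $\Lambda\colon\unit\to\coend$: naturality of the braiding lets you slide the $\Lambda$-strand past anything, so the braided Sweedler computation collapses to the unbraided one; it would be worth saying this rather than just calling it routine. Second, you do not actually need both cointegral axioms --- one-sided cointegrals already form a one-dimensional space containing $\Lambda^{co}$ in the unimodular case, so either half of your computation suffices. A shortcut for the last step: once $f=\zeta\Lambda^{co}$ is known, postcomposing with $\Lambda$ gives $\zeta=\omega\circ(\Lambda\otimes\Lambda)$, which ties directly into Lemma \ref{zeta_p}.
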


The following lemma is proven in \cite[Cor 4.6]{de_renzi_3-dimensional_2022}:

\begin{lemma}\label{zeta_p}
    If $\C$ is a modular category, then $\zeta=p_-\cdot p_+$.
\end{lemma}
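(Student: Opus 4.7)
The plan is to reduce the proof of $\zeta = p_+ p_-$ to a direct computation of the scalar $\omega \circ (\Lambda \otimes \Lambda) \colon \unit \to \unit$, and then to evaluate this scalar by invoking the compatibility between the Hopf pairing on $\coend$ and the ribbon twist $\mathcal T$. The first step is immediate: post-composing both sides of the defining equation $\omega \circ (\Lambda \otimes \id_\coend) = \zeta \, \Lambda^{co}$ with $\Lambda \colon \unit \to \coend$ on the right, and using the normalization $\Lambda^{co} \circ \Lambda = \id_\unit$ from Lemma \ref{modularity_parameter}, one gets
$$\omega \circ (\Lambda \otimes \Lambda) \;=\; \zeta \cdot \id_\unit.$$
So it suffices to show that $\omega \circ (\Lambda \otimes \Lambda) = p_+ p_-$.

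For the second step, I would invoke the standard factorization identity which expresses the universal Hopf pairing of the canonical coend in a ribbon category in terms of the twist $\mathcal T$. Diagrammatically, this is the identity that $\omega$ can be rewritten as a composite whose middle piece is $\mathcal T^{-1}$ applied to a product of $\mathcal T$-twisted inputs, i.e.\ schematically
$$\omega \;=\; (\varepsilon \otimes \varepsilon) \circ (\mathcal T^{-1} \otimes \mathcal T^{-1}) \circ \Delta \circ \mathcal T \circ m \circ (\mathcal T \otimes \mathcal T).$$
This is precisely the diagrammatic manifestation of the factorizability of $\coend$, and is proved in \cite{kerler_non-semisimple_2001}. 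Combined with the symmetry $\Lambda^{co} \circ m \circ (S \otimes \id) = \Lambda^{co} \circ m \circ (\id \otimes S)$ from Lemma \ref{integral_antipode_relations}(3), and with the identity $S \circ \Lambda = \Lambda$ from part (2) of the same lemma, one can manipulate either variable at will.

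For the third step, I would evaluate this factorization identity on $\Lambda \otimes \Lambda$ and use the defining property of the integral $\Lambda$ to decouple the two slots of $\omega$. Concretely, the integral property together with the counit axiom lets one collapse the internal multiplication and comultiplication so that the scalar $\omega \circ (\Lambda \otimes \Lambda)$ separates into a product of two factors, each of which is of the form $\varepsilon \circ \mathcal T^{\pm 1} \circ \Lambda$. By Definition \ref{twistnondegen} these are exactly $p_\pm$, so one concludes $\omega \circ (\Lambda \otimes \Lambda) = p_+ p_- \cdot \id_\unit$, and hence $\zeta = p_+ p_-$ as desired.

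The main obstacle I anticipate is in Step two: writing out and verifying rigorously the factorization identity for $\omega$ in terms of $\mathcal T$ at the level of categorical string diagrams, rather than in the more familiar Hopf algebra setting where $\coend$ is a concrete factorizable Hopf algebra. Care is needed with the placement of antipodes and braidings, but once the identity is in hand, Steps one and three are formal applications of the integral and counit axioms.
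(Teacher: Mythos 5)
The paper does not prove this lemma in-text: it is quoted from \cite[Cor.\ 4.6]{de_renzi_3-dimensional_2022}, so your attempt has to be judged on its own merits. Your Step 1 is correct and is the right normalization: composing $\omega\circ(\Lambda\otimes\id_\coend)=\zeta\,\Lambda^{co}$ with $\Lambda$ gives $\omega\circ(\Lambda\otimes\Lambda)=\zeta$. Your Step 2 identity is also correct; it is the categorical form of $M=(v\otimes v)\Delta(v^{-1})$ relating the monodromy and the ribbon element, i.e.\ the compatibility of $\omega$, $\Delta$, $m$ and $\mathcal T$ on $\coend$.

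The gap is in Step 3. The factorization in Step 2 is an \emph{identity} of morphisms $\coend\otimes\coend\to\unit$, so evaluating it on $\Lambda\otimes\Lambda$ adds no information, and the axioms you invoke only apply to $m\circ(\Lambda\otimes-)$ and $(\varepsilon\otimes\id_\coend)\circ\Delta$ — not to $m\circ(\mathcal T\Lambda\otimes\mathcal T\Lambda)$, where the twist sits between the integral and the multiplication and blocks the collapse ($\mathcal T\Lambda$ is not an integral). Concretely, one checks that $(\varepsilon\otimes\varepsilon)\circ(\mathcal T^{-1}\otimes\mathcal T^{-1})\circ\Delta=\varepsilon\circ\mathcal T^{-2}$, so your composite reduces to $\varepsilon\circ\mathcal T^{-1}\circ m\circ(\mathcal T\Lambda\otimes\mathcal T\Lambda)$; commuting $\mathcal T$ past $m$ reintroduces the double-braiding morphism $\Omega$ and returns you to $(\varepsilon\otimes\varepsilon)\circ\Omega\circ(\Lambda\otimes\Lambda)=\omega\circ(\Lambda\otimes\Lambda)$. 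Every such formal manipulation is circular. The actual content of $\zeta=p_+p_-$ requires an additional, non-formal input — in \cite{de_renzi_3-dimensional_2022} this is the lemma controlling how the map $\omega\circ(\Lambda\otimes-)$ interacts with $\mathcal T$ (equivalently, the handle-slide property of the Kirby colour $\Lambda$, or one of the $SL(2,\ZZ)$ relations between $\mathcal S$ and $\mathcal T$). You would need to state and prove such a lemma; the integral and counit axioms alone do not decouple the two slots.
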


The result below was proven in \cite[Cor. 6.4]{gainutdinov_projective_2020}.

\begin{lemma}\label{coint_eta_eps}
    If $\C$ is a modular category, then there exists a unique morphism $\eta_\unit :\unit \to P_\unit$ satisfying, for every simple object $j\in \C$, the equation 
    $$\Lambda^{co}\circ i_{P_j} = \delta_{P_j,\unit} \eta_\unit^\lor\otimes \varepsilon_\unit.$$
\end{lemma}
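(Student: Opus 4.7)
The plan is to apply the left-duality adjunction $\Hom_\C(P_j^\lor\otimes P_j,\unit)\cong\mathrm{End}_\C(P_j)$ (via $\mathrm{ev}_{P_j}$) to recast both sides of the claimed identity as endomorphisms of $P_j$, where uniqueness becomes manifest and existence reduces to a vanishing statement. Under this bijection, $\Lambda^{co}\circ i_{P_j}$ corresponds to some $\tilde\psi_j\in\mathrm{End}_\C(P_j)$, and a direct check using $\eta_\unit^\lor=\mathrm{ev}_{P_\unit}\circ(\id_{P_\unit^\lor}\otimes\eta_\unit)$ shows that $\eta_\unit^\lor\otimes\varepsilon_\unit$ corresponds to the composite $\eta_\unit\circ\varepsilon_\unit\colon P_\unit\to\unit\to P_\unit$. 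Since $\varepsilon_\unit$ is an epimorphism, any $\eta_\unit$ with $\tilde\psi_\unit=\eta_\unit\circ\varepsilon_\unit$ is automatically unique, so the lemma reduces to: (a) $\tilde\psi_j=0$ whenever $P_j\neq P_\unit$; and (b) $\tilde\psi_\unit$ factors through $\varepsilon_\unit$, equivalently, vanishes on $\mathrm{rad}(P_\unit)$.

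Dinaturality of $\{i_X\}_{X\in\C}$ translates directly into $\{\tilde\psi_X\}_{X\in\C}$ being a natural endomorphism of $\id_\C$. To establish (a), use Lemma \ref{modularity_parameter} to rewrite
\[
\Lambda^{co}\circ i_X\;=\;\zeta^{-1}\,\omega\circ(\Lambda\otimes i_X),
\]
and combine the explicit double-braiding formula \eqref{omega_pairing} for $\omega$ with unimodularity of $\C$ (a consequence of modularity, by \cite[Lemma 5.2.8]{kerler_non-semisimple_2001}). Unimodularity concentrates the image of $\Lambda$ in the unit block of $\coend$, and a direct computation then yields $\omega\circ(\Lambda\otimes i_{L_j})=0$ for every simple $L_j\not\cong\unit$, hence $\tilde\psi_{L_j}=0$. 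Propagation from the simples to the indecomposable projectives $P_j$ with $j\neq\unit$ is then carried out by induction on radical length, using naturality of $\tilde\psi$ along inclusions of composition factors and along projective covers of the successive subquotients.

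Granted (a), naturality applied to any $f\colon P_j\to P_\unit$ with $j\neq\unit$ gives $\tilde\psi_\unit\circ f=f\circ\tilde\psi_j=0$; since $P_\unit$ has simple head $\unit$, the radical $\mathrm{rad}(P_\unit)$ is iteratively covered by such morphisms (layer by layer in its Loewy filtration, together with the vanishing of $\tilde\psi_\unit$ already obtained on previous layers), forcing $\tilde\psi_\unit|_{\mathrm{rad}(P_\unit)}=0$ and thus producing the desired $\eta_\unit$. The main obstacle is the genuinely modular input of step (a)--- namely the vanishing $\omega(\Lambda,i_{L_j})=0$ for non-unit simples together with its propagation to projectives in the non-semisimple regime. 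This is where unimodularity of $\C$ is essential, and the complete computation, along with the verification that the resulting $\eta_\unit$ also exhibits $(P_\unit,\eta_\unit)$ as the injective envelope of $\unit$, is performed in \cite[Cor.~6.4]{gainutdinov_projective_2020}.
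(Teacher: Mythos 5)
First, a structural point: the paper does not prove this lemma at all --- it is quoted verbatim from \cite[Cor.~6.4]{gainutdinov_projective_2020} --- and your proposal likewise ends by deferring ``the complete computation'' to that same corollary. So the only new mathematical content you are supplying is the intermediate sketch. The reduction itself is fine: transposing along duality, $\Lambda^{co}$ corresponds to an element $\tilde\psi$ of $\mathrm{End}(\id_\C)\cong\Hom_\C(\coend,\unit)$, uniqueness follows because $\varepsilon_\unit$ is epi, and the claim becomes $\tilde\psi_{P_j}=\delta_{j,\unit}\,\eta_\unit\circ\varepsilon_\unit$.

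The gap is the propagation ``from the simples to the indecomposable projectives by induction on radical length, using naturality.'' Vanishing of a natural endomorphism of the identity on every composition factor of $M$ does not imply vanishing on $M$: given $0\to M'\to M\to L\to 0$ with $\tilde\psi_{M'}=0$ and $\tilde\psi_{L}=0$, naturality only forces $\tilde\psi_M$ to factor as $M\to L\to M'\hookrightarrow M$, and $\Hom_\C(L,M')$ is nonzero in general. Concretely, for $\C=H\text{-mod}$ with $H$ a non-semisimple factorizable Hopf algebra, natural endomorphisms of $\id_\C$ are central elements of $H$, and any nonzero nilpotent central element vanishes on every simple yet acts nontrivially on some indecomposable projective; so ``$\tilde\psi_{L_j}=0$ for all $j$'' plus naturality is strictly weaker than what you need. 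The same defect infects your step (b): the Loewy layers of $\mathrm{rad}(P_\unit)$ contain copies of $\unit$ (at the very least $\mathrm{soc}(P_\unit)\cong\unit$ by unimodularity), and covering those layers requires maps out of $P_\unit$ itself, for which no vanishing is available --- the bootstrap is circular exactly where it matters. What actually pins down $\Lambda^{co}\circ i_{P_j}$ in \cite{gainutdinov_projective_2020} is not a naturality induction but the non-degeneracy of the pairing induced by the cointegral combined with $\dim\Hom_\C(P_j,\unit)=\delta_{j,\unit}=\dim\Hom_\C(\unit,P_j)$, which forces the endomorphism $\tilde\psi_{P_j}$ to factor through $\unit$; that factorization is the genuinely modular/unimodular input, and it is precisely what your argument does not supply.
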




\subsection{Target bicategory}\label{target}

\begin{definition}[Lincat]
    The symmetric monoidal bicategory Lincat is defined as follows:
    \begin{itemize}
        \item Its objects are finite abelian linear categories 
        \item Its 1-morphisms are right exact functors, i.e functors that preserve finite colimits. We denote by $\text{Fun}^{Lincat}(-,-)\subset \text{Fun}(-,-)$, the full subcategory of right exact linear functors.
        \item Its 2-morphisms are natural transformations.
    \end{itemize}
\end{definition}

There is a natural tensor product on Lincat defined by Kelly \cite{kelly_basic_1982} (generalizing the Deligne tensor product of
finite abelian categories) which is uniquely characterized by the universal property that a functor $\C\otimes \mathcal{D}\to \mathcal{E}$
preserving finite colimits is the same as a functor $\C\times \mathcal{D}\to \mathcal{E}$ preserving finite colimits separately in $\C$ and $\mathcal{D}$. The unit of this tensor product is the category $\text{Vec}_k$ of finite-dimensional $k$-vector spaces.

\begin{remark}\label{proj_res}
The 1-morphisms in Lincat are right exact functors, and the modular tensor category we will be working with has enough projectives. In this case, a right exact functor is determined by its values on projectives. This means that $Fun^{Lincat}(\C,\text{Vec}_k)\cong Fun(\text{Proj}(\C),\text{Vec}_k)$. When defining the TFT, this will allow us to specify certain 1-morphism generators only on the projective objects. Better yet, it is sufficient to talk about the indecomposable projectives, since the direct summands of a projective are projective.

Moreover, we will implicitly identify functors $F\colon \text{Vec}_k \to \C$ with their value $F(k)$, an object of $\C$.
\end{remark}

\newpage

\section{Construction of the TQFT}\label{construction}

According to the definition of a presentation of a symmetric monoidal 2-category (Section \ref{presentations}) and Conjecture \ref{conjecture}, to define the non-compact 3d TQFT, we need to specify the images of the generators in the target bicategory Lincat and to check that all the relevant relations of the non-compact presentation given in Section \ref{modpres} hold.

\subsection{Assignment to generators} \label{genass}

Fix $\C$ to be a (not necessarily semisimple) modular tensor category, and $\cP:=\text{Proj}(\C)$. Thus, $\C\in \text{Lincat}$. Additionally, let $\mathscrsfs{D}\in k^\times$ be a constant whose value is going to be fixed later by certain relations, and fix an integral $\Lambda$, whose existence is guaranteed by Proposition \ref{exint}. This uniquely determines a cointegral $\Lambda^{co}$ satisfying $\Lambda^{co}\circ\Lambda = \id_\unit$ and a modularity parameter $\zeta\in k^\times$, according to Lemma \ref{modularity_parameter}. 

For the convenience of the reader, before we give the full assignment in detail, we repeat our table \ref{genass_table_data} from the introduction. 

\scalecobordisms{0.6}

\begin{table}[h!]
\centering
 \begin{tabular}{||c | c||} 
 \hline
 Generators of $\Bordncsig$& Value in Lincat \\ [0.5ex] 
 \hline\hline
$\begin{tz}
\node[Cyl, top, height scale=0]  at (0,0) {};
\end{tz}  $ & $\C$    \\ [0.5ex] 
 $\tinypants$&   $
 \text{Forget}\colon\C^\coend\to \C$ \\ [0.5ex] 
 $\tinycopants$ &$\text{Co-free}\colon\C\to \C^\coend$\\ [0.5ex] 
 $\tinycup$    & $-\otimes\unit\colon\Vecfd\to\C$ \\ [0.5ex] 
 $\tinycap$&   $\Hom_\C(-,\unit)^*\colon\C\to \Vecfd$\\ [0.5ex] 
 $\begin{tz} 
        \node[Pants, top, bot] (A) at (0,0) {};
        \node[Copants, bot, anchor=leftleg] at (A.leftleg) {};
    \end{tz}
    \quad
    \xRightarrow{\epsilon}
    \quad
    \begin{tz} 
        \node[Cyl, top, bot, tall] (A) at (0,0) {};
    \end{tz}$ & $\id_X \otimes \varepsilon \colon X\otimes\coend\to X$\\ [1ex] 
$\begin{tz} 
        \node[Cyl, tall, top, bot] (A) at (0,0) {};
        \node[Cyl, tall, top, bot] (B) at (2*\cobwidth, 0) {};
    \end{tz}
    \quad
    \xRightarrow{\eta}
    \quad
    \begin{tz} 
        \node[Pants, bot] (A) at (0,0) {};
        \node[Copants, top, bot, anchor=belt] at (A.belt) {};
    \end{tz}$& Coaction $\delta_X$ \\ [1ex] 
    $\begin{tz} 
        \node[Cyl, top, bot, tall] (A) at (0,0) {};
    \end{tz}
    \quad
    \xRightarrow{\epsilon^\dagger}
    \quad
    \begin{tz} 
        \node[Pants, top, bot] (A) at (0,0) {};
        \node[Copants, bot, anchor=leftleg] at (A.leftleg) {};
    \end{tz}$ & $\id_X\otimes \mathscrsfs{D}^\inv \Lambda\colon X\to X\otimes\coend$ \\ [1ex] 
    $\begin{tz} 
        \node[Pants, bot] (A) at (0,0) {};
        \node[Copants, top, bot, anchor=belt] at (A.belt) {};
    \end{tz}
    \quad
    \xRightarrow{\eta^\dagger}
    \quad
    \begin{tz} 
        \node[Cyl, tall, top, bot] (A) at (0,0) {};
        \node[Cyl, tall, top, bot] (B) at (2*\cobwidth, 0) {};
    \end{tz}$ & 
    $(\text{id}_X\otimes (\mathscrsfs{D}\Lambda^{co}\circ m \circ (S \otimes\text{id}_\coend)))\circ(\delta_X \otimes \text{id}_\coend)$ \\ [1ex] 
    $\begin{tz}
        \node[Cup, top] (A) at (0,0) {};
        \node[Cap, bot] (B) at (0,-2*\cobheight) {};
    \end{tz}
    \quad
    \xRightarrow{\mu}
    \quad
    \begin{tz}
        \node[Cyl, top, bot, tall] (A) at (0,0) {};
    \end{tz}$ & $\mathscrsfs{D}(\text{ev}_{\varepsilon_\unit}\otimes\eta_\unit)$ \\ [1ex]
    $\begin{tz}
        \node[Cyl, top, bot, tall] (A) at (0,0) {};
    \end{tz}
    \quad
    \xRightarrow{\mu^\dagger}
    \quad\begin{tz}
        \node[Cup, top] (A) at (0,0) {};
        \node[Cap, bot] (B) at (0,-2*\cobheight) {};
    \end{tz}$ & $(\text{id}\otimes EV_{P})\circ(\text{coev}_{\text{Hom}_\mathcal{C}(P,\unit)})$  \\ [1ex]
    $\begin{tz}
        \node[Cap, bot] (A) at (0,0) {};
        \node[Cup] at (0,0) {};
    \end{tz}
    \quad
    \xRightarrow{\nu^\dagger}
    \quad
    \begin{tz}
        \draw[green] (0,0) rectangle (0.6, 0.6);  
    \end{tz}$ & Evaluation of vector spaces \\ [1ex]
 \hline
 \end{tabular}
  \caption{Assignment $\Znc$ to generators of $\Bordncsig$}
    \label{genass_table}
\end{table}

Note that, having developed the necessary algebraic prerequisites, we have given a more precise description of the assignments. In particular, we make use of the equivalence $\C\boxtimes\C \xrightarrow{\sim} \C^\coend$ (Lemma \ref{comod_equiv}).

    
Now, in more detail, we define the assignment $\mathcal{Z}:\Bordncsig \to \text{Lincat}$ as follows:

$$\mathcal{Z}(S^1)=\C$$

Call 

\begin{itemize}
    \item $T:=\mathcal{Z}(\tinypants)$
    \item $T^R:=\mathcal{Z}(\tinycopants)$
    \item $U:=\mathcal{Z}(\tinycup)$
    \item $U^L:=\mathcal{Z}(\tinycap)$
\end{itemize}


and define:

\begin{itemize}
    \item $T$ to be given by the monoidal product: 
    
      \begin{align*}
      \C\boxtimes\C&\xrightarrow{T} \C\\
      X\boxtimes Y&\mapsto X\otimes Y
      \end{align*}



    \item $T^R$ to be:
    
    \begin{align*}
\C&\xrightarrow{T^R}\C\boxtimes\C \\
X&\mapsto \int^{Z\in\C} X\otimes Z^\lor \boxtimes Z
\end{align*}



    We call this map $T^R$ as, when we later check the relations \eqref{adj_eta_epsilon1}, \eqref{adj_eta_epsilon2}, this will show that it's the right adjoint of $T$.
    
    \item $U\colon\text{Vec}_k\to \C$
    
    \begin{align*}
        \text{Vec}_k&\xrightarrow{U}\C \\
V&\mapsto V\otimes\unit
    \end{align*}



    \item $U^L$ is (as we will see) the left adjoint of $U$. We define it to be: 
    
    \begin{align*}
       \C&\xrightarrow{U^L} \text{Vec}_k \\
X&\mapsto \text{Hom}_\C(X,\unit)^* \\
    \end{align*}


\end{itemize}

    As mentioned in Section \ref{target}, it will be beneficial to work with $\C^\coend$ instead of $\C\boxtimes\C$. So, under the equivalence $\C\boxtimes\C\cong \C^\coend$ the functor:
\begin{itemize}
    \item $T$ corresponds to the forgetful functor: 
    
      \begin{align*}
      \C^\coend&\xrightarrow{T} \C\\
      (x,\delta_X)&\mapsto X
      \end{align*}



    \item $T^R$ corresponds to:
    
    \begin{align*}
\C&\xrightarrow{T^R}\C^\coend \\
X&\mapsto (X\otimes \coend,\Delta)
\end{align*}

\end{itemize}








Now we will define the images of 2-morphisms. 

The assignment for the generating 2-morphisms $\alpha, \lambda, \rho, \beta, \theta$ is straightforward. They are respectively given by the associator, left/right unitor, braiding ($c_{X,Y}$) and twist ($\vartheta$) morphisms of the modular tensor category $\C$.


The maps $\mathcal{Z}(\epsilon), \mathcal{Z}(\eta)$ are given by (and will prove to be the counit and unit, respectively, of the adjunction of $T \dashv T^R$):

\begin{itemize}
    \item $\mathcal{Z}(\epsilon)$ is a natural transformation: $T\circ T^R\Rightarrow \text{id}_\C$. For $X\in\C$, we define it to be:
    
    $$\mathcal{Z}(\epsilon)\colon X\otimes\coend\xrightarrow{\id_X \otimes  \varepsilon} X$$

\vspace{5mm}
    
    \item $\mathcal{Z}(\eta)$ is a natural transformation: $\text{id}_{\C^\coend}\Rightarrow T^R\circ T$. For $(X,\delta_X) \in \C^\coend$, we define it to be:
    
    $$\mathcal{Z}(\eta)\colon(X,\delta_X) \xrightarrow{\delta_X} (X\otimes \coend, \Delta),$$


\vspace{5mm}

\end{itemize}

where $\varepsilon$ is the counit of $\coend$ and $\delta_X\colon X\to X\otimes \coend$ is the coaction for the $\coend$-comodule $(X,\delta_X)$. 


To define the maps $\mathcal{Z}(\epsilon^\dagger), \mathcal{Z}(\eta^\dagger)$, we will make use of the integral $\Lambda\colon\unit\to\coend$ and cointegral $\Lambda^{co}\colon\coend\to \unit$ that we defined earlier:

\begin{itemize}
    \item $\mathcal{Z}(\epsilon^\dagger)\colon \text{id}_\C \Rightarrow T^R\circ T$, meaning, a natural transformation with components: $X\to X\otimes\coend$. We define this to be:
    
    \begin{equation*}
        X\xrightarrow{\id_X\otimes \mathscrsfs{D}^\inv \Lambda} X\otimes \coend.
    \end{equation*}

    As mentioned, the constant $\mathscrsfs{D}\in\ k^\times$ will be determined later by some of the axioms. 
    
    \item $\mathcal{Z}(\eta^\dagger)$: We want a natural transformation with components: $(X\otimes\coend,\Delta)\to (X,\delta_X)$
    
    \begin{gather*}
        (X\otimes\coend,\Delta)\xrightarrow{\delta_X \otimes \text{id}_\coend} (X\otimes\coend\otimes\coend,\Delta)
     \xrightarrow{\text{id}_X\otimes S \otimes\text{id}_\coend} (X\otimes\coend\otimes\coend,\Delta) \\
    \xrightarrow{\text{id}_X\otimes m} (X\otimes\coend,\Delta)\xrightarrow{\text{id}_X\otimes\mathscrsfs{D}\Lambda^{co}} (X,\delta_X)
    \end{gather*}

\end{itemize}

\begin{remark}
    The intuition behind the definition of $\mathcal{Z}(\epsilon^\dagger), \mathcal{Z}(\eta^\dagger)$ is related to the following : The pairing $\Lambda^{co}\circ m$ is non degenerate (with the copairing involving the integral $\Lambda$) (\cite{kerler_non-semisimple_2001}, Corol 4.2.13), and thus introduces an isomorphism $\coend\cong \coend^\lor$. Now, as $T^L(X)=(X\otimes \coend^\lor,(\id_X\otimes\Delta^\lor\otimes\id_\coend)\circ(\id_X\otimes\id_{\coend^\lor}\otimes \text{coev}_{\coend^\lor}))$ and $T^R(X)=(X\otimes\coend,\id_X\otimes\Delta)$, utilizing the isomorphism above, we obtain an isomorphism $T^L\cong T^R$. This is precisely why $\mathcal{Z}(\epsilon^\dagger), \mathcal{Z}(\eta^\dagger)$ will be the counit and unit of the adjunction $T^R \dashv T$, and will therefore satisfy the desired axioms.
\end{remark}


Now we define the natural transformations $\mathcal{Z}(\mu^\dagger)$, $\mathcal{Z}(\nu^\dagger)$ on projectives as follows:


\begin{itemize}
    \item $\mathcal{Z}(\mu^\dagger)\colon \text{id}\Rightarrow U\circ U^L$, meaning, for $P\in \cP$, we have a natural transformation with components:

    $$ P \xrightarrow{\mathcal{Z}(\mu^\dagger)_P} \Hom_\C(P,\unit)^*\otimes\unit $$
given by the following composite: 

\begin{align*}
    \begin{split}
        P\xrightarrow{\text{coev}_{\text{Hom}_\mathcal{C}(P,\unit)}\otimes \text{id}_P}\text{Hom}_\mathcal{C}(P,\unit)^*\otimes\text{Hom}_\mathcal{C}(P,\unit)\otimes P \xrightarrow{\text{id}\otimes EV_{P}}\text{Hom}_\mathcal{C}(P,\unit)^*\otimes\unit
    \end{split}
\end{align*}

 where in the first step we used co-evaluation for the vector space $\Hom_\C(P,\unit)$.

    \item $\mathcal{Z}(\nu^\dagger)\colon U^L\circ U \Rightarrow \text{id}$, in other words, a natural transformation

    $$\Hom_\C(\unit,\unit)^*\xrightarrow{\mathcal{Z}(\nu^\dagger)}k$$
     is given by evaluation on $\id_\unit$.
\end{itemize}



As for $\mathcal{Z}(\mu)$, it is not an obvious map. 


\begin{itemize}
    \item $\mathcal{Z}(\mu)\colon U\circ U^L \Rightarrow \text{id}$, i.e a natural transformation with components:

    \begin{equation}\label{mudef}
        \Hom(P,\unit)^*\otimes\unit \xrightarrow{\mu_P} P
    \end{equation}
\end{itemize}

To define it, it suffices to define $\mu_{P_\unit}\colon\Hom(P_\unit,\unit)^*\otimes\unit \to P_\unit$.

This morphism is given by:

$$\Hom_\C(P_\unit,\unit)^*\otimes\unit\xrightarrow{\mathscrsfs{D}(\text{ev}_{\varepsilon_\unit}\otimes\eta_\unit)} P_\unit$$


where by $\text{ev}_{\varepsilon_\unit}$ we mean the map $\Hom_\C(P_\unit,\unit)^*\to k$ obtained by evaluating an element $f\in  \Hom_\C(P_\unit,\unit)^*$ on the canonical element $\varepsilon_\unit\in \Hom_\C(P_\unit,\unit)$.

However, it will be useful to think about the map $\text{ev}_{\varepsilon_\unit}$ in the following way:

 We have the canonical element $\varepsilon_\unit\in \Hom_\C(P_\unit,\unit)$ which gives rise to a linear map $\Bar{\varepsilon}_1: k\to \Hom_\C(P_\unit,\unit)$ and therefore we can define the map $\Hom_\C(P_\unit,\unit)^* \xrightarrow{ev_{\varepsilon_\unit}} k $ as: 
 $$k\otimes \Hom_\C(P_\unit,\unit)^* \xrightarrow{\bar{\varepsilon}_\unit\otimes\id} \Hom_\C(P_\unit,\unit)\otimes\Hom_\C(P_\unit,\unit)^* \xrightarrow{ev_{vect}} k$$


The usefulness of this description of $\text{ev}_{\varepsilon_\unit}$ is showcased in the proof of the following lemma, which will be useful when checking relations \eqref{piv_on_sphere} and \eqref{MOD}.

\begin{remark}\label{cap_description}
    The functor $U^L\colon\C\to \text{Vec}_k$ has two equivalent descriptions. Since the functor $\Hom_\C(-,\unit)^*\colon\C\to \text{Vec}_k$ is right exact, it suffices to define it only on projectives: $U^L(P)\cong \Hom_\C(P,\unit)^*$. This gives us another way to describe it, using Proposition \ref{coYoneda}. 
    
    This way, $U^L(-)\cong \displaystyle\int^{P\in \cP} \hspace{-20pt}\Hom_\C(P,-)\otimes\Hom_\C(P,\unit)^*$, and $\Znc(\nu^\dagger)$ is correspondingly given by evaluation of vector spaces $\displaystyle\int^{P\in \cP}\hspace{-20pt}\Hom_\C(P,\unit)\otimes\Hom_\C(P,\unit)^*\xrightarrow{\mathcal{Z}(\nu^\dagger)}k$. In fact, this coincides with the action of $\nu^\dagger$ when the target is Bimod, something we are concerned with in chapters \ref{3-mfld-inv} and \ref{mod-trace}.

\end{remark}






\subsection{Checking the relations}

One of the main results of this thesis is proving the following:

\begin{theorem}\label{main_th}
    The assignment $\mathcal{Z}$ in Section \ref{genass} defines a symmetric monoidal functor between bicategories $\Bordncsig\to \text{Lincat}$.
\end{theorem}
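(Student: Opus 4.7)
The strategy is to verify that the assignment $\mathcal{Z}$ of Section \ref{genass} preserves each family of relations listed in Definition \ref{modpres}, with the scalar $\mathscrsfs{D}$ and the anomaly scalar $\xi$ being fixed by the modularity and anomaly relations respectively. Since the data of a symmetric monoidal 2-functor out of a presentation is determined by its value on generators subject to relations, the work reduces to checking relations. Before doing so, I would briefly confirm well-definedness: all 1-morphisms are right exact (the tensor product is biexact, the coend functor exists and is exact in each variable when composed with the biexact tensor with duals, and $\Hom_\C(-,\unit)^*$ is right exact because $\Hom_\C(-,\unit)$ is left exact and $(-)^*$ flips exactness on $\Vecfd$), and the natural transformations make sense using the structure morphisms of $\coend$, the (co)integral, and the canonical maps attached to $(P_\unit, \varepsilon_\unit)$ and $\eta_\unit$ from Lemma \ref{coint_eta_eps}.

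I would then dispatch the structural relations in a single pass. Invertibility of the images of $\alpha,\lambda,\rho,\beta,\theta,\xi$ follows because these are sent to the associator, unitors, braiding and twist of the ribbon category $\C$, together with a scalar for $\xi$. The pentagon \eqref{eq:pentagon} and triangle \eqref{eq:triangle}, the hexagon \eqref{eq:hexagon}, and the twist relations \eqref{balanced1}, \eqref{balanced2}, \eqref{tortile} all reduce to the axioms of a ribbon category applied to $\C$ (and to $\unit$). The adjunction relations \eqref{adj_eta_epsilon1}, \eqref{adj_eta_epsilon2} reproduce the standard forgetful/cofree adjunction $T\dashv T^R$ between $\C^\coend$ and $\C$: the unit $\eta$ is the coaction and the counit $\epsilon$ is induced by the counit $\varepsilon$ of $\coend$, and the zigzag identities are precisely the comodule axioms. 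The adjunction relations \eqref{adj_nu_mu_dag1}, \eqref{adj_nu_mu_dag2} for $U^L\dashv U$ are the zigzag identities for the vector space evaluation/coevaluation applied pointwise (restricted to projectives, as justified in Remark \ref{proj_res}).

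The interesting content is concentrated in \eqref{adj_eta_epsilon_dag1}, \eqref{adj_eta_epsilon_dag2}, the rigidity relations, pivotality, modularity, and the anomaly. For the second adjunction $T^R\dashv T$, the plan is to use that the Hopf pairing $\Lambda^{co}\circ m$ with copairing $(\id\otimes S)\circ\Delta\circ\Lambda$ is non-degenerate, giving $\coend\cong\coend^\lor$ and an identification of the would-be left adjoint $T^L$ with $T^R$; the specific formulas for $\epsilon^\dagger,\eta^\dagger$ involving $\mathscrsfs{D}^{-1}\Lambda$ and $\mathscrsfs{D}\Lambda^{co}$ then make the two zigzag identities reduce to Hopf algebra identities for $\coend$ (crucially $m\circ(S\otimes\id)\circ\Delta=u\circ\varepsilon$) plus the normalization $\Lambda^{co}\circ\Lambda=\id_\unit$. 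For rigidity, the composites $\phi_l,\phi_r$ of \eqref{defn_of_phileft}, \eqref{defn_of_phiright} unfold under $\mathcal{Z}$ to particular string diagrams in $\C$ involving $\coend$, and their proposed inverses in \eqref{explicit_phileft_inverse}, \eqref{explicit_phiright_inverse} produce compositions that collapse via the antipode equation together with the integral relations of Lemma \ref{integral_antipode_relations} (parts (2) and (3)); part (3) is exactly what ensures that the ``left'' and ``right'' versions of rigidity are simultaneously controlled.

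The final trio I expect to be the hardest. Pivotality \eqref{piv_on_sphere} amounts to showing that a certain composite built from $\mu,\mu^\dagger,\epsilon,\epsilon^\dagger$ is the identity on the $\Hom$-vector space $\Hom_\C(P_\unit,\unit)^*\otimes\unit$; tracing through the assignments, it reduces to the defining property of $\eta_\unit$ in Lemma \ref{coint_eta_eps} ($\Lambda^{co}\circ i_{P_\unit} = \eta_\unit^\lor\otimes \varepsilon_\unit$) together with co-Yoneda (Proposition \ref{coYoneda}), and the uniqueness in that lemma is what makes the identity hold on the nose. For modularity \eqref{MOD}, the left-hand composite computes, via the canonical coaction description of $\epsilon^\dagger$, the defining formula of the Hopf pairing $\omega$ acted on $(\Lambda,\id_\coend)$, while the twist insertion produces a factor $\vartheta$; applying Lemma \ref{modularity_parameter} gives the scalar $\zeta = p_+ p_-$, so fixing $\mathscrsfs{D}^2 = \zeta = p_+ p_-$ makes the two sides agree. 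The anomaly relation \eqref{Anom} then pins down $\xi$: tracking $\mathcal{Z}$ through the composite in \eqref{Anom} yields multiplication by $\varepsilon\circ \mathcal{T}\circ\Lambda \cdot \mathscrsfs{D}^{-1}$, which equals $p_+/\mathscrsfs{D}$; using $\mathscrsfs{D}=\sqrt{p_+p_-}$ gives the scalar $\sqrt{p_+/p_-}$, so $\xi$ must be assigned this value (cf.\ Remark \ref{anom_root1}). The main obstacle across this last stage is book-keeping the various coend structure maps and sign/scalar conventions so that $\mathscrsfs{D}$ can indeed be chosen consistently with both modularity and anomaly; the non-degeneracy of $\omega$ and the uniqueness (up to scalar) of integrals in Proposition \ref{exint} are what make this bookkeeping close.
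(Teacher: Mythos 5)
Your proposal follows essentially the same route as the paper's proof: a relation-by-relation check in which the structural relations reduce to the ribbon axioms of $\C$, the adjunction relations to the comodule/cofree adjunction and vector-space zigzags, the dagger adjunctions and rigidity to Hopf-algebra identities for $\coend$ together with Lemma \ref{integral_antipode_relations} and the non-degeneracy of the pairing built from $\Lambda^{co}\circ m$, pivotality to Lemma \ref{coint_eta_eps}, and modularity and the anomaly to Lemma \ref{modularity_parameter}, fixing $\mathscrsfs{D}=\sqrt{p_+p_-}$ and $\xi=\sqrt{p_+/p_-}$ exactly as in the paper. No gaps of substance; the only minor imprecision is that relation \eqref{tortile} is not a bare ribbon axiom but requires the short computation with $\vartheta_{Y^\lor}=\vartheta_Y^\lor$ and naturality of the twist that the paper spells out.
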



In order to prove Theorem \ref{main_th}, it is sufficient to check that the relations \eqref{eq:pentagon}-\eqref{Anom} are preserved after applying the assignment $\mathcal{Z}$. We will be referring to this as `checking the relations' without mentioning the assignment $\mathcal{Z}$. This amounts to checking equalities between natural transformations. We will therefore work in components, denoting by $X,Y,Z$ objects in $\C$, $P,Q\in\cP$. 


Before we proceed to the proof of Theorem \ref{main_th}, we prove some useful lemmas:

\begin{lemma}\label{mu_mu_dag_comp}


    
$(\mathcal{Z}(\mu)\circ\mathcal{Z}(\mu^\dagger))(P_j)
=
\begin{cases} 
      \mathscrsfs{D}(\eta_\unit\circ \varepsilon_\unit) & P_j\cong P_\unit \\
      0 & P_j\ncong P_\unit 
   \end{cases}
$. 



\end{lemma}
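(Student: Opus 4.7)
The plan is to compute the composite $\mathcal{Z}(\mu)_{P_j}\circ\mathcal{Z}(\mu^\dagger)_{P_j}$ directly, splitting by whether the indecomposable projective $P_j$ is isomorphic to $P_\unit$ or not. The key input is the dimension count $\dim_k\Hom_\C(P_j,\unit)=\delta_{j,\unit}$ recorded in Section~\ref{finiteproj}, together with the explicit formulas for $\mathcal{Z}(\mu^\dagger)$ (coevaluation on $\Hom_\C(P,\unit)$ followed by $\id\otimes EV_P$) and $\mathcal{Z}(\mu)$ on $P_\unit$ (which is $\mathscrsfs{D}(\operatorname{ev}_{\varepsilon_\unit}\otimes\eta_\unit)$).

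First I would dispose of the case $P_j\not\cong P_\unit$. Since $\unit$ is simple and $P_j$ is the projective cover of $L_j\not\cong\unit$, we have $\Hom_\C(P_j,\unit)\cong 0$, so the vector space $\Hom_\C(P_j,\unit)^*$ is zero. The component $\mathcal{Z}(\mu^\dagger)_{P_j}\colon P_j\to \Hom_\C(P_j,\unit)^*\otimes\unit$ therefore lands in the zero object, which gives $\mathcal{Z}(\mu)_{P_j}\circ\mathcal{Z}(\mu^\dagger)_{P_j}=0$.

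For the remaining case $P_j\cong P_\unit$, I would unfold the two components. Here $\Hom_\C(P_\unit,\unit)$ is one-dimensional, spanned by $\varepsilon_\unit$, so the $k$-linear coevaluation $k\to \Hom_\C(P_\unit,\unit)\otimes\Hom_\C(P_\unit,\unit)^*$ sends $1$ to $\varepsilon_\unit\otimes\varepsilon_\unit^\vee$, where $\varepsilon_\unit^\vee$ is the dual basis element characterised by $\operatorname{ev}_{\varepsilon_\unit}(\varepsilon_\unit^\vee)=1$. Tensoring with $P_\unit$ and post-composing with $\id\otimes EV_{P_\unit}$ gives
\[
\mathcal{Z}(\mu^\dagger)_{P_\unit}=\varepsilon_\unit^\vee\otimes\varepsilon_\unit\colon P_\unit\longrightarrow \Hom_\C(P_\unit,\unit)^*\otimes\unit,
\]
where we use the unit isomorphism $k\otimes P_\unit\cong P_\unit$. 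Post-composing with $\mathcal{Z}(\mu)_{P_\unit}=\mathscrsfs{D}(\operatorname{ev}_{\varepsilon_\unit}\otimes\eta_\unit)$ and using $\operatorname{ev}_{\varepsilon_\unit}(\varepsilon_\unit^\vee)=1$ yields $\mathscrsfs{D}(\eta_\unit\circ\varepsilon_\unit)$, as claimed.

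The only conceptual subtlety is the tacit identification of $\mathcal{Z}(\mu)_{P_j}$ for a general indecomposable projective; but naturality of $\mathcal{Z}(\mu)\colon U\circ U^L\Rightarrow \id$ combined with the definition on $P_\unit$ and the vanishing $\Hom_\C(P_j,\unit)^*=0$ for $j\ne\unit$ forces the stated values, so no real obstacle arises. The computation is essentially a bookkeeping exercise with dual bases and the canonical pairing $\operatorname{ev}_{\varepsilon_\unit}$.
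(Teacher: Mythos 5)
Your proof is correct and follows essentially the same route as the paper's: dispose of the case $P_j\ncong P_\unit$ via the vanishing of $\Hom_\C(P_j,\unit)$, then unfold $\mathcal{Z}(\mu^\dagger)_{P_\unit}$ and $\mathcal{Z}(\mu)_{P_\unit}$ and cancel the coevaluation against $\mathrm{ev}_{\varepsilon_\unit}$ via the duality (snake) relation for the vector space $\Hom_\C(P_\unit,\unit)$. The only cosmetic difference is that you work with an explicit dual basis of the one-dimensional space $\Hom_\C(P_\unit,\unit)$ where the paper phrases the same cancellation diagrammatically and basis-free.
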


\begin{proof}
    First, if $P_j\ncong P_\unit$, then this composite is the zero morphism (Section \ref{finiteproj}).

    Now, for the case where $P_j\cong P_\unit$, the composite $\mu_{P_\unit}\circ\mu^\dagger_{P_\unit}$ looks as follows in string diagram notation:

    \begin{align*}
        \begin{tz}[xscale=0.5,yscale=0.8]
\node [2halfbox] at (2.5,1) {$EV_{P_\unit}$};
\node [halfbox] at (0,2) {$\Bar{\varepsilon}_\unit$};
\node [halfbox] at (2.5,2) {$\eta_\unit$};
\draw [blue strand] (1,0.5) to (1,2.5);
\draw [blue strand] (0,2) to (0,2.5);
\draw [blue strand] (2,0.5) to (2,1);
\draw (2.5,2) to (2.5,3) node [above] {$P_\unit$};
\draw (3,0) node [below] {$P_\unit$} to (3,1);
\draw [blue strand] (1,0.5) to [out=down, in=down, looseness=1] (2,0.5);
\draw [blue strand] (0,2.5) to [out=up, in=up, looseness=1] (1,2.5);
\end{tz}
\gap=\gap
        \begin{tz}[xscale=0.5,yscale=0.8]
\node [halfbox] at (2.5,1) {$\varepsilon_\unit$};
\node [halfbox] at (2.5,2) {$\eta_\unit$};
\draw (2.5,2) to (2.5,3) node [above] {$P_\unit$};
\draw (2.5,0) node [below] {$P_\unit$} to (2.5,1);
\end{tz}
\end{align*}

Where the blue string is meant to correspond to the vector space $\Hom_\C(P_\unit,\unit)$ and the equality holds because of the duality relations as well as \cite[Lemma 5.1.3]{kerler_non-semisimple_2001}.

    
\end{proof}



\begin{lemma} \label{coaction_weird}
    The coaction $\delta_{X\otimes \coend\otimes Y}$ appearing in $\mathcal{Z}(\phi_r)$ and $\mathcal{Z}(\phi_r^\inv)$ is given by:
    $$X\otimes \coend\otimes Y \xrightarrow{\id_X\otimes\Delta\otimes\varrho_Y}X\otimes \coend\otimes\coend\otimes Y\otimes \coend \xrightarrow{\id_X\otimes\id_\coend\otimes c_{\coend,Y}\otimes\id_\coend} X\otimes \coend\otimes Y\otimes \coend\otimes \coend $$ 
    $$\xrightarrow{\id_{X\otimes\coend\otimes Y}\otimes m} X\otimes \coend\otimes Y\otimes \coend.$$

As a diagram this is represented as:
    \begin{align*}
 \begin{tz}[xscale=0.4,yscale=0.7]
\draw [black strand] (4.25,1) to [out=up, in=down] (1,2.5);
\draw [purple strand] (3.5,3) to (3.5,4) node [above] {$\mathcal{F}$};
\draw [purple strand] (2.5,1) to (2.5,3);
\draw [purple strand] (4.75,1) to (4.75,3);
\draw [purple strand] (1,0) node [below] {$\mathcal{F}$} to (1,1);
\draw [purple strand] (0,1) to (0,4);
\node [box] at (3.5,3) {$m$};
\node [box] at (1,1) {$\Delta$};
\node [1halfbox] at (4.5,1) {$\varrho_Y$};
\draw (1,2.5) to (1,4) node [above] {$Y$};
\draw (-1.5,0) node [below] {$X$} to (-1.5,4);
\draw [black strand] (4.5,0) node [below] {$Y$} to [out=up, in=down] (4.5,1);
\end{tz}
    \end{align*}
\end{lemma}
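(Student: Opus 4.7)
The first thing I would do is pin down the object $X\otimes\coend\otimes Y$ precisely, by reading it off from the cobordism \eqref{defn_of_phiright}. The claim will be that under the equivalence $\C\boxtimes\C\cong\C^\coend$ of Lemma \ref{comod_equiv}, this object decomposes as a threefold tensor product in $\C^\coend$: an inert $X$-factor (trivial coaction, carried by a cylinder), a middle $\coend$-factor with coaction $\Delta$ (produced by the copants, since by the definition of $T^R$ in Section \ref{genass} the circle running through the copants contributes $(\coend,\Delta)$), and a $Y$-factor with coaction $\varrho_Y$ (from the fresh input which, under the equivalence $\C\boxtimes\C\cong\C^\coend$, comes equipped with its canonical coaction from \eqref{canonical_coaction}).

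Once that identification is made, the plan is to invoke the standard formula for the coaction on the tensor product of two right $\coend$-comodules $(A,\delta_A),(B,\delta_B)$ in the braided category $\C$:
\[
\delta_{A\otimes B}\;=\;(\id_{A\otimes B}\otimes m)\circ(\id_A\otimes c_{\coend,B}\otimes\id_\coend)\circ(\delta_A\otimes\delta_B).
\]
This is what makes $\C^\coend$ monoidal, and its well-definedness relies on $\coend$ being a bialgebra in $\C$. Applied with $A=X\otimes\coend$, $\delta_A=\id_X\otimes\Delta$ and $B=Y$, $\delta_B=\varrho_Y$, it produces exactly the three arrows in the statement: first $\id_X\otimes\Delta\otimes\varrho_Y$, then a braiding $\id_X\otimes\id_\coend\otimes c_{\coend,Y}\otimes\id_\coend$ swapping the right output of $\Delta$ past $Y$, then $\id_{X\otimes\coend\otimes Y}\otimes m$ multiplying together the two rightmost $\coend$-factors. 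The string-diagram rendering in the lemma is simply the graphical translation of this composite.

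The hard part, if any, will be the bookkeeping in the first step -- making sure that the strands in the cobordism picture \eqref{defn_of_phiright} are matched correctly with the three comodule factors, and that the relevant braiding is $c_{\coend,Y}$ rather than its inverse (so one has to keep careful track of whether the newly produced $\coend$ enters on the left or the right of $Y$). No further input beyond the bialgebra axioms for $\coend$ and the explicit description of the monoidal structure on $\C^\coend$ coming from Lemma \ref{comod_equiv} is needed.
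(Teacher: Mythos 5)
Your proposal is correct in substance and lands on the same structural skeleton as the paper's proof: identify the comodule underlying the source 1-morphism, then recognize the claimed composite as the tensor-product coaction $(\id\otimes m)\circ(\id\otimes c_{\coend,-}\otimes\id)\circ(\delta\otimes\delta)$ on $(X\otimes\coend,\id_X\otimes\Delta)\otimes(Y,\varrho_Y)$. The difference is where the work is located. You outsource the key step to the ``standard formula'' for tensor products in $\C^\coend$ together with the assertion that the object decomposes as $(X,\mathrm{triv})\otimes(\coend,\Delta)\otimes(Y,\varrho_Y)$; but the object one actually reads off from \eqref{defn_of_phiright} is $\int^{Z}(X\otimes Z^\lor)\boxtimes(Z\otimes Y)$ with coaction $\id_{X\otimes Z^\lor}\otimes\varrho_{Z\otimes Y}$, so your decomposition is exactly the claim that $(Z\otimes Y,\varrho_{Z\otimes Y})\cong(Z,\varrho_Z)\otimes(Y,\varrho_Y)$ in $\C^\coend$ (monoidality of the canonical coaction) together with the identification of the descent of $\id_{Z^\lor}\otimes\varrho_Z$ with $\Delta$. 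The paper's proof \emph{is} the verification of precisely these two facts, by expanding $i_{Z\otimes Y}$ via the multiplication $m$ of $\coend$ from \eqref{algebra} and pushing the string diagram into the stated form before invoking the universal property of the coend; so your ``standard fact'' is not free in this context and its proof is the content of the lemma. Two smaller cautions: the identification of the copants' contribution with $(\coend,\Delta)$ is Corollary \ref{coaction_coend}, which the paper derives \emph{from} this lemma (the case $Y=\unit$), so quoting it as an input risks circularity --- it is harmless only because $\int^Z(i_Z\otimes\id_\coend)\circ(\id_{Z^\lor}\otimes\varrho_Z)=\Delta$ follows directly from the defining diagram \eqref{coalgebra} of $\Delta$, which you should say explicitly; and your bookkeeping worry about $c_{\coend,Y}$ versus its inverse is resolved exactly by the convention built into the definition of $m$ in \eqref{algebra}, which is where the paper's computation pins down the braiding direction.
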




\begin{proof}
    The coaction $\delta_{X\otimes \coend\otimes Y}$ is given by $\varrho$, applied on the appropriate object. To understand what that object is, we need to look at the level at which $\eta$ (resp. $\eta^\dagger$) is applied on \eqref{defn_of_phiright} (resp. \eqref{explicit_phiright_inverse}). $\eta$ is applied on the two cylinders (resp. $\eta^\dagger$ on $\tinycopants\circ\tinypants$). In both cases, the object at the source of that 1-morphism is $\displaystyle\int^{Z\in\C}\!\!\!\!\!\!\!\!\!\!\!\!X\otimes Z^\lor\boxtimes Z\otimes Y$. $\eta$ is given by the coaction of this object and the first part of the $\eta^\dagger$ composite is the coaction of this object. So we are looking to compute the coaction induced on $X\otimes \coend\otimes Y$ from the equivalence $\C\boxtimes\C\cong \C^\coend$, in other words from $\varrho_{Z\otimes Y}$.

    Recall that $(Z\otimes Y)^\lor \cong  Y^\lor\otimes Z^\lor$. We therefore have the following:

\begin{align*}
\begin{tz}[xscale=0.4,yscale=0.7]
\node [box] at (3.5,1) {$i_{Z\otimes Y}$};
\draw [purple strand] (3.5,1) to (3.5,2) node [above] {$\mathcal{F}$};
\draw (5,0) node [below] {$Y$} to (5,1);
\draw (4,0) node [below] {$Z$} to (4,1);
\draw (3,0) to (3,1);
\draw (2,0) to (2,1);
\draw (1,0) to (1,2) node [above] {$Y$};
\draw (0,0) to (0,2) node [above] {$Z$};
\draw [black strand] (0,0) to [out=down, in=down, looseness=1] (3,0);
\draw [black strand] (1,0) to [out=down, in=down, looseness=1] (2,0);
\end{tz}
\gap=\gap
\begin{tz}[xscale=0.4,yscale=0.7]
\draw [purple strand] (3.5,4) to (3.5,5) node [above] {$\mathcal{F}$};
\node [box] at (3.5,4) {$i_{Z\otimes Y}$};
\draw (1,0) to (1,5) node [above] {$Y$};
\draw (0,0) to (0,5) node [above] {$Z$};
\draw [black strand] (4,2) to [out=up, in=down] (2,3.5);
\draw [black strand] (2,2) to [out=up, in=down] (3,3.5);
\draw [black strand] (3,2) to [out=up, in=down] (4,3.5);
\draw [black strand] (5,2) to [out=up, in=down] (5,4);
\draw [black strand] (4,3.5) to [out=up, in=down] (4,4);
\draw [black strand] (2,3.5) to [out=up, in=down] (2,4);
\draw [black strand] (3,3.5) to [out=up, in=down] (3,4);
\draw [black strand] (2,0) to [out=up, in=down] (4,1.5);
\draw [black strand] (3,0) to [out=up, in=down] (2,1.5);
\draw [black strand] (4,0) node [below] {$Z$} to [out=up, in=down] (3,1.5);
\draw [black strand] (5,0) node [below] {$Y$} to [out=up, in=down] (5,2);
\draw [black strand] (4,1.5) to [out=up, in=down] (4,2);
\draw [black strand] (2,1.5) to [out=up, in=down] (2,2);
\draw [black strand] (3,1.5) to [out=up, in=down] (3,2);
\draw [black strand] (0,0) to [out=down, in=down, looseness=1] (3,0);
\draw [black strand] (1,0) to [out=down, in=down, looseness=1] (2,0);
\end{tz}
\gap=\gap
\begin{tz}[xscale=0.4,yscale=0.7]
\draw [purple strand] (3.5,3) to (3.5,4) node [above] {$\mathcal{F}$};
\draw [purple strand] (2.5,2) to (2.5,3);
\draw [purple strand] (4.5,2) to (4.5,3);
\node [box] at (3.5,3) {$m$};
\node [1halfbox] at (2.5,2) {$i_{Z}$};
\node [1halfbox] at (4.5,2) {$i_{Y}$};
\draw (1,0) to (1,4) node [above] {$Y$};
\draw (0,0) to (0,4) node [above] {$Z$};
\draw [black strand] (2,0) to [out=up, in=down] (4,1.5);
\draw [black strand] (3,0) to [out=up, in=down] (2,1.5);
\draw [black strand] (4,0) node [below] {$Z$} to [out=up, in=down] (3,1.5);
\draw [black strand] (5,0) node [below] {$Y$} to [out=up, in=down] (5,2);
\draw [black strand] (4,1.5) to [out=up, in=down] (4,2);
\draw [black strand] (2,1.5) to [out=up, in=down] (2,2);
\draw [black strand] (3,1.5) to [out=up, in=down] (3,2);
\draw [black strand] (0,0) to [out=down, in=down, looseness=1] (3,0);
\draw [black strand] (1,0) to [out=down, in=down, looseness=1] (2,0);
\end{tz}
\gap=\gap
\end{align*}

\begin{align*}
    \gap=\gap
\begin{tz}[xscale=0.4,yscale=0.7]
\draw [purple strand] (3.5,3) to (3.5,4) node [above] {$\mathcal{F}$};
\draw [purple strand] (2.5,2) to (2.5,3);
\draw [purple strand] (4.5,2) to (4.5,3);
\node [box] at (3.5,3) {$m$};
\node [1halfbox] at (2.5,2) {$i_{Z}$};
\node [1halfbox] at (4.5,2) {$i_{Y}$};
\draw [black strand] (3.5,0) to [out=up, in=down] (1,2);
\draw (1,2) to (1,4) node [above] {$Y$};
\draw (0,0) to (0,4) node [above] {$Z$};
\draw [black strand] (4,0) to [out=up, in=down] (4,2);
\draw [black strand] (2,0) to [out=up, in=down] (2,1.5);
\draw [black strand] (3,0) node [below] {$Z$} to [out=up, in=down] (3,1.5);
\draw [black strand] (5,0) node [below] {$Y$} to [out=up, in=down] (5,2);
\draw [black strand] (4,1.5) to [out=up, in=down] (4,2);
\draw [black strand] (2,1.5) to [out=up, in=down] (2,2);
\draw [black strand] (3,1.5) to [out=up, in=down] (3,2);
\draw [black strand] (0,0) to [out=down, in=down, looseness=1] (2,0);
\draw [black strand] (3.5,0) to [out=down, in=down, looseness=1] (4,0);
\end{tz}
\gap=\gap
\begin{tz}[xscale=0.4,yscale=0.7]
\draw [black strand] (3.5,1.5) to [out=up, in=down] (1,2.5);
\draw [purple strand] (3.5,3) to (3.5,4) node [above] {$\mathcal{F}$};
\draw [purple strand] (2.5,1) to (2.5,3);
\draw [purple strand] (4.5,2) to (4.5,3);
\node [box] at (3.5,3) {$m$};
\node [1halfbox] at (2.5,1) {$i_{Z}$};
\node [1halfbox] at (4.5,2) {$i_{Y}$};
\draw (3.5,0) to (3.5,1.5);
\draw (1,2.5) to (1,4) node [above] {$Y$};
\draw (0,0) to (0,4) node [above] {$Z$};
\draw [black strand] (4,0) to [out=up, in=down] (4,2);
\draw [black strand] (2,0) to [out=up, in=down] (2,1);
\draw [black strand] (3,0) node [below] {$Z$} to [out=up, in=down] (3,1);
\draw [black strand] (5,0) node [below] {$Y$} to [out=up, in=down] (5,2);
\draw [black strand] (0,0) to [out=down, in=down, looseness=1] (2,0);
\draw [black strand] (3.5,0) to [out=down, in=down, looseness=1] (4,0);
\end{tz}
\gap=\gap
\begin{tz}[xscale=0.4,yscale=0.7]
\draw [black strand] (4.25,1) to [out=up, in=down] (1,2.5);
\draw [purple strand] (3.5,3) to (3.5,4) node [above] {$\mathcal{F}$};
\draw [purple strand] (2.5,1) to (2.5,3);
\draw [purple strand] (4.75,1) to (4.75,3);
\node [box] at (3.5,3) {$m$};
\node [1halfbox] at (2.5,1) {$i_{Z}$};
\node [1halfbox] at (4.5,1) {$\varrho_Y$};
\draw (1,2.5) to (1,4) node [above] {$Y$};
\draw (0,0) to (0,4) node [above] {$Z$};
\draw [black strand] (2,0) to [out=up, in=down] (2,1);
\draw [black strand] (3,0) node [below] {$Z$} to [out=up, in=down] (3,1);
\draw [black strand] (4.5,0) node [below] {$Y$} to [out=up, in=down] (4.5,1);
\draw [black strand] (0,0) to [out=down, in=down, looseness=1] (2,0);
\end{tz}
\end{align*}

Therefore, the action $\delta_{X\otimes \coend\otimes Y}:X\otimes \coend\otimes Y\to X\otimes \coend\otimes Y\otimes \coend$ is given by the unique morphism that is determined by the universal property of $\coend$ as follows:

\begin{align*}
    \begin{tz}[xscale=0.4,yscale=0.7]
\draw [black strand] (4.25,1) to [out=up, in=down] (1,2.5);
\draw [purple strand] (3.5,3) to (3.5,4) node [above] {$\mathcal{F}$};
\draw [purple strand] (2.5,1) to (2.5,3);
\draw [purple strand] (4.75,1) to (4.75,3);
\draw [purple strand] (-0.5,1) to (-0.5,4);
\node [box] at (3.5,3) {$m$};
\node [1halfbox] at (2.5,1) {$i_{Z}$};
\node [1halfbox] at (-0.5,1) {$i_{Z}$};
\node [1halfbox] at (4.5,1) {$\varrho_Y$};
\draw (1,2.5) to (1,4) node [above] {$Y$};
\draw (0,0) to (0,1);
\draw (-1,0) node [below] {$Z^\lor$} to (-1,1);
\draw (-2,0) node [below] {$X$} to (-2,4);
\draw [black strand] (2,0) to [out=up, in=down] (2,1);
\draw [black strand] (3,0) node [below] {$Z$} to [out=up, in=down] (3,1);
\draw [black strand] (4.5,0) node [below] {$Y$} to [out=up, in=down] (4.5,1);
\draw [black strand] (0,0) to [out=down, in=down, looseness=1] (2,0);
\end{tz}
\gap=\gap
    \begin{tz}[xscale=0.4,yscale=0.7]
\draw [black strand] (4.25,1) to [out=up, in=down] (1,2.5);
\draw [purple strand] (3.5,3) to (3.5,4) node [above] {$\mathcal{F}$};
\draw [purple strand] (2.5,1) to (2.5,3);
\draw [purple strand] (4.75,1) to (4.75,3);
\draw [purple strand] (1,0) node [below] {$\mathcal{F}$} to (1,1);
\draw [purple strand] (0,1) to (0,4);
\node [box] at (3.5,3) {$m$};
\node [box] at (1,1) {$\Delta$};
\node [1halfbox] at (4.5,1) {$\varrho_Y$};
\draw (1,2.5) to (1,4) node [above] {$Y$};
\draw (-1.5,0) node [below] {$X$} to (-1.5,4);
\draw [black strand] (4.5,0) node [below] {$Y$} to [out=up, in=down] (4.5,1);
\end{tz}
\end{align*}

\end{proof}

\begin{corol}\label{coaction_coend}
    In the case where $Y=\unit$, we get that the coaction of the free $\coend$ comodule $X\otimes \coend$ is given by the comultiplication $\Delta$.
\end{corol}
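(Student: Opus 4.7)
The plan is to specialize the composite formula established in Lemma \ref{coaction_weird} to the case $Y = \unit$ and observe that it collapses to $\id_X \otimes \Delta$. The key input is to identify the ``$Y$-dependent'' pieces of that composite, namely $\varrho_Y$, the braiding $c_{\coend, Y}$, and the multiplication on the resulting $\coend$ factors, in the special case when $Y = \unit$.

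First I would compute $\varrho_\unit$. Unwinding Definition \eqref{canonical_coaction}, $\varrho_\unit$ is the composite $(\id_\unit \otimes i_\unit) \circ (\text{coev}_\unit \otimes \id_\unit)$, which, after applying the canonical unitor $\unit \otimes \unit^\lor \otimes \unit \cong \unit^\lor \otimes \unit$, is exactly $i_\unit$. By the unit axiom displayed in the right-hand identity of \eqref{algebra}, $i_\unit = u$, so $\varrho_\unit$ equals the unit $u\colon \unit \to \coend$ of the Hopf algebra $\coend$, up to unitor identifications. Next, the braiding $c_{\coend, \unit}$ is canonically the identity, since braiding against the monoidal unit is trivial in any braided monoidal category.

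Substituting these two facts into the composite from Lemma \ref{coaction_weird} with $Y = \unit$, the coaction on $X \otimes \coend \otimes \unit \cong X \otimes \coend$ collapses to
\begin{equation*}
X \otimes \coend \xrightarrow{\id_X \otimes \Delta} X \otimes \coend \otimes \coend \xrightarrow{\id_X \otimes \id_\coend \otimes u} X \otimes \coend \otimes \coend \xrightarrow{\id_X \otimes m} X \otimes \coend.
\end{equation*}
By the unit axiom for the algebra $(\coend, m, u)$, the composite $m \circ (\id_\coend \otimes u)$ is $\id_\coend$, so the whole expression reduces to $\id_X \otimes \Delta$, which is precisely the free $\coend$-comodule structure on $X \otimes \coend$.

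There is no real obstacle beyond careful bookkeeping. The only subtlety is to confirm that the identification $\varrho_\unit = u$ matches the convention used inside the formula of Lemma \ref{coaction_weird} (in particular, that the unitors $\unit \otimes \coend \cong \coend$ are applied consistently). Once this is verified, the corollary is a direct consequence of Hopf-algebra unitality and requires no further computation.
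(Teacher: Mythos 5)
Your proof is correct and follows the same route the paper intends: the corollary is stated without proof precisely because it is the specialization of Lemma \ref{coaction_weird} at $Y=\unit$, using $\varrho_\unit = i_\unit = u$, triviality of $c_{\coend,\unit}$, and the unit axiom $m\circ(\id_\coend\otimes u)=\id_\coend$, exactly as you argue. (The only blemish is a bookkeeping slip in your displayed composite, where the target of $\id_X\otimes\id_\coend\otimes u$ should carry three tensor factors of $\coend$; this does not affect the argument.)
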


Now we are ready to prove Theorem \ref{main_th}:

\begin{proof}
    
\vspace{1cm}

\begin{itemize}
     
 \item Relations \eqref{eq:pentagon}, \eqref{eq:triangle}, \eqref{eq:hexagon}, \eqref{balanced1}, \eqref{balanced2} hold trivially, as they correspond to the pentagon, triangle, hexagon and balancing relations of the modular category $\C$.

\item The first relations we need to check, are the images of \eqref{defn_of_phileft}-\eqref{explicit_phileft_inverse} and \eqref{defn_of_phiright}-\eqref{explicit_phiright_inverse} under $\mathcal{Z}$, namely, that $\mathcal{Z}(\phi_l)^\inv=\mathcal{Z}(\phi_l^\inv)$ and $\mathcal{Z}(\phi_r)^\inv=\mathcal{Z}(\phi_r^\inv)$.

We begin by seeing that $\mathcal{Z}(\phi_l)$ and $\mathcal{Z}(\phi_l^\inv)$ are both the identity natural transformation, so the desired relation is satisfied trivially:

\begin{align} \label{phi_l}
    \begin{tz}[xscale=0.5,yscale=0.7]
\node [2halfbox] at (2.5,1) {$\Delta$};
\node [halfbox] at (2,2) {$\varepsilon$};
\draw [purple strand] (3,1) to (3,2) node [above] {$\coend$};
\draw [purple strand] (2,1) to (2,2);
\draw [purple strand] (2.5,0) node [below] {$\coend$} to (2.5,1);
\draw (0,0) node [below] {$X$} to (0,2);
\draw (1,0) node [below] {$Y$} to (1,2);
\end{tz}
\gap=\gap
\begin{tz}[xscale=0.5,yscale=0.7]
\draw [purple strand] (2,0) node [below] {$\coend$} to (2,2);
\draw (0,0) node [below] {$X$} to (0,2);
\draw (1,0) node [below] {$Y$} to (1,2);
\end{tz}
&&
    \begin{tz}[xscale=0.5,yscale=0.7]
\node [halfbox] at (2.5,2) {$S$};
\node [2halfbox] at (2,1) {$\Delta$};
\node [halfbox] at (2,0) {$\Lambda$};
\node [2halfbox] at (3,3) {$m$};
\node [halfbox] at (3,4) {$\Lambda^{co}$};
\draw [purple strand] (3.5,0) node [below] {$\coend$} to (3.5,3);
\draw [purple strand] (2,0) to (2,1);
\draw [purple strand] (2.5,1) to (2.5,3);
\draw [purple strand] (1.5,1) to (1.5,4);
\draw [purple strand] (3,3) to (3,4);
\draw (0,0) node [below] {$X$} to (0,4);
\draw (1,0) node [below] {$Y$} to (1,4);
\end{tz}
\gap=\gap
\begin{tz}[xscale=0.5,yscale=0.7]
\draw [purple strand] (2,0) node [below] {$\coend$} to (2,2);
\draw (0,0) node [below] {$X$} to (0,2);
\draw (1,0) node [below] {$Y$} to (1,2);
\end{tz}
\end{align}

The first equality holds because of the counit-comultiplication relation while the second holds because of the fact that $\Delta\circ\Lambda$ is a non-degenerate pairing and more precisely, because of \cite[Lemma 4.2.12]{kerler_non-semisimple_2001}. 


\item Now, for \eqref{defn_of_phiright}-\eqref{explicit_phiright_inverse}, things require a bit more computation:

First, we would like to simplify both $\mathcal{Z}(\phi_r)$ and $\mathcal{Z}(\phi_r^\inv)$. 
The former is given by the composite:

$$X\otimes \coend \otimes Y \xrightarrow{\delta_{X\otimes \coend\otimes Y}} X\otimes\coend\otimes Y\otimes \coend \xrightarrow{\id_X\otimes\varepsilon\otimes\id_{Y\otimes\coend}} X\otimes Y\otimes \coend,$$

And the latter involves the composite:

$$X\otimes Y\otimes \coend \xrightarrow{\id_X\otimes\Lambda\otimes\id_{Y\otimes\coend}} 
X\otimes\coend\otimes Y\otimes \coend \xrightarrow{\delta_{X\otimes \coend\otimes Y}\otimes\id_\coend}X\otimes\coend\otimes Y\otimes\coend\otimes \coend.$$

Now using Lemma \ref{coaction_weird}, $\mathcal{Z}(\phi_r)$ becomes:

\begin{align*}
        \begin{tz}[xscale=0.4,yscale=0.7]
\draw [black strand] (4.25,1) to [out=up, in=down] (1,2.5);
\draw [purple strand] (3.5,3) to (3.5,4) node [above] {$\mathcal{F}$};
\draw [purple strand] (2.5,1) to (2.5,3);
\draw [purple strand] (4.75,1) to (4.75,3);
\draw [purple strand] (1,0) node [below] {$\mathcal{F}$} to (1,1);
\draw [purple strand] (0,1) to (0,3);
\node [box] at (3.5,3) {$m$};
\node [box] at (1,1) {$\Delta$};
\node [1halfbox] at (4.5,1) {$\varrho_Y$};
\node [halfbox] at (0,3) {$\varepsilon$};
\draw (1,2.5) to (1,4) node [above] {$Y$};
\draw (-1.5,0) node [below] {$X$} to (-1.5,4);
\draw [black strand] (4.5,0) node [below] {$Y$} to [out=up, in=down] (4.5,1);
\end{tz}
\gap=\gap
\begin{tz}[xscale=0.4,yscale=0.7]
\draw [black strand] (4.25,1) to [out=up, in=down] (1,2.5);
\draw [purple strand] (3.5,3) to (3.5,4) node [above] {$\mathcal{F}$};
\draw [purple strand] (2.5,0) node [below] {$\mathcal{F}$} to (2.5,3);
\draw [purple strand] (4.75,1) to (4.75,3);
\node [box] at (3.5,3) {$m$};
\node [1halfbox] at (4.5,1) {$\varrho_Y$};
\draw (1,2.5) to (1,4) node [above] {$Y$};
\draw (0,0) node [below] {$X$} to (0,4);
\draw [black strand] (4.5,0) node [below] {$Y$} to [out=up, in=down] (4.5,1);
\end{tz}
\end{align*}

And $\mathcal{Z}(\phi_r^\inv)$:

\begin{align*}
    \begin{tz}[xscale=0.5,yscale=0.7]
\node [2halfbox] at (3.5,1) {$\varrho_Y$};
\node [halfbox] at (1.5,0) {$\Lambda$};
\node [halfbox] at (4.25,6) {$\Lambda^{co}$};
\node [2halfbox] at (1.5,1) {$\Delta$};
\node [2halfbox] at (3.5,3) {$m$};
\node [halfbox] at (3.5,4) {$S$};
\node [box] at (4.25,5) {$m$};
\draw [purple strand] (5,0) node [below] {$\coend$} to (5,5);
\draw [purple strand] (4,1) to (4,3);
\draw [purple strand] (1.5,0) to (1.5,1);
\draw [purple strand] (1,1) to (1,6);
\draw [purple strand] (3.5,3) to (3.5,5);
\draw [purple strand] (4.25,5) to (4.25,6);
\draw [purple strand] (2,1) to (2,1.5);
\draw [purple strand] (3,2.5) to (3,3);
\draw (3,1) to (3,1.5);
\draw (2,2.5) to (2,6);
\draw [black strand] (3,1.5) to [out=up, in=down, looseness=0.9] (2,2.5);
\draw [purple strand] (2,1.5) to [out=up, in=down, looseness=0.9] (3,2.5);
\draw (0,0) node [below] {$X$} to (0,6);
\draw (3.5,0) node [below] {$Y$} to (3.5,1);
\end{tz}
\gap=\gap
\begin{tz}[xscale=0.5,yscale=0.7]
\node [box] at (4,0.5) {$\varrho_Y$};
\node [halfbox] at (4,1.5) {$\Lambda$};
\node [halfbox] at (5,7) {$\Lambda^{co}$};
\node [2halfbox] at (3.5,3) {$\Delta$};
\node [2halfbox] at (4.5,4) {$m$};
\node [halfbox] at (4.5,5) {$S$};
\node [box] at (5,6) {$m$};
\draw [purple strand] (6,-0.5) node [below] {$\coend$} to (6,6);
\draw [purple strand] (5,0.5) to (5,4);
\draw [purple strand] (4,1.5) to (3.5,3);
\draw [purple strand] (1,5) to (1,7);
\draw [purple strand] (4,3) to (4,4);
\draw [purple strand] (4.5,4) to (4.5,6);
\draw [purple strand] (5,6) to (5,7);
\draw (3,0.5) to (3,1.5);
\draw (2,2.5) to (2,7);
\draw [black strand] (3,1.5) to [out=up, in=down, looseness=0.9] (2,2.5);
\draw [purple strand] (3,3) to [out=up, in=down, looseness=0.9] (1,5);
\draw (0,-0.5) node [below] {$X$} to (0,7);
\draw (4,-0.5) node [below] {$Y$} to (4,0.5);
\end{tz}
\gap=\gap
 \begin{tz}[xscale=0.5,yscale=0.7]
\node [box] at (4,0.5) {$\varrho_Y$};
\node [halfbox] at (4,1.5) {$\Lambda$};
\node [halfbox] at (5,7) {$\Lambda^{co}$};
\node [2halfbox] at (3.5,3) {$\Delta$};
\node [2halfbox] at (4.5,4) {$m$};
\node [halfbox] at (6,5) {$S$};
\node [box] at (5,6) {$m$};
\draw [purple strand] (6,-0.5) node [below] {$\coend$} to (6,6);
\draw [purple strand] (5,0.5) to (5,4);
\draw [purple strand] (4,1.5) to (3.5,3);
\draw [purple strand] (1,5) to (1,7);
\draw [purple strand] (4,3) to (4,4);
\draw [purple strand] (4.5,4) to (4.5,6);
\draw [purple strand] (5,6) to (5,7);
\draw (3,0.5) to (3,1.5);
\draw (2,2.5) to (2,7);
\draw [black strand] (3,1.5) to [out=up, in=down, looseness=0.9] (2,2.5);
\draw [purple strand] (3,3) to [out=up, in=down, looseness=0.9] (1,5);
\draw (0,-0.5) node [below] {$X$} to (0,7);
\draw (4,-0.5) node [below] {$Y$} to (4,0.5);
\end{tz}
\gap=\gap
\end{align*}

\begin{align*}
    \gap=\gap
 \begin{tz}[xscale=0.5,yscale=0.7]
\node [box] at (4,0.5) {$\varrho_Y$};
\node [halfbox] at (4,1.5) {$\Lambda$};
\node [halfbox] at (5,7) {$\Lambda^{co}$};
\node [2halfbox] at (3.5,3) {$\Delta$};
\node [2halfbox] at (5.5,5) {$m$};
\node [halfbox] at (6,4) {$S$};
\node [box] at (5,6) {$m$};
\draw [purple strand] (6,-0.5) node [below] {$\coend$} to (6,5);
\draw [purple strand] (5.5,5) to (5.5,6);
\draw [purple strand] (5,0.5) to (5,5);
\draw [purple strand] (4,1.5) to (3.5,3);
\draw [purple strand] (1,5) to (1,7);
\draw [purple strand] (4,3) to (4,6);
\draw [purple strand] (5,6) to (5,7);
\draw (3,0.5) to (3,1.5);
\draw (2,2.5) to (2,7);
\draw [black strand] (3,1.5) to [out=up, in=down, looseness=0.9] (2,2.5);
\draw [purple strand] (3,3) to [out=up, in=down, looseness=0.9] (1,5);
\draw (0,-0.5) node [below] {$X$} to (0,7);
\draw (4,-0.5) node [below] {$Y$} to (4,0.5);
\end{tz}
\gap=\gap
 \begin{tz}[xscale=0.5,yscale=0.7]
\node [2halfbox] at (2.5,1) {$\varrho_Y$};
\node [halfbox] at (4,2) {$S$};
\node [2halfbox] at (3.5,3) {$m$};
\node [halfbox] at (3.5,4) {$S^\inv$};
\draw [purple strand] (4,0) node [below] {$\coend$} to (4,3);
\draw [purple strand] (3,1) to (3,3);
\draw (2,1) to (2,6);
\draw [purple strand] (3.5,3) to (3.5,4);
\draw [purple strand] (3.5,4) to [out=up, in=down, looseness=0.9] (1,6);
\draw (0,0) node [below] {$X$} to (0,6);
\draw (2.5,0) node [below] {$Y$} to (2.5,1);
\end{tz}
\end{align*}

Where in the second equality we used Lemma \ref{integral_antipode_relations} (iii) and in the third equality, we used associativity of the multiplication $m$. 
The last equality is due to \cite[Relation (c) of figure 4.3]{kerler_non-semisimple_2001}. 

Now using the above , we check the composite $\mathcal{Z}(\phi_r^\inv)\circ \mathcal{Z}(\phi_r)$:

\begin{align*}
 \begin{tz}[xscale=0.5,yscale=0.7]
\node [2halfbox] at (2.5,5) {$\varrho_Y$};
\node [halfbox] at (4,6) {$S$};
\node [2halfbox] at (3.5,7) {$m$};
\node [halfbox] at (3.5,8) {$S^\inv$};
\draw [purple strand] (4,5) to (4,7);
\draw [purple strand] (3,5) to (3,7);
\draw (2,5) to (2,10);
\draw [purple strand] (3.5,7) to (3.5,8);
\draw [purple strand] (3.5,8) to [out=up, in=down, looseness=0.9] (1,10);
\draw (0,4) to (0,10);
\draw (2.5,4) to (2.5,5);
\draw [black strand] (5.75,1) to [out=up, in=down] (2.5,2.5);
\draw [purple strand] (5,3) to (5,4);
\draw [purple strand] (5,4) to [out=up, in=down, looseness=0.9] (4,5);
\draw [purple strand] (4,0) node [below] {$\mathcal{F}$} to (4,3);
\draw [purple strand] (6.25,1) to (6.25,3);
\node [box] at (5,3) {$m$};
\node [1halfbox] at (6,1) {$\varrho_Y$};
\draw (2.5,2.5) to (2.5,4);
\draw (0,0) node [below] {$X$} to (0,4);
\draw [black strand] (6,0) node [below] {$Y$} to [out=up, in=down] (6,1);
\end{tz}
\gap=\gap
 \begin{tz}[xscale=0.5,yscale=0.7]
\node [2halfbox] at (2.5,5) {$\varrho_Y$};
\node [halfbox] at (4,6) {$S$};
\node [2halfbox] at (3.5,7) {$m$};
\node [halfbox] at (3.5,8) {$S^\inv$};
\draw [purple strand] (4,4) to (4,7);
\draw [purple strand] (3,5) to (3,7);
\draw (2,5) to (2,10);
\draw [purple strand] (3.5,7) to (3.5,8);
\draw [purple strand] (3.5,8) to [out=up, in=down, looseness=0.9] (1,10);
\draw (2.5,4) to (2.5,5);
\draw [purple strand] (3,2) to [out=up, in=down, looseness=0.9] (4.5,4);
\draw [black strand] (6,0) node [below] {$Y$} to [out=up, in=down] (3,2);
\draw [purple strand] (4.5,0) node [below] {$\mathcal{F}$} to (4.5,1);
\draw [purple strand] (4.5,1) to [out=up, in=down, looseness=0.9] (3.5,4);
\node [2halfbox] at (4,4) {$m$};
\node [1halfbox] at (3,2) {$\varrho_Y$};
\draw (2.5,2) to (2.5,4);
\draw (0,0) node [below] {$X$} to (0,10);
\end{tz}
\gap=\gap
 \begin{tz}[xscale=0.5,yscale=0.7]
\node [halfbox] at (4,6) {$S$};
\node [2halfbox] at (3.5,7) {$m$};
\node [halfbox] at (3.5,8) {$S^\inv$};
\draw [purple strand] (4,5) to (4,7);
\draw [purple strand] (3,2) to (3,7);
\draw (2,2) to (2,10);
\draw [purple strand] (3.5,7) to (3.5,8);
\draw [purple strand] (3.5,8) to [out=up, in=down, looseness=0.9] (1,10);
\draw [purple strand] (3.5,3) to [out=up, in=down, looseness=0.9] (4.5,5);
\draw [black strand] (6,0) node [below] {$Y$} to [out=up, in=down] (2.5,2);
\draw [purple strand] (4.5,0) node [below] {$\mathcal{F}$} to (4.5,3);
\draw [purple strand] (4.5,3) to [out=up, in=down, looseness=0.9] (3.5,5);
\node [2halfbox] at (4,5) {$m$};
\node [1halfbox] at (3.25,3) {$\Delta$};
\node [2halfbox] at (2.5,2) {$\varrho_Y$};
\draw (0,0) node [below] {$X$} to (0,10);
\end{tz}
\gap=\gap
\end{align*}

\begin{align*}
     \begin{tz}[xscale=0.5,yscale=0.7]
\node [2halfbox] at (3.5,7) {$m$};
\node [halfbox] at (3.5,8) {$S^\inv$};
\draw [purple strand] (4,5) to (4,7);
\draw [purple strand] (3,3) to [out=up, in=down, looseness=0.9] (2.5,5);
\draw [purple strand] (2.5,5) to [out=up, in=down, looseness=0.9] (3,7);
\draw [purple strand] (3,2) to (3,3);
\draw (2,2) to (2,10);
\draw [purple strand] (3.5,7) to (3.5,8);
\draw [purple strand] (3.5,8) to [out=up, in=down, looseness=0.9] (1,10);
\draw [purple strand] (3.5,3) to (3.5,5);
\draw [black strand] (6,0) node [below] {$Y$} to [out=up, in=down] (2.5,2);
\draw [purple strand] (4.5,0) node [below] {$\mathcal{F}$} to (4.5,5);
\node [2halfbox] at (4,5) {$m$};
\node [1halfbox] at (3.25,3) {$\Delta$};
\node [2halfbox] at (2.5,2) {$\varrho_Y$};
\node [halfbox] at (3.5,4) {$S$};
\node [halfbox] at (4.5,4) {$S$};
\draw (0,0) node [below] {$X$} to (0,10);
\end{tz}
\gap=\gap
     \begin{tz}[xscale=0.5,yscale=0.7]
\node [2halfbox] at (3.5,7) {$m$};
\node [halfbox] at (3.5,8) {$S^\inv$};
\draw [purple strand] (4,6) to (4,7);
\draw [purple strand] (4.5,5) to [out=up, in=down, looseness=0.9] (4,6);
\draw [purple strand] (3.25,5) to (3.25,7);
\draw [purple strand] (2.75,3) to (2.75,5);
\draw [purple strand] (3,2) to (3,3);
\draw (2,2) to (2,10);
\draw [purple strand] (3.5,7) to (3.5,8);
\draw [purple strand] (3.5,8) to [out=up, in=down, looseness=0.9] (1,10);
\draw [purple strand] (3.5,3) to (3.5,5);
\draw [black strand] (6,0) node [below] {$Y$} to [out=up, in=down] (2.5,2);
\draw [purple strand] (4.5,0) node [below] {$\mathcal{F}$} to (4.5,5);
\node [2halfbox] at (3.25,5) {$m$};
\node [2halfbox] at (3.25,3) {$\Delta$};
\node [2halfbox] at (2.5,2) {$\varrho_Y$};
\node [halfbox] at (3.5,4) {$S$};
\node [halfbox] at (4.5,4) {$S$};
\draw (0,0) node [below] {$X$} to (0,10);
\end{tz}
\gap=\gap
     \begin{tz}[xscale=0.5,yscale=0.7]
\node [2halfbox] at (3.5,7) {$m$};
\node [halfbox] at (3.5,8) {$S^\inv$};
\draw [purple strand] (4,6) to (4,7);
\draw [purple strand] (4.5,5) to [out=up, in=down, looseness=0.9] (4,6);
\draw [purple strand] (3.25,5) to (3.25,7);
\draw [purple strand] (3,2) to (3,3);
\draw (2,2) to (2,10);
\draw [purple strand] (3.5,7) to (3.5,8);
\draw [purple strand] (3.5,8) to [out=up, in=down, looseness=0.9] (1,10);
\draw [black strand] (6,0) node [below] {$Y$} to [out=up, in=down] (2.5,2);
\draw [purple strand] (4.5,0) node [below] {$\mathcal{F}$} to (4.5,5);
\node [halfbox] at (3.25,5) {$u$};
\node [halfbox] at (3,3) {$\varepsilon$};
\node [2halfbox] at (2.5,2) {$\varrho_Y$};
\node [halfbox] at (4.5,4) {$S$};
\draw (0,0) node [below] {$X$} to (0,10);
\end{tz}
\gap=\gap
\end{align*}

\begin{align*}
         \begin{tz}[xscale=0.5,yscale=0.7]
\node [halfbox] at (4,3) {$S^\inv$};
\draw (2,2) to (2,3);
\draw [black strand] (2,3) to [out=up, in=down] (4,5);
\draw [purple strand] (4,3) to [out=up, in=down, looseness=0.9] (2,5);
\draw [purple strand] (4,2) to (4,3);
\draw [black strand] (4,0) node [below] {$Y$} to [out=up, in=down] (2,2);
\draw [purple strand] (2,0) node [below] {$\mathcal{F}$} to [out=up, in=down, looseness=0.9] (4,2);
\node [halfbox] at (4,2) {$S$};
\draw (0,0) node [below] {$X$} to (0,5);
\end{tz}
\gap=\gap
         \begin{tz}[xscale=0.5,yscale=0.7]
\draw [black strand] (4,0) node [below] {$Y$} to (4,5);
\draw [purple strand] (2,0) node [below] {$\mathcal{F}$} to (2,5);
\draw (0,0) node [below] {$X$} to (0,5);
\end{tz}
\end{align*}

Where in the second equality we used the compatibility of the coaction $\varrho_Y$ with the comultiplication $\Delta$ of $\coend$, in the third equality, the so called 'anti-multiplicativity' \cite[6.2.1]{turaev_monoidal_2017} of the antipode of a Hopf algebra, in the fourth, the fundamental relation for the antipode of the Hopf algebra $\coend$, and finally in the penultimate equality, the unit and counit axioms for $\coend$.

As for the composite $\mathcal{Z}(\phi_r^\inv)\circ \mathcal{Z}(\phi_r)$:

\begin{align*}
 \begin{tz}[xscale=0.5,yscale=0.7]
\node [2halfbox] at (2.5,1) {$\varrho_Y$};
\node [halfbox] at (4,2) {$S$};
\node [2halfbox] at (3.5,3) {$m$};
\node [halfbox] at (3.5,4) {$S^\inv$};
\draw [purple strand] (4,0) node [below] {$\coend$} to (4,3);
\draw [purple strand] (3,1) to (3,3);
\draw (2,1) to (2,4);
\draw [black strand] (2,4) to [out=up, in=down, looseness=0.9] (4.5,6);
\draw [purple strand] (3.5,3) to (3.5,4);
\draw [purple strand] (3.5,4) to [out=up, in=down, looseness=0.9] (2.5,6);
\draw (0,0) node [below] {$X$} to (0,6);
\draw (2.5,0) node [below] {$Y$} to (2.5,1);
\draw [black strand] (4.25,7) to [out=up, in=down] (1,8.5);
\draw [purple strand] (3.5,9) to (3.5,10) node [above] {$\mathcal{F}$};
\draw [purple strand] (2.5,6) to (2.5,9);
\draw [purple strand] (4.75,7) to (4.75,9);
\node [box] at (3.5,9) {$m$};
\node [1halfbox] at (4.5,7) {$\varrho_Y$};
\draw (1,8.5) to (1,10) node [above] {$Y$};
\draw (0,6) to (0,10);
\draw [black strand] (4.5,6) to [out=up, in=down] (4.5,7);
\end{tz}
\gap=\gap
 \begin{tz}[xscale=0.5,yscale=0.7]
\node [2halfbox] at (2.5,1) {$\varrho_Y$};
\node [1halfbox] at (2,2) {$\varrho_Y$};
\node [halfbox] at (4,2) {$S$};
\node [2halfbox] at (3.5,3) {$m$};
\node [halfbox] at (3.5,4) {$S^\inv$};
\draw [purple strand] (4,0) node [below] {$\coend$} to (4,3);
\draw [purple strand] (3,1) to (3,3);
\draw (2,1) to (2,2);
\draw [black strand] (1.75,2) to [out=up, in=down, looseness=0.9] (1,4);
\draw [purple strand] (2.25,2) to (2.25,4);
\draw [purple strand] (2.25,4) to [out=up, in=down] (3,6);
\draw [purple strand] (3.5,3) to (3.5,4);
\draw [purple strand] (3.5,4) to [out=up, in=down, looseness=0.9] (2,6);
\draw (0,0) node [below] {$X$} to (0,7);
\draw (2.5,0) node [below] {$Y$} to (2.5,1);
\draw [purple strand] (2.5,6) to (2.5,7) node [above] {$\mathcal{F}$};
\node [2halfbox] at (2.5,6) {$m$};
\draw (1,4) to (1,7) node [above] {$Y$};
\end{tz}
\gap=\gap
 \begin{tz}[xscale=0.5,yscale=0.7]
\node [2halfbox] at (1.5,1) {$\varrho_Y$};
\node [1halfbox] at (2,2) {$\Delta$};
\node [halfbox] at (4,2) {$S$};
\node [2halfbox] at (3.5,3) {$m$};
\node [halfbox] at (3.5,4) {$S^\inv$};
\draw [purple strand] (4,0) node [below] {$\coend$} to (4,3);
\draw [purple strand] (2,1) to (2,2);
\draw [purple strand] (1.75,2) to (1.75,4);
\draw [purple strand] (1.75,4) to [out=up, in=down] (3,6);
\draw [purple strand] (2.25,2) to [out=up, in=down] (3,3);
\draw [purple strand] (3.5,3) to (3.5,4);
\draw [purple strand] (3.5,4) to [out=up, in=down, looseness=0.9] (2,6);
\draw (0,0) node [below] {$X$} to (0,7);
\draw (1.5,0) node [below] {$Y$} to (1.5,1);
\draw [purple strand] (2.5,6) to (2.5,7) node [above] {$\mathcal{F}$};
\node [2halfbox] at (2.5,6) {$m$};
\draw (1,1) to (1,7) node [above] {$Y$};
\end{tz}
\gap=\gap
\end{align*}

\begin{align*}
    \begin{tz}[xscale=0.5,yscale=0.7]
\node [2halfbox] at (1.5,1) {$\varrho_Y$};
\node [1halfbox] at (2,2) {$\Delta$};
\node [halfbox] at (4,2) {$S$};
\node [halfbox] at (3.5,5) {$S$};
\node [halfbox] at (1.75,5) {$S$};
\node [2halfbox] at (3.5,3) {$m$};
\node [halfbox] at (3.5,4) {$S^\inv$};
\node [halfbox] at (3,7) {$S^\inv$};
\draw [purple strand] (4,0) node [below] {$\coend$} to (4,3);
\draw [purple strand] (2,1) to (2,2);
\draw [purple strand] (1.75,2) to (1.75,6);
\draw [purple strand] (2.25,2) to [out=up, in=down] (3,3);
\draw [purple strand] (3.5,3) to (3.5,6);
\draw (0,0) node [below] {$X$} to (0,8);
\draw (1.5,0) node [below] {$Y$} to (1.5,1);
\draw [purple strand] (3,6) to (3,8) node [above] {$\mathcal{F}$};
\node [box] at (3,6) {$m$};
\draw (1,1) to (1,8) node [above] {$Y$};
\end{tz}
\gap=\gap
\begin{tz}[xscale=0.5,yscale=0.7]
\node [2halfbox] at (1.5,1) {$\varrho_Y$};
\node [1halfbox] at (2,2) {$\Delta$};
\node [halfbox] at (4,2) {$S$};
\node [halfbox] at (1.75,3) {$S$};
\node [2halfbox] at (3.5,3) {$m$};
\node [halfbox] at (3,5) {$S^\inv$};
\draw [purple strand] (4,0) node [below] {$\coend$} to (4,3);
\draw [purple strand] (2,1) to (2,2);
\draw [purple strand] (1.75,2) to (1.75,4);
\draw [purple strand] (2.25,2) to [out=up, in=down] (3,3);
\draw [purple strand] (3.5,3) to (3.5,4);
\draw (0,0) node [below] {$X$} to (0,6);
\draw (1.5,0) node [below] {$Y$} to (1.5,1);
\draw [purple strand] (3,4) to (3,6) node [above] {$\mathcal{F}$};
\node [box] at (3,4) {$m$};
\draw (1,1) to (1,6) node [above] {$Y$};
\end{tz}
\gap=\gap
\begin{tz}[xscale=0.5,yscale=0.7]
\node [2halfbox] at (1.5,1) {$\varrho_Y$};
\node [1halfbox] at (2,2) {$\Delta$};
\node [halfbox] at (4,2) {$S$};
\node [halfbox] at (1.75,3) {$S$};
\node [1halfbox] at (2,4) {$m$};
\node [halfbox] at (3,6) {$S^\inv$};
\draw [purple strand] (4,0) node [below] {$\coend$} to (4,5);
\draw [purple strand] (2,1) to (2,2);
\draw [purple strand] (1.75,2) to (1.75,4);
\draw [purple strand] (2.25,2) to [out=up, in=down] (2.75,3);
\draw [purple strand] (2.75,3) to [out=up, in=down] (2.25,4);
\draw [purple strand] (2,4) to (2,5);
\draw (0,0) node [below] {$X$} to (0,7);
\draw (1.5,0) node [below] {$Y$} to (1.5,1);
\draw [purple strand] (3,5) to (3,7) node [above] {$\mathcal{F}$};
\node [box] at (3,5) {$m$};
\draw (1,1) to (1,7) node [above] {$Y$};
\end{tz}
\gap=\gap
\end{align*}

\begin{align*}
    \begin{tz}[xscale=0.5,yscale=0.7]
\node [2halfbox] at (1.5,1) {$\varrho_Y$};
\node [halfbox] at (2,2) {$\varepsilon$};
\node [halfbox] at (4,2) {$S$};
\node [halfbox] at (2,4) {$u$};
\node [halfbox] at (3,6) {$S^\inv$};
\draw [purple strand] (4,0) node [below] {$\coend$} to (4,5);
\draw [purple strand] (2,1) to (2,2);
\draw [purple strand] (2,4) to (2,5);
\draw (0,0) node [below] {$X$} to (0,7);
\draw (1.5,0) node [below] {$Y$} to (1.5,1);
\draw [purple strand] (3,5) to (3,7) node [above] {$\mathcal{F}$};
\node [box] at (3,5) {$m$};
\draw (1,1) to (1,7) node [above] {$Y$};
\end{tz}
\gap=\gap
\begin{tz}[xscale=0.5,yscale=0.7]
\node [halfbox] at (2,1) {$S$};
\node [halfbox] at (2,2) {$S^\inv$};
\draw [purple strand] (2,0) node [below] {$\coend$} to (2,3);
\draw (0,0) node [below] {$X$} to (0,3);
\draw (1,0) node [below] {$Y$} to (1,3);
\end{tz}
\gap=\gap
\begin{tz}[xscale=0.5,yscale=0.7]
\draw [purple strand] (2,0) node [below] {$\coend$} to (2,3);
\draw (0,0) node [below] {$X$} to (0,3);
\draw (1,0) node [below] {$Y$} to (1,3);
\end{tz}
\end{align*}

Where the second equality is due to the compatibility of the coaction with the comultiplication $\Delta$. The third equality is again due to the 'anti-multiplicativity' of the antipode $S$, but now we have pre-composed it with the inverse braiding and post-composed with $S^\inv$. The fourth equality is given by the associativity of $m$, the fifth by the defining relation of the antipode $S$, and the penultimate by the relations between unit-multiplication and counit-coaction.

\item Fixing $X\boxtimes Y\in \C\boxtimes\C$ , relation \eqref{tortile} demands that for all $\phi\in\Hom(X\otimes Y,\unit)^*$, it is true that $\phi(-\circ\vartheta_X\otimes\text{id}_Y)=\phi(-\circ\text{id}_X\otimes\vartheta_Y)$. Since this has to hold for all $\phi$, that implies that the arguments of $\phi$ have to be equal.

In other words, we will show the equality of the following 2-morphisms:

$$\Hom(X\otimes Y,\unit)\xrightarrow{-\circ(\vartheta_X\otimes\id_Y)}\Hom(X\otimes Y,\unit)=
\Hom(X\otimes Y,\unit)\xrightarrow{-\circ(\id_X\otimes\vartheta_Y)}\Hom(X\otimes Y,\unit).$$

To do this, let $g\in \Hom(X\otimes Y,\unit)$. Then we have the following:

\begin{align*}
\begin{tz}[xscale=0.8]
\draw (2,0) node [below] {$Y$} to (2,2);
\draw (1,0) node [below] {$X$} to (1,2);
\node [halfbox] at (2,1) {$\vartheta_{Y}$};
\node [box] at (1.5,2) {$g$};
\end{tz}
\gap=\gap
\begin{tz}[xscale=0.8]
\draw (1,0) node [below] {$X$} to (1,2);
\draw (4,0) node [below] {$Y$} to (4,3);
\node [halfbox] at (4,1) {$\vartheta_Y$};
\node [box] at (1.5,2) {$g$};
\draw [black strand] (2,1) to [out=down, in=down, looseness=1] (3,1);
\draw [black strand] (3,3) to [out=up, in=up, looseness=1] (4,3);
\draw (3,1) to (3,3);
\draw (2,1) to (2,2);
\end{tz}
\gap=\gap
\begin{tz}[xscale=0.8]
\draw (1,0) node [below] {$X$} to (1,1.5);
\draw (4,0) node [below] {$Y$} to (4,3);
\node [halfbox] at (3,2.5) {$\vartheta_{Y^\lor}$};
\node [box] at (1.5,1.5) {$g$};
\draw [black strand] (2,1) to [out=down, in=down, looseness=1] (3,1);
\draw [black strand] (3,3) to [out=up, in=up, looseness=1] (4,3);
\draw (3,1) to (3,3);
\draw (2,1) to (2,1.5);
\end{tz}
\end{align*}

\begin{align*}
\gap=\gap
\begin{tz}[xscale=0.8]
\draw (1,0) node [below] {$X$} to (1,2);
\draw (4,0) node [below] {$Y$} to (4,3);
\node [halfbox] at (1,0.5) {$\vartheta_{X}$};
\node [box] at (1.5,2) {$g$};
\draw [black strand] (2,1) to [out=down, in=down, looseness=1] (3,1);
\draw [black strand] (3,3) to [out=up, in=up, looseness=1] (4,3);
\draw (3,1) to (3,3);
\draw (2,1) to (2,2);
\end{tz}
\gap=\gap
\begin{tz}[xscale=0.8]
\draw (2,0) node [below] {$Y$} to (2,2);
\draw (1,0) node [below] {$X$} to (1,2);
\node [halfbox] at (1,1) {$\vartheta_{X}$};
\node [box] at (1.5,2) {$g$};
\end{tz}
\end{align*}

Where in the second equality we used the duality property of the twist $\vartheta$, and in the third the naturality of $\vartheta$ with respect to the morphism $$(g\otimes\id_{Y^\lor})\circ(\id_X\otimes\text{coev}_Y)\colon X\to Y^\lor.$$



\item We then continue by checking relation \eqref{adj_eta_epsilon1}. This implies that we want:

$$ X \xrightarrow{\delta_X} X\otimes \coend \xrightarrow{\varepsilon} X = \id_X $$

Which is true because of the relation between the coaction $\delta_X$ and the counit $\varepsilon$. 

\item Relation \eqref{adj_eta_epsilon2} is verified in exactly the same way, and in some sense it is a special case of \eqref{adj_eta_epsilon1}:

$$(X\otimes\coend,\Delta)\xrightarrow{\Delta}((X\otimes\coend)\otimes\coend,\Delta)\xrightarrow{\id_X\otimes\varepsilon\otimes\id_\coend} (X\otimes\coend,\Delta)$$


\item Now, checking the relations \eqref{adj_eta_epsilon_dag1} and \eqref{adj_eta_epsilon_dag2} respectively:

\begin{align*}
    \begin{tz}[xscale=0.5,yscale=0.7]
\node [2halfbox] at (2.5,1) {$\varrho_X$};
\node [halfbox] at (3,2) {$S$};
\node [2halfbox] at (3.5,3) {$m$};
\node [halfbox] at (3.5,4) {$\Lambda^{co}$};
\node [halfbox] at (4,0) {$\Lambda$};
\draw [purple strand] (4,0) to (4,3);
\draw [purple strand] (3,1) to (3,3);
\draw (2,1) to (2,4);
\draw [purple strand] (3.5,3) to (3.5,4);
\draw (2.5,0) node [below] {$X$} to (2.5,1);
\end{tz}
\gap=\gap
    \begin{tz}[xscale=0.5,yscale=0.7]
\node [2halfbox] at (2.5,1) {$\varrho_X$};
\node [halfbox] at (4,2) {$S$};
\node [2halfbox] at (3.5,3) {$m$};
\node [halfbox] at (3.5,4) {$\Lambda^{co}$};
\node [halfbox] at (4,0) {$\Lambda$};
\draw [purple strand] (4,0) to (4,3);
\draw [purple strand] (3,1) to (3,3);
\draw (2,1) to (2,4);
\draw [purple strand] (3.5,3) to (3.5,4);
\draw (2.5,0) node [below] {$X$} to (2.5,1);
\end{tz}
\gap=\gap
    \begin{tz}[xscale=0.5,yscale=0.7]
\node [2halfbox] at (2.5,1) {$\varrho_X$};
\node [2halfbox] at (3.5,3) {$m$};
\node [halfbox] at (3.5,4) {$\Lambda^{co}$};
\node [halfbox] at (4,0) {$\Lambda$};
\draw [purple strand] (4,0) to (4,3);
\draw [purple strand] (3,1) to (3,3);
\draw (2,1) to (2,4);
\draw [purple strand] (3.5,3) to (3.5,4);
\draw (2.5,0) node [below] {$X$} to (2.5,1);
\end{tz}
\gap=\gap
    \begin{tz}[xscale=0.5,yscale=0.7]
\node [2halfbox] at (2.5,1) {$\varrho_X$};
\node [halfbox] at (3,2) {$\varepsilon$};
\node [halfbox] at (3,4) {$\Lambda^{co}$};
\node [halfbox] at (3,3) {$\Lambda$};
\draw [purple strand] (3,1) to (3,2);
\draw (2,1) to (2,4);
\draw [purple strand] (3,3) to (3,4);
\draw (2.5,0) node [below] {$X$} to (2.5,1);
\end{tz}
\gap=\gap
    \begin{tz}[xscale=0.5,yscale=0.7]
\draw (0,0) node [below] {$X$} to (0,4);
\end{tz}
\end{align*}

Where in the first and second equality we used Lemma \ref{integral_antipode_relations} (iii), (ii) respectively, and in the third, the definition of the integral $\Lambda$.

\begin{align*}
        \begin{tz}[xscale=0.5,yscale=0.7]
\node [halfbox] at (2.5,2) {$S$};
\node [2halfbox] at (2,1) {$\Delta$};
\node [halfbox] at (2,0) {$\Lambda$};
\node [2halfbox] at (3,3) {$m$};
\node [halfbox] at (3,4) {$\Lambda^{co}$};
\draw [purple strand] (3.5,0) node [below] {$\coend$} to (3.5,3);
\draw [purple strand] (2,0) to (2,1);
\draw [purple strand] (2.5,1) to (2.5,3);
\draw [purple strand] (1.5,1) to (1.5,4);
\draw [purple strand] (3,3) to (3,4);
\draw (1,0) node [below] {$X$} to (1,4);
\end{tz}
\gap=\gap
\begin{tz}[xscale=0.5,yscale=0.7]
\draw [purple strand] (2,0) node [below] {$\coend$} to (2,4);
\draw (1,0) node [below] {$X$} to (1,4);
\end{tz}
\end{align*}

By \cite[Lemma 4.2.12]{kerler_non-semisimple_2001}.

\item Now we turn to relations \eqref{adj_nu_mu_dag1} and \eqref{adj_nu_mu_dag2}. Note that these manifest $U^L$ as the left adjoint of $U$.

To check relation \eqref{adj_nu_mu_dag1}, we write $\unit$ as a colimit of projectives. We can do this using Proposition \ref{coYoneda}: 
$$\unit\cong\int^{P\in\cP}\hspace{-20pt}\Hom_\C(P,\unit)\otimes P.$$ 


The isomorphism is given by $\int EV$, so we have a map
$$\unit\xrightarrow{(\int EV)^\inv}\int^{P\in\cP}\hspace{-20pt}\Hom_\C(P,\unit)\otimes P.$$

So, for relation \eqref{adj_nu_mu_dag1} to hold, the following diagram needs to commute:

\tikzset{every node/.style={font=\normalsize}}

\[\begin{tikzcd}[row sep=large,column sep=huge]
	{\int^P\text{Hom}_\mathcal{C}(P,\unit)\otimes P} & \unit \\
	{\int^P\text{Hom}_\mathcal{C}(P,\unit)\otimes\text{Hom}_\mathcal{C}(P,\unit)^*\otimes\text{Hom}_\mathcal{C}(P,\unit)\otimes P} \\
	{\int^P\text{Hom}_\mathcal{C}(P,\unit)\otimes\text{Hom}_\mathcal{C}(P,\unit)^*\otimes\unit} & \unit
	\arrow["{(\int EV)^{-1}}"', from=1-2, to=1-1]
	\arrow["{\int(\text{id}_{\text{Hom}_\mathcal{C}(P,\unit)}\otimes\text{coev}_{\text{Hom}_\mathcal{C}(P,\unit)}\otimes \text{id}_P)}"', from=1-1, to=2-1]
	\arrow["{\int(\text{id}_{\text{Hom}_\mathcal{C}(P,\unit)\otimes\text{Hom}_\mathcal{C}(P,\unit)^*}\otimes EV_P)}"', from=2-1, to=3-1]
	\arrow["{\text{id}_\unit}", from=1-2, to=3-2]
	\arrow["{\int(ev_\text{vect})\otimes\text{id}_\unit}"', from=3-1, to=3-2]
\end{tikzcd}\]

Indeed, this diagram is commutative as the evaluation and coevaluation of vector spaces satisfy a snake relation, and then $\int EV$ cancels with $(\int EV)^\inv$.

\item As for relation \eqref{adj_nu_mu_dag2}, we want the following diagram to be commutative: 

\[\begin{tikzcd}
	{\text{Hom}_\mathcal{C}(P,\unit)^*} & {\text{Hom}_\mathcal{C}(P,\unit)^*} \\
	{\text{Hom}_\mathcal{C}(P,\unit)^*\otimes\displaystyle\int^{Q\in\cP}\hspace{-20pt}\text{Hom}_\mathcal{C}(Q,\unit)\otimes\text{Hom}_\mathcal{C}(Q,\unit)^*}
	\arrow["{\mathcal{Z}(\mu^\dagger)}", from=1-1, to=2-1]
	\arrow["{\text{id}_{\text{Hom}_\mathcal{C}(P,\unit)^*}}", from=1-1, to=1-2]
	\arrow["{\mathcal{Z}(\nu^\dagger)}"', from=2-1, to=1-2]
\end{tikzcd}\]


Note that $\mathcal{Z}(\mu^\dagger)$ is applied on $P$, the argument of $\text{Hom}_\mathcal{C}(P,\unit)^*$. For sake of simplicity, we will denote the maps as applied on $P$, instead of $\text{Hom}_\mathcal{C}(P,\unit)^*$. Then the above diagram looks like this:

\[\!\!\!\!\!\!\!\!\!\!\!\!\!\!\!\!\!\!\!\!\!\!\!\!\!\!\!\!\!\!\!\!\!\!\!\!\begin{tikzcd}
	{\text{Hom}_\mathcal{C}(P,\unit)^*} & {\text{Hom}_\mathcal{C}(P,\unit)^*} \\
	{\text{Hom}_\mathcal{C}(\text{Hom}_\mathcal{C}(P,\unit)^*\otimes\text{Hom}_\mathcal{C}(P,\unit)\otimes P,\unit)^*} & {\text{Hom}_\mathcal{C}(P,\unit)^*\otimes\displaystyle\int^Q\hspace{-10pt}\text{Hom}_\mathcal{C}(Q,\unit)\otimes\text{Hom}_\mathcal{C}(Q,\unit)^*} \\
	{\text{Hom}_\mathcal{C}(\text{Hom}_\mathcal{C}(P,\unit)^*\otimes\unit,\unit)^*} \\
	& {\text{Hom}_\mathcal{C}(\text{Hom}_\mathcal{C}(P,\unit)^*\otimes\displaystyle\int^Q\hspace{-10pt}\text{Hom}_\mathcal{C}(Q,\unit)\otimes Q,\unit)^*}
	\arrow["{\text{id}_{\text{Hom}_\mathcal{C}(P,\unit)^*}}"', from=1-1, to=1-2]
	\arrow["{\text{coev}_{vect}\otimes \text{id}_P}", from=1-1, to=2-1]
	\arrow["{\text{id}_{\text{Hom}_\mathcal{C}(P,\unit)}\otimes\text{EV}_P}", from=2-1, to=3-1]
	\arrow["{\int \text{ev}_{vect}}", from=2-2, to=1-2]
	\arrow["{\text{id}_{\text{Hom}_\mathcal{C}(P,\unit)^*}\otimes(\int\text{EV})^{-1}}"{pos=0.3}, from=3-1, to=4-2]
	\arrow["\sim", from=4-2, to=2-2]
\end{tikzcd}\]


where by $\sim$ we have denoted the isomorhism of coends induced by $$\Hom(V\otimes X,Y)^*\cong V \otimes \Hom(X,Y)^*,$$ for $V=\text{Hom}_\mathcal{C}(P,\unit)^*\otimes\text{Hom}_\mathcal{C}(Q,\unit)$, $X=Q$ and $Y=\unit$.

This diagram is indeed commutative and we now explain why.

First, looking at the composite $(\int\text{EV})^{-1}\circ\text{EV}_P$, this is equal to $d^1_P$ , where $d^1$ is the dinatural family of maps for the coend $\int^P \text{Hom}_\mathcal{C}(P,\unit)\otimes P$. Then this, (post-)composed with $\sim$, is equal to $d^2_P$, where by $d^2$ we denote the dinatural family of maps for the coend $\int^P \text{Hom}_\mathcal{C}(P,\unit)\otimes \Hom(P,\unit)^*$.

Then, by the definition of $\int \text{ev}_{vect}$ and by the snake relation satisfied by $\text{coev}_{\Hom_\mathcal{C}(P,\unit)}$ and $\text{ev}_{\Hom_\mathcal{C}(P,\unit)}$ we get the desired result.



\item The pivotality relation \eqref{piv_on_sphere} says that a certain composite of 2-morphisms is the identity 2-morphism on $S^2  $. However, since this composite is in fact an endomorphism of the cylinder, we will check something stronger. Namely, we will show that the relation holds for $\mathcal{Z}(\tinycup)$, instead of $\mathcal{Z}(\begin{tikzpicture}
    \node [Cap] at (0,0) {};
    \node[Cup] at (0,0) {};
    \end{tikzpicture})$.

 We first need to make sense of $\mu\circ\mu^\dagger$ applied on 'one leg' as a morphism $\coend\to\coend$. To do this, we start with $P_\unit^\lor\otimes P_\unit$. Relation \eqref{piv_on_sphere} instructs us to apply $\mu\circ\mu^\dagger$ on $P_\unit$. So we have the following:

$$P_\unit^\lor\otimes P_\unit\xrightarrow{\id_{P_\unit^\lor}\otimes (\eta_\unit\circ\varepsilon_\unit)}P_\unit^\lor\otimes P_\unit\xrightarrow{i_{P_\unit}}\coend$$

Diagrammatically, we have the following equality:

\tikzset{every node/.style={font=\tiny}}

\begin{align*}
    \begin{tz}[xscale=0.4,yscale=0.7]
\node [box] at (1,3) {$i_{P_\unit}$};
\node [halfbox] at (2,1) {$\varepsilon_\unit$};
\node [halfbox] at (2,2) {$\eta_\unit$};
\draw [purple strand] (1,3) to (1,4) node [above] {$\mathcal{F}$};
\draw (2,2) to (2,3);
\draw (2,0) node [below] {$P_\unit$} to (2,1);
\draw (0,0) node [below] {$P_\unit^\lor$} to (0,3);
\end{tz}
\gap=\gap
    \begin{tz}[xscale=0.4,yscale=0.7]
\node [2halfbox] at (1.5,3) {$i_{P_\unit}$};
\node [halfbox] at (4,1) {$\varepsilon_\unit$};
\node [halfbox] at (4,2) {$\eta_\unit$};
\draw [purple strand] (1.5,3) to (1.5,4) node [above] {$\mathcal{F}$};
\draw (2,1) to (2,3);
\draw (3,1) to (3,3);
\draw (4,2) to (4,3);
\draw (4,0) node [below] {$P_\unit$} to (4,1);
\draw (1,0) node [below] {$P_\unit^\lor$} to (1,3);
\draw [black strand] (2,1) to [out=down, in=down, looseness=1] (3,1);
\draw [black strand] (3,3) to [out=up, in=up, looseness=1] (4,3);
\end{tz}
\gap=\gap
    \begin{tz}[xscale=0.4,yscale=0.7]
\node [2halfbox] at (1.5,3) {$i_{P_\unit}$};
\node [halfbox] at (4.5,2) {$\varepsilon_\unit$};
\node [halfbox] at (3,2) {$\eta_\unit^\lor$};
\draw [purple strand] (1.5,3) to (1.5,4) node [above] {$\mathcal{F}$};
\draw (2,1) to (2,3);
\draw (3,1) to (3,2);
\draw (4.5,0) node [below] {$P_\unit$} to (4.5,2);
\draw (1,0) node [below] {$P_\unit^\lor$} to (1,3);
\draw [black strand] (2,1) to [out=down, in=down, looseness=1] (3,1);
\end{tz}
\gap=\gap
\begin{tz}[xscale=0.5,yscale=0.7]
\node [halfbox] at (2,3.5) {$\Lambda^{co}$};
\draw [purple strand] (0,2.5) to (0,3.5) node [above] {$\mathcal{F}$};
\draw [purple strand] (2,2.5) to (2,3.5);
\node [2halfbox] at (2,2.5) {$i_{P_j}$};
\node [2halfbox] at (0,2.5) {$i_{P_j}$};
\draw [black strand] (2,0) node [below] {$P_j$} to (2,1);
\draw [black strand] (0,0) node [below] {$P_j^\lor$} to (0,1);
\draw [black strand] (0,1) to [out=up, in=up, looseness=0.9] (-0.5,2.5);
\draw [black strand] (2,1) to [out=up, in=up, looseness=0.9] (2.5,2.5);
\draw [black strand] (0.5,2.5) to [out=down, in=down, looseness=4] (1.5,2.5);
\end{tz}
\end{align*}

Where in the last step we make use of Lemma \ref{coint_eta_eps}, and the fact that the morphism is zero for all $j\neq 1$. 

This then induces the following morphism $\coend\to\coend$:

\begin{align*}
        \begin{tz}[xscale=0.4,yscale=1]
\node [box] at (1,1) {$\Delta$};
\node [halfbox] at (2,2) {$\Lambda^{co}$};
\draw [purple strand] (1,0) node [below] {$\mathcal{F}$} to (1,1);
\draw [purple strand] (0,1) to (0,2);
\draw [purple strand] (2,1) to (2,2);
\end{tz}
\end{align*}



Given the above, checking that relation \eqref{piv_on_sphere} holds, boils down to showing that the composite $\unit\xrightarrow{\Lambda} \coend \xrightarrow{(\id\otimes \Lambda^{co})\circ \Delta} \coend \xrightarrow{\varepsilon} \unit$ is equal to the identity. This is true, due to the relation of $\varepsilon$ and $\Delta$, as well as the fact that $\Lambda^{co}\circ\Lambda= \id_\unit$.

 Note that for this relation to be satisfied, we also had $\gamma$ from $\mathcal{Z}(\epsilon^\dagger)$ cancel out $\gamma^\inv$ from $\mathcal{Z}(\mu)$.

\item To understand the composite given by the modularity relation \eqref{MOD}, let us first understand how we're meant to apply "$\vartheta,\vartheta^\inv$".

The object at the level where $\vartheta,\vartheta^\inv$ is applied is 
$\int^Y X\otimes Y^\lor\boxtimes Y$. Therefore, '$\mathcal{Z}(\theta,\theta^\inv)$' is the coend morphism induced by $\vartheta_{X\otimes Y^\lor}\boxtimes\vartheta_Y^\inv$.

After using the twist relation for $\vartheta_{X\otimes Y^\lor}$, this looks as follows:

\begin{align*}
\begin{tz}[xscale=0.8]
\draw (2,0) node [below] {$Y$} to (2,3);
\draw (1,0) node [below] {$Y^\lor$} to [out=up, in=down] (0,1);
\draw [black strand] (0,0) node [below] {$X$} to [out=up, in=down] (1,1);
\draw [black strand] (1,1) to [out=up, in=down] (0,2);
\draw [black strand] (0,2) to [out=up, in=down] (0,4);
\draw [black strand] (0,1) to [out=up, in=down] (1,2);
\draw [black strand] (1,2) to [out=up, in=down] (1,3);
\node [halfbox] at (0,2) {$\vartheta_X$};
\node [halfbox] at (1,2) {$\vartheta_{Y^\lor}$};
\node [halfbox] at (2,2) {$\vartheta^\inv_Y$};
\node [box] at (1.5,3) {$i_Y$};
\draw [purple strand] (1.5,3) to (1.5,4);
\end{tz}
\gap=\gap
\begin{tz}[xscale=0.8]
\draw (2,0) node [below] {$Y$} to (2,3);
\draw (1,0) node [below] {$Y^\lor$} to [out=up, in=down] (0,1);
\draw [black strand] (0,0) node [below] {$X$} to [out=up, in=down] (1,1);
\draw [black strand] (1,1) to [out=up, in=down] (0,2);
\draw [black strand] (0,2) to [out=up, in=down] (0,4);
\draw [black strand] (0,1) to [out=up, in=down] (1,2);
\draw [black strand] (1,2) to [out=up, in=down] (1,3);
\node [halfbox] at (0,3) {$\vartheta_X$};
\node [halfbox] at (2,2) {$\vartheta_{Y}$};
\node [halfbox] at (2,1) {$\vartheta^\inv_Y$};
\node [box] at (1.5,3) {$i_Y$};
\draw [purple strand] (1.5,3) to (1.5,4);
\end{tz}
\gap=\gap
\begin{tz}[xscale=0.8]
\draw (2,0) node [below] {$Y$} to (2,3);
\draw (1,0) node [below] {$Y^\lor$} to [out=up, in=down] (0,1);
\draw [black strand] (0,0) node [below] {$X$} to [out=up, in=down] (1,1);
\draw [black strand] (1,1) to [out=up, in=down] (0,2);
\draw [black strand] (0,2) to [out=up, in=down] (0,3);
\draw [black strand] (0,1) to [out=up, in=down] (1,2);
\draw [black strand] (1,2) to [out=up, in=down] (1,3);
\node [halfbox] at (-2,2) {$\vartheta_X$};
\node [box] at (1.5,3) {$i_Y$};
\draw [purple strand] (1.5,3) to (1.5,4);
\draw [black strand] (-2,1) to [out=down, in=down, looseness=1] (-1,1);
\draw [black strand] (-1,3) to [out=up, in=up, looseness=1] (0,3);
\draw (-2,1) to (-2,4);
\draw (-1,1) to (-1,3);
\end{tz}
\end{align*}

Where in the first equality we used the coend property as well as the fact that $\vartheta_{X^\lor}=\vartheta^\lor_{X}$ for all $X\in \mathcal{C}$.

If we include the $\varepsilon$ that we have in the modularity composite we get:

\begin{align*}
    \begin{tz}[xscale=0.8]
\draw (2,0) node [below] {$Y$} to (2,3);
\draw (1,0) node [below] {$Y^\lor$} to [out=up, in=down] (0,1);
\draw [black strand] (0,0) node [below] {$X$} to [out=up, in=down] (1,1);
\draw [black strand] (1,1) to [out=up, in=down] (0,2);
\draw [black strand] (0,2) to [out=up, in=down] (0,3);
\draw [black strand] (0,1) to [out=up, in=down] (1,2);
\draw [black strand] (1,2) to [out=up, in=down] (1,3);
\node [halfbox] at (-2,2) {$\vartheta_X$};
\node [box] at (1.5,3) {$i_Y$};
\draw [purple strand] (1.5,3) to (1.5,4);
\draw [black strand] (-2,1) to [out=down, in=down, looseness=1] (-1,1);
\draw [black strand] (-1,3) to [out=up, in=up, looseness=1] (0,3);
\draw (-2,1) to (-2,4);
\draw (-1,1) to (-1,3);
\node [halfbox] at (1.5,4) {$\varepsilon$};
\end{tz}
\gap=\gap
\begin{tz}[xscale=0.8]
\draw (2,0) node [below] {$Y$} to (2,3);
\draw (1,0) node [below] {$Y^\lor$} to [out=up, in=down] (0,1);
\draw [black strand] (0,0) node [below] {$X$} to [out=up, in=down] (1,1);
\draw [black strand] (1,1) to [out=up, in=down] (0,2);
\draw [black strand] (0,2) to [out=up, in=down] (0,3);
\draw [black strand] (0,1) to [out=up, in=down] (1,2);
\draw [black strand] (1,2) to [out=up, in=down] (1,3);
\node [halfbox] at (-2,2) {$\vartheta_X$};
\draw [black strand] (1,3) to [out=up, in=up, looseness=1] (2,3);
\draw [black strand] (-2,1) to [out=down, in=down, looseness=1] (-1,1);
\draw [black strand] (-1,3) to [out=up, in=up, looseness=1] (0,3);
\draw (-2,1) to (-2,4);
\draw (-1,1) to (-1,3);
\end{tz}
\gap=\gap
\begin{tz}[xscale=0.8]
\draw (0.5,0.5) node [below] {$X$} to (0.5,3);
\draw (1,0.5) node [below] {$Y^\lor$} to (1,3);
\draw (1.5,0.5) node [below] {$Y$} to (1.5,3);
\draw [purple strand] (1.25,3) to (1.25,4);
\draw [purple strand] (0.25,3) to (0.25,4);
\node [halfbox] at (-1,2) {$\vartheta_X$};
\node [1halfbox] at (1.25,3) {$i_Y$};
\node [1halfbox] at (0.25,3) {$i_X$};
\node [box] at (0.75,4) {$\omega$};
\draw [black strand] (-1,1) to [out=down, in=down, looseness=1] (0,1);
\draw (-1,1) to (-1,4);
\draw (0,1) to (0,3);
\end{tz}
\end{align*}

Now, by the universal property of the coend $\coend$, the above gives rise to a morphism $X\otimes\coend\to X$.

So now, seeing the full composite, we have:

\begin{align*}
    \begin{tz}[xscale=0.8]
\draw (0.5,0.5) node [below] {$X$} to (0.5,3);
\draw [purple strand] (1.25,0.5) to (1.25,4);
\draw [purple strand] (0.25,3) to (0.25,4);
\node [halfbox] at (1.25,0.5) {$\Lambda$};
\node [halfbox] at (-1,2) {$\vartheta_X$};
\node [1halfbox] at (0.25,3) {$i_X$};
\node [box] at (0.75,4) {$\omega$};
\draw [black strand] (-1,1) to [out=down, in=down, looseness=1] (0,1);
\draw (-1,1) to (-1,4);
\draw (0,1) to (0,3);
\end{tz}
\gap=\gap
    \begin{tz}[xscale=0.8]
\draw (0.5,0.5) node [below] {$X$} to (0.5,3);
\draw [purple strand] (0.25,3) to (0.25,4);
\node [halfbox] at (-1,2) {$\vartheta_X$};
\node [2halfbox] at (0.25,3) {$i_X$};
\node [2halfbox] at (0.25,4) {$\Lambda^{co}$};
\draw [black strand] (-1,1) to [out=down, in=down, looseness=1] (0,1);
\draw (-1,1) to (-1,4);
\draw (0,1) to (0,3);
\end{tz}
\end{align*}

where we used Lemma \ref{modularity_parameter} since $\Lambda^{co}\circ \Lambda = \id_\unit$.

Here we remind that it is sufficient to check the relation for $X=P_j$, a projective cover of a simple $j$, by Lemma \ref{coint_eta_eps} we have that all the above morphisms are zero, apart from:


\begin{align*}
    \begin{tz}[xscale=0.8]
\draw (0.5,0.5) node [below] {$P_\unit$} to (0.5,3);
\draw [purple strand] (0.25,3) to (0.25,4);
\node [halfbox] at (-1,2) {$\vartheta_{P_\unit}$};
\node [2halfbox] at (0.25,3) {$i_{P_\unit}$};
\node [2halfbox] at (0.25,4) {$\Lambda^{co}$};
\draw [black strand] (-1,1) to [out=down, in=down, looseness=1] (0,1);
\draw (-1,1) to (-1,4);
\draw (0,1) to (0,3);
\end{tz}
\gap=\gap
\begin{tz}[xscale=0.8]
\draw (1,0.5) node [below] {$P_\unit$} to (1,3);
\node [halfbox] at (-1,2) {$\vartheta_{P_\unit}$};
\node [halfbox] at (0,3) {$\eta_\unit^\lor$};
\node [halfbox] at (1,3) {$\varepsilon_\unit$};
\draw [black strand] (-1,1) to [out=down, in=down, looseness=1] (0,1);
\draw (-1,1) to (-1,4);
\draw (0,1) to (0,3);
\end{tz}
\gap=\gap
\begin{tz}[xscale=0.8]
\draw (1,0) node [below] {$P_\unit$} to (1,1);
\node [halfbox] at (-1,2) {$\vartheta_{P_\unit}$};
\node [halfbox] at (1,2) {$\eta_\unit$};
\node [halfbox] at (1,1) {$\varepsilon_\unit$};
\draw [black strand] (-1,1) to [out=down, in=down, looseness=1] (0,1);
\draw [black strand] (0,3) to [out=up, in=up, looseness=1] (1,3);
\draw (-1,1) to (-1,4);
\draw (0,1) to (0,3);
\draw (1,2) to (1,3);
\end{tz}
\gap=\gap
\begin{tz}[xscale=0.8]
\draw (1,0) node [below] {$P_\unit$} to (1,1);
\node [halfbox] at (1,2) {$\eta_\unit$};
\node [halfbox] at (1,1) {$\varepsilon_\unit$};
\draw (1,2) to (1,3);
\end{tz}
\end{align*}

which, by Lemma \ref{mu_mu_dag_comp} is the desired composite.

Note that we used the naturality of $\vartheta$ and the fact that $\vartheta_\unit=\id_\unit$.

As for the constants involved, the relation that they have to satisfy is: $\mathscrsfs{D}^\inv\cdot\zeta=\mathscrsfs{D}$ which gives $\mathscrsfs{D}=\sqrt{\zeta}$, fixing the constant $\mathscrsfs{D}\in k^\times$.


\begin{remark}
    Note that the rotated forms around the $z$-axis of $\eqref{piv_on_sphere}$ and $\eqref{MOD}$ hold because of the universal property of the coend $\coend$. 
\end{remark}


\item In order to check the anomaly relation \eqref{Anom}, we will make use of the fact that $\mathcal{Z}(S^2)\cong k$. Since $\mathcal{Z}(\xi)\colon \mathcal{Z}(S^2)\to \mathcal{Z}(S^2)$, then $\mathcal{Z}(\xi)$ is a scalar multiple of the identity, and abusing notation we write: $\mathcal{Z}(\xi)=\xi\cdot \id_{\mathcal{Z}(S^2)}$, where $\xi\in k$.

Now, we check the value of the left composite of \eqref{Anom} on $\mathcal{Z}(\tinycup)$, instead of $\mathcal{Z}(\begin{tikzpicture}
    \node [Cap] at (0,0) {};
    \node[Cup] at (0,0) {};
    \end{tikzpicture})$, which, from the twist non-degeneracy \eqref{twistnondegen} of $\C$ we know is:

\tikzset{every node/.style={font=\normalsize}}
\[\begin{tikzcd}
	\unit & \coend \\
	\unit & \coend
	\arrow["\mathscrsfs{D}^\inv\Lambda", from=1-1, to=1-2]
	\arrow["{\mathcal{T}}", from=1-2, to=2-2]
	\arrow["\varepsilon", from=2-2, to=2-1]
	\arrow["{\mathscrsfs{D}^\inv\cdot p_+\id_\unit}"', from=1-1, to=2-1]
\end{tikzcd}\]


Therefore, for the anomaly relation \eqref{Anom} to hold, we want to fix 
$$\xi=\mathscrsfs{D}^\inv \cdot p_+,$$ which, together with what we have computed above ($\mathscrsfs{D}^\inv=1/\sqrt{p_+p_-}$), gives $\xi=\sqrt{p_+/p_-}$.


\end{itemize}

With this, we have checked that all the necessary relations are satisfied.

\end{proof}

\begin{remark}
    The special case when $\xi=\id$ in $\Bordncsig$ gives the oriented non-compact presentation $\Bord$. 
    Having computed that $\xi=\sqrt{p_+/p_-}$ we see that we can define an oriented non-compact TQFT (i.e a functor out of $\Bord$), if and only if $p_+/p_-=1$. This is often referred to as `$\C$ being anomaly-free'.
\end{remark}

\begin{remark}\label{anom_root2}
   As mentioned in Remark \ref{anom_root1}, if we change the source 2-category in Theorem \ref{main_th} from $\Bordncsig$ to $\Bordncp$, we would instead compute $\xi=\sqrt[\leftroot{-2}\uproot{2}6]{p_+/p_-}$.
\end{remark}


\section{Mapping class group representations}\label{mcg_actions}

\subsection{Mapping class group representations from the non-compact TQFT}\label{MCG_nc}

Let $\Sigma_g$ be a (smooth, compact, oriented, closed) surface of genus $g\geq 0$. We will denote by $\text{Mod}(\Sigma_g)$ the mapping class group, i.e the group of isotopy classes of diffeomorphisms of $\Sigma_g$. It is a well known fact \cite{farb_primer_2011} that $\text{Mod}(\Sigma_g)$ is finitely generated by the so-called Dehn twists. We will denote a (right-handed) Dehn twist along a curve $\gamma$ by $T_\gamma$. In particular, there exists a specific set of generators, called the Lickorish generators, which is given by Dehn twists

$\{T_{\alpha_1},\dots,T_{\alpha_g}, T_{\beta_1},\dots, T_{\beta_{g-1}}, T_{\gamma_1},\dots, T_{\gamma_g}\},$
along the curves $\alpha_i, \beta_i, \gamma_i$, as shown in Figure \ref{fig:mcggenerators}. Note that this is an independent use of the letters $\alpha, \beta$ and $\gamma$ than the one we had in previous chapters.

\begin{figure}[H]
    \centering
    \begin{overpic}[scale = 0.7]{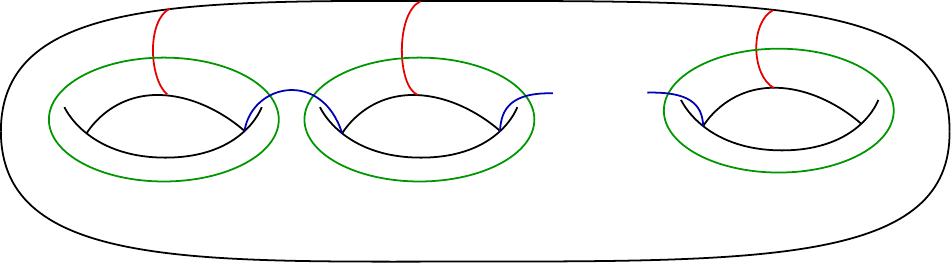}
        \put (17,27.5) {$\alpha_1$}
        \put (43,28.5) {$\alpha_2$}
        \put (81,27.5) {$\alpha_g$}
        \put (15,5.5) {$\beta_1$}
        \put (43,5.5) {$\beta_2$}
        \put (82,6) {$\beta_g$}
        \put (30,19.5) {$\gamma_1$}
        \put (54,19.5) {$\gamma_2$}
        \put (64.5,19.5) {$\gamma_{g-1}$}
        \put (61,17.5) {$\dots$}
    \end{overpic}
    \caption{Lickorish generators of $\text{Mod}_g$. Taken from \cite{romaidis_mapping_2023}.}
    \label{fig:mcggenerators}
\end{figure}

We will use an alternative set of generators for $\text{Mod}(\Sigma_g)$ by replacing $T_{\beta_i}$ by the \textbf{inverse} of the composite $S_i:=T_{\alpha_i}\circ T_{\beta_i}\circ T_{\alpha_i}$.

In $\Bordncsig$, $\Sigma_g$ should correspond to:

\normalbordisms
$$\begin{tikzpicture}
        \node[Cap, top] (A) at (0,0) {};
        \node[Pants, anchor=belt] (B) at (A.center) {};
        \node[Copants, anchor=leftleg] (C) at (B.leftleg) {};
        \node[draw=none] (G) at (0,-2) {$\vdots$};
        \node[Pants, top] (D) at (0,-3) {};
        \node[Copants, anchor=leftleg] (E) at (D.leftleg) {};
        \node[Cup] (F) at (E.belt) {};
\end{tikzpicture}$$


For simplicity, we also denote this 1-morphism by $\Sigma_g$. 
This way, $\mathcal{Z}(\Sigma_g)\cong\Hom(\coend^{\otimes g},\unit)^*$.

By $\Sigma_g^*$ we denote the 1-morphism as above, but without the $\tinycap$ applied at the top (corresponding to the once punctured genus $g$ surface). Then $\mathcal{Z}(\Sigma_g^*)=\coend^{\otimes g}$.

Now, we define the following composites in $\Bordncsig$:
\scalecobordisms{0.6}
\tikzset{every node/.style={font=\normalsize}}

\begin{align} \label{defn_of_I}
\I \quad&:=\quad
        \begin{tz}[xscale=2, yscale=4]
\node (1) at (0,0)
{
$\begin{tikzpicture}
        \node[Pants, top] (A) at (0,0) {};
        \node[Copants, anchor=leftleg] (B) at (A.leftleg) {};
        \selectpart[green, inner sep=1pt] {(A-rightleg)};
\end{tikzpicture}$
};
\node (2) at (1,0)
{
$\begin{tikzpicture}
        \node[Pants, top] (A) at (0,0) {};
        \node[Copants, anchor=leftleg] (B) at (A.leftleg) {};
\end{tikzpicture}$
};
\begin{scope}[double arrow scope]
    \draw (1) -- node[above] {$\theta$} (2);
\end{scope}
\end{tz}
\\
\label{defn_of_II}
\II \quad&:=\quad
\begin{aligned}  \begin{tikzpicture}[xscale=2, yscale=4]
\node (1) at (0,0)
{
$\begin{tikzpicture}
        \node[Copants, top] (A) at (0,0) {};
        \node[Pants, anchor=belt] (B) at (A.belt) {};
\end{tikzpicture}$
};
\node (2) at (1,0)
{
$\begin{tikzpicture}
        \node[Pants, top] (A) at (0,0) {};
        \node[Cyl, anchor=top] (B) at (A.leftleg) {};
        \node[Copants, anchor=leftleg] (C) at (A.rightleg) {};
        \node[Cyl, anchor=bottom, top] (D) at (C.rightleg) {};
        \selectpart[green, inner sep=1pt] {(A-rightleg)};
\end{tikzpicture}$
};
\node (3) at (2,0)
{
$\begin{tikzpicture}
        \node[Pants, top] (A) at (0,0) {};
        \node[Cyl, anchor=top] (B) at (A.leftleg) {};
        \node[Copants, anchor=leftleg] (C) at (A.rightleg) {};
        \node[Cyl, anchor=bottom, top] (D) at (C.rightleg) {};
\end{tikzpicture}$
};
\node (4) at (3,0)
{
$\begin{tikzpicture}
        \node[Copants, top] (A) at (0,0) {};
        \node[Pants, anchor=belt] (B) at (A.belt) {};
\end{tikzpicture}$
};
\begin{scope}[double arrow scope]
    \draw (1) --  node[above]{$\phileft^\inv$} (2);
    \draw (2) --  node[above]{$\theta$} (3);
    \draw (3) --  node[above]{$\phileft$} (4);
\end{scope} \end{tikzpicture} \end{aligned}
\\
\label{defn_of_A}
A^\dagger \quad&:=\quad
\begin{tz}[xscale=2, yscale=4]
\node (1) at (0,0)
{
$\begin{tikzpicture}
        \node[Pants, top] (A) at (0,0) {};
        \node[Copants, anchor=leftleg] (B) at (A.leftleg) {};
        \selectpart[green, inner sep=1pt] {(B-belt)};
\end{tikzpicture}$
};
\node [inner sep=0pt] (2) at (1,0)
{
$\begin{tikzpicture}
        \node[Pants, top] (A) at (0,0) {};
        \node[Copants, anchor=leftleg] (B) at (A.leftleg) {};
        \node[Pants, anchor=belt] (C) at (B.belt) {};
        \node[Copants, anchor=leftleg] (D) at (C.leftleg) {};
    \selectpart[green] {(A-leftleg) (A-rightleg) (C-leftleg) (C-rightleg)};
\end{tikzpicture}$
};
\node [inner sep=0pt] (3) at (2,0)
{
$\begin{tikzpicture}
        \node[Pants, top] (A) at (0,0) {};
        \node[Copants, anchor=leftleg] (B) at (A.leftleg) {};
        \node[Pants, anchor=belt] (C) at (B.belt) {};
        \node[Copants, anchor=leftleg] (D) at (C.leftleg) {};
        \selectpart[green] {(A-belt) (A-leftleg) (A-rightleg) (B-belt)};
\end{tikzpicture}$
};
\node (4) at (3,0)
{
$\begin{tikzpicture}
        \node[Pants, top] (A) at (0,0) {};
        \node[Copants, anchor=leftleg] (B) at (A.leftleg) {};
\end{tikzpicture}$
};
\begin{scope}[double arrow scope]
    \draw (1) -- node[above] {$\epsilon^\dagger$} (2);
    \draw (2) -- node[above] {$\II$} (3);
    \draw (3) -- node[above] {$\epsilon$} (4);
\end{scope}
\end{tz}
\\
\label{defn_of_III}
\III^\inv \quad&:=\quad
\begin{tz}[xscale=2, yscale=4]
\node (1) at (0,0)
{
$\begin{tikzpicture}
        \node[Pants, top] (A) at (0,0) {};
        \node[Copants, anchor=leftleg] (B) at (A.leftleg) {};
        \selectpart[green, inner sep=1pt] {(A-rightleg)};
\end{tikzpicture}$
};
\node (2) at (1,0)
{
$\begin{tikzpicture}
        \node[Pants, top] (A) at (0,0) {};
        \node[Copants, anchor=leftleg] (B) at (A.leftleg) {};
\end{tikzpicture}$
};
\node (3) at (2,0)
{
$\begin{tikzpicture}
        \node[Pants, top] (A) at (0,0) {};
        \node[Copants, anchor=leftleg] (B) at (A.leftleg) {};
        \selectpart[green, inner sep=1pt] {(A-rightleg)};
\end{tikzpicture}$
};
\node (4) at (3,0)
{
$\begin{tikzpicture}
        \node[Pants, top] (A) at (0,0) {};
        \node[Copants, anchor=leftleg] (B) at (A.leftleg) {};
\end{tikzpicture}$
};
\begin{scope}[double arrow scope]
    \draw (1) -- node[above] {$\theta^\inv$} (2);
    \draw (2) -- node[above] {$A^\dagger$} (3);
    \draw (3) -- node[above] {$\theta^\inv$} (4);
\end{scope}
\end{tz}
\end{align}

By $\I_i, \II_j, A^\dagger_i, \III^\inv_i \colon\Sigma_g\to\Sigma_g$ we denote the 2-morphisms that are obtained by applying the above on the $i$-th `hole', or between the $j$'th and ($j+1$)'th `holes' of $\Sigma_g$. As explained in \cite{bartlett_extended_2014}, these composites are invertible, and they are meant to correspond to the mapping cylinders for the generators of the mapping class group $\{T_{\alpha_i}, T_{\gamma_j}, T_{\beta_i}^\inv, S_i^\inv\}$ respectively.

The value of any of the $\mathcal{Z}(\I_i), \mathcal{Z}(\II_i), \mathcal{Z}(\III_i^\inv)\colon\mathcal{Z}(\Sigma_g^*) \to \mathcal{Z}(\Sigma_g^*)$, is really some invertible morphism $x\colon \coend^g\to\coend^g$. Then, if $f\in\Hom(\coend^g,\unit)^*$, the (left) action on the state space $\mathcal{Z}(\Sigma_g)=\Hom(\coend^g,\unit)^*$ is given by $f(-\circ x)$.

Now, we define the following unique morphisms in $\C$:

\begin{itemize}
    \item $\Omega\colon\coend\otimes\coend\to\coend\otimes\coend$, satisfying for all $X,Y\in \C$ the identity: 
    $$\Omega\circ(i_X\otimes i_Y)= (i_X\otimes i_Y)\circ(\id_{X^\lor}\otimes(c_{Y^\lor,X}\circ c_{X,Y^\lor})\otimes \id_Y). $$
    \item $\mathcal{S}\colon\coend\to\coend$ satisfying for all $X\in \C$ the identity: 
    $$\mathcal{S}\circ i_X= (\varepsilon\otimes \id_\coend)\circ \Omega\circ(\id_X\otimes \Lambda).$$
\end{itemize}

Additionally, we denote by $\mathcal{H}\colon\coend\otimes\coend\to\coend\otimes\coend$ the morphism $$\mathcal{H}:= \Omega\circ(\mathcal{T}\otimes\mathcal{T}).$$



\begin{lemma}\label{action_MCG_gens}
    
    Computing, we have that:

    \begin{align}
        \mathcal{Z}(\I_i)(f)&= 
        f(-\circ\id_{\coend^{i-1}}\otimes\mathcal{T}\otimes\id_{\coend^{g-i}}) \\
        \mathcal{Z}(\II_i)(f)&=
        f(-\circ\id_{\coend^{j-1}}\otimes\mathcal{H}\otimes\id_{\coend^{g-j-1}}) \\
        \mathcal{Z}(\III_i^\inv)(f)&=1/\sqrt{p_+p_-}\cdot
        f(-\circ\id_{\coend^{i-1}}\otimes\mathcal{S}\otimes\id_{\coend^{g-i}})
    \end{align}

\end{lemma}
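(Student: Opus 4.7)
The plan is to unwind each composite from its definition in \eqref{defn_of_I}--\eqref{defn_of_III}, apply the explicit assignment $\mathcal{Z}$ from Section \ref{genass}, and match the resulting endomorphism of $\coend^{\otimes g}$ with the stated operators. Since the action of any invertible $x\colon\Sigma_g^*\to\Sigma_g^*$ on $f\in\Hom(\coend^{\otimes g},\unit)^*$ is given by $f\mapsto f(-\circ\mathcal{Z}(x))$, it is enough to compute $\mathcal{Z}(x)$ itself. By functoriality and the disjoint-union monoidal structure, each operator is supported on the one or two handles on which it acts, so we may reduce to $g=1$ for $\I_i, \III_i^\inv$ and $g=2$ for $\II_j$.

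For $\I_i$, the 2-morphism \eqref{defn_of_I} is simply the twist $\theta$ applied to the cylinder constituting the $i$-th handle of $\Sigma_g$. By definition, $\mathcal{Z}(\theta)$ is the ribbon twist $\vartheta$, and the induced morphism on the coend object $\coend$ is, by Definition \ref{twistnondegen}, precisely $\mathcal{T}$. Hence $\mathcal{Z}(\I_i)=\id_{\coend^{i-1}}\otimes\mathcal{T}\otimes\id_{\coend^{g-i}}$, giving the first formula.

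For $\II_j$, recall from the computation \eqref{phi_l} inside the proof of Theorem \ref{main_th} that both $\mathcal{Z}(\phileft)$ and $\mathcal{Z}(\phileft^\inv)$ are identity natural transformations. Thus \eqref{defn_of_II} reduces under $\mathcal{Z}$ to the twist applied to the intermediate cylinder separating the $j$-th copants and the $(j{+}1)$-th pants. Using the equivalence $\C\boxtimes\C\cong\C^\coend$ (Lemma \ref{comod_equiv}), the object on that cylinder is the coend variable $Y\otimes Z^\lor$ for the two neighbouring handles, so the twist acts as $\vartheta_{Y\otimes Z^\lor}$. The braided-balancing formula $\vartheta_{A\otimes B}=(\vartheta_A\otimes\vartheta_B)\circ c_{B,A}\circ c_{A,B}$ combined with $\vartheta_{Z^\lor}=\vartheta_Z^\lor$ and the universal property of $\coend$ identifies the induced endomorphism of $\coend\otimes\coend$ with $\Omega\circ(\mathcal{T}\otimes\mathcal{T})=\mathcal{H}$, by the defining property of $\Omega$. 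This yields the second formula.

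For $\III_i^\inv$, \eqref{defn_of_III} gives $\mathcal{Z}(\III_i^\inv)=\mathcal{T}^{-1}\circ\mathcal{Z}(A^\dagger_i)\circ\mathcal{T}^{-1}$, so the task is to compute $\mathcal{Z}(A^\dagger_i)$. Unwinding \eqref{defn_of_A}: $\epsilon^\dagger$ contributes $\id\otimes\mathscrsfs{D}^{-1}\Lambda$, creating an auxiliary $\coend$-strand; the intermediate $\II$ contributes $\mathcal{H}=\Omega\circ(\mathcal{T}\otimes\mathcal{T})$ on the two strands; and $\epsilon$ contributes $\id\otimes\varepsilon$, killing the auxiliary strand. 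The resulting composite is
\[
\mathcal{Z}(A^\dagger_i) = \mathscrsfs{D}^{-1}(\id_\coend\otimes\varepsilon)\circ\Omega\circ(\mathcal{T}\otimes\mathcal{T})\circ(\id_\coend\otimes\Lambda).
\]
Composing with $\mathcal{T}^{-1}$ on both sides absorbs the two outer $\mathcal{T}$'s coming from $\mathcal{H}$, leaving $\mathscrsfs{D}^{-1}(\id\otimes\varepsilon)\circ\Omega\circ(\id\otimes\Lambda)$. By the defining identity of $\mathcal{S}$, together with the symmetry that $(\id\otimes\varepsilon)\circ\Omega\circ(\id\otimes\Lambda)$ and $(\varepsilon\otimes\id)\circ\Omega\circ(-\otimes\Lambda)\circ i_X$ agree as morphisms $\coend\to\coend$ (using the universal property of $\coend$ and the fact that $\Omega$ is symmetric in its two tensor factors after dualising), this equals $\mathscrsfs{D}^{-1}\mathcal{S}=(p_+p_-)^{-1/2}\mathcal{S}$ after substituting the value $\mathscrsfs{D}=\sqrt{p_+p_-}$ fixed in the proof of Theorem \ref{main_th}. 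Placing this on the $i$-th handle yields the third formula.

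The main obstacle is the middle step: tracing the twist on the intermediate cylinder in $\II$ through the equivalence $\C\boxtimes\C\cong\C^\coend$ and through the coend structure maps, and identifying the result precisely as $\Omega\circ(\mathcal{T}\otimes\mathcal{T})$. The calculation is a careful diagrammatic argument using the balancing axiom, the compatibility $\vartheta_{X^\lor}=\vartheta_X^\lor$, and the universal property of $\coend$; the constants and signs in $\III_i^\inv$ then fall out automatically from $\mathscrsfs{D}=\sqrt{p_+p_-}$.
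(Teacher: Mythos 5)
Your proposal is correct and follows essentially the same route as the paper's proof: unwind each composite, use that $\mathcal{Z}(\phileft)=\mathcal{Z}(\phileft^\inv)=\id$, apply the twist-of-a-tensor-product identity together with $\vartheta_{Y^\lor}=\vartheta_Y^\lor$ and dinaturality of the coend to recognise $\mathcal{H}=\Omega\circ(\mathcal{T}\otimes\mathcal{T})$, and then cancel the twists in $\III^\inv$ against the two $\theta^\inv$'s via naturality of $\vartheta$ and the braiding, with $\mathscrsfs{D}=\sqrt{p_+p_-}$ supplying the prefactor. The one loose point --- exactly which tensor slot of $\Omega$ receives $\Lambda$ versus the incoming $\coend$-strand in $A^\dagger$, and hence which $\mathcal{T}$ is absorbed by which $\mathcal{T}^\inv$ --- is settled in the paper by an explicit string-diagram computation, but your appeal to the symmetry of $\Omega$ and the defining identity of $\mathcal{S}$ covers the same ground at the same level of rigour as the paper itself.
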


\begin{proof}

We compute the values of the 2-morphisms in \cref{defn_of_I,defn_of_II,defn_of_A,defn_of_III} after applying the constructed TQFT $\mathcal{Z}$. This will be sufficient, as the result is independent of the incoming object of the functor $\mathcal{Z}(\tinycopants)$.

\begin{itemize}
    \item  It is easy to see that:

    $$\mathcal{Z}(\I)\colon X\otimes\coend\xrightarrow{\id_X\otimes\mathcal{T}}X\otimes\coend.$$

We have already used this when checking the relations in \cref{MOD,Anom}.
    
    \item We will compute $\mathcal{Z}(\II)$ on: 
    $$\begin{tikzpicture}
        \node[Pants, top] (A) at (0,0) {};
        \node[Copants, anchor=leftleg] (B) at (A.leftleg) {};
        \node[Pants, anchor=belt] (C) at (B.belt) {};
        \node[Copants, anchor=leftleg] (D) at (C.leftleg) {};
    \selectpart[green] {(A-leftleg) (A-rightleg) (C-leftleg) (C-rightleg)};
\end{tikzpicture}$$

So abusing notation, $\mathcal{Z}(\II)\colon Z\otimes\coend\otimes \coend \to Z\otimes\coend\otimes\coend$.

\tikzset{every node/.style={font=\tiny}}

The image of the above under $\mathcal{Z}$ is a functor $\C\to\C$. If the input is some object $Z\in \C$ and if by $X\in\C$ and $Y\in\C$ we respectively denote the objects over which we take the first and second coend coming from the two $\mathcal{Z}(\tinycopants)$, we find:

\begin{align*}
\begin{tz}[xscale=0.8]
\draw (-2,0) node [below] {$Z$} to (-2,4.5);
\draw (-1,0) node [below] {$X^\lor$} to (-1,3.5);
\draw (2,0) node [below] {$Y$} to (2,3.5);
\draw (0,0) node [below] {$X$} to (0,3.5);
\draw (1,0) node [below] {$Y^\lor$} to (1,3.5);
\node [box] at (0.5,2) {$\vartheta_{X\otimes Y^\lor}$};
\node [box] at (1.5,3.5) {$i_Y$};
\node [box] at (-0.5,3.5) {$i_X$};
\draw [purple strand] (-0.5,3.5) to (-0.5,4.5);
\draw [purple strand] (1.5,3.5) to (1.5,4.5);
\end{tz}
\gap=\gap
\begin{tz}[xscale=0.8]
\draw (-2,0) node [below] {$Z$} to (-2,4.5);
\draw (-1,0) node [below] {$X^\lor$} to (-1,3.5);
\draw (2,0) node [below] {$Y$} to (2,3.5);
\draw (0,0) node [below] {$X$} to (0,1);
\draw (1,0) node [below] {$Y^\lor$} to (1,1);
\draw (1,1) to [out=up, in=down] (0,2);
\draw [black strand] (0,1) to [out=up, in=down] (1,2);
\draw [black strand] (1,2) to [out=up, in=down] (0,3);
\draw [black strand] (0,3) to [out=up, in=down] (0,3.5);
\draw [black strand] (0,2) to [out=up, in=down] (1,3);
\draw [black strand] (1,3) to [out=up, in=down] (1,3.5);
\node [halfbox] at (0,0.5) {$\vartheta_X$};
\node [halfbox] at (1,0.5) {$\vartheta_{Y^\lor}$};
\node [box] at (1.5,3.5) {$i_Y$};
\node [box] at (-0.5,3.5) {$i_X$};
\draw [purple strand] (-0.5,3.5) to (-0.5,4.5);
\draw [purple strand] (1.5,3.5) to (1.5,4.5);
\end{tz}
\gap=\gap
\begin{tz}[xscale=0.8]
\draw (-2,0) node [below] {$Z$} to (-2,4.5);
\draw (-1,0) node [below] {$X^\lor$} to (-1,2.5);
\draw (2,0) node [below] {$Y$} to (2,2.5);
\draw (0,0) node [below] {$X$} to (0,2.5);
\draw (1,0) node [below] {$Y^\lor$} to (1,2.5);
\node [halfbox] at (0,1) {$\vartheta_X$};
\node [halfbox] at (1,1) {$\vartheta_{Y^\lor}$};
\node [box] at (1.5,2.5) {$i_Y$};
\node [box] at (-0.5,2.5) {$i_X$};
\node [box] at (0.5,3.5) {$\Omega$};
\draw [purple strand] (-0.5,2.5) to [out=up, in=down] (0,3.5);
\draw [purple strand] (1.5,2.5) to [out=up, in=down] (1,3.5);
\draw [purple strand] (0,3.5) to (0,4.5);
\draw [purple strand] (1,3.5) to (1,4.5);
\end{tz}
\end{align*}

\begin{align*}
\begin{tz}[xscale=0.8]
\draw (-2,0) node [below] {$Z$} to (-2,4.5);
\draw (-1,0) node [below] {$X^\lor$} to (-1,1);
\draw (2,0) node [below] {$Y$} to (2,1);
\draw (0,0) node [below] {$X$} to (0,1);
\draw (1,0) node [below] {$Y^\lor$} to (1,1);
\node [halfbox] at (-0.5,2) {$\mathcal{T}$};
\node [halfbox] at (1.5,2) {$\mathcal{T}$};
\node [box] at (1.5,1) {$i_Y$};
\node [box] at (-0.5,1) {$i_X$};
\node [box] at (0.5,3.5) {$\Omega$};
\draw [purple strand] (-0.5,1) to (-0.5,2.5);
\draw [purple strand] (1.5,1) to (1.5,2.5);
\draw [purple strand] (-0.5,2.5) to [out=up, in=down] (0,3.5);
\draw [purple strand] (1.5,2.5) to [out=up, in=down] (1,3.5);
\draw [purple strand] (0,3.5) to (0,4.5);
\draw [purple strand] (1,3.5) to (1,4.5);
\end{tz}
\end{align*}

Where in the first step, we used the property of the twist of a tensor product, and in the third we used dinaturality of the coend, as well as the property $\vartheta_{Y^\lor}=\vartheta^\lor_Y$.

\item Now we check the value of $\mathcal{Z}(A^\dagger)\colon Z\otimes\coend\to Z\otimes\coend$. This results to the following:


\begin{align*}
{\color{red} 1/\sqrt{p_+p_-}}\,
\begin{tz}[xscale=0.8]
\draw (-2,0) node [below] {$Z$} to (-2,4.5);
\draw (2,0) node [below] {$Y$} to (2,1);
\draw (1,0) node [below] {$Y^\lor$} to (1,1);
\node [halfbox] at (-0.5,0) {$\Lambda$};
\node [halfbox] at (-0.5,2) {$\mathcal{T}$};
\node [halfbox] at (1.5,2) {$\mathcal{T}$};
\node [box] at (1.5,1) {$i_Y$};
\node [box] at (0.5,3.5) {$\Omega$};
\node [halfbox] at (1,4.5) {$\varepsilon$};
\draw [purple strand] (-0.5,0) to (-0.5,2.5);
\draw [purple strand] (1.5,1) to (1.5,2.5);
\draw [purple strand] (-0.5,2.5) to [out=up, in=down] (0,3.5);
\draw [purple strand] (1.5,2.5) to [out=up, in=down] (1,3.5);
\draw [purple strand] (0,3.5) to (0,4.5);
\draw [purple strand] (1,3.5) to (1,4.5);
\end{tz}
\end{align*}

\item Using the above, as well as the equivalent descriptions of $\mathcal{H}$, we can compute the value of $\mathcal{Z}(\III^\inv)\colon Z\otimes\coend\to Z\otimes\coend$:

\begin{align*}
\begin{tz}[xscale=0.8]
\draw (-2,0) node [below] {$Z$} to (-2,4.75);
\node [halfbox] at (-0.5,0) {$\mathscrsfs{D}^\inv\Lambda$};
\node [halfbox] at (1.5,1) {$\mathcal{T}^\inv$};
\node [halfbox] at (-0.5,2) {$\mathcal{T}$};
\node [halfbox] at (1.5,2) {$\mathcal{T}$};
\node [box] at (0.5,3.5) {$\Omega$};
\node [halfbox] at (0,4.25) {$\mathcal{T}^\inv$};
\node [halfbox] at (1,4.5) {$\varepsilon$};
\draw [purple strand] (-0.5,0) to (-0.5,2.5);
\draw [purple strand] (1.5,0) node [below] {$\coend$} to (1.5,2.5);
\draw [purple strand] (-0.5,2.5) to [out=up, in=down] (0,3.5);
\draw [purple strand] (1.5,2.5) to [out=up, in=down] (1,3.5);
\draw [purple strand] (0,3.5) to (0,4.75);
\draw [purple strand] (1,3.5) to (1,4.5);
\end{tz}
\gap=\gap
\begin{tz}[xscale=0.8]
\draw (-2,0) node [below] {$Z$} to (-2,4.75);
\node [halfbox] at (-0.5,0) {$\mathscrsfs{D}^\inv\Lambda$};
\node [halfbox] at (1.5,1) {$\mathcal{T}^\inv$};
\node [halfbox] at (-0.5,1) {$\mathcal{T}$};
\node [halfbox] at (-0.5,2) {$\mathcal{T}^\inv$};
\node [halfbox] at (1.5,2) {$\mathcal{T}$};
\node [box] at (0.5,3.5) {$\Omega$};
\node [halfbox] at (1,4.5) {$\varepsilon$};
\draw [purple strand] (-0.5,0) to (-0.5,2.5);
\draw [purple strand] (1.5,0) node [below] {$\coend$} to (1.5,2.5);
\draw [purple strand] (-0.5,2.5) to [out=up, in=down] (0,3.5);
\draw [purple strand] (1.5,2.5) to [out=up, in=down] (1,3.5);
\draw [purple strand] (0,3.5) to (0,4.75);
\draw [purple strand] (1,3.5) to (1,4.5);
\end{tz}
\gap=\gap
{\color{red} 1/\sqrt{p_+p_-}}\,
\begin{tz}[xscale=0.8]
\draw (-2,0) node [below] {$Z$} to (-2,3.25);
\node [halfbox] at (-0.5,0) {$\Lambda$};
\node [box] at (0.5,2) {$\Omega$};
\node [halfbox] at (1,3) {$\varepsilon$};
\draw [purple strand] (-0.5,0) to (-0.5,1);
\draw [purple strand] (1.5,0) node [below] {$\coend$} to (1.5,1);
\draw [purple strand] (-0.5,1) to [out=up, in=down] (0,2);
\draw [purple strand] (1.5,1) to [out=up, in=down] (1,2);
\draw [purple strand] (0,2) to (0,3.25);
\draw [purple strand] (1,2) to (1,3);
\end{tz}
\end{align*}


The first equation follows from the naturality of the twist $\vartheta$ and braiding $c$.

\end{itemize}

\end{proof}


    



We could pause here and observe that the same morphisms appear in the projective mapping class group action of \cite{de_renzi_mapping_2023} (see section \ref{rel_to_lyub_MCG}). In order to form a mathematical statement out of this observation, we are forced to assume the validity of Conjecture \ref{conjecture}. Then, thanks to Theorem \ref{main_th}, we know that the non-compact TQFT $\Znc$ gives rise to a representation $\tilde{\rho}_{nc}$ of a central extension of the mapping class group. Equivalently, we obtain a projective representation $$\bar{\rho}_{nc}\colon\text{Mod}(\Sigma_g)\to \text{PGL}_k(\mathcal{Z}(\Sigma_g)).$$

Using Lemma \ref{action_MCG_gens} we have:

\begin{prop}
The projective mapping class group representation $$\bar{\rho}_{nc}\colon\text{Mod}(\Sigma_g)\to \text{PGL}_k(\mathcal{Z}(\Sigma_g))$$ satisfies:

\begin{align}
        \bar{\rho}_{nc}(T_{\alpha_i})(f)&= 
        f(-\circ\id_{\coend^{i-1}}\otimes\mathcal{T}\otimes\id_{\coend^{g-i}}) \\
        \bar{\rho}_{nc}(T_{\gamma_j})(f)&=
        f(-\circ\id_{\coend^{j-1}}\otimes\mathcal{H}\otimes\id_{\coend^{g-j-1}}) \\
        \bar{\rho}_{nc}(S_i^\inv)(f)&=
        f(-\circ\id_{\coend^{i-1}}\otimes\mathcal{S}\otimes\id_{\coend^{g-i}})
    \end{align}

\end{prop}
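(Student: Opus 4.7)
The plan is to deduce this proposition as a direct corollary of Lemma \ref{action_MCG_gens}, once we make explicit how $\Znc$ gives rise to a projective mapping class group representation and identify the 2-morphisms $\I_i$, $\II_j$, $\III_i^{\inv}$ with mapping cylinders of the chosen generators.

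First, under Conjecture \ref{conjecture} and Theorem \ref{main_th}, restricting $\Znc$ to invertible 2-endomorphisms of the 1-morphism $\Sigma_g$ produces a group homomorphism from $\Aut_{\bBordncsig}(\Sigma_g)$ to $\text{GL}_k(\Znc(\Sigma_g))$. Because 2-morphisms in $\bBordncsig$ are equipped with bounding 4-manifolds, this automorphism group is a central extension of $\text{Mod}(\Sigma_g)$ by a factor detected by the anomaly $\xi$; composing with the quotient $\text{GL}_k\to \text{PGL}_k$ kills this central factor and produces the projective representation $\bar{\rho}_{nc}$.

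Second, I would recall the geometric identification of $\I_i$, $\II_j$, and $\III_i^{\inv}$, as defined in \eqref{defn_of_I}--\eqref{defn_of_III}, with the mapping cylinders of $T_{\alpha_i}$, $T_{\gamma_j}$, and $S_i^{\inv}$ respectively. This identification is already established in \cite{bartlett_extended_2014, bartlett_modular_2015} in the semisimple case, as a local statement near the $i$-th (resp.\ $j$-th and $(j{+}1)$-th) hole of $\Sigma_g$: for instance, $\I_i$ is the mapping cylinder of a twist along the meridian of the $i$-th handle (encoded by $\theta$), while $\II_j$ implements a braiding between two neighbouring handles via $\phileft^\inv, \theta, \phileft$. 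These arguments transfer unchanged to $\Bordncsig$, since none of them involve the discarded generator $\nu$ or the relations attached to it.

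Finally, reading off the values $\Znc(\I_i)$, $\Znc(\II_j)$, $\Znc(\III_i^{\inv})$ from Lemma \ref{action_MCG_gens}, together with the description $\Znc(\Sigma_g^*) = \coend^{\otimes g}$ and the fact that the duality isomorphism $\Hom_\C(\coend^{\otimes g},\unit)^* \cong \Znc(\Sigma_g)$ transforms post-composition by a morphism $x\colon\coend^{\otimes g}\to \coend^{\otimes g}$ into pre-composition inside the argument of $f$, yields the three displayed formulas. The scalar factor $1/\sqrt{p_+p_-}$ appearing in $\Znc(\III_i^{\inv})$ lies in $k^\times$ and hence disappears after projectivization. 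The only mildly subtle step is the geometric identification in the second paragraph — which is the main (though modest) obstacle — but as already noted, this is essentially routine given the prior work and the fact that all generators and relations of \ref{modpres} relevant to the Dehn twists $\I_i, \II_j, \III_i^{\inv}$ survive the passage from the full to the non-compact presentation.
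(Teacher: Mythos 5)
Your proposal is correct and follows essentially the same route as the paper: assume Conjecture \ref{conjecture} to obtain the projective representation from Theorem \ref{main_th}, identify $\I_i$, $\II_j$, $\III_i^{\inv}$ with the mapping cylinders of the chosen generators as in the cited earlier work, and read off the formulas from Lemma \ref{action_MCG_gens}, with the scalar $1/\sqrt{p_+p_-}$ absorbed by projectivization. The extra detail you supply on why the mapping-cylinder identification survives the passage to the non-compact presentation is a welcome clarification but does not change the argument.
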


\subsection{Relation to the Lyubashenko projective representations}\label{rel_to_lyub_MCG}

The projective mapping class group representations $\bar{\rho}_X$ coming from the non-semisimple 3d TQFTs constructed in \cite{de_renzi_3-dimensional_2022} are computed in \cite{de_renzi_mapping_2023}. In addition, it is shown that they are equivalent to the family of projective representations $\bar{\rho}_L$ constructed by Lyubashenko in \cite{lyubashenko_invariants_1995}.

We will briefly review the construction of $\bar{\rho}_X$ for a surface without punctures, and proceed to show that the projective representation $\bar{\rho}_{nc}$ from Section \ref{MCG_nc} is dual to $\bar{\rho}_X$.

The vector space they consider is $X^\prime_g:=\Hom_\C(\coend^g,1)$. 

Let $x^\prime\in \Hom_\C(\coend^g,1)$, the group homomorphism $\bar{\rho}_X\colon\text{Mod}(\Sigma_g)\to \text{PGL}_k((X^\prime_g))$ satisfies:

\begin{align}
        \rho_X(T_{\alpha_i})(f)&= 
        x^\prime\circ(\id_{\coend^{i-1}}\otimes\mathcal{T}^\inv\otimes\id_{\coend^{g-i}}) \\
        \rho_X(T_{\gamma_j})(f)&=
        x^\prime\circ(\id_{\coend^{i-1}}\otimes\mathcal{H}^\inv\otimes\id_{\coend^{g-j-1}}) \\
        \rho_X(S_i^\inv)(f)&=
        x^\prime\circ(\id_{\coend^{i-1}}\otimes\mathcal{S}^\inv\otimes\id_{\coend^{g-i}})
    \end{align}

This is a group homomorphism as they consider inverse mapping cylinders \cite[Eq (17)]{de_renzi_mapping_2023}.





Note that the inverses appear because their choice of generators of $\text{Mod}(\Sigma_g)$ is inverse to ours.

We therefore get that:

\begin{theorem}\label{mapping-class-th}
    By definition, we have that $\bar{\rho}_{nc}=\bar{\rho}_X^*$.
\end{theorem}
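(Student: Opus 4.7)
The plan is to compare the two projective representations directly on a finite generating set of $\text{Mod}(\Sigma_g)$ and observe that the action formulas are literally dual to one another. First I would record the identification of the underlying vector spaces: the construction of Section \ref{genass} gives $\Znc(\Sigma_g) = \Hom_\C(\coend^{\otimes g}, \unit)^*$, which is by definition the linear dual of the space $X'_g = \Hom_\C(\coend^{\otimes g}, \unit)$ on which $\bar{\rho}_X$ is defined. Thus there is a canonical nondegenerate pairing $\langle -, - \rangle \colon \Znc(\Sigma_g) \times X'_g \to k$, $\langle f, x' \rangle = f(x')$, and checking $\bar{\rho}_{nc} \cong \bar{\rho}_X^*$ amounts to verifying $\langle \bar{\rho}_{nc}(\varphi)(f), x' \rangle = \langle f, \bar{\rho}_X(\varphi^{-1})(x') \rangle$ for all $\varphi \in \text{Mod}(\Sigma_g)$.

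Next I would carry out this verification on the generating set $\{T_{\alpha_i}, T_{\gamma_j}, S_i^{-1}\}$, reading off the required formulas from the two sources. By Lemma \ref{action_MCG_gens}, $\bar{\rho}_{nc}(T_{\alpha_i})$ sends $f$ to $f \circ (- \circ (\id_{\coend^{i-1}} \otimes \mathcal{T} \otimes \id_{\coend^{g-i}}))$, while by the formulas of Section \ref{rel_to_lyub_MCG}, $\bar{\rho}_X(T_{\alpha_i})$ sends $x'$ to $x' \circ (\id_{\coend^{i-1}} \otimes \mathcal{T}^{-1} \otimes \id_{\coend^{g-i}})$. Pairing these,
\[
\langle \bar{\rho}_{nc}(T_{\alpha_i})(f), x' \rangle = f\bigl(x' \circ (\id \otimes \mathcal{T} \otimes \id)\bigr) = \langle f, \bar{\rho}_X(T_{\alpha_i}^{-1})(x') \rangle,
\]
which is precisely the definition of the dual projective representation. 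The identical computation works verbatim for $T_{\gamma_j}$ using $\mathcal{H}$ in place of $\mathcal{T}$, and for $S_i^{-1}$ using $\mathcal{S}$.

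Finally, since the Lickorish Dehn twists generate $\text{Mod}(\Sigma_g)$ and the assignment $(V, \rho) \mapsto (V^*, \rho^*)$ is functorial on projective representations, the pointwise equality on generators extends to the whole group, yielding $\bar{\rho}_{nc} \cong \bar{\rho}_X^*$.

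The main subtlety, rather than a genuine obstacle, is the anomaly: both $\bar{\rho}_{nc}$ and $\bar{\rho}_X$ are only projective representations, and in the non-compact construction a scalar $1/\sqrt{p_+p_-}$ appears in front of $\Znc(\III_i^{-1})$ (Lemma \ref{action_MCG_gens}). This scalar is irrelevant after passing to $\text{PGL}_k$, so it affects only the central extension by which one lifts the projective representation, not the projective equivalence class. Hence the identification $\bar{\rho}_{nc} = \bar{\rho}_X^*$ holds cleanly as projective representations, which is exactly what the theorem asserts.
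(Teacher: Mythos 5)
Your proposal is correct and follows essentially the same route as the paper: the theorem is proved by comparing the generator formulas of Lemma \ref{action_MCG_gens} with those of $\bar{\rho}_X$ from \cite{de_renzi_mapping_2023}, observing that they are precontraction/dual to one another (with the inverses accounted for by the opposite choice of mapping cylinders), and noting that scalars are irrelevant in $\text{PGL}_k$. Your explicit pairing $\langle \bar{\rho}_{nc}(\varphi)(f), x' \rangle = \langle f, \bar{\rho}_X(\varphi^{-1})(x') \rangle$ is simply a more careful spelling-out of what the paper compresses into ``by definition.''
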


\newpage

\section{From Lincat to Bimod}\label{Lincat-Bimod}

In order to tackle the questions concerning 3-manifold invariants (Section \ref{3-mfld-inv}) and modified traces (Section \ref{mod-trace}), we will adapt the non-compact TQFT \ref{main_th} to a different target, the bicategory Bimod.
This is due to the fact that, in order to compute the values of $\Znc$ on 3-manifolds, it is useful to have a `skein-theoretic description'. Working in Bimod provides such a setting.

\begin{definition}[Bimod]
    The symmetric monoidal bicategory Bimod is defined as follows:
    \begin{itemize}
        \item Its objects are linear categories;
        \item Its 1-morphisms are bimodules $F\colon\C\proarrow\mathcal{D}$, defined as linear functors $F\colon \mathcal{D}^{op}\times \C\to \text{Vect}_k$;
        \item Its 2-morphisms are natural transformations between the associated linear functors.
    \end{itemize}
\end{definition}

Composition of bimodules $F\colon \C\proarrow\mathcal{D}$ and $G\colon \mathcal{D}\proarrow \mathcal{E}$ is defined as:
$$G\circ F(e,c):= \int^{d\in \mathcal{D}} G(e,d)\otimes F(d,c).$$
The identity 1-morphisms are given by the Hom functor: $$\Hom(-,-)\colon \C^{op}\times \C\to \text{Vect}_k.$$


For a bimodule $F\colon\mathcal{D}_1\proarrow\mathcal{D}_2$, we will denote its value on $X\in \mathcal{D}_1$ and $Y\in\mathcal{D}_2$ by $F_X^Y$.

\begin{definition}\label{associated_bimod}
 Given a linear functor $F\colon \mathcal{D}_1\to\mathcal{D}_2$, we can construct an \textit{associated bimodule} 
 $F_*\colon \C\proarrow\mathcal{D}$ as:
 $F_*:=\Hom_\mathcal{D}(-,F(-))$.

Given a natural transformation $\omega\colon F\xRightarrow{} G$ between linear functors ${F,G\colon\mathcal{D}_1\to\mathcal{D}_2}$, there is an induced natural transformation between associated bimodules: 

${\omega_*\colon F_*\to G_*}$, given by postcomposition with $\omega$.
\end{definition}

In fact, the above is organized in a 2-functor $(-)^{fg,proj}$ from Lincat to Bimod, that sends a small linear category that admits finite colimits to its subcategory of compact projective objects. In general, a right exact functor does not preserve projective objects, but always gives rise to a bimodule.

Postcomposing $\Znc$ with this 2-functor we obtain a TQFT in Bimod. We will denote this TQFT by $\Znc^\prime$.


The values of the Bimod TQFT on object and 1-morphism generators are:

\begin{itemize}
    \item $\Znc^\prime(S^1) = \cP$
    \item $\Znc^\prime(\tinypants)_{B,C}^A= \Hom_\C(A,B\otimes C)$
    \item $\Znc^\prime(\tinycopants)_C^{A,B}= \Hom_\C(A\otimes B, C)$
    \item $\Znc^\prime(\tinycap)_A= \Hom_\C(A,1)^*$
    \item $\Znc^\prime(\tinycup)^A= \Hom_\C(A,1)$
\end{itemize}

\begin{remark}
    Strictly speaking, the values on $\tinycopants$ and $\tinycap$ are equivalent to the ones above.
In particular, directly computing the associated bimodules, we have: 
\begin{itemize}
    \item $(T^R)_*= \Hom_{\C\boxtimes\C}(-\boxtimes -,\displaystyle\int^{P\in\mathcal{P}}\hspace{-20pt}-\otimes P^\lor\boxtimes P)$
    \item $(U^L)_*= \Hom_{\Vect}(k,\Hom_\C(-,1)^*)$
\end{itemize}
\end{remark}

As explained in \cite{bartlett_modular_2015}, the bimodule vector space is the same as the vector space of string diagrams drawn internal to the volume of that particular surface embedded in $\mathbb{R}^3$. This can be thought of as the skein module of the 3-manifold bound by the surface. We are interested in this description as it facilitates computations.


\normalbordisms
\begin{calign}
\label{eq:embeddingR3}
\begin{tz}
    \node[Pants, bot, top, height scale=1.0] (A) at (0,0) {};
    \begin{scope}[internal string scope]
        \node (i) at ([yshift=\toff] A.belt) [above] {$A$};
        \node (j) at ([yshift=-\boff] A.leftleg) [below] {$B$};
        \node (k) at ([yshift=-\boff] A.rightleg) [below] {$C$};
        \node [stiny label] (g) at (0,0.02\cobheight) {$f$};
        \draw (i.south)
            to (g.center)
            to [out=-140, in=up] (j.north);
        \draw (g.center)
            to [out=-40, in=up] (k.north);
    \end{scope}
\end{tz}
&
\begin{tz}
    \node[Copants, bot, top, height scale=1.0] (A) at (0,0) {};
    \begin{scope}[internal string scope]
        \node (i) at ([yshift=-\boff] A.belt) [below] {$C$};
        \node (j) at ([yshift=\toff] A.leftleg) [above] {$A$};
        \node (k) at ([yshift=\toff] A.rightleg) [above] {$B$};
        \node [stiny label] (g) at (0,-0.1\cobheight) {$g$};
        \draw (i.north)
            to (g.center)
            to [out=140, in=down] (j.south);
        \draw (g.center)
            to [out=40, in=down] (k.south);
    \end{scope}
\end{tz}
&
\begin{aligned}
\begin{tikzpicture}
\setlength\cupheight{1.5\cupheight}
\node (i) at (0,0) [Cup, top] {};
\node (j) at (0,-\cobheight) [Bot3D, invisible] {};
\node (g) [stiny label] at (0,-0.4\cobheight) {$h$};
\draw [internal string] (g.center) to ([yshift=\toff] i.center) node [above, red] {$A$};
\node at ([yshift=-\boff] j.center) [below, white] {$A$};
\end{tikzpicture}
\end{aligned}
\\*
\nonumber
f \in \Znc^\prime \big( \tikztinypants \big) {}^A _{B,C}
&
g \in \Znc^\prime \big( \tikztinycopants \big) {}^{A,B} _{C}
&
h \in \Znc^\prime \big( \tikztinycup \big) {}^A
\end{calign}

\begin{remark}
    Morphisms in the interior are drawn in the opposite direction as that of the cobordism. As of now, we adopt the convention that red string diagrams are to be read from top to bottom and from left to right.
\end{remark}

\begin{remark}\label{cap-string-diag}
    Having defined $\Znc^\prime(\tinycap)_A= \Hom_\C(A,1)^*$, it appears that we do not have a string diagrammatic description of its elements. However, since $A\in\cP$, we can use the natural isomorphism $\Omega^1\colon\Hom_\C(A,1)^*\xrightarrow{\sim}\Hom_\C(A,1)$ defined in \ref{omega1def}, and work with string diagrams this way.
    
    \begin{gather}
        \begin{tikzpicture}
\setlength\cupheight{1.5\cupheight}
\node (i) at (0,0) [Cap, top] {};
\node (j) at (0,\cobheight) [Bot3D, invisible] {};
\node (g) [stiny label] at (0,0.4\cobheight) {$j$};
\draw [internal string] (g.center) to ([yshift=-\boff] i.center) node [below, red] {$A$};
\node at ([yshift=\toff] j.center) [above, white] {$A$};
\end{tikzpicture}
\\
\nonumber
 j \in \Omega^1\left(\Znc^\prime \big( \tikztinycap \big) {}_A\right)
    \end{gather}
    
    We will implicitly do this in later chapters when working with closed surfaces. It is important to note, however, that this is not necessary, but only for the sake of simplicity. We could instead work with a surface with one boundary component and the resulting (internal string) morphism $A\to 1$, which looks similar to the `admissibility condition' appearing in \cite{de_renzi_3-dimensional_2022}. This does not affect our analysis in later chapters, as we can `localize' this morphism in a cylinder near the cap $\tinycap$ (via the coend relation), and then evaluate it using $\Znc^\prime(\tinycap)_A= \Hom_\C(A,1)^*$.
\end{remark}

Having the string diagram notation makes computations easier in many cases. This is why, for the rest of this section, we express the actions of 2-morphism generators in Bimod, and whenever applicable, describe them by how they act on internal string diagrams. We will only focus on the ones that appear in sections \ref{3-mfld-inv} and \ref{mod-trace}.

We have two ways to describe what $\Znc^\prime$ assigns to a surface in Bimod. 

\begin{enumerate}
    \item Use the value in Lincat to obtain the associated bimodule;
    \item Decompose the given surface into a composite of the generators above, and obtain a composite in Bimod.
\end{enumerate}

The string diagrammatic approach is associated with the latter. Having defined $\Znc$ in Lincat however, we automatically get an action of 2-morphism generators on the associated bimodules. To connect the two, we use Definition \ref{associated_bimod}, as well as the fact that the two descriptions are related by (potentially several) applications of the co-Yoneda lemma.

First, define for $X,Y,Z\in \C$ the natural isomorphisms

\begin{align}
    (-)^\flat\colon \Hom_\C(X\otimes Y,Z)&\xrightarrow{\sim}\Hom_\C(X, Z\otimes Y^\lor) \label{flat} \\
    (-)^\sharp\colon \Hom_\C(Y,X\otimes Z)&\xrightarrow{\sim}\Hom_\C(X^\lor\otimes Y,Z) \label{sharp} \\
    (-)^\natural\colon \Hom_\C(X\otimes Y,Z)&\xrightarrow{\sim}\Hom_\C(Y,X^\lor\otimes Z) \label{naturalm}
\end{align}

by 

$$\!\!\!\!\!\!f^\flat=(f\otimes\id_{Y^\lor})\circ(\id_X\otimes\text{coev}_Y), 
\quad 
g^\sharp=(\text{ev}_X\otimes\id_{Z})\circ(\id_{X^\lor}\otimes g),
\quad
h^\natural=(\id_{X^\lor}\otimes h)\circ(\text{coev}^R_X\otimes\id_Y).$$

Throughout this section, fix $P,Q,P_1,P_2,Q_1,Q_2\in \mathcal{P}$.


\begin{prop}\label{eps-eta_bimod}
    $\Znc^\prime(\epsilon)$ and $\Znc^\prime(\eta)$ act as follows on internal string diagrams:

    \begin{align}
    \begin{tz}
        \node[Pants, bot, top] (A) at (0,0) {};
        \node[Copants, bot, anchor=leftleg] (B) at (A.leftleg) {};
        \begin{scope}[internal string scope]
                \node (i) at ([yshift=\toff] A.belt) [above] {};
                \node (i2) at ([yshift=-\boff] B.belt) [below] {};
                \node[stiny label] (p) at ([yshift=0.05\cobheight] A.center) {$f$};
                \node[stiny label] (q) at ([yshift=-0.1\cobheight] B.center) {$g$};
                \draw (i.south)
                    to (p.center)
                    to [out=-120, in=up]
                        (A.leftleg)
                        to[out=down, in=120] (q.center);
                \draw (p.center) to[out=-60, in=up]
                (A.rightleg) to[out=down, in=60] (q.center);
                \draw (q.center) -- (i2.north);
        \end{scope}
\end{tz}
\,\,\xmapsto{\Znc^\prime (\epsilon)}\,\,
\begin{tz}
        \node[Cyl, top, bot=false] (A) at (0,0) {};
        \node[Cyl, bot] (A2) at (0,-\cobheight) {};
        \begin{scope}[internal string scope]
                \node (i) at ([yshift=\toff] A.top) [above] {};
                \node (i2) at ([yshift=-\boff] A2.bottom) [below] {};
                \node[stiny label] (p) at (A.center) {$f$};
                \node[stiny label] (q) at (A2.center) {$g$};
                \draw (i.south) -- (p.center) to[out=-120, in=120] node[left, xshift=0.1cm] {} (q.center);
                \draw (p.center) to[out=-60, in=60] node[right, xshift=-0.1cm] {} (q.center);
                \draw (q.center) -- (i2.north);
        \end{scope}
\end{tz}
&\qquad\qquad
\begin{tz}
        \node[Cyl, tall, top, bot] (A) at (0,0) {};
        \node[Cyl, tall, top, bot] (B) at (\cobgap + \cobwidth, 0) {};
        \begin{scope}[internal string scope]
                \draw ([yshift=\toff] A.top) node[above]{} -- ([yshift=-\boff] A.bot) node[below]{};
                \draw ([yshift=\toff] B.top) node[above]{} -- ([yshift=-\boff] B.bot) node[below]{};
        \end{scope}
\end{tz}
\,\,\xmapsto{\Znc^\prime (\eta)}
\begin{tz}
        \node[Pants, bot, belt scale=1.5] (A) at (0,0) {};
        \node[Copants, bot, anchor=belt, belt scale=1.5, top] (B) at (A.belt) {}; 
        \begin{scope}[internal string scope]
                \node (i) at ([yshift=\toff] B.leftleg) [above] {};
                \node (j) at ([yshift=\toff] B.rightleg) [above] {};
                \node (i2) at ([yshift=-\boff] A.leftleg) [below] {};
                \node (j2) at ([yshift=-\boff] A.rightleg) [below] {};
                \draw (i.south) to[out=down, in=up] (B-belt.in-leftthird) to[out=down, in=up] (i2.north);
                \draw (j.south) to[out=down, in=up] (B-belt.in-rightthird) to[out=down, in=up] (j2.north);
        \end{scope}
\end{tz}
\end{align}
\end{prop}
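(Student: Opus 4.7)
The plan is to deduce $\Znc^\prime(\epsilon)$ and $\Znc^\prime(\eta)$ from the Lincat definitions in Section \ref{genass} by passing through the associated-bimodule construction of Definition \ref{associated_bimod}, and then to translate the resulting linear maps into the string-diagrammatic language of \eqref{eq:embeddingR3}. The key observation is that each 1-morphism involved admits two equivalent bimodule descriptions: a \emph{composite} description coming from gluing bimodules along the middle circles, which manifestly matches the string-diagram picture; and an \emph{associated-bimodule} description inherited from Lincat, on which the Lincat natural transformation acts explicitly. I would carry out each comparison in the associated-bimodule description and then transport it along the natural isomorphism to the composite description.

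For $\epsilon$, I would first identify $\Znc^\prime(\tinypants\circ\tinycopants)^B_C$ in two ways: as $\int^{(A_1,A_2)\in\cP\times\cP}\Hom_\C(B,A_1\otimes A_2)\otimes \Hom_\C(A_1\otimes A_2,C)$ by composition of bimodules, and as $\Hom_\C(B,C\otimes\coend)$ via the associated bimodule of $T\circ T^R$. The isomorphism between them is obtained by a right-duality adjunction, Proposition \ref{coYoneda} in the variable $A_1$, and Proposition \ref{coend_redux} in the variable $A_2$; tracing through, a representative $[f,g]$ is sent to $(g\otimes\id_\coend)\circ(\id_{A_1}\otimes\varrho_{A_2})\circ f$. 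The Lincat map $\Znc(\epsilon)=\id\otimes\varepsilon$ becomes post-composition with $\id_C\otimes\varepsilon$; substituting and using $\varepsilon\circ i_{A_2}=\text{ev}_{A_2}$ from \eqref{coalgebra}, a single snake on $A_2$ collapses the expression to $g\circ f\colon B\to C$, which is the vertical composite in the target cylinder as shown.

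For $\eta$, I would identify the source bimodule as $\Hom_\C(P_1,Q_1)\otimes\Hom_\C(P_2,Q_2)$ and the target as $\int^{A\in\cP}\Hom_\C(A,Q_1\otimes Q_2)\otimes \Hom_\C(P_1\otimes P_2,A)\cong \Hom_\C(P_1\otimes P_2,Q_1\otimes Q_2)$ by Proposition \ref{coYoneda}. Using the equivalence $\C\boxtimes\C\simeq\C^\coend$ from Lemma \ref{comod_equiv}, $\Znc(\eta)$ acts by post-composition with $\id_{Q_1}\otimes\varrho_{Q_2}$, sending $f\otimes g$ to $(\id_{Q_1}\otimes\varrho_{Q_2})\circ(f\otimes g)\colon P_1\otimes P_2\to Q_1\otimes Q_2\otimes\coend$. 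Unwinding $\varrho_{Q_2}$ and applying the universal property of $\coend$ would then show that this is the image of the coend class $[\id_{Q_1\otimes Q_2},\,f\otimes g]$ with intermediate object $A=Q_1\otimes Q_2$, whose image under the co-Yoneda isomorphism is simply $f\otimes g$; diagrammatically this is two disjoint strands passing through the pants-copants belt without interacting.

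The hard part will be checking that the two bimodule descriptions are genuinely intertwined by the natural transformation; once the explicit representatives above are fixed, both verifications reduce to an application of $\varepsilon\circ i=\text{ev}$ with the snake identity (for $\epsilon$) and of the defining property of $\varrho$ together with the dinaturality of $i$ (for $\eta$), with no further gymnastics in the coend calculus required.
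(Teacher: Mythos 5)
Your proposal is correct and follows essentially the same route as the paper: identify the composite bimodule with the associated bimodule of the Lincat functor via $(-)^\flat$ and co-Yoneda, transport the Lincat natural transformation across this identification, and reduce via $\varepsilon\circ i_Y=\mathrm{ev}_Y$ plus a snake identity for $\epsilon$, and via the coevaluation--evaluation (unit--counit) cancellation for $\eta$. The only differences are cosmetic (your source/target labels for the Hom-spaces are reversed relative to the paper's convention, and for $\eta$ you phrase the cancellation through the co-Yoneda identification of the target composite rather than through the counit of $T\dashv T^R$, which amounts to the same computation).
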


\begin{proof}

We start with $\mathcal{Z}^\prime(\epsilon)$: 


    This corresponds to the natural transformation between the bimodules

    $$\Hom_\C(Q,P\otimes\coend)\xrightarrow{\Znc(\epsilon)_*}\Hom(Q,P),$$

    given by postcomposition with $\varepsilon$.

We want to understand:

$$\int^{X,Y}\hspace{-15pt}\Hom_\C(Q,X\otimes Y)\otimes\Hom_\C(X\otimes Y, P)\xrightarrow{\Znc^\prime(\epsilon)}\Hom(Q,P)$$

In other words, we aim to understand the isomorphism: 

$$\int^{X,Y}\hspace{-15pt}\Hom_\C(Q,X\otimes Y)\otimes\Hom_\C(X\otimes Y, P)\xrightarrow{\sim}\Hom_\C(Q,P\otimes\coend)$$

This is the composite of

\begin{equation*}
    \setlength{\arraycolsep}{0pt}
\renewcommand{\arraystretch}{1.2}
  \begin{array}{ c c c }
    \displaystyle\int^{X,Y}\hspace{-15pt}\Hom_\C(Q,X\otimes Y)\otimes\Hom_\C(X\otimes Y, P) & {} 
    \xrightarrow{\id\otimes(-)^\flat} {} & 
   \displaystyle\int^{X,Y}\hspace{-15pt}\Hom_\C(Q,X\otimes Y)\otimes\Hom_\C(X,P\otimes Y^\lor) \\
    f\otimes g           & {} \mapsto {} & f\otimes\left((g\otimes\id_{Y^\lor})\circ(\id_{P}\otimes\text{coev}_Y)\right)
  \end{array}
\end{equation*}


with the co-Yoneda isomorphism

\begin{equation*}
    \setlength{\arraycolsep}{0pt}
\renewcommand{\arraystretch}{1.2}
  \begin{array}{ c c c }
    \displaystyle\int^{X,Y}\hspace{-15pt}\Hom_\C(Q,X\otimes Y)\otimes\Hom_\C(X,P\otimes Y^\lor) & {} 
    \xrightarrow{\sim} {} & 
   \displaystyle\int^{Y}\hspace{-10pt}\Hom_\C(Q,P\otimes Y^\lor\otimes Y) \\
    f\otimes h           & {} \mapsto {} & (g\otimes\id_{Y})\circ f
  \end{array}
\end{equation*}


Thanks to the exactness of $\Hom_\C(Q,-)$ and the fact that $\varepsilon$ is given by $\text{ev}_Y$ \ref{coalgebra}, we conclude that $\Znc^\prime(\epsilon)$ acts by composition, as desired.


Now for $\Znc^\prime(\eta)$:

The corresponding natural transformation of associated bimodules is:

$$\Hom_{\C\boxtimes\C}(Q_1\boxtimes Q_2,P_1\boxtimes P_2)\xrightarrow{\Znc(\eta)_*}\Hom_{\C\boxtimes\C}(Q_1\boxtimes Q_2,P_1\otimes P_2\otimes\int^Y Y^\lor\boxtimes Y),$$

given by the coevaluation on $Q_2$ and inclusion in the coend, essentially as explained in Remark \ref{canonical_coaction}. We remind the reader that this is the unit of the adjunction $T\dashv T^R$.

We want to understand

\begin{equation}\label{eta_in_bimod}
    \Znc^\prime(\eta)\colon\Hom(Q_1,P_1)\otimes \Hom(Q_2,P_2)\to\Hom_\C(Q_1\otimes Q_2,P_1\otimes P_2)
\end{equation}

By definition, it is true that $\Hom_{\C\boxtimes\C}(Q_1\boxtimes Q_2,P_1\boxtimes P_2)\cong\Hom(Q_1,P_1)\otimes \Hom(Q_2,P_2)$.

Let $f\colon Q_1\to P_1$ and $g\colon Q_2\to P_2$.

The counit of the adjunction $T\dashv T^R$, gives:

\begin{equation*}
    \setlength{\arraycolsep}{0pt}
\renewcommand{\arraystretch}{1.2}
  \begin{array}{ c c c }
    \Hom_{\C\boxtimes\C}(Q_1\boxtimes Q_2,P_1\otimes P_2\otimes\!\!\!\displaystyle\int^Y \!\!\!\!\!\!Y^\lor\boxtimes Y) & {} 
    \xrightarrow{\sim} {} & 
   \Hom_\C(Q_1\otimes Q_2,P_1\otimes P_2) \\
    f\boxtimes g           & {} \mapsto {} & (\id_{P_1}\otimes \id_{P_2}\otimes\varepsilon)\circ(f\otimes g)
  \end{array}
\end{equation*}

Due to the cancellation of unit and counit (coevaluation and evaluation), we find that the desired morphism $\Znc^\prime(\eta)$ is given by the inclusion 
$${\Hom(Q_1,P_1)\otimes \Hom(Q_2,P_2)\xhookrightarrow{}\Hom_\C(Q_1\otimes Q_2,P_1\otimes P_2)}.$$

\end{proof}

Let $f\colon Q\to P$. Abusing notation, we call the image of $f\otimes\Lambda$ under 
$$\Hom_\C(Q,P\otimes \coend)\xrightarrow{\sim} \displaystyle\int^{Y}\hspace{-7pt}\Hom_\C(Q,P\otimes Y^\lor\otimes Y)$$ again $f\otimes\Lambda$.

\begin{prop}\label{epsdag_bimod}
    $\Znc^\prime(\epsilon^\dagger)$ acts as follows on internal string diagrams:

    $$\begin{tz}
    \node (A) [Cyl, tall, bot, top] at (0,0) {};
    \begin{scope}[curvein]
        \draw ([yshift=\toff] A.top) to ([yshift=-\boff] A.bot);
    \end{scope}
\end{tz}
\quad
 \xmapsto{\Znc^\prime (\epsilon ^\dag)}
\quad
{ \color{red} \frac{1}{\mathscrsfs{D}} }
\begin{tz}
    \node (A) [Pants, bot, top] at (0,0) {};
    \node (B) [Copants, bot, anchor=leftleg] at (A.leftleg) {};
    \begin{scope}[curvein]
        \draw ([yshift=\toff] A.belt)
            to [out=down, in=up, out looseness=1.5] (A-leftleg.in-leftthird)
            to [out=down, in=up, in looseness=1.7] ([yshift=-\boff] B.belt);
        \draw[red strand] (A-leftleg.in-rightthird)
            to [out=up,in=up,looseness=1.7]
                (A-rightleg.in-leftthird)
            to [out=down,in=down, looseness=1.7] (A-leftleg.in-rightthird);
    \end{scope}
\end{tz}$$

The convention for the red circle is that the top part is $\Lambda$ and the bottom is $\text{ev}$.
\end{prop}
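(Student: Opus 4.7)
The plan is to mirror the argument used for $\mathcal{Z}^\prime(\epsilon)$ in Proposition \ref{eps-eta_bimod}, but now for the coevaluation in the opposite direction. First I would recall from the assignment in Table \ref{genass_table_data} that $\mathcal{Z}(\epsilon^\dagger)_X = \mathscrsfs{D}^{-1}(\id_X\otimes\Lambda)\colon X\to X\otimes \coend$. Passing to associated bimodules via Definition \ref{associated_bimod}, postcomposition with this morphism gives the natural transformation
\[
\Hom_\C(Q,P)\xrightarrow{\mathcal{Z}(\epsilon^\dagger)_*}\Hom_\C(Q,P\otimes\coend),\qquad f\longmapsto \mathscrsfs{D}^{-1}(\id_P\otimes\Lambda)\circ f,
\]
which, in the abuse of notation introduced just above the statement, is exactly $f\mapsto \mathscrsfs{D}^{-1}(f\otimes\Lambda)$.

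Next, I would invert the identification used in the proof of Proposition \ref{eps-eta_bimod}, namely the composite of $(-)^\flat$ from \eqref{flat} with the co-Yoneda isomorphism of Proposition \ref{coYoneda}, so as to rewrite the target as
\[
\int^{X,Y\in\cP}\Hom_\C(Q,X\otimes Y)\otimes\Hom_\C(X\otimes Y,P).
\]
Under this identification, the image $\mathscrsfs{D}^{-1}(f\otimes\Lambda)$ corresponds to the class represented by choosing an object $Y\in\cP$ together with a factorization of $\Lambda$ through $i_Y\colon Y^\lor\otimes Y\to \coend$, the second tensor factor supplying precisely the evaluation that realizes the coend structure map. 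This is well-defined modulo the dinatural relations defining the coend.

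Finally, I would match this element with the string diagram displayed in the proposition. The picture on the right is $\mathcal{Z}^\prime$ applied to the composite $\tikztinycopants\circ\tikztinypants$, whose bimodule value, by the assignment on generating 1-morphisms, is exactly the above coend. Its elements are drawn by choosing representatives $(h,g)$ of morphisms $Q\to X\otimes Y$ and $X\otimes Y\to P$; the red strand running through the leftleg of the pants is the shared object $Y$. The stated convention that ``the top part of the red circle is $\Lambda$ and the bottom is $\mathrm{ev}$'' is then precisely the graphical encoding that this $Y$-strand is capped above by the factorization of $\Lambda$ through $Y^\lor\otimes Y$ and below by $\mathrm{ev}_Y$, i.e.\ the coend structure morphism drawn as a closed loop. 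The prefactor $\mathscrsfs{D}^{-1}$ is inherited directly from the definition of $\mathcal{Z}(\epsilon^\dagger)$.

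The only real obstacle is the notational one: making precise the string-diagrammatic convention that a red loop ``labelled by $\Lambda$'' stands for the insertion of $\id\otimes\Lambda$ inside the coend representation of the bimodule. Once this convention is interpreted via the isomorphism $\Hom_\C(Q,P\otimes\coend)\cong \int^{Y}\Hom_\C(Q,P\otimes Y^\lor\otimes Y)$ together with the universal property used to factor $\Lambda$, everything reduces to unwinding definitions. No deeper properties of the integral (unimodularity, non-degeneracy of $\Lambda^{co}\circ m$, etc.) are needed here; those only enter in later verifications involving more intricate composites.
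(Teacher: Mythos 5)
Your proposal is correct and follows essentially the same route as the paper: read off $\Znc(\epsilon^\dagger)_* = \left(\mathscrsfs{D}^\inv(\id_P\otimes\Lambda)\right)_*$ from the Lincat assignment, then unwind the identification of $\Hom_\C(Q,P\otimes\coend)$ with $\int^{X,Y}\Hom_\C(Q,X\otimes Y)\otimes\Hom_\C(X\otimes Y,P)$ via co-Yoneda and the duality isomorphism \eqref{flat}, arriving at $f\mapsto \mathscrsfs{D}^\inv(f\otimes\Lambda)\otimes(\id_P\otimes\mathrm{ev}_Y)$, which is exactly the displayed diagram under the stated red-circle convention. The paper makes the two inverse identifications explicit (inserting $\id_{P\otimes Y^\lor}$ and postcomposing with $\mathrm{ev}_Y$), but this is only a more concrete spelling of the same unwinding you describe.
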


\begin{proof}

We have that $\Znc(\epsilon^\dagger)_*$ is given by:

$$\Hom_\C(Q,P)\xrightarrow{\left(\mathscrsfs{D}^\inv(\id_P\otimes\Lambda)\right)_*}\Hom(Q,P\otimes\coend).$$

Computing $\Znc^\prime(\epsilon^\dagger)$ in terms of string diagrams involves the isomorphisms:

\begin{equation*}
    \setlength{\arraycolsep}{0pt}
\renewcommand{\arraystretch}{1.2}
  \begin{array}{ c c c }
    \displaystyle\int^{Y}\hspace{-7pt}\Hom_\C(Q,P\otimes Y^\lor\otimes Y) & {} 
    \xrightarrow{\sim} {} & 
    \displaystyle\int^{X,Y}\hspace{-15pt}\Hom_\C(Q,X\otimes Y)\otimes\Hom_\C(X,P\otimes Y^\lor) \\
    h           & {} \mapsto {} & h\otimes(\id_{P\otimes Y^\lor})
  \end{array}
\end{equation*}


and

\begin{equation*}
    \setlength{\arraycolsep}{0pt}
\renewcommand{\arraystretch}{1.2}
  \begin{array}{ c c c }
    \displaystyle\int^{X,Y}\hspace{-15pt}\Hom_\C(Q,X\otimes Y)\otimes\Hom_\C(X,P\otimes Y^\lor) & {} 
    \xrightarrow{\sim} {} & 
     \displaystyle\int^{X,Y}\hspace{-15pt}\Hom_\C(Q,X\otimes Y)\otimes\Hom_\C(X\otimes Y, P)\\
    h\otimes g & {} \mapsto {} & h\otimes\left((\id_P\otimes\text{ev}_Y)\circ (g\otimes \id_Y)\right)
  \end{array}
\end{equation*}

Under the full composite, the image of $f\colon Q\to P$ is:

$$f\mapsto \mathscrsfs{D}^\inv(f\otimes\Lambda)\otimes(\id_P\otimes\text{ev}_Y),$$

which is the desired morphism.
\end{proof}

\begin{remark}
    The red circle notation coincides with that from \cite{de_renzi_3-dimensional_2022}.
\end{remark}

\begin{prop}\label{beta-theta_bimod}
    The action of $\Znc^\prime(\beta)$ and $\Znc^\prime(\theta)$ on internal string diagrams is given by:

    \begin{align}
\label{braiding-and-twist-image}
\begin{tz}
    \node[Pants, bot, top, height scale=1.3] (A) at (0,0) {};
    \begin{scope}[internal string scope]
        \draw ([yshift=\toff] A-belt.in-leftthird)
                to +(0,-0.3)
            to [out=down, in=up] (A.leftleg)
            to +(0,-0.3);
        \draw ([yshift=\toff] A-belt.in-rightthird)
                to +(0,-0.3)
            to [out=down, in=up] (A.rightleg)
            to +(0,-0.3);
    \end{scope}
\end{tz}
&\gap\xmapsto{\Znc^\prime(\beta)}\gap
\begin{tz}
    \node[Pants, bot, top, height scale=1.3] (A) at (0,0) {};
    \node[BraidA, bot, anchor=topleft] (B) at (A.leftleg) {};
    \draw[internal string] ([yshift=\toff] A-belt.in-rightthird)
        to (A-belt.in-rightthird)
        to [out=down, in=up, out looseness=1.7, in looseness=0.7] (B.topleft);
    \draw [internal string] (B.bottomright) to +(0,-0.3);
    \draw[internal string] ([yshift=\toff] A-belt.in-leftthird)
        to +(0,-0.3)
        to [out=down, in=up, out looseness=1.7, in looseness=0.7] (A.rightleg)
        to [out=down, in=up, looseness=0.6] (B.bottomleft)
        to +(0,-0.3);
    \obscureA{B}
    {
\draw [solid, red] (B.topleft) to [out=down, in=up, looseness=0.6] (B.bottomright);
    }
\end{tz}
&
\begin{tz}
    \node[Cyl, bot, top, height scale=1.3] (A) at (0,0) {};
    \begin{scope}[internal string scope]
        \node (i) at ([yshift=\toff] A.top) [above] {};
        \node (i2) at ([yshift=-\boff] A.bot) [below] {};
        \draw (i.south) -- (i2.north);
    \end{scope}
\end{tz}
&\gap\xmapsto{\Znc^\prime(\theta)}\gap
\begin{tz}
    \node[Cyl, bot, top, height scale=1.3] (A) at (0,0) {};
    \begin{scope}[internal string scope]
        \node (i) at ([yshift=\toff] A.top) [above] {};
        \node (i2) at ([yshift=-\boff] A.bot) [below] {};
        \draw (i.south) -- (i2.north);
        \node [stiny label] at (A.center) {$\vartheta$};
    \end{scope}
\end{tz}
\end{align}
\end{prop}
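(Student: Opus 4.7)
The plan is to follow the same pattern as in Propositions \ref{eps-eta_bimod} and \ref{epsdag_bimod}: identify the Lincat-level natural transformation assigned to each generator, promote it via Definition \ref{associated_bimod} to a natural transformation of associated bimodules, and then match the resulting action with the string-diagrammatic picture under the identification of bimodule values with spaces of internal string diagrams.

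First I would handle $\theta$. Its source and target 1-morphism is the identity cylinder, whose image in Lincat is $\id_\C$ and whose associated bimodule is simply $\Hom_\C(-,-)$; this matches the pictorial description $\Znc^\prime(\tinycyl)^P_Q = \Hom_\C(Q,P)$ directly, with no co-Yoneda manipulation required. Since $\Znc(\theta) = \vartheta$, the induced map on the associated bimodule acts on $f\colon Q\to P$ by $f\mapsto \vartheta_P\circ f$. By naturality of the twist this equals $f\circ \vartheta_Q$, which in the pictorial convention (strings read top to bottom) is precisely the insertion of a $\vartheta$-node on the single strand, as depicted in \eqref{braiding-and-twist-image}.

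Next I would handle $\beta$. The 1-morphism source of $\beta$ is the pants $T\colon \C\boxtimes\C\to\C$, $X\boxtimes Y\mapsto X\otimes Y$, whose associated bimodule is
\[
T_*(B\boxtimes C; A) \;=\; \Hom_\C(A, B\otimes C),
\]
matching the string-diagrammatic description $\Znc^\prime(\tinypants)^A_{B,C}$ with, again, no co-Yoneda isomorphisms needed. The target 1-morphism is $T$ precomposed with the symmetry of $\C\boxtimes\C$, and the natural transformation $\Znc(\beta)$ is the categorical braiding $c_{B,C}\colon B\otimes C\to C\otimes B$. Hence $\Znc^\prime(\beta)$ sends $f\in\Hom_\C(A,B\otimes C)$ to $c_{B,C}\circ f\in\Hom_\C(A,C\otimes B)$, which in the internal string-diagram convention is exactly the picture on the right in \eqref{braiding-and-twist-image}: the input diagram $f$ is unchanged and a braid crossing of the two outgoing strands is appended at the bottom.

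Since both $\beta$ and $\theta$ are applied between 1-morphisms whose associated bimodules coincide on the nose with their pictorial descriptions, the main obstacle present in Proposition \ref{epsdag_bimod} — namely, unwinding the co-Yoneda and adjunction isomorphisms that relate $T^R_*$ to $\Hom_\C(-\otimes-,-)$ and produce the $(-)^\flat$ transpose together with the red $\Lambda$–$\mathrm{ev}$ circle — does not arise here. The proof thus reduces to the direct translation above, and the only verification required is that the pictorial convention for a braid crossing and a twist node agrees with postcomposition by $c_{B,C}$ and $\vartheta_P$ respectively, a matter of convention fixed at the start of this chapter.
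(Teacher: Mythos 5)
Your proposal is correct and takes essentially the same route as the paper, which simply states that the result is ``automatic from the action of $\Znc(\beta)_*$ and $\Znc(\theta)_*$''; your write-up just spells out why no co-Yoneda unwinding is needed here, namely that the associated bimodules of $T$ and $\id_\C$ already coincide on the nose with their string-diagrammatic descriptions, so the 2-morphisms act by postcomposition with $c$ and $\vartheta$.
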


\begin{proof}
    This is automatic from the action of $\Znc(\beta)_*$ and $\Znc(\theta)_*$.
\end{proof}

\begin{prop}\label{mudag-nudag_bimod}
$\Znc^\prime(\mu^\dagger)\colon\Hom_\C(Q,P)\to\Hom_\C(Q,\unit)\otimes\Hom_\C(P,\unit)^*$ is given by

\begin{align}
\begin{split}
    \Hom_\C(Q,P)\xrightarrow{\id\otimes\text{coev}_{\Hom_\mathcal{C}(P,\unit)}}& \Hom_\C(Q,P)\otimes\Hom_\C(P,\unit) \otimes\Hom_\mathcal{C}(P,\unit)^*\\
    \xrightarrow{\text{compose}\otimes\id}&\Hom_\C(Q,\unit)\otimes{\Hom_\mathcal{C}(P,\unit)^*}
\end{split}
\end{align}

and $\Znc^\prime(\nu^\dagger)$ is given by the evaluation of vector spaces:

\begin{equation}
    \mathcal{Z}^\prime(\nu^\dagger)\colon
\displaystyle\int^{P\in\cP}\hspace{-20pt}\Hom_\C(P,\unit)\otimes\Hom_\C(P,\unit)^*\xrightarrow{\text{ev}_{\Hom_\mathcal{C}(P,\unit)}}k
\end{equation}

\end{prop}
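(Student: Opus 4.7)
The plan is to apply the definition of the 2-functor $(-)^{fg,proj}\colon\text{Lincat}\to\text{Bimod}$ recorded in Definition \ref{associated_bimod}: a natural transformation $\omega\colon F\Rightarrow G$ in Lincat descends to $\omega_*\colon F_*\Rightarrow G_*$ with components given by postcomposition with $\omega$. Hence both $\Znc^\prime(\mu^\dagger)$ and $\Znc^\prime(\nu^\dagger)$ are determined by the Lincat definitions of $\Znc(\mu^\dagger)$ and $\Znc(\nu^\dagger)$ from Section \ref{genass}, together with suitable identifications of the source and target bimodules.

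For $\Znc^\prime(\mu^\dagger)$, I will first unfold the target bimodule $\tinycup\circ\tinycap$ using the coend formula for composition in Bimod, obtaining
\[
\int^{V\in\text{Vec}_k}\Hom_\C(Q,V\otimes\unit)\otimes\Hom_{\text{Vec}_k}(V,\Hom_\C(P,\unit)^*).
\]
Co-Yoneda in $V$ combined with the copower isomorphism $\Hom_\C(Q,V\otimes\unit)\cong V\otimes\Hom_\C(Q,\unit)$ identifies this coend with $\Hom_\C(Q,\unit)\otimes\Hom_\C(P,\unit)^*$. By the associated-bimodule description, $\Znc^\prime(\mu^\dagger)$ sends $f\in\Hom_\C(Q,P)$ to $\Znc(\mu^\dagger)_P\circ f$. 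Expanding the definition of $\Znc(\mu^\dagger)_P$ as $(\id\otimes EV_P)\circ(\text{coev}_{\Hom_\C(P,\unit)}\otimes\id_P)$, evaluating the coevaluation against a dual basis, and tracing the result through the identification above produces exactly the two-step composite in the statement, namely $f\mapsto\sum_i(\phi_i\circ f)\otimes\phi_i^*$.

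For $\Znc^\prime(\nu^\dagger)$, the target identity bimodule at $(k,k)$ is $\Hom_{\text{Vec}_k}(k,k)\cong k$, while the source bimodule $\tinycap\circ\tinycup$ at $(k,k)$, computed by the same coend formula, is
\[
\int^{P\in\cP}\Hom_\C(P,\unit)\otimes\Hom_\C(P,\unit)^*.
\]
I will then invoke Remark \ref{cap_description}, which exhibits the alternative realization $U^L(-)\cong\int^{P\in\cP}\Hom_\C(P,-)\otimes\Hom_\C(P,\unit)^*$ and records that $\Znc(\nu^\dagger)$ acts as evaluation of vector spaces under this presentation. Since this realization of $U^L$ is precisely what the Bimod composition produces, the claim for $\nu^\dagger$ follows immediately.

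The main challenge is bookkeeping in the first part: lining up co-Yoneda, the copower, and the symmetry of tensor product in $\text{Vec}_k$ so that the final order of factors matches the statement. The remaining calculations are mechanical.
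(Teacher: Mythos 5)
Your proposal is correct and follows essentially the same route as the paper, which simply notes that the action of $\Znc^\prime(\mu^\dagger)$ "follows directly from properties of $EV$" and that $\Znc^\prime(\nu^\dagger)$ is handled by Remark \ref{cap_description}; your write-up just makes the coend/co-Yoneda bookkeeping explicit. No gaps.
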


\begin{proof}
    The action of $\Znc^\prime(\mu^\dagger)$ follows directly from properties of $EV$ \ref{evaluation-counit}.

As for $\Znc^\prime(\nu^\dagger)$, this is also straightforward, as discussed in Remark \ref{cap_description}.
\end{proof}

\newpage

\section{3-manifold invariants}\label{3-mfld-inv}

In this section, we will compute the 3-manifold invariant the non-compact TQFT defined in Theorem \ref{main_th} gives rise to. We do this for the case where $\Znc$ is oriented, i.e when $\C$ is anomaly-free. We will have to be a bit loose and think of the generators of $\Bord$ geometrically. We will implicitly employ this strategy throughout this chapter. This procedure could be made formal via a `geometric realization' functor \cite{bartlett_extended_2014,bartlett_modular_2015}. 

Before we get into explicit computations, we need to explain how to extract a closed 3-manifold invariant from a non-compact TQFT.
Despite the fact that the noncompact 3d TQFT does not have assignments for closed 3-manifolds, we can still extract invariants associated to them, via the following procedure:

Given a closed, connected 3-manifold $M$, we can evaluate the non-compact TQFT on $M$, after removing two copies of $D^3$, one incoming, the other outgoing. That is: 

$$\Znc(M\setminus (D^3\sqcup D^3))\colon \Znc(S^2)\to \Znc(S^2).$$

The vector space $\Znc(S^2)$ is one dimensional, so the above linear map is just an element of $k$. We will prove that this number is the Lyubashenko 3-manifold invariant \cite{lyubashenko_invariants_1995} multiplied by a factor. We also note that, since we work in the non-compact setting, and the Lyubashenko invariant specializes to the Reshetikhin-Turaev invariant in the semisimple case, we also prove that the TQFT constructed by \cite{bartlett_modular_2015} recovers the Reshetikhin-Turaev invariant.



\subsection{3-manifold presentation in terms of generators}

To achieve our goal, the first thing we need to address is how to present any closed oriented 3-manifold in terms a composite of 2-morphism generators. We therefore work in the full cobordism bicategory $\Bordor$, and use the generator 
$$\begin{tz}
  \draw[green] (0,0) rectangle (0.6, -0.6);  
 \end{tz}
 \,\,
\Rarrow{\nu}
\,\,
 \begin{tz}
         \node[Cap, bot] (A) at (0,0) {};
         \node[Cup] at (0,0) {};
 \end{tz}$$

The decomposition we provide is based on a surgery presentation of the 3-manifold. 

\begin{theorem}[Lickorish-Wallace]\label{Lick-Wall}
    Any closed, oriented, connected 3-manifold $M$ is realized
by (integral) Dehn surgery on a link $L$ in $S^3$.
\end{theorem}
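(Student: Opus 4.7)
The plan is to follow the classical Wallace--Lickorish argument via 4-dimensional cobordism. The first step will be to show that every closed oriented 3-manifold $M$ bounds a compact oriented 4-manifold $W$; equivalently, that the oriented cobordism group $\Omega_3^{SO}$ vanishes. I would invoke this standard fact (provable, e.g., via Thom's computation, or more hands-on through the vanishing of all Stiefel--Whitney and Pontryagin numbers of a closed 3-manifold, combined with the fact that the spin cobordism group $\Omega_3^{\mathrm{Spin}} = 0$). This gives a filling $W$ with $\partial W = M$.

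Next, I would equip $W$ with a handle decomposition relative to $\partial W = M$, obtained from any self-indexing Morse function on $W$. After rearrangement one may assume handles are attached in order of increasing index $0,1,2,3,4$. The goal is to modify $W$ so that it is built from a single $0$-handle $D^4$ by attaching only $2$-handles; then $\partial W$ is, by construction, the result of integral Dehn surgery on the attaching link in $\partial D^4 = S^3$, with framings specified by the framings of the $2$-handle attachments. To reach this normal form I would eliminate $3$- and $4$-handles first: turning the cobordism $W$ upside down exchanges $k$-handles with $(4-k)$-handles, and by repeating the filling argument for $M$ with reversed orientation if necessary, one can arrange that only $0$-, $1$-, and $2$-handles appear. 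Connectedness lets one assume a single $0$-handle.

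The main obstacle is the $1$-handles. The key move (``Lickorish's trick'') is to replace each $1$-handle by a $2$-handle attached along an unknot linking the belt sphere of the $1$-handle once, with framing $0$. The geometric input is the elementary diffeomorphism
\[
\partial(D^4 \cup \text{1-handle}) \;\cong\; S^1 \times S^2 \;\cong\; \partial\bigl(D^4 \cup_{U,\,0}\text{2-handle}\bigr),
\]
where $U$ is an unknot. Using handle slides, any subsequent $2$-handles passing through the $1$-handle can be isotoped so that their attaching circles avoid the $1$-handle's core disc and instead link the new $0$-framed unknot. After this replacement, $W$ has only $0$- and $2$-handles, and its boundary is $S^3$ with integral surgery performed on the (framed) attaching link $L$ of the resulting $2$-handles.

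The principal technical difficulty is bookkeeping in the handle-slide step: one must verify that replacing $1$-handles with $0$-framed $2$-handles and then sliding the original $2$-handles over them preserves the boundary $3$-manifold and produces a genuine framed link in $S^3$. Everything else --- existence of the filling, rearrangement, and the upside-down cobordism argument --- is a routine application of Morse theory on manifolds with boundary, so I would treat those quickly and concentrate the exposition on the $1$-handle-to-$2$-handle conversion, at which point $M = \partial W$ is realised as integer surgery on $L \subset S^3$, as required.
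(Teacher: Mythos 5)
The paper does not prove this statement at all: it is quoted as the classical Lickorish--Wallace theorem and used as a black box, so there is no in-paper argument to compare yours against. Judged on its own terms, your outline is the standard Wallace-style four-dimensional proof ($\Omega_3^{SO}=0$, fill $M$ by a compact oriented $W$, normalise a handle decomposition of $W$ to one $0$-handle plus $2$-handles, read off the framed link), and the overall architecture is right.

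There is, however, one step that would fail as written: the elimination of $3$- and $4$-handles by ``turning the cobordism upside down and repeating the filling argument with reversed orientation.'' Turning $W$ upside down produces a handle decomposition built on a collar of $M$, not on $\emptyset$, so it does not hand you a new filling of $M$ with handles only of index $\le 2$; and simply deleting a $4$-handle changes the boundary by adding an $S^3$ component. The standard repairs are either (a) to observe that a compact manifold with nonempty boundary always admits a handle decomposition with no top-index handles, and then deal with the $3$-handles by noting that the union of the handles of index $\le 2$ has boundary $M \# k(S^1\times S^2)$, which returns to $M$ after $k$ further integral surgeries on meridional unknots (so the link just grows); or (b) to surger out the $1$- and $3$-handles of $W$ directly, replacing each $S^1\times D^3$ by $D^2\times S^2$, which leaves $\partial W=M$ untouched and reduces to the $0$/$2$-handle case. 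Your ``replace a $1$-handle by a $0$-framed $2$-handle on a meridian'' move is fine, but it does not address index $3$ and $4$. Separately, it is worth knowing that Lickorish's original proof is purely three-dimensional --- a Heegaard splitting of $M$ together with the fact that the mapping class group of a surface is generated by Dehn twists --- and that proof is actually closer in spirit to what this paper does downstream, where $M$ is decomposed through a genus-$g$ surface and a mapping cylinder $I_\varphi$ of a surface diffeomorphism.
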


Assume $M$ has a surgery presentation in terms of a framed knot $K$.
Dehn surgery is the procedure of gluing a solid torus $S^1\times D^2$, into the knot complement $S^3\setminus T(K)$, twisted by $\varphi$.

Then, via Theorem \ref{Lick-Wall}, we get that $M= (S^3\setminus T(K))\cup_\varphi (S^1\times D^2)$, where $T(K)$ is the tubular neighborhood of the knot $K$ and $\varphi\colon T^2\to \partial\left(S^3\setminus T(K)\right)$ is the diffeomorphism determined by the framing of $K$.

Viewing this as a composite of cobordisms, we have:

\begin{equation}\label{3mfld-decomp}
    M= (S^3\setminus T(K))\circ I_\varphi\circ (S^1\times D^2),
\end{equation}

where $I_\varphi$ is the mapping cylinder of the diffeomorphism $\varphi$.

We can think of $\varphi$ as a composite $\varphi=\varphi^\prime\circ S$. First, applying an $S$  transformation ($S_1$ in the notation of Chapter \ref{mcg_actions}) on $T^2=\partial(S^1\times D^2)$, and then the diffeomorphism ${\varphi^\prime\colon T^2\to \partial\left(S^3\setminus T(K)\right)}$, which involves Dehn twists $T_\alpha$ to `match' the framing of $K$.

In order to express the knot complement $S^3\setminus T(K)$ in terms of generators, we write it as a composite of certain 3-cobordisms. This essentially involves `passing through' a genus $g$ surface:

\begin{equation}\label{decomp-knot-comp}
    S^3\setminus T(K)=\left((S^3\setminus H_g^\mathrm{o})\cup_{\id_{\Sigma_g}}(H_g\setminus T(K))\right),
\end{equation}

where $g$ depends on the choice of knot diagram of $K$ and $H_g$ is the genus $g$ handlebody.

\begin{figure}[ht]
    \centering
    \includegraphics[width=0.6\linewidth]{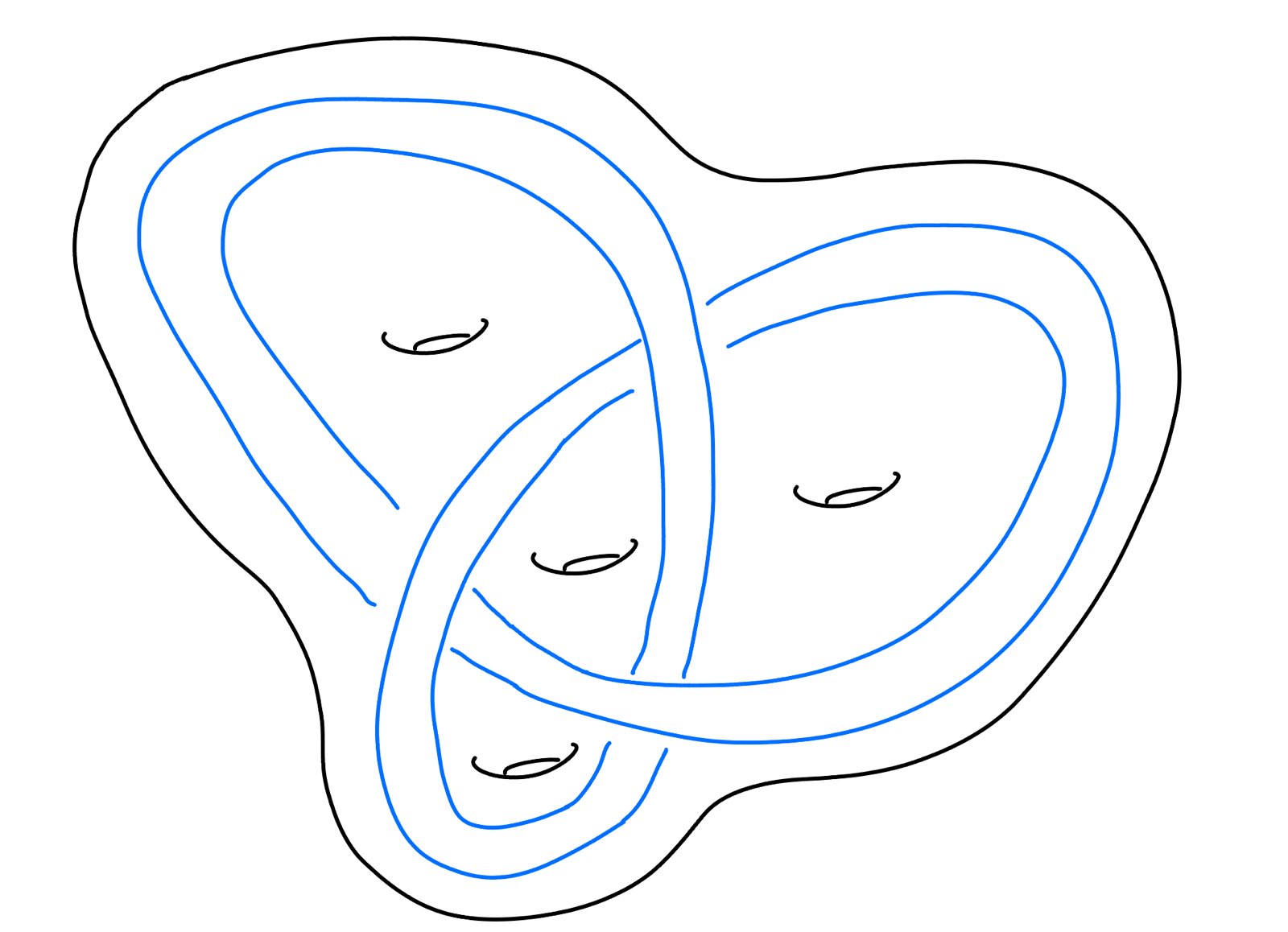}
    \caption{Cobordism from $\partial(T(K))$ (\textcolor{blue}{blue}) to $\Sigma_4$ (black) for the trefoil.}
    \label{trefoil_expl}
\end{figure}


First let us understand $H_g\setminus T(K)$.
Fix a presentation of the knot diagram. 

The data of a knot diagram is the same as that of a connected planar graph, if we replace crossings by vertices. If the number of faces of this planar graph is $g+1$, its tubular neighborhood is diffeomorphic to $H_g$. We can obtain a cobordism $H_g\setminus T(K)\colon\partial(T(K))\to \Sigma_g$ as the complement of the tubular neighborhood of the knot diagram in $H_g$, as depicted in Figure \ref{trefoil_expl} for the example of the trefoil knot. The handlebody $(S^3\setminus H_g)$ is given by applying $g$-many 2-handles (filling the holes) and a 3-handle 
$$\Sigma_g\xrightarrow{\text{2-handles}} S^2\xrightarrow{\text{3-handle}}\emptyset$$ 
gives the full decomposition of $S^3\setminus T(K)$.


This procedure is by no means unique. However, one can already see that it gives rise to a decomposition of $M$ in terms of generators. We have explicit generators matching the 2- and 3- handles ($\epsilon$ and $\nu^\dagger$), and the 3-cobordism $H_g\setminus T(K)$ is non-trivial only at crossings. In other words, it can be obtained by an application of a 1-handle ($\eta$) and a $\beta$ to match the crossing. This last point is the merit of the decomposition of \ref{decomp-knot-comp}.








We now write this decomposition \ref{3mfld-decomp} in terms of 2-morphism generators for certain examples.

Note that the solid torus $S^1\times D^2$, whose boundary is twisted by $S$, is given by:

$$\begin{tz}
  \draw[green] (0,0) rectangle (0.6, -0.6);  
 \end{tz}
\,\,
\Rarrow{\nu}
\,\,
\begin{tz}
        \node[Cap] (A) at (0,0) {};
        \node[Cup] (B) at (0,0) {};
        \node [Cobordism Bottom End 3D] (C) at (0,0) {};
        \selectpart[green, inner sep=1pt] {(C)};
\end{tz}
\,\,
\Rarrow{\epsilon^\dagger}
\,\,
\begin{tz}
        \node[Cap] (A) at (0,0) {};
        \node[Pants, anchor=belt] (B) at (A.center) {};
        \node[Copants, anchor=leftleg] (C) at (B.leftleg) {};
        \node[Cup] (D) at (C.belt) {};
\end{tz}
 $$


\begin{exmp}
    Let $K$ be the 0-framed trefoil. It can be represented as a braid closure as follows:

$$\begin{tz}[scale=0.6, xscale=1]
\draw [red strand] (1,0) to [out=up, in=down] (2,1);
\draw [red strand] (2,0) to [out=up, in=down] (1,1);
\draw [red strand] (1,1) to [out=up, in=down] (2,2);
\draw [red strand] (2,1) to [out=up, in=down] (1,2);
\draw [red strand] (1,2) to [out=up, in=down] (2,3);
\draw [red strand] (2,2) to [out=up, in=down] (1,3);
\draw [red] (0,0) to (0,3);
\draw [red] (3,0) to (3,3);
\draw [red strand] (0,3) to [out=up, in=up, looseness=1.2] (1,3);
\draw [red strand] (2,3) to [out=up, in=up, looseness=1.2] (3,3);
\draw [red strand] (0,0) to [out=down, in=down, looseness=1.2] (1,0);
\draw [red strand] (2,0) to [out=down, in=down, looseness=1.2] (3,0);
\end{tz}$$

    Then, the decomposition of the corresponding 3-manifold is:
    \scalecobordisms{0.6}

 \begin{equation}
\begin{tz}[xscale=2, yscale=2]
\node (1) at (0,-2)
{$\begin{tz}
        \node[Cap] (A) at (0,0) {};
        \node[Cup] (B) at (0,0) {};
        \node [Cobordism Bottom End 3D] (C) at (0,0) {};
\end{tz}$};

\node (2) at (0,0)
{$\begin{tz}
        \node[Cap] (A) at (0,0) {};
        \node[Pants, anchor=belt] (B) at (A.center) {};
        \node[Copants, anchor=leftleg] (C) at (B.leftleg) {};
        \node[Cup] (D) at (C.belt) {};
\end{tz}$};

\node (3) at (1,0)
{$\begin{tz}
  \node[Pants, bot] (P1) at (0,0) {};
  \node[Cap] at (P1.belt) {};

  \node[Cyl, verytall, bot, anchor=top] at (P1.leftleg) {};

  \node[BraidB, anchor=topleft] (B1) at (P1.rightleg) {};

  \node[BraidB, anchor=topright] (B2) at (B1.bottomright) {};

  \node[BraidB, anchor=topright] (B3) at (B2.bottomright) {};

  \node[Pants, bot, anchor=leftleg] (P2) at (B1.topright) {};
  \node[Cap] at (P2.belt) {};

  \node[Cyl, verytall, bot, anchor=top] at (P2.rightleg) {};

  \node[Copants, anchor=rightleg] (C1) at (B3.bottomleft) {};
  \node[Cup] at (C1.belt) {};

  \node[Copants, anchor=leftleg] (C2) at (B3.bottomright) {};
  \node[Cup] at (C2.belt) {};
\end{tz}$};

\node (4) at (2.5,0)
{$\begin{tz}
  \node[Pants, bot] (P1) at (0,0) {};
  \node[Cap] at (P1.belt) {};

  \node[Cyl, supertall, bot, anchor=top] at (P1.leftleg) {};

  \node[Copants, anchor=leftleg] (PB1) at (P1.rightleg) {};
  \node[Pants, bot, anchor=belt] (PB11) at (PB1.belt) {};

  \node[BraidB, anchor=topleft] (B1) at (PB11.leftleg) {};

  \node[Copants, anchor=leftleg] (PB2) at (B1.bottomleft) {};
  \node[Pants, bot, anchor=belt] (PB22) at (PB2.belt) {};

  \node[BraidB, anchor=topleft] (B2) at (PB22.leftleg) {};

  \node[Copants, anchor=leftleg] (PB3) at (B2.bottomleft) {};
  \node[Pants, bot, anchor=belt] (PB33) at (PB3.belt) {};

  \node[BraidB, anchor=topleft] (B3) at (PB33.leftleg) {};

  \node[Pants, bot, anchor=leftleg] (P2) at (PB1.rightleg) {};
  \node[Cap] at (P2.belt) {};

  \node[Cyl, supertall, bot, anchor=top] at (P2.rightleg) {};

  \node[Copants, anchor=rightleg] (C1) at (B3.bottomleft) {};
  \node[Cup] at (C1.belt) {};

  \node[Copants, anchor=leftleg] (C2) at (B3.bottomright) {};
  \node[Cup] at (C2.belt) {};
\end{tz}$};

\node (5) at (4,0)
{$\begin{tz}
  \node[Pants, bot] (P1) at (0,0) {};
  \node[Cap] at (P1.belt) {};

  \node[Cyl, lightlysupertall, bot, anchor=top] at (P1.leftleg) {};

  \node[Copants, anchor=leftleg] (PB1) at (P1.rightleg) {};
  \node[Pants, bot, anchor=belt] (PB11) at (PB1.belt) {};


  \node[Copants, anchor=leftleg] (PB2) at (PB11.leftleg) {};
  \node[Pants, bot, anchor=belt] (PB22) at (PB2.belt) {};


  \node[Copants, anchor=leftleg] (PB3) at (PB22.leftleg) {};
  \node[Pants, bot, anchor=belt] (PB33) at (PB3.belt) {};


  \node[Pants, bot, anchor=leftleg] (P2) at (PB1.rightleg) {};
  \node[Cap] at (P2.belt) {};

  \node[Cyl, lightlysupertall, bot, anchor=top] at (P2.rightleg) {};

  \node[Copants, anchor=rightleg] (C1) at (PB33.leftleg) {};
  \node[Cup] at (C1.belt) {};

  \node[Copants, anchor=leftleg] (C2) at (PB33.rightleg) {};
  \node[Cup] at (C2.belt) {};
\end{tz}$};

\node (6) at (4,-2)
{$\begin{tz}
        \node[Cap] (A) at (0,0) {};
        \node[Cup] (B) at (0,0) {};
        \node [Cobordism Bottom End 3D] (C) at (0,0) {};
\end{tz}$};

\node (7) at (3,-2)
{$\begin{tz}
  \draw[green] (0,0) rectangle (0.6, -0.6);  
 \end{tz}$};

\node (8) at (1,-2)
{$\begin{tz}
  \draw[green] (0,0) rectangle (0.6, -0.6);  
 \end{tz}$};

\begin{scope}[double arrow scope]
    \draw (1) -- node [left] {$\epsilon^\dagger$} (2);
    \draw[] ([xshift=0pt] 2.0) to node [above, inner sep=1pt] {$I_{\varphi^\prime}$} (3);
    \draw (3) -- node [above] {$\eta^3$} (4);
    \draw (4) -- node [above] {$\beta^3$} (5);
    \draw (5) -- node [right] {$\epsilon^4$} (6);
    \draw (8) -- node [above] {$\nu$} (1);
    \draw (6) -- node [above] {$\nu^\dagger$} (7);
\end{scope}
\end{tz}
\end{equation}
\end{exmp}

\begin{remark}
    To apply some of the $\epsilon$'s we need to use $\phileft^\inv$ and associators $\alpha$ or $\alpha^\inv$.
\end{remark}

The case where $M$ is obtained via surgery on a link works entirely analogously. Let $L$ be the link with $l$ components. The only meaningful difference is the use of $\mu^\dagger$ $l-1$ times, followed by $l$ applications of $\epsilon^\dagger$ on the resulting $S^2$s.

\begin{exmp}
    
Let $L$ be the Hopf link (with 0 framing on both components):

$$\begin{tz}[scale=0.6, xscale=1]
\draw [red strand] (1,0) to [out=up, in=down] (2,1);
\draw [red strand] (2,0) to [out=up, in=down] (1,1);
\draw [red strand] (1,1) to [out=up, in=down] (2,2);
\draw [red strand] (2,1) to [out=up, in=down] (1,2);
\draw [red] (0,0) to (0,2);
\draw [red] (3,0) to (3,2);
\draw [red strand] (0,2) to [out=up, in=up, looseness=1.2] (1,2);
\draw [red strand] (2,2) to [out=up, in=up, looseness=1.2] (3,2);
\draw [red strand] (0,0) to [out=down, in=down, looseness=1.2] (1,0);
\draw [red strand] (2,0) to [out=down, in=down, looseness=1.2] (3,0);
\end{tz}$$

If $M$ is obtained by surgery on $L$ it can be decomposed as follows:

\scalecobordisms{0.6}

 \begin{equation}
\begin{tz}[xscale=2, yscale=2]
\node (1) at (1,-2)
{$\begin{tz}
        \node[Cap] (A) at (0,0) {};
        \node[Cup] (B) at (0,0) {};
        \node [Cobordism Bottom End 3D] (C) at (0,0) {};
\end{tz}$};

\node (2) at (0,-0.75)
{$\begin{tz}
        \node[Cap] (A) at (0,0) {};
        \node[Pants, anchor=belt] (B) at (A.center) {};
        \node[Copants, anchor=leftleg] (C) at (B.leftleg) {};
        \node[Cup] (D) at (C.belt) {};
        \node[Cap] (E) at (-1,0) {};
        \node[Pants, anchor=belt] (F) at (E.center) {};
        \node[Copants, anchor=leftleg] (G) at (F.leftleg) {};
        \node[Cup] (H) at (G.belt) {};
\end{tz}$};

\node (3) at (1,0.5)
{$\begin{tz}
  \node[Pants, bot] (P1) at (0,0) {};
  \node[Cap] at (P1.belt) {};

  \node[Cyl, tall, bot, anchor=top] (CylL) at (P1.leftleg) {};

  \node[BraidB, anchor=topleft] (B1) at (P1.rightleg) {};

  \node[BraidB, anchor=topright] (B2) at (B1.bottomright) {};

  \node[Pants, bot, anchor=leftleg] (P2) at (B1.topright) {};
  \node[Cap] at (P2.belt) {};

  \node[Cyl, tall, bot, anchor=top] (CylR) at (P2.rightleg) {};

  \node[Copants, anchor=rightleg] (C1) at (B2.bottomleft) {};
  \node[Cup] at (C1.belt) {};

  \node[Copants, anchor=leftleg] (C2) at (B2.bottomright) {};
  \node[Cup] at (C2.belt) {};
\end{tz}$};

\node (4) at (2.5,0)
{$\begin{tz}
  \node[Pants, bot] (P1) at (0,0) {};
  \node[Cap] at (P1.belt) {};

  \node[Cyl, lightlysupertall, bot, anchor=top] at (P1.leftleg) {};

  \node[Copants, anchor=leftleg] (PB1) at (P1.rightleg) {};
  \node[Pants, bot, anchor=belt] (PB11) at (PB1.belt) {};

  \node[BraidB, anchor=topleft] (B1) at (PB11.leftleg) {};

  \node[Copants, anchor=leftleg] (PB2) at (B1.bottomleft) {};
  \node[Pants, bot, anchor=belt] (PB22) at (PB2.belt) {};

  \node[BraidB, anchor=topleft] (B2) at (PB22.leftleg) {};

  \node[Pants, bot, anchor=leftleg] (P2) at (PB1.rightleg) {};
  \node[Cap] at (P2.belt) {};

  \node[Cyl, lightlysupertall, bot, anchor=top] at (P2.rightleg) {};

  \node[Copants, anchor=rightleg] (C1) at (B2.bottomleft) {};
  \node[Cup] at (C1.belt) {};

  \node[Copants, anchor=leftleg] (C2) at (B2.bottomright) {};
  \node[Cup] at (C2.belt) {};
\end{tz}$};

\node (5) at (4,0)
{$\begin{tz}
  \node[Pants, bot] (P1) at (0,0) {};
  \node[Cap] at (P1.belt) {};

  \node[Cyl, veryverytall, bot, anchor=top] at (P1.leftleg) {};

  \node[Copants, anchor=leftleg] (PB1) at (P1.rightleg) {};
  \node[Pants, bot, anchor=belt] (PB11) at (PB1.belt) {};


  \node[Copants, anchor=leftleg] (PB2) at (PB11.leftleg) {};
  \node[Pants, bot, anchor=belt] (PB22) at (PB2.belt) {};



  \node[Pants, bot, anchor=leftleg] (P2) at (PB1.rightleg) {};
  \node[Cap] at (P2.belt) {};

  \node[Cyl, veryverytall, bot, anchor=top] at (P2.rightleg) {};

  \node[Copants, anchor=rightleg] (C1) at (PB22.leftleg) {};
  \node[Cup] at (C1.belt) {};

  \node[Copants, anchor=leftleg] (C2) at (PB22.rightleg) {};
  \node[Cup] at (C2.belt) {};
\end{tz}$};

\node (6) at (4,-2)
{$\begin{tz}
        \node[Cap] (A) at (0,0) {};
        \node[Cup] (B) at (0,0) {};
        \node [Cobordism Bottom End 3D] (C) at (0,0) {};
\end{tz}$};

\node (7) at (3,-2)
{$\begin{tz}
  \draw[green] (0,0) rectangle (0.6, -0.6);  
 \end{tz}$};

\node (8) at (2,-2)
{$\begin{tz}
  \draw[green] (0,0) rectangle (0.6, -0.6);  
 \end{tz}$};

 \node (9) at (0,-2)
{$\begin{tz}
        \node[Cup] (A) at (0,0) {};
        \node[Cap, bot] (C) at (0,0) {};
        \node[Cap] (B) at (0,-2*\cobheight) {};
        \node[Cup, bot] (D) at (0,-2*\cobheight) {};
    \end{tz}$};

\begin{scope}[double arrow scope]
    \draw (9) -- node [left] {$(\epsilon^\dagger)^2$} (2);
    \draw (2) -- node [left] {$I_{\varphi^\prime}$} (3);
    \draw[] ([xshift=0pt] 4.0) to node [above, inner sep=1pt] {${\beta}^2$} (5);
    \draw (3) -- node [above] {$\eta^2$} (4);
    \draw (1) -- node [above] {$\mu^\dagger$} (9);
    \draw (5) -- node [right] {$\epsilon^3$} (6);
    \draw (8) -- node [above] {$\nu$} (1);
    \draw (6) -- node [above] {$\nu^\dagger$} (7);
\end{scope}
\end{tz}
\end{equation}

\end{exmp}

We can therefore decompose any 3-manifold $M$ as the examples above. By $M^*$ we denote the composite $S^2\to S^2$ obtained by throwing away $\nu$ and $\nu^\dagger$ from the decomposition of $M$.

\subsection{Computing the invariants}

Let $M$ be an oriented 3-manifold that has a surgery presentation in terms of a framed link $L$ with $l$ components.
Denote the Lyubashenko invariant of $M$ \cite{lyubashenko_invariants_1995} by $\mathcal{L}(M)$, and $\mathcal{RT}(M)$ the Reshetikhin-Turaev invariant \cite{reshetikhin_invariants_1991} in the semisimple case.

In order to compute the invariant associated to $M$ by $\Znc$, we make the following assumption.

\begin{lemma}\label{conj-action-strings}
    $I_{\varphi^\prime}$ acts on

$$\begin{tz}
        \node[Cap] (A) at (0,0) {};
        \node[Pants, anchor=belt] (B) at (A.center) {};
        \node[Copants, anchor=leftleg] (C) at (B.leftleg) {};
        \node[Cup] (D) at (C.belt) {};
        \draw[internal string] (B.leftleg)
        to[out=up, in=up, looseness=1.7] (B.rightleg);
        \draw[internal string] (B.leftleg)
        to[out=down, in=down, looseness=1.7] (B.rightleg);
\end{tz}
\quad \dots \quad
\begin{tz}
        \node[Cap] (A) at (0,0) {};
        \node[Pants, anchor=belt] (B) at (A.center) {};
        \node[Copants, anchor=leftleg] (C) at (B.leftleg) {};
        \node[Cup] (D) at (C.belt) {};
        \draw[internal string] (B.leftleg)
        to[out=up, in=up, looseness=1.7] (B.rightleg);
        \draw[internal string] (B.leftleg)
        to[out=down, in=down, looseness=1.7] (B.rightleg);
\end{tz}$$

by having the red strand of the $i$-th torus go through all the (interior of the) surface $\partial\left(S^3\setminus(T(L_i)) \right)$ with appropriate twists $\vartheta$, matching the framing of $L$.

For example, in the case of the +1-framed trefoil we have:

\scalecobordisms{0.6}
\begin{equation}
\begin{tz}[xscale=2, yscale=2]
\node (2) at (0,0)
{$\begin{tz}
        \node[Cap] (A) at (0,0) {};
        \node[Pants, anchor=belt] (B) at (A.center) {};
        \node[Copants, anchor=leftleg] (C) at (B.leftleg) {};
        \node[Cup] (D) at (C.belt) {};
        \draw[internal string] (B.leftleg)
        to[out=up, in=up, looseness=1.7] (B.rightleg);
        \draw[internal string] (B.leftleg)
        to[out=down, in=down, looseness=1.7] (B.rightleg);
\end{tz}$};

\node (3) at (2,0)
{$\begin{tz}
  \node[Pants, bot] (P1) at (0,0) {};
  \node[Cap] at (P1.belt) {};

  \node[Cyl, verytall, bot, anchor=top] (CylL) at (P1.leftleg) {};

  \node[BraidB, anchor=topleft] (B1) at (P1.rightleg) {};

  \node[BraidB, anchor=topright] (B2) at (B1.bottomright) {};

  \node[BraidB, anchor=topright] (B3) at (B2.bottomright) {};

  \node[Pants, bot, anchor=leftleg] (P2) at (B1.topright) {};
  \node[Cap] at (P2.belt) {};

  \node[Cyl, verytall, bot, anchor=top] (CylR) at (P2.rightleg) {};

  \node[Copants, anchor=rightleg] (C1) at (B3.bottomleft) {};
  \node[Cup] at (C1.belt) {};

  \node[Copants, anchor=leftleg] (C2) at (B3.bottomright) {};
  \node[Cup] at (C2.belt) {};


  \draw[internal string] (B1.topleft) to[out=down, in=up] (B1.bottomright);
  \obscureA{B}
    {
\draw[solid, red] (B1.topright) to[out=down, in=up] (B1.bottomleft);
    }

\draw[internal string] (B2.topleft) to[out=down, in=up] (B2.bottomright);
\obscureA{B}{
  \draw[solid, red] (B2.topright) to[out=down, in=up] (B2.bottomleft);
}

\draw[internal string] (B3.topleft) to[out=down, in=up] (B3.bottomright);
\obscureA{B}{
  \draw[solid, red] (B3.topright) to[out=down, in=up] (B3.bottomleft);
}

  \begin{scope}[internal string scope]



    \draw[red strand] (CylL.top) -- (CylL.bottom);
\draw[red strand] (CylR.top) to (CylR.center) node [stiny label] {$\vartheta$} to (CylR.bottom);

    \draw[red strand] (CylL.top)
        to[out=up, in=up, looseness=1.7] (B1.topleft);

    \draw[red strand] (CylR.top)
        to[out=up, in=up, looseness=1.7] (B1.topright);

    \draw[red strand] (CylL.bottom)
    to[out=down, in=down, looseness=1.7] (B3.bottomleft);

    \draw[red strand] (CylR.bottom)
    to[out=down, in=down, looseness=1.7] (B3.bottomright);

  \end{scope}
\end{tz}$};
\begin{scope}[double arrow scope]
    \draw (2) -- node [above] {$\Znc^\prime(I_{\varphi^\prime})$} (3);
\end{scope}
\end{tz}
\end{equation}

\end{lemma}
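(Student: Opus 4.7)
The plan is to reduce the claim to the explicit actions of the generators of $\text{Mod}(\Sigma_g)$ that I already computed in Chapter \ref{mcg_actions} and then translated into the Bimod target in Chapter \ref{Lincat-Bimod}. By construction, $\varphi'$ is a diffeomorphism $T^2 \to \partial(S^3 \setminus T(L_i))$ built out of Dehn twists: first a sequence of braidings (which realize the crossings of the chosen knot/link diagram when one passes through the genus $g$ surface as in \eqref{decomp-knot-comp}), followed by a Dehn twist $T_{\alpha_i}$ for each unit of framing required to match the framing of $L_i$. Each of these pieces is a mapping cylinder whose image under $\Znc^\prime$ I already have in closed form.

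First I would note that under $\Znc^\prime$, the mapping cylinder $I_{T_{\alpha_i}}$ acts on an internal string at the $i$-th torus by inserting a twist $\vartheta$ on that red strand (this is exactly Proposition \ref{beta-theta_bimod} and Lemma \ref{action_MCG_gens} for $\I_i$). Similarly, the braid generator $I_\beta$ acts by replacing a pair of parallel red strands at a given slot of the surface by their crossing, again by Proposition \ref{beta-theta_bimod}. Starting from the configuration
\[
\begin{tz}
        \node[Cap] (A) at (0,0) {};
        \node[Pants, anchor=belt] (B) at (A.center) {};
        \node[Copants, anchor=leftleg] (C) at (B.leftleg) {};
        \node[Cup] (D) at (C.belt) {};
        \draw[internal string] (B.leftleg) to[out=up, in=up, looseness=1.7] (B.rightleg);
        \draw[internal string] (B.leftleg) to[out=down, in=down, looseness=1.7] (B.rightleg);
\end{tz}
\qquad \cdots \qquad
\begin{tz}
        \node[Cap] (A) at (0,0) {};
        \node[Pants, anchor=belt] (B) at (A.center) {};
        \node[Copants, anchor=leftleg] (C) at (B.leftleg) {};
        \node[Cup] (D) at (C.belt) {};
        \draw[internal string] (B.leftleg) to[out=up, in=up, looseness=1.7] (B.rightleg);
        \draw[internal string] (B.leftleg) to[out=down, in=down, looseness=1.7] (B.rightleg);
\end{tz}
\]
which represents $l$ copies of the red meridian loop in each solid torus $T(L_i)$, I would apply the braid and twist generators of $\varphi'$ one at a time, tracking how each red strand is transported. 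By functoriality, the outcome is the image in $\Znc^\prime$ of the geometric action of $\varphi'$ on the red curves, which is precisely a parallel push-off of each meridian along the framing curve of $L_i$ inside $\partial(S^3 \setminus T(L_i))$.

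The main obstacle is bookkeeping: one must verify that the specific composite of $\beta$'s and $\vartheta$'s assembled from the chosen knot/link diagram, when acted upon by $\Znc^\prime$, really produces the \emph{geometrically correct} framed push-off rather than some a priori different curve that only happens to represent the same homology class. This is where Dehn--Lickorish theory enters: up to isotopy, the parallel push-off along the framing is uniquely determined by the framing integer modulo the relations already built into the mapping class group presentation of Chapter \ref{mcg_actions}. Granting this standard fact from 3-dimensional topology, together with the computations of Lemma \ref{action_MCG_gens} and Propositions \ref{beta-theta_bimod} and \ref{mudag-nudag_bimod}, the picture asserted in the lemma follows; the trefoil diagram displayed in the statement then serves as the prototypical illustration, and the general case is obtained by replicating this procedure for each component of $L$.
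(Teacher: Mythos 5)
Your overall strategy --- decompose $I_{\varphi^\prime}$ into elementary 2-isomorphisms whose action on internal red strands is already known, track the strands, and close the argument with the fact that a mapping class of $T^2$ is determined by where it sends the meridian and the longitude --- is the same one the paper uses, and your final appeal to that determinacy statement matches the paper's ``their action on the meridian and longitude of the torus coincide'' almost verbatim.

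However, there is a genuine gap in how you set up the decomposition. You describe $\varphi^\prime$ as built out of Dehn twists, namely ``a sequence of braidings which realize the crossings of the knot diagram, followed by $T_{\alpha_i}$'s for the framing.'' This misplaces the crossings: in the decomposition \eqref{3mfld-decomp}, the braidings that realize the crossings belong to the knot-complement piece $H_g\setminus T(K)$ and are applied \emph{after} $I_{\varphi^\prime}$ (cf.\ the $\eta^3$ followed by $\beta^3$ in the trefoil example); they are not part of $\varphi^\prime$. The 2-morphism $I_{\varphi^\prime}$ is a mapping cylinder between two \emph{different} 1-morphisms --- the standard disjoint union of tori and the linked surface $\partial(T(L))$ embedded in $S^3$ --- and the only honest Dehn twists it contains are the $T_\alpha$'s matching the framing. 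What your proof never supplies is how the identification of the linked surface with the standard tori is realized \emph{inside the presentation}: the paper does this with the structural 2-isomorphisms of the symmetric monoidal bicategory $\Bord$ (the symmetric braiding and its compatibility with the tensor unit $\emptyset$), the cusp isomorphism \eqref{cusp}, and the relation exchanging $\theta$ on a capped pair of pants for a $\beta$. Without exhibiting $I_{\varphi^\prime}$ as such a composite of generators, Lemma \ref{action_MCG_gens} and Proposition \ref{beta-theta_bimod} cannot be invoked to track the red strands, since those results only compute the images of specific composites of generators. The meridian--longitude argument then only certifies that whatever composite one writes down equals $I_{\varphi^\prime}$; it cannot substitute for actually producing that composite.
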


Intuitively, this is because Dehn surgery sends the meridian of the $S^1\times D^2$ glued to the $i$-th component of $L$ to the curve on $\partial\left(S^3\setminus T(L_i)\right)$ determined by the framing of $L_i$.

\begin{proof}
    This claim essentially follows from the fact that $\Bord$ is a symmetric monoidal 2-category. In particular, every `linked' surface (1-morphism that is the boundary of the tubular neighborhood of the link) is isomorphic to a disjoint union of tori. To see this, we can use elementary moves (2-isomorphisms) like:

\begin{equation}
    \begin{tz}[xscale=1.4, yscale=2]
\node (1) at (0,0)
{
$\begin{tikzpicture}
        \node[Pants] (A) at (0,0) {};
        \node[Cap] at (A.belt) {};
        \selectpart[green, inner sep=1pt]{(A-rightleg)};
        \selectpart[red] {(A-leftleg) (A-rightleg) (A-belt)};
\end{tikzpicture}$
};
\node (2) at (2,0)
{
$\begin{tikzpicture}
        \node[Pants] (A) at (0,0) {};
        \node[Cap] at (A.belt) {};
        \node[BraidA, bot, anchor=topleft] (B) at (A.leftleg) {};
\end{tikzpicture}$
};
\begin{scope}[double arrow scope]
    \draw (1) -- node[above] {${\color{green}\theta}, \color{red}{\beta}$} (2);
\end{scope}
\end{tz},
\end{equation}

 the cusp \ref{cusp}, or 2-isomorphisms involving the symmetric braiding of the symmetric monoidal 2-category $\Bord$ and its compatibility with the tensor unit $\emptyset$. Finally, we use appropriately many $\theta$ to match the framing. In the end, the composite of these 2-isomorphisms is equal to the diffeomorphism $I_{\varphi^\prime}$ since their action on the meridian and longitude of the torus coincide.
\end{proof}


Using Lemma \ref{conj-action-strings}, we can prove the following:

\begin{theorem}\label{invariants-th}
    If $M$ is obtained by Dehn surgery on a knot, we have that:
    $\Znc^\prime(M^*)=(1/\mathscrsfs{D})\cdot\mathcal{L}(M)\cdot\id_{\Znc^\prime(S^2)}$.
    
    In the case where $\C$ is semisimple, $\Znc^\prime(M^*)=\mathcal{RT}(M)\cdot\id_{\Znc^\prime(S^2)}$. 
\end{theorem}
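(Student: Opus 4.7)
The plan is to evaluate $\Znc^\prime(M^*)$ explicitly using the surgery decomposition of $M$ given earlier in the section, and then to match the result against the Lyubashenko invariant $\mathcal{L}(M)$, which is the evaluation of the framed link diagram with each component labeled by the integral $\Lambda$. Applying $\Znc^\prime$ to the decomposition $M^* = \epsilon^{\# \text{caps}} \circ \beta^{\# \text{crossings}} \circ \eta^{\# \text{1-handles}} \circ I_{\varphi^\prime}\circ (\epsilon^\dag)^l$ (for a surgery link with $l$ components), and applying the string-diagrammatic descriptions from Propositions~\ref{eps-eta_bimod}, \ref{epsdag_bimod}, \ref{beta-theta_bimod}, and \ref{mudag-nudag_bimod}, the value is a morphism $\Znc^\prime(S^2)\to\Znc^\prime(S^2)$, i.e.\ a scalar times the identity.

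First I would perform the computation in the knot ($l=1$) case. By Proposition~\ref{epsdag_bimod}, each $\epsilon^\dag$ inserts a factor $1/\mathscrsfs{D}$ together with a red loop decorated by $\Lambda$ on top and $\mathrm{ev}$ on the bottom, sitting inside the solid torus $S^1\times D^2$ that was glued in during surgery. Lemma~\ref{conj-action-strings} then tells us that $I_{\varphi^\prime}$ threads this $\Lambda$-colored strand along the boundary of a tubular neighborhood of the knot $K$, with the number of twists $\vartheta$ dictated by the framing. The remaining $\eta$'s and $\beta$'s realize crossings by Propositions~\ref{eps-eta_bimod} and \ref{beta-theta_bimod}, producing the knot diagram of $K$ with its framing. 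Finally the cascade of $\epsilon$'s and the $2$-handles cap everything off with the counit $\varepsilon$ in $\mathcal{F}$, which by the coend relation and \cite[Lem. 5.2.8]{kerler_non-semisimple_2001} amounts to closing up all free strands into the evaluation $\mathrm{ev}$ in $\C$. The net effect is the scalar obtained by evaluating the framed knot diagram of $K$ colored by $\Lambda$ inside $\C$, multiplied by $1/\mathscrsfs{D}$. This is precisely Lyubashenko's invariant $\mathcal{L}(M)$ up to the $1/\mathscrsfs{D}$ normalization.

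The link case ($l\geq 2$) reduces to the knot case by incorporating the $l-1$ applications of $\mu^\dag$ to first split the cap $S^2$ into $l$ copies of $S^2$ (via Proposition~\ref{mudag-nudag_bimod}), then performing the same procedure separately on each of the $l$ components. Each component contributes its own $\Lambda$-colored loop and its own $1/\mathscrsfs{D}$ factor, but the $\mu^\dag$ insertions introduce a compensating factor by Lemma~\ref{mu_mu_dag_comp} and Remark~\ref{cap-string-diag} on how $\nu^\dag$ pairs with $\mu^\dag$; tracking these carefully produces Lyubashenko's normalization $\mathscrsfs{D}^{-1-\sigma(L)}$ or its analogue up to the single overall $1/\mathscrsfs{D}$ we expect to see. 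The semisimple corollary then follows because, when $\C$ is semisimple, $\mathscrsfs{D}^2 = \zeta = p_+ p_-$ equals the squared global dimension of $\C$, the integral $\Lambda$ becomes $\sum_i \dim(L_i)\, i_{L_i}\circ \mathrm{coev}_{L_i}$, and $\mathcal{L}(M)$ collapses to the Reshetikhin--Turaev invariant after absorbing the $1/\mathscrsfs{D}$ into the standard normalization of $\mathcal{RT}$.

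The hard part will be bookkeeping of the scalar factors: the $1/\mathscrsfs{D}$'s from each $\epsilon^\dag$, the normalization of $\Lambda$ relative to $\Lambda^{co}$ via $\Lambda^{co}\circ\Lambda = \mathrm{id}_\unit$, the contributions coming from the $\mu^\dag$-$\nu^\dag$ pairings in the link case, and the interaction with the signature correction $\sigma(L)$ that appears in Lyubashenko's invariant (handled by our working in $\Bordncsig$, so the signature data is tracked by the $\xi$-anomaly). Verifying that all these factors collapse to a single overall $1/\mathscrsfs{D}$ and not some more complicated function of $l$ or $\sigma(L)$ is where the real content lies; it is essentially a careful comparison of two normalizations of the same diagrammatic recipe.
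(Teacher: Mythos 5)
Your treatment of the knot case is essentially the paper's own argument: decompose $M^*$ via the surgery presentation, apply the string-diagrammatic actions of the generators from Propositions \ref{eps-eta_bimod}, \ref{epsdag_bimod}, \ref{beta-theta_bimod} and \ref{mudag-nudag_bimod}, invoke Lemma \ref{conj-action-strings} for $I_{\varphi^\prime}$, and read off a $\Lambda$-coloured framed knot diagram times the single $1/\mathscrsfs{D}$ contributed by the one $\epsilon^\dagger$. The only ingredient you leave implicit that the paper makes explicit is the use of Remark \ref{cap-string-diag} to localize the internal string diagram $j\colon\unit\to A$ coming from the cap, so that it is unaffected by the 2-morphisms in the decomposition; this is what guarantees the final answer is a scalar multiple of $\id_{\Znc^\prime(S^2)}$ rather than some other endomorphism.

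Your link-case discussion, however, contains a claim that does not hold: you assert that the $\mu^\dagger$ insertions introduce a compensating scalar so that the $l$ factors of $1/\mathscrsfs{D}$ (one per $\epsilon^\dagger$) collapse to a single overall $1/\mathscrsfs{D}$. By Proposition \ref{mudag-nudag_bimod}, $\Znc^\prime(\mu^\dagger)$ is built from $\text{coev}$ of vector spaces and composition and carries no factor of $\mathscrsfs{D}$ (it is $\Znc(\mu)$, not $\Znc(\mu^\dagger)$, that contains the $\mathscrsfs{D}$), so nothing compensates and one expects $\mathscrsfs{D}^{-l}$ for an $l$-component link --- which is exactly what the paper states in the remark immediately following the theorem, where the link case is left as an expectation rather than proved. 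Since the theorem as stated is restricted to knots, this does not break your proof of the actual statement, but the link paragraph should be removed or corrected to the $\mathscrsfs{D}^{-l}$ normalization.
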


\begin{proof}
    We adopt the convention from Remark \ref{cap-string-diag}. This way, we can draw an internal string diagram $j\colon\unit\to A$ for the cap $\tinycap$. The string diagram of the $S^2$ we start with, is a composite morphism $\unit\to\unit$. As mentioned in the same remark, after applying $\epsilon^\dagger$, we can localize this morphism in a cylinder below $\tinycap$, using the coend relations. Thanks to this, we see that this morphism is not altered when applying the 2-morphisms from the decomposition of $M^*$. This, along with the actions of the generators computed in propositions \ref{eps-eta_bimod}, \ref{epsdag_bimod}, \ref{beta-theta_bimod} and \ref{mudag-nudag_bimod} gives us the desired result.
\end{proof}

\begin{remark}
    We expect that Theorem \ref{invariants-th} holds for 3-manifolds obtained by Dehn surgery on a link. In particular, that 
    $\Znc^\prime(M^*)=(1/\mathscrsfs{D}^l)\cdot\mathcal{L}(M)\cdot\id_{\Znc^\prime(S^2)}$.
\end{remark}

\begin{corol}
    In the semisimple case there is no non-compactness condition on the cobordisms. Therefore, the proof of Theorem \ref{invariants-th} works for any 3-manifold $M$, which in turn implies that the 3d TQFT $Z$ defined in \cite{bartlett_modular_2015} computes the Reshetikhin Turaev invariant: $Z(M)=\mathcal{RT}(M)$.
\end{corol}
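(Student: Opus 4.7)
The plan is to leverage the fact that in the semisimple case all objects of $\C$ are projective ($\cP=\C$), so that the 2-functor $\Znc$ naturally extends to the full oriented cobordism presentation of \cite{bartlett_modular_2015}. In particular, the generator $\nu\colon\emptyset\Rightarrow \tinycap\circ\tinycup$, which is forbidden in the non-compact setting because it violates the non-compactness condition, can now be consistently assigned: it is the unit of the adjunction $U^L\dashv U$, which in the semisimple case reduces to the coevaluation of $\Hom_\C(\unit,\unit)\cong k$ (since $\unit$ is simple). This gives a genuine symmetric monoidal 2-functor $Z\colon\Bordsig\to\mathrm{Lincat}$ agreeing with $\Znc$ on all 2-morphisms not involving $\nu$.

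Next, given a closed oriented 3-manifold $M$ with surgery presentation on a framed link $L\subset S^3$ with $l$ components, I would write the full decomposition of $M$ as a composite $\emptyset\Rightarrow\emptyset$ in $\Bordsig$: one $\nu$ supplying the incoming $S^2$, the composite $M^*$ already analyzed in Theorem \ref{invariants-th} (suitably generalized to $l$ link components via $\mu^\dagger$ as in the Hopf link example), and one $\nu^\dagger$ absorbing the outgoing $S^2$. Applying $Z$ and using Propositions \ref{eps-eta_bimod}--\ref{mudag-nudag_bimod}, the middle factor computes to $\mathcal{L}(M)/\mathscrsfs{D}^{l}$ times the identity on $Z(S^2)\cong k$, exactly as in the proof of Theorem \ref{invariants-th}. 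In the semisimple case, $\mathcal{L}(M)=\mathcal{RT}(M)$ by \cite{lyubashenko_invariants_1995}, so the only remaining point is the normalization.

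The main technical obstacle is tracking these $\mathscrsfs{D}$-factors. Concretely, I would verify that $Z(\nu)$ and $Z(\nu^\dagger)$, together with the structural constants appearing in $Z(\mu)$ and the $\nu^\dagger$ that close up the extra $S^2$'s created by $\mu^\dagger$, contribute exactly $\mathscrsfs{D}^{l}$. This is the same global-dimension factor that appears in the Reshetikhin--Turaev normalization $\mathcal{RT}(M)=\mathcal{RT}(L)/\mathscrsfs{D}^{l}$, so the cancellation is the expected one. The key observation is that in the semisimple setting the integral $\Lambda\colon\unit\to\coend$ corresponds to $\sum_j \dim(L_j)\, i_{L_j}(\mathrm{coev}_{L_j})$, so that each insertion of $\mathscrsfs{D}^{-1}\Lambda$ (coming from an $\epsilon^\dagger$) produces a sum over colourings weighted by quantum dimensions, and closing off with a $\nu$/$\nu^\dagger$ pair supplies the missing factor.

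Putting these ingredients together, $Z(M)=Z(\nu^\dagger)\circ Z(M^*)\circ Z(\nu)=\mathcal{RT}(M)$, establishing the corollary. The proof is essentially a bookkeeping exercise once the two inputs---the extension of the assignment to include $\nu$, and the equality of Lyubashenko's and the Reshetikhin--Turaev invariants in the semisimple case---are in hand.
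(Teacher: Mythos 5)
Your proposal is correct and follows essentially the same route as the paper, which derives this corollary directly from Theorem \ref{invariants-th}: in the semisimple case the non-compactness restriction is vacuous, the decomposition of $M$ into generators (now including $\nu$) goes through unchanged, and the Lyubashenko invariant specializes to the Reshetikhin--Turaev invariant, with the extra normalization bookkeeping you describe being consistent with the paper's (unwritten) intent. One small slip: $\nu\colon \id \Rightarrow U^L\circ U$ is the unit of the adjunction $U\dashv U^L$ (exhibiting the cap as the \emph{right} adjoint of the cup, which is what semisimplicity buys you), not of $U^L\dashv U$, whose unit is $\mu^\dagger$.
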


Let us see how this works in examples. Note that each $\epsilon^\dagger$ gives rise to a factor of $\mathscrsfs{D}^\inv$, which we do not write. We have also adopted the convention of not drawing the element of $\Znc^\prime(S^2)$ we start with, since, as explained before, it can be localized away from where the 2-morphisms are applied.
The `final' string diagrams in $S^2$ can be thought of as living in a cylinder between $\tinycap$ and $\tinycup$.

\begin{exmp}
    
For the 0-framed trefoil knot we have:

\scalecobordisms{0.6}

 \begin{equation}
\begin{tz}[xscale=2, yscale=2]
\node (1) at (0,-2)
{$\begin{tz}
        \node[Cap] (A) at (0,0) {};
        \node[Cup] (B) at (0,0) {};
        \node [Cobordism Bottom End 3D] (C) at (0,0) {};
\end{tz}$};

\node (2) at (0,0)
{$\begin{tz}
        \node[Cap] (A) at (0,0) {};
        \node[Pants, anchor=belt] (B) at (A.center) {};
        \node[Copants, anchor=leftleg] (C) at (B.leftleg) {};
        \node[Cup] (D) at (C.belt) {};
        \draw[internal string] (B.leftleg)
        to[out=up, in=up, looseness=1.7] (B.rightleg);
        \draw[internal string] (B.leftleg)
        to[out=down, in=down, looseness=1.7] (B.rightleg);
\end{tz}$};

\node (3) at (1,0)
{$\begin{tz}
  \node[Pants, bot] (P1) at (0,0) {};
  \node[Cap] at (P1.belt) {};

  \node[Cyl, verytall, bot, anchor=top] (CylL) at (P1.leftleg) {};

  \node[BraidB, anchor=topleft] (B1) at (P1.rightleg) {};

  \node[BraidB, anchor=topright] (B2) at (B1.bottomright) {};

  \node[BraidB, anchor=topright] (B3) at (B2.bottomright) {};

  \node[Pants, bot, anchor=leftleg] (P2) at (B1.topright) {};
  \node[Cap] at (P2.belt) {};

  \node[Cyl, verytall, bot, anchor=top] (CylR) at (P2.rightleg) {};

  \node[Copants, anchor=rightleg] (C1) at (B3.bottomleft) {};
  \node[Cup] at (C1.belt) {};

  \node[Copants, anchor=leftleg] (C2) at (B3.bottomright) {};
  \node[Cup] at (C2.belt) {};


  \draw[internal string] (B1.topleft) to[out=down, in=up] (B1.bottomright);
  \obscureA{B}
    {
\draw[solid, red] (B1.topright) to[out=down, in=up] (B1.bottomleft);
    }

\draw[internal string] (B2.topleft) to[out=down, in=up] (B2.bottomright);
\obscureA{B}{
  \draw[solid, red] (B2.topright) to[out=down, in=up] (B2.bottomleft);
}

\draw[internal string] (B3.topleft) to[out=down, in=up] (B3.bottomright);
\obscureA{B}{
  \draw[solid, red] (B3.topright) to[out=down, in=up] (B3.bottomleft);
}

  \begin{scope}[internal string scope]



    \draw[red strand] (CylL.top) -- (CylL.bottom);
    \draw[red strand] (CylR.top) -- (CylR.bottom);
    
    \draw[red strand] (CylL.top)
        to[out=up, in=up, looseness=1.7] (B1.topleft);

    \draw[red strand] (CylR.top)
        to[out=up, in=up, looseness=1.7] (B1.topright);

    \draw[red strand] (CylL.bottom)
    to[out=down, in=down, looseness=1.7] (B3.bottomleft);

    \draw[red strand] (CylR.bottom)
    to[out=down, in=down, looseness=1.7] (B3.bottomright);

  \end{scope}
\end{tz}$};

\node (4) at (2.5,0)
{$\begin{tz}
  \node[Pants, bot] (P1) at (0,0) {};
  \node[Cap] at (P1.belt) {};

  \node[Cyl, supertall, bot, anchor=top] (CylL) at (P1.leftleg) {};

  \node[Copants, anchor=leftleg] (PB1) at (P1.rightleg) {};
  \node[Pants, bot, anchor=belt] (PB11) at (PB1.belt) {};

  \node[BraidB, anchor=topleft] (B1) at (PB11.leftleg) {};

  \node[Copants, anchor=leftleg] (PB2) at (B1.bottomleft) {};
  \node[Pants, bot, anchor=belt] (PB22) at (PB2.belt) {};

  \node[BraidB, anchor=topleft] (B2) at (PB22.leftleg) {};

  \node[Copants, anchor=leftleg] (PB3) at (B2.bottomleft) {};
  \node[Pants, bot, anchor=belt] (PB33) at (PB3.belt) {};

  \node[BraidB, anchor=topleft] (B3) at (PB33.leftleg) {};

  \node[Pants, bot, anchor=leftleg] (P2) at (PB1.rightleg) {};
  \node[Cap] at (P2.belt) {};

  \node[Cyl, supertall, bot, anchor=top] (CylR) at (P2.rightleg) {};

  \node[Copants, anchor=rightleg] (C1) at (B3.bottomleft) {};
  \node[Cup] at (C1.belt) {};

  \node[Copants, anchor=leftleg] (C2) at (B3.bottomright) {};
  \node[Cup] at (C2.belt) {};


  \draw[internal string] (B1.topleft) to[out=down, in=up] (B1.bottomright);
  \obscureA{B}
    {
\draw[solid, red] (B1.topright) to[out=down, in=up] (B1.bottomleft);
    }

\draw[internal string] (B2.topleft) to[out=down, in=up] (B2.bottomright);
\obscureA{B}{
  \draw[solid, red] (B2.topright) to[out=down, in=up] (B2.bottomleft);
}

\draw[internal string] (B3.topleft) to[out=down, in=up] (B3.bottomright);
\obscureA{B}{
  \draw[solid, red] (B3.topright) to[out=down, in=up] (B3.bottomleft);
}

  \begin{scope}[internal string scope]
         \draw (P1.rightleg) to[out=down, in=up] (PB1-belt.in-leftthird) to[out=down, in=up] (B1.topleft);
         \draw (P2.leftleg) to[out=down, in=up] (PB1-belt.in-rightthird) to[out=down, in=up] (B1.topright);
         \draw (B1.bottomleft) to[out=down, in=up] (PB2-belt.in-leftthird) to[out=down, in=up] (B2.topleft);
         \draw (B1.bottomright) to[out=down, in=up] (PB2-belt.in-rightthird) to[out=down, in=up] (B2.topright);
         \draw (B2.bottomleft) to[out=down, in=up] (PB3-belt.in-leftthird) to[out=down, in=up] (B3.topleft);
         \draw (B2.bottomright) to[out=down, in=up] (PB3-belt.in-rightthird) to[out=down, in=up] (B3.topright);
    \draw[red strand] (CylL.top) -- (CylL.bottom);
    \draw[red strand] (CylR.top) -- (CylR.bottom);
    
    \draw[red strand] (CylL.top)
        to[out=up, in=up, looseness=1.7] (PB1.leftleg);

    \draw[red strand] (CylR.top)
        to[out=up, in=up, looseness=1.7] (PB1.rightleg);

    \draw[red strand] (CylL.bottom)
    to[out=down, in=down, looseness=1.7] (B3.bottomleft);

    \draw[red strand] (CylR.bottom)
    to[out=down, in=down, looseness=1.7] (B3.bottomright);

  \end{scope}
\end{tz}$};

\node (5) at (4,0)
{$\begin{tz}
  \node[Pants, bot] (P1) at (0,0) {};
  \node[Cap] at (P1.belt) {};

  \node[Cyl, veryverytall, bot, height scale=1.725, anchor=top] (CylL) at (P1.leftleg) {};

  \node[Copants, anchor=leftleg] (PB1) at (P1.rightleg) {};
  \node[Pants, bot, height scale=1.3, anchor=belt] (PB11) at (PB1.belt) {};


  \node[Copants, anchor=leftleg] (PB2) at (PB11.leftleg) {};
  \node[Pants, bot, height scale=1.3, anchor=belt] (PB22) at (PB2.belt) {};


  \node[Copants, anchor=leftleg] (PB3) at (PB22.leftleg) {};
  \node[Pants, bot, height scale=1.3, anchor=belt] (PB33) at (PB3.belt) {};


  \node[Pants, bot, anchor=leftleg] (P2) at (PB1.rightleg) {};
  \node[Cap] at (P2.belt) {};

  \node[Cyl, veryverytall, bot, height scale=1.725, anchor=top] (CylR) at (P2.rightleg) {};

  \node[Copants, anchor=rightleg] (C1) at (PB33.leftleg) {};
  \node[Cup] at (C1.belt) {};

  \node[Copants, anchor=leftleg] (C2) at (PB33.rightleg) {};
  \node[Cup] at (C2.belt) {};

  \begin{scope}[internal string scope]
         \draw (P2.leftleg) to[out=down, in=up] (PB1-belt.in-rightthird) to [out=down, in=up, out looseness=1.7, in looseness=0.7] (PB11.leftleg);
         \draw (P1.rightleg) to[out=down, in=up] (PB1-belt.in-leftthird) to +(0,-0.2) to [out=down, in=up, out looseness=1.7, in looseness=1.2] (PB11.rightleg);
         \draw (PB11.rightleg) to[out=down, in=up] (PB2-belt.in-rightthird) to [out=down, in=up, out looseness=1.7, in looseness=0.7] (PB22.leftleg);
         \draw (PB11.leftleg) to[out=down, in=up] (PB2-belt.in-leftthird) to +(0,-0.2) to [out=down, in=up, out looseness=1.7, in looseness=1.2] (PB22.rightleg);
         \draw (PB22.rightleg) to[out=down, in=up] (PB3-belt.in-rightthird) to [out=down, in=up, out looseness=1.7, in looseness=0.7] (PB33.leftleg);
         \draw (PB22.leftleg) to[out=down, in=up] (PB3-belt.in-leftthird) to +(0,-0.2) to [out=down, in=up, out looseness=1.7, in looseness=1.2] (PB33.rightleg);
    \draw[red strand] (CylL.top) -- (CylL.bottom);
    \draw[red strand] (CylR.top) -- (CylR.bottom);
    
    \draw[red strand] (CylL.top)
        to[out=up, in=up, looseness=1.7] (PB1.leftleg);

    \draw[red strand] (CylR.top)
        to[out=up, in=up, looseness=1.7] (PB1.rightleg);

    \draw[red strand] (CylL.bottom)
    to[out=down, in=down, looseness=1.7] (PB33.leftleg);

    \draw[red strand] (CylR.bottom)
    to[out=down, in=down, looseness=1.7] (PB33.rightleg);

  \end{scope}
\end{tz}$};

\node (6) at (3.25,-2.5)
{$\begin{tz}
        \node[Cap, scale=5] (A) at (0,0) {};
        \node[Cup, scale=5] (B) at (0,0) {};
        \begin{scope}[internal string scope]
            \draw [red strand] (-0.17,-0.25) to [out=up, in=down] (0.17,0.08);
\draw [red strand] (0.17,-0.25) to [out=up, in=down] (-0.17,0.08);
\draw [red strand] (-0.17,0.08) to [out=up, in=down] (0.17,0.42);
\draw [red strand] (0.17,0.08) to [out=up, in=down] (-0.17,0.42);
\draw [red strand] (-0.17,0.42) to [out=up, in=down] (0.17,0.75);
\draw [red strand] (0.17,0.42) to [out=up, in=down] (-0.17,0.75);
\draw [red strand] (-0.5,-0.25) to (-0.5,0.75);
\draw [red strand] (0.5,-0.25) to (0.5,0.75);
\draw [red strand] (-0.5,0.75) to [out=up, in=up, looseness=1.2] (-0.17,0.75);
\draw [red strand] (0.17,0.75) to [out=up, in=up, looseness=1.2] (0.5,0.75);
\draw [red strand] (-0.5,-0.25) to [out=down, in=down, looseness=1.2] (-0.17,-0.25);
\draw [red strand] (0.17,-0.25) to [out=down, in=down, looseness=1.2] (0.5,-0.25);
        \end{scope}
\end{tz}$};

\begin{scope}[double arrow scope]
    \draw (1) -- node [left] {$\Znc^\prime(\epsilon^\dagger)$} (2);
    \draw (2) -- node [above] {$\sim$} (3);
    \draw[] ([xshift=0pt] 4.0) to node [above, inner sep=1pt] {$\Znc^\prime(\beta)^3$} (5);
    \draw (3) -- node [above] {$\Znc^\prime(\eta)^3$} (4);
    \draw (5) -- node [right] {$\Znc^\prime(\epsilon)^4$} (6);
\end{scope}
\end{tz}
\end{equation}
\end{exmp}

\begin{exmp}
    For the 0-framed Hopf link we have:

\scalecobordisms{0.6}

 \begin{equation}
\begin{tz}[xscale=2, yscale=2]
\node (1) at (1,-2)
{$\begin{tz}
        \node[Cap] (A) at (0,0) {};
        \node[Cup] (B) at (0,0) {};
        \node [Cobordism Bottom End 3D] (C) at (0,0) {};
\end{tz}$};

\node (2) at (0,-0.75)
{$\begin{tz}
        \node[Cap] (A) at (0,0) {};
        \node[Pants, anchor=belt] (B) at (A.center) {};
        \node[Copants, anchor=leftleg] (C) at (B.leftleg) {};
        \node[Cup] (D) at (C.belt) {};
        \draw[internal string] (B.leftleg)
        to[out=up, in=up, looseness=1.7] (B.rightleg);
        \draw[internal string] (B.leftleg)
        to[out=down, in=down, looseness=1.7] (B.rightleg);
        \node[Cap] (E) at (-1,0) {};
        \node[Pants, anchor=belt] (F) at (E.center) {};
        \node[Copants, anchor=leftleg] (G) at (F.leftleg) {};
        \node[Cup] (H) at (G.belt) {};
        \draw[internal string] (F.leftleg)
        to[out=up, in=up, looseness=1.7] (F.rightleg);
        \draw[internal string] (F.leftleg)
        to[out=down, in=down, looseness=1.7] (F.rightleg);
\end{tz}$};

\node (3) at (1,0.5)
{$\begin{tz}
  \node[Pants, bot] (P1) at (0,0) {};
  \node[Cap] at (P1.belt) {};

  \node[Cyl, tall, bot, anchor=top] (CylL) at (P1.leftleg) {};

  \node[BraidB, anchor=topleft] (B1) at (P1.rightleg) {};

  \node[BraidB, anchor=topright] (B2) at (B1.bottomright) {};

  \node[Pants, bot, anchor=leftleg] (P2) at (B1.topright) {};
  \node[Cap] at (P2.belt) {};

  \node[Cyl, tall, bot, anchor=top] (CylR) at (P2.rightleg) {};

  \node[Copants, anchor=rightleg] (C1) at (B2.bottomleft) {};
  \node[Cup] at (C1.belt) {};

  \node[Copants, anchor=leftleg] (C2) at (B2.bottomright) {};
  \node[Cup] at (C2.belt) {};

  \draw[internal string] (B1.topleft) to[out=down, in=up] (B1.bottomright);
  \obscureA{B1}
    {
      \draw[solid, red] (B1.topright) to[out=down, in=up] (B1.bottomleft);
    }

  \draw[internal string] (B2.topleft) to[out=down, in=up] (B2.bottomright);
  \obscureA{B2}
    {
      \draw[solid, red] (B2.topright) to[out=down, in=up] (B2.bottomleft);
    }

  \draw[red strand] (CylL.top) -- (CylL.bottom);
  \draw[red strand] (CylR.top) -- (CylR.bottom);

  \draw[red strand] (CylL.top)
      to[out=up, in=up, looseness=1.7] (B1.topleft);

  \draw[red strand] (CylR.top)
      to[out=up, in=up, looseness=1.7] (B1.topright);

  \draw[red strand] (CylL.bottom)
      to[out=down, in=down, looseness=1.7] (B2.bottomleft);

  \draw[red strand] (CylR.bottom)
      to[out=down, in=down, looseness=1.7] (B2.bottomright);
\end{tz}$};

\node (4) at (2.5,0)
{$\begin{tz}
  \node[Pants, bot] (P1) at (0,0) {};
  \node[Cap] at (P1.belt) {};

  \node[Cyl, lightlysupertall, bot, anchor=top] (CylL) at (P1.leftleg) {};

  \node[Copants, anchor=leftleg] (PB1) at (P1.rightleg) {};
  \node[Pants, bot, anchor=belt] (PB11) at (PB1.belt) {};

  \node[BraidB, anchor=topleft] (B1) at (PB11.leftleg) {};

  \node[Copants, anchor=leftleg] (PB2) at (B1.bottomleft) {};
  \node[Pants, bot, anchor=belt] (PB22) at (PB2.belt) {};

  \node[BraidB, anchor=topleft] (B2) at (PB22.leftleg) {};

  \node[Pants, bot, anchor=leftleg] (P2) at (PB1.rightleg) {};
  \node[Cap] at (P2.belt) {};

  \node[Cyl, lightlysupertall, bot, anchor=top] (CylR) at (P2.rightleg) {};

  \node[Copants, anchor=rightleg] (C1) at (B2.bottomleft) {};
  \node[Cup] at (C1.belt) {};

  \node[Copants, anchor=leftleg] (C2) at (B2.bottomright) {};
  \node[Cup] at (C2.belt) {};


  \draw[internal string] (B1.topleft) to[out=down, in=up] (B1.bottomright);
  \obscureA{B}
    {
\draw[solid, red] (B1.topright) to[out=down, in=up] (B1.bottomleft);
    }

\draw[internal string] (B2.topleft) to[out=down, in=up] (B2.bottomright);
\obscureA{B}{
  \draw[solid, red] (B2.topright) to[out=down, in=up] (B2.bottomleft);
}

  \begin{scope}[internal string scope]
         \draw (P1.rightleg) to[out=down, in=up] (PB1-belt.in-leftthird) to[out=down, in=up] (B1.topleft);
         \draw (P2.leftleg) to[out=down, in=up] (PB1-belt.in-rightthird) to[out=down, in=up] (B1.topright);
         \draw (B1.bottomleft) to[out=down, in=up] (PB2-belt.in-leftthird) to[out=down, in=up] (B2.topleft);
         \draw (B1.bottomright) to[out=down, in=up] (PB2-belt.in-rightthird) to[out=down, in=up] (B2.topright);
    \draw[red strand] (CylL.top) -- (CylL.bottom);
    \draw[red strand] (CylR.top) -- (CylR.bottom);
    
    \draw[red strand] (CylL.top)
        to[out=up, in=up, looseness=1.7] (PB1.leftleg);

    \draw[red strand] (CylR.top)
        to[out=up, in=up, looseness=1.7] (PB1.rightleg);

    \draw[red strand] (CylL.bottom)
    to[out=down, in=down, looseness=1.7] (B2.bottomleft);

    \draw[red strand] (CylR.bottom)
    to[out=down, in=down, looseness=1.7] (B2.bottomright);

  \end{scope}
  
\end{tz}$};

\node (5) at (4,0)
{$\begin{tz}
  \node[Pants, bot] (P1) at (0,0) {};
  \node[Cap] at (P1.belt) {};

  \node[Cyl, verytall, bot, height scale=1.535, anchor=top] (CylL) at (P1.leftleg) {};

  \node[Copants, anchor=leftleg] (PB1) at (P1.rightleg) {};
  \node[Pants, bot, height scale=1.3, anchor=belt] (PB11) at (PB1.belt) {};


  \node[Copants, anchor=leftleg] (PB2) at (PB11.leftleg) {};
  \node[Pants, bot, height scale=1.3, anchor=belt] (PB22) at (PB2.belt) {};



  \node[Pants, bot, anchor=leftleg] (P2) at (PB1.rightleg) {};
  \node[Cap] at (P2.belt) {};

  \node[Cyl, verytall, bot, height scale=1.535, anchor=top] (CylR) at (P2.rightleg) {};

  \node[Copants, anchor=rightleg] (C1) at (PB22.leftleg) {};
  \node[Cup] at (C1.belt) {};

  \node[Copants, anchor=leftleg] (C2) at (PB22.rightleg) {};
  \node[Cup] at (C2.belt) {};

  \begin{scope}[internal string scope]
         \draw (P2.leftleg) to[out=down, in=up] (PB1-belt.in-rightthird) to [out=down, in=up, out looseness=1.7, in looseness=0.7] (PB11.leftleg);
         \draw (P1.rightleg) to[out=down, in=up] (PB1-belt.in-leftthird) to +(0,-0.2) to [out=down, in=up, out looseness=1.7, in looseness=1.2] (PB11.rightleg);
         \draw (PB11.rightleg) to[out=down, in=up] (PB2-belt.in-rightthird) to [out=down, in=up, out looseness=1.7, in looseness=0.7] (PB22.leftleg);
         \draw (PB11.leftleg) to[out=down, in=up] (PB2-belt.in-leftthird) to +(0,-0.2) to [out=down, in=up, out looseness=1.7, in looseness=1.2] (PB22.rightleg);
    \draw[red strand] (CylL.top) -- (CylL.bottom);
    \draw[red strand] (CylR.top) -- (CylR.bottom);
    
    \draw[red strand] (CylL.top)
        to[out=up, in=up, looseness=1.7] (PB1.leftleg);

    \draw[red strand] (CylR.top)
        to[out=up, in=up, looseness=1.7] (PB1.rightleg);

    \draw[red strand] (CylL.bottom)
    to[out=down, in=down, looseness=1.7] (PB22.leftleg);

    \draw[red strand] (CylR.bottom)
    to[out=down, in=down, looseness=1.7] (PB22.rightleg);

  \end{scope}
\end{tz}$};

\node (6) at (3.25,-2.25)
{$\begin{tz}
        \node[Cap, scale=5] (A) at (0,0) {};
        \node[Cup, scale=5] (B) at (0,0) {};
        \begin{scope}[internal string scope]
            \draw [red strand] (-0.17,-0.25) to [out=up, in=down] (0.17,0.08);
\draw [red strand] (0.17,-0.25) to [out=up, in=down] (-0.17,0.08);
\draw [red strand] (-0.17,0.08) to [out=up, in=down] (0.17,0.42);
\draw [red strand] (0.17,0.08) to [out=up, in=down] (-0.17,0.42);
\draw [red strand] (-0.5,-0.25) to (-0.5,0.42);
\draw [red strand] (0.5,-0.25) to (0.5,0.42);
\draw [red strand] (-0.5,0.42) to [out=up, in=up, looseness=1.2] (-0.17,0.42);
\draw [red strand] (0.17,0.42) to [out=up, in=up, looseness=1.2] (0.5,0.42);
\draw [red strand] (-0.5,-0.25) to [out=down, in=down, looseness=1.2] (-0.17,-0.25);
\draw [red strand] (0.17,-0.25) to [out=down, in=down, looseness=1.2] (0.5,-0.25);
        \end{scope}
\end{tz}$};

 \node (9) at (0,-2)
{$\begin{tz}
        \node[Cup] (A) at (0,0) {};
        \node[Cap, bot] (C) at (0,0) {};
        \node[Cap] (B) at (0,-2*\cobheight) {};
        \node[Cup, bot] (D) at (0,-2*\cobheight) {};
    \end{tz}$};

\begin{scope}[double arrow scope]
    \draw (9) -- node [left] {$\Znc^\prime(\epsilon^\dagger)^2$} (2);
    \draw (2) -- node [left] {$\sim$} (3);
    \draw[] ([xshift=0pt] 4.0) to node [above, inner sep=1pt] {$\Znc^\prime({\beta})^2$} (5);
    \draw (3) -- node [above] {$\Znc^\prime(\eta)^2$} (4);
    \draw (1) -- node [above] {$\Znc^\prime(\mu^\dagger)$} (9);
    \draw (5) -- node [right] {$\Znc^\prime(\epsilon)^3$} (6);
\end{scope}
\end{tz}
\end{equation}

\end{exmp}

\begin{remark}
    Our recipe for decomposing $M$ in terms of 2-morphism generators works for the anomaly-free/oriented case. It is not clear how to extend this for $\Bordsig$ at this point, as composition is not fully worked out.
    However, we make the following observation:
    Our recipe computes the correct invariant when the signature $\sigma(L)=0$.
    If $\sigma(L)\neq 0$, we can obtain an $L^\prime$ with $\sigma(L^\prime)=0$ that gives rise to the same 3-manifold $M$ via surgery. This can be done with a series of Kirby 1-moves (blow-up/down). This essentially adds $|\sigma(L)|$-many $\pm$-framed unknots to $L$, unlinked to the rest of $L$. Applying our recipe on $L^\prime$, these framed unknots correspond to anomaly composites \ref{Anom}. This way, we see that $\Znc^\prime(M)$ is multiplied by $\xi^{\sigma(L)}$, and therefore again retrieve the correct formula for the invariant.
\end{remark}

\newpage

\section{Modified trace}\label{mod-trace}


The technology of modified traces developed in recent years \cite{geer_modified_2009,geer_generalized_2011,geer_traces_2012,geer_ambidextrous_2013,beliakova_modified_2020,geer_m-traces_2022,shibata_modified_2021} has been used to renormalize invariants that proved to be somewhat degenerate. Modified traces were used in the construction of non-semisimple TQFTs in \cite{de_renzi_3-dimensional_2022}, which are closely related to our construction in Section \ref{construction}. In this chapter, we examine how the TQFT $\Znc$ relates to modified traces. In particular, we will explain how to extract such a modified trace from $\Znc$.

We start with some basic definitions.


\begin{definition}\label{non-degen_trace}
    For $\C$ a finite abelian category over a field $k$, a \textit{trace} on a full subcategory $\mathcal{P}\subset\C$, is a family of $k$-linear maps $\{t_P\colon \Hom_\C(P,P)\to k\}_{P\in \mathcal{P}}$, satisfying the following conditions:

\begin{enumerate}
    \item (Cyclicity) For all $P,Q\in\cP$ and $f\colon P\to Q$, $g\colon Q\to P$, it is true that
    $$t_P(g\circ f)=t_Q(f\circ g).$$
    \item (Non-degeneracy) For all $P\in \cP$ and $X\in \C$, the pairing 
    $$t_P(-\circ -)\colon \Hom_\C(P,X)\times\Hom_\C(X,P)\to k,$$ is non-degenerate.
\end{enumerate}
\end{definition}

\begin{remark}
    The cyclicity condition is equivalent to saying that the family 
    
    $\{t_P\colon \Hom_\C(P,P)\to k\}_{P\in \mathcal{P}}$ is dinatural.
\end{remark}

If $\C$ is rigid monoidal, pivotal, and $\mathcal{P}=\text{Proj}(\C)$, the tensor ideal of projectives, we can define a \textit{modified} trace. 

First, for $X,Y\in \C$ and $f\in \text{End}(X\otimes Y)$, we define the left and right partial traces of $f$ as: 
$$\text{tr}_L(f)= (\text{ev}_X\otimes \id_Y)\circ(\id_{X^\lor}\otimes f)\circ(\text{coev}^R_X\otimes \id_Y),$$
$$\text{tr}_R(f)= (\id_Y \otimes\text{ev}^R_X)\circ(f\otimes \id_{X^\lor})\circ(\id_Y\otimes\text{coev}_X)$$

\begin{definition}

Now, the trace $t_P$ will be called \textit{modified} if it additionally satisfies the following conditions:

\begin{enumerate}
    \item (Right partial trace) For all $P\in \cP$, $X\in \C$ and $f\in \text{End}(P\otimes X)$,
    $$t_{P\otimes X}(f)=t_X(\text{tr}_R(f));$$
    \item (Left partial trace) For all $P\in \cP$, $X\in \C$ and $f\in \text{End}(X\otimes P)$,
    $$t_{X\otimes P}(f)=t_X(\text{tr}_L(f)).$$
\end{enumerate}
    
\end{definition}

Note that when $\C$ is ribbon, the conditions (1) and (2) above are equivalent \cite{geer_generalized_2011}.

\begin{remark}
    Modified traces can be defined in the non-pivotal setting \cite{shibata_modified_2023}.
\end{remark}

To see how modified traces appear through $\Znc$, it will be beneficial to think of the family $t_P$ as a $k$-linear map 
$$\int^{P\in\mathcal{P}} \hspace{-15pt} \Hom_\C(P,P)\to k,$$

as in our setting, the space $\displaystyle\int^{P\in\cP} \hspace{-20pt} \Hom_\C(P,P)$ is canonically isomorphic to $\mathcal{Z}(T^2)$. 
Moreover, by evaluating the non-compact TQFT $\mathcal{Z}$ on the solid torus, we obtain a  map $$\mathcal{Z}(\nu^\dagger\circ\epsilon)\colon\mathcal{Z}(T^2)\to k.$$

Arguing in terms of the compactified 2d TQFT $\mathcal{Z}(S^1\times -)$ and Lurie's non-compact cobordism hypothesis \cite{lurie_classification_2009}, this map is a trace map, part of the `Calabi-Yau' structure on $\mathcal{Z}(S^1)$.

What we prove in this chapter is that the map $\displaystyle\int^{P\in\cP} \hspace{-20pt} \Hom_\C(P,P)\to k$ we obtain as the composite of the above, is a modified trace. A central, non-trivial component of this statement concerns the origin of the canonical isomorphism 
$$\mathcal{Z}(T^2)\cong \displaystyle\int^{P\in\cP} \hspace{-20pt} \Hom_\C(P,P).$$

\subsection{1-dualizability in bicategories}

In order to explain why there is a canonical isomorphism $\mathcal{Z}(T^2)\cong \displaystyle\int^{P\in\cP} \hspace{-20pt} \Hom_\C(P,P)$, we will be concerned with the theory of 1-dualizable objects in bicategories. For this, we follow closely the exposition in \cite{pstrc_agowski_dualizable_2022}.

Let $\cM$ be a monoidal bicategory with monoidal unit $I$.

\begin{definition}
    A \textit{dual pair} in $\cM$ is a tuple ($L, R, e, c, \sigma, \tau$), where $L,R\in \cM$ are objects, $e\colon L\otimes R\to I$, $c\colon I\to R\otimes L$ are 1-morphisms and $\sigma, \tau$ are isomorphisms witnessing the `snake' identities:

\[\begin{tikzcd}
	&& {L\otimes(R\otimes L)} & {(L\otimes R)\otimes L} \\
	& {L\otimes I} &&& {I\otimes L} \\
	L &&&&& L
	\arrow[""{name=0, anchor=center, inner sep=0}, "\sim", from=1-3, to=1-4]
	\arrow["{e\otimes \id_L}", from=1-4, to=2-5]
	\arrow["{\id_L\otimes c}", from=2-2, to=1-3]
	\arrow["\sim", from=2-5, to=3-6]
	\arrow["\sim", from=3-1, to=2-2]
	\arrow[""{name=1, anchor=center, inner sep=0}, "{\id_L}"', from=3-1, to=3-6]
	\arrow["\sigma", shorten <=9pt, shorten >=9pt, Rightarrow, from=0, to=1]
\end{tikzcd}\]

\[\begin{tikzcd}
	&& {(R\otimes L)\otimes R} & {R\otimes(L\otimes R)} \\
	& {I\otimes R} &&& {R\otimes I} \\
	R &&&&& R
	\arrow[""{name=0, anchor=center, inner sep=0}, "\sim", from=1-3, to=1-4]
	\arrow["{\id_R\otimes e }", from=1-4, to=2-5]
	\arrow["{c\otimes\id_R}", from=2-2, to=1-3]
	\arrow["\sim", from=2-5, to=3-6]
	\arrow["\sim", from=3-1, to=2-2]
	\arrow[""{name=1, anchor=center, inner sep=0}, "{\id_R}"', from=3-1, to=3-6]
	\arrow["\tau", shorten <=9pt, shorten >=9pt, Rightarrow, from=0, to=1]
\end{tikzcd}\]

\end{definition}

Whenever there is no ambiguity, we will denote a dual pair $(L, R, e, c, \sigma, \tau)$ in $\mathcal{M}$ by $\langle L,R\rangle$.

Dual pairs are organized into a bicategory, $\text{DualPair}(\mathcal{M})$ - see \cite[Definition 3.8, Notation 3.9]{pstrc_agowski_dualizable_2022} for a complete description.
We are interested in the data of a 1-morphism in this bicategory, and this is what we explain now.

A morphism $(s, t, \gamma, \delta)$ between two dual pairs $(L, R, e, c, \sigma, \tau)$ and $(L^\prime, R^\prime, e^\prime, c^\prime, \sigma^\prime, \tau^\prime)$, consists of the data of:

\begin{itemize}
    \item Two 1-morphisms in $\cM$, $s\colon L\to L^\prime$ and $t\colon R\to R^\prime$;
    \item Two invertible 2-morphisms $\gamma, \delta$:
    

\[\begin{tikzcd}
	I && {R^\prime\otimes L^\prime} & {L^\prime\otimes R^\prime} && I \\
	& \delta &&& \gamma \\
	I && {R\otimes L} & {L\otimes R} && I
	\arrow["{c^\prime}", from=1-1, to=1-3]
	\arrow["{e^\prime}", from=1-4, to=1-6]
	\arrow["{\id_I}", from=3-1, to=1-1]
	\arrow["c"', from=3-1, to=3-3]
	\arrow["{t\otimes s}"', from=3-3, to=1-3]
	\arrow["{s\otimes t}", from=3-4, to=1-4]
	\arrow["e", from=3-4, to=3-6]
	\arrow["{\id_I}"', from=3-6, to=1-6]
\end{tikzcd}\]
\end{itemize}

Satisfying the following coherence:

\begin{equation}\label{cusp_coherence}
\begin{tikzcd}[cramped, sep=tiny]
	&& {L^\prime\otimes R^\prime\otimes L^\prime} &&&&&& {L^\prime\otimes R^\prime\otimes L^\prime} \\
	{L^\prime} &&&& {L^\prime} && {L^\prime} && {\sigma^\prime} && {L^\prime} \\
	& {s\otimes \delta} && {\gamma\otimes s} && {=} \\
	&& {L\otimes R\otimes L} &&&&&& {=} \\
	L && \sigma && L && L &&&& L
	\arrow["{e^\prime\otimes \id_{L^\prime}}", from=1-3, to=2-5]
	\arrow[from=1-9, to=2-11]
	\arrow["{\id_{L^\prime}\otimes c^\prime}", from=2-1, to=1-3]
	\arrow[from=2-7, to=1-9]
	\arrow["{\id_{L^\prime}}"', curve={height=24pt}, from=2-7, to=2-11]
	\arrow["{s\otimes t\otimes s}"{description}, from=4-3, to=1-3]
	\arrow["{e\otimes\id_L}"', from=4-3, to=5-5]
	\arrow["s", from=5-1, to=2-1]
	\arrow["{\id_L\otimes c}"', from=5-1, to=4-3]
	\arrow["{\id_L}"', curve={height=24pt}, from=5-1, to=5-5]
	\arrow["s"', from=5-5, to=2-5]
	\arrow["s", from=5-7, to=2-7]
	\arrow["{\id_L}"', from=5-7, to=5-11]
	\arrow["s"', from=5-11, to=2-11]
\end{tikzcd}
\end{equation}

\begin{equation}\label{cusp_coherence2}
\begin{tikzcd}[cramped, sep=tiny]
	&& {R^\prime\otimes L^\prime\otimes R^\prime} &&&&&& {R^\prime\otimes L^\prime\otimes R^\prime} \\
	{R^\prime} &&&& {R^\prime} && {R^\prime} && {\tau^\prime} && {R^\prime} \\
	& {\delta\otimes t} && {t\otimes \gamma} && {=} \\
	&& {R\otimes L\otimes R} &&&&&& {=} \\
	R && \tau && R && R &&&& R
	\arrow["{\id_{R^\prime}\otimes e^\prime}", from=1-3, to=2-5]
	\arrow["{\id_{R^\prime}\otimes e^\prime}", from=1-9, to=2-11]
	\arrow["{c^\prime\otimes \id_{R^\prime}}", from=2-1, to=1-3]
	\arrow["{c^\prime\otimes \id_{R^\prime}}", from=2-7, to=1-9]
	\arrow["{\id_{L^\prime}}"', curve={height=24pt}, from=2-7, to=2-11]
	\arrow["{t\otimes s\otimes t}"{description}, from=4-3, to=1-3]
	\arrow["{\id_R\otimes e}"', from=4-3, to=5-5]
	\arrow["t", from=5-1, to=2-1]
	\arrow["{c \otimes \id_R}"', from=5-1, to=4-3]
	\arrow["{\id_R}"', curve={height=24pt}, from=5-1, to=5-5]
	\arrow["t"', from=5-5, to=2-5]
	\arrow["t", from=5-7, to=2-7]
	\arrow["{\id_R}"', from=5-7, to=5-11]
	\arrow["t"', from=5-11, to=2-11]
\end{tikzcd}
\end{equation}

Clearly, an object is part of a dual pair if and only if it is 1-dualizable \cite[Proposition 3.6]{pstrc_agowski_dualizable_2022}.
However, it turns out that if we naively consider the forgetful 2-functor from the bicategory of dual pairs onto the 2-groupoid of 1-dualizable objects, this needn't be an equivalence. In order to fix this, we need to consider a class of dual pairs that satisfy additional coherence equations, the so-called \textit{swallowtail} axioms \cite[Definition 3.11]{pstrc_agowski_dualizable_2022}.
We call the bicategory of such dual pairs the \textit{bicategory of coherent dual pairs} in $\mathcal{M}$, which we denote by $\text{CohDualPair}(\mathcal{M})$. Since we are only concerned with coherent dual pairs, any mention of `dual pairs' after this point implicitly refers to coherent. 

We have the following results:


\begin{itemize}
    \item $\text{CohDualPair}(\mathcal{M})$ is a 2-groupoid \cite[Proposition 3.10]{pstrc_agowski_dualizable_2022};
    \item The data $(L, R, e, c, \sigma)$ can be uniquely completed to a dual pair \cite[Theorem 3.14]{pstrc_agowski_dualizable_2022};
    \item If $\left(\mathcal{M}^{\text{d}}\right)^{\sim}$ is the 2-groupoid of dualizable objects in $\mathcal{M}$, the forgetful 2-functor 
    \begin{equation}\label{DDat_equiv}
       F\colon\text{CohDualPair}(\mathcal{M})\to \left(\mathcal{M}^{\text{d}}\right)^{\sim}
    \end{equation}
    is an equivalence \cite[Theorem 3.16]{pstrc_agowski_dualizable_2022}.
\end{itemize}



We will denote an object of $\text{CohDualPair}(\mathcal{M})$ by $(L, R, e, c, \sigma)$, something enabled by the second result above.


\subsection{The canonical isomorphism}\label{canonical-iso}
In order to explain how to obtain the isomorphism $\mathcal{Z}(T^2)\cong \displaystyle\int^{P\in\cP} \hspace{-20pt} \Hom_\C(P,P)$, we will define two objects of $\text{CohDualPair}(\text{Bimod}_k)$. A 1-morphism between these objects gives rise to such an isomorphism. Through further analysis, we will see that an appropriately defined space of such 1-morphisms is contractible, and therefore a choice of such is unique, up to a unique 2-isomorphism.

We begin by defining the two objects of $\text{CohDualPair}(\text{Bimod}_k)$) in question. They both are associated to fully extended, non-compact 2d TQFTs $\mathcal{Z}_0$ and $\Znc(-\times S^1)$, so we denote them as such.
We emphasize that we do not prove this claim for $\mathcal{Z}_0$, and we simply present them as objects of $\text{CohDualPair}(\text{Bimod}_k)$.




\scalecobordisms{0.6}

\begin{table}[h!]
\centering
 \begin{tabular}{||c || c | c||} 
 \hline
 $\text{Bord}^{nc}_{2,1,0}$ & $\mathcal{Z}_0\colon \text{Bord}^{nc}_{2,1,0}\to \text{Bimod}_k$& $\Znc^\prime(-\times S^1)\colon \text{Bord}^{nc}_{2,1,0}\to \text{Bimod}_k$ \\ [0.5ex] 
 \hline\hline
 $pt_+$ & $\cP$ & $\cP$\\ [0.5ex] 
$pt_-$ & $\cP^{op}$ & $\cP$ \\ [0.5ex] 
$\begin{tz}[xscale=0.8,yscale=0.7]
\draw (0,0) to (0,0.5);
\draw (1,0) to (1,0.5);
\draw [black strand] (0,0.5) to [out=up, in=up, looseness=1.2] (1,0.5);
     \end{tz}$ & $\Hom_\C(-,-)\colon\cP\boxtimes\cP^{op}\to \Vecfd$ & $\Hom_\C(-\otimes -,\unit)^*\colon\cP\boxtimes\cP\to \Vecfd$ \\ [1ex] 
$\begin{tz}[xscale=0.8,yscale=0.7]
\draw (0,0) to (0,0.5);
\draw (1,0) to (1,0.5);
\draw [black strand] (0,0) to [out=down, in=down, looseness=1.2] (1,0);
     \end{tz}$ & $\Hom_\C(-,-)\colon\cP^{op}\boxtimes\cP\to \Vecfd$ & $\Hom_\C(-\otimes -,\unit)\colon\cP^{op}\boxtimes\cP^{op}\to \Vecfd$ \\ [1ex]
$
\begin{tz}[xscale=0.8,yscale=0.7]
\draw (0,0) to (0,0.5);
\draw (1,0) to (1,0.5);
\draw (2,0) to (2,0.5);
\draw [black strand] (0,0.5) to [out=up, in=up, looseness=1.2] (1,0.5);
\draw [black strand] (1,0) to [out=down, in=down, looseness=1.2] (2,0);
     \end{tz}
\xrightarrow{\text{cusp}}
\begin{tz}[xscale=0.8,yscale=0.7]
\draw (0,0) to (0,1);\end{tz}$ & Co-Yoneda & $\alpha_1$ \\ [1ex]
 \hline
 \end{tabular}
  \caption{Dualizability data for $\Znc_0(pt)$ and $\Znc^\prime(pt\times S^1)$.}
    \label{1-dual_data_table}
\end{table}



The isomorphism $\alpha_1$ is given by the the image of the cusp isomorphism under the non-compact TQFT $\Znc$.

In $\Bordncsig$, the cusp isomorphism is given by the following composite of 2-morphisms:

\begin{align}\label{cusp}
 \text{cusp} \quad&:=\quad
\begin{tz}
 \node[Pants, bot] (A) at (0,0) {};
 \node[Cyl, bot, anchor=top] (B) at (A.leftleg) {};
 \node[Copants, anchor=leftleg] (C) at (A.rightleg) {};
 \node[Cyl, top, bot, anchor=bottom] (D) at (C.rightleg) {};
 \node[Cap] (E) at (A.belt) {};
 \node[Cup] (F) at (C.belt) {};
 \selectpart[green] {(A-belt) (B-bottom) (C-belt) (D-top)};
\end{tz}
\,\, \Rarrow{\phileft} \,\,
\begin{tz}
        \node[Copants, top, bot] (A) at (0,0) {};
        \node[Pants, bot, anchor=belt] (B) at (A.belt) {};
        \node[Cap] (C) at (A.leftleg) {};
        \node[Cup] (D) at (B.rightleg) {};
        \selectpart[green] {(C-top) (A-rightleg) (A-leftleg) (A-belt)}
\end{tz}
\,\, \Rarrow{\breve{\lambda}} \,\,
\begin{tz}
        \node[Pants,  top, bot] (B) at (0,0) {};
        \node[Cup] (D) at (B.rightleg) {};
\end{tz}
\,\, \Rarrow{\rho} \,\,
\begin{tz}
 \node[Cyl, tall, top, bot] (A) at (0,0) {};
\end{tz}
\end{align}

As explained in \cite{bartlett_extended_2014}, a formula for the left co-unitor $\breve{\lambda}$ can be given via `calculus of mates', by the following composite of generators:

\begin{align}
 \breve{\lambda} \quad&:=\quad
\begin{tz}
        \node[Copants, bot] (A) at (0,0) {};
        \node[Cyl, top, bot, anchor=bottom] (B) at (A.rightleg) {};
        \node[Cap] (C) at (A.leftleg) {};
        \selectpart[green] {(B-top) (B-bottom)}
\end{tz}
\,\, \Rarrow{\lambda^\inv} \,\,
\begin{tz}
        \node[Copants, bot] (A) at (0,0) {};
        \node[Cyl, tall, bot, anchor=bottom] (B) at (A.rightleg) {};
        \node[Cap] (C) at (A.leftleg) {};
        \node[Pants, top, bot, anchor=rightleg] (D) at (B.top) {};
        \node[Cup] (E) at (D.leftleg) {};
        \selectpart[green] {(C) (D-leftleg)}
\end{tz}
\,\, \Rarrow{\mu} \,\,
\begin{tz} 
        \node[Pants, top, bot] (A) at (0,0) {};
        \node[Copants, bot, anchor=leftleg] at (A.leftleg) {};
\end{tz}
\,\, \Rarrow{\epsilon} \,\,
\begin{tz}
         \node[Cyl, tall, top, bot] (A) at (0,0) {};
\end{tz}
\end{align}

\scalecobordisms{0.3}

\begin{remark}\label{snake-nakayama}
    In Lincat, $\Znc(\begin{tz}
 \node[Pants, bot] (A) at (0,0) {};
 \node[Cyl, bot, anchor=top] (B) at (A.leftleg) {};
 \node[Copants, anchor=leftleg] (C) at (A.rightleg) {};
 \node[Cyl, top, bot, anchor=bottom] (D) at (C.rightleg) {};
 \node[Cap] (E) at (A.belt) {};
 \node[Cup] (F) at (C.belt) {};
\end{tz})(-)=
\displaystyle\int^{X\in\C}\hspace{-20pt}\Hom_\C(-\otimes X^\lor,1)^*\otimes X$, which is isomorphic to the Nakayama functor $\mathcal{N}$ \cite{fuchs_eilenberg-watts_2020}. It is known that trivializations of the Nakayama functor are in 1-1 correspondence with modified traces \cite{schweigert_trace_2023,shibata_modified_2023}.
Since the cusp isomorphism provides such a trivialization, it should not come as a surprise that it is related to the definition of the modified trace. We emphasize, however, that our method for obtaining a modified trace is independent of those in the aforementioned works.
\end{remark}

Specializing the discussion of the previous section, a morphism between two such dual pairs consists of the following data:

\begin{itemize}
    \item A functor $\cP\xrightarrow{s_0}\cP$;
    \item A functor $\cP^{op}\xrightarrow{t_0}\cP$;
    \item A natural isomorphism with components $\Hom_\C(Y,P)\xrightarrow{\gamma_0}\Hom_\C(P\otimes Y^\lor,\unit)^*$;
    \item A natural isomorphism with components ${\Hom_\C(Q,Y)\xrightarrow{\delta_0}\Hom_\C(Y^\lor\otimes Q,\unit)}$.
\end{itemize}

The ingredients to obtain the isomorphism $c_t\colon{\displaystyle\int^{P\in\cP} \hspace{-20pt} \Hom_\C(P,P)\xrightarrow{\sim}\Hom_\C(\coend,1)^*\left(=\Znc(T^2)\right)}$ are the following:

\begin{itemize}
    \item The canonical isomorphism $\mathcal{Z}_0(S^1)\cong \displaystyle\int^{P\in\cP} \hspace{-20pt} \Hom_\C(P,P)$, given by the cusp isomorphism under $\mathcal{Z}_0$ (co-Yoneda);
    \item An isomorphism $\mathcal{Z}_0(S^1)\cong \Znc^\prime(S^1\!\times S^1)$ coming from a 1-morphism ${\langle \cP,\cP^{op}\rangle\to \langle \cP,\cP\rangle}$ in $\text{CohDualPair}(\text{Bimod}_k)$;
    \item The identification $\Znc(T^2)\cong\Znc^\prime(T^2)$, as explained in Section \ref{Lincat-Bimod}.
\end{itemize}

Technically, $\gamma_0$ and $\delta_0$ cannot give us an isomorphism $\mathcal{Z}_0(S^1)\cong \Znc^\prime(S^1\!\times S^1)$, as they are part of a morphism between right dualizability data for $\cP$, and therefore we can't quite compose the two bimodules to get $\Znc(S^1\!\times S^1)$. However, since $\text{Bimod}_k$ is symmetric monoidal, we obtain the left dualizability data of $\cP$ from the right. So, to be more precise, to obtain the isomorphism in question, we also need part of the data of a 1-morphism $\langle \cP^{op},\cP\rangle\to \langle \cP,\cP\rangle$. In particular, a natual isomorphism 

\begin{equation}
    \gamma_1\colon\Hom_\C(Y,P)\to \Hom_\C(Y^\lor\otimes P,\unit)^*
\end{equation}

(notice the flipped entries in the target compared to $\gamma_0$).


To summarise, the isomorphism $c_t$ is given by the following composite:

\[\begin{tikzcd}
	{\displaystyle\int^{P\in\cP} \hspace{-20pt} \Hom_\C(P,P)} && {\displaystyle\int^{P,Q\in\cP} \hspace{-20pt} \Hom_\C(P,Q)\otimes\Hom_\C(Q,P)} \\
	\\
	{\Hom_\C(\coend,1)^*} && {\displaystyle\int^{P,Q\in\cP} \hspace{-20pt} \Hom_\C(P^\lor\otimes Q,\unit)^*\otimes\Hom_\C(P^\lor\otimes Q,\unit)}
	\arrow["\sim", from=1-1, to=1-3]
	\arrow["c_t"', from=1-1, to=3-1]
	\arrow["{\gamma_1\otimes\delta_0}", from=1-3, to=3-3]
	\arrow["\sim", from=3-3, to=3-1]
\end{tikzcd}\]

At this point, $c_t$ appears to depend on the choice of $\gamma_1$ and $\delta_0$. We dedicate the rest of this section on the following:

\begin{theorem}\label{iso_exist_canon}
   The isomoprhism $c_t$ exists and is canonical.
\end{theorem}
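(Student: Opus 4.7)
The plan is to deduce the existence and uniqueness of $c_t$ from the equivalence \eqref{DDat_equiv}, applied to the bicategory $\mathcal{M}=\text{Bimod}_k$. First I would exhibit the two relevant objects of $\text{CohDualPair}(\text{Bimod}_k)$ in Table~\ref{1-dual_data_table} as genuine (coherent) dualizability data for $\cP$. The pair $\langle \cP,\cP^{op}\rangle$ is standard: its cusp isomorphism is the co-Yoneda lemma, and the swallowtail axioms reduce to the usual triangle identities for $\Hom_\C(-,-)$. For $\langle\cP,\cP\rangle$, the unit and counit are identified with $\Znc^\prime$ applied to the cup and cap via Remark~\ref{cap-string-diag}, and the cusp $\alpha_1$ is the image of \eqref{cusp} under $\Znc^\prime$. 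Coherence of this second pair follows from functoriality of $\Znc$, together with the observation that the swallowtail axioms in $\Bordncsig$ hold as 2-morphism identities between composites of generators. The hard part here is strictly the bookkeeping, not the mathematics: one needs to check coherence \eqref{cusp_coherence}, \eqref{cusp_coherence2} at the level of $\Bordncsig$ and transport them along $\Znc$.

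Given these two coherent dual pairs, both have underlying 1-dualizable object $\cP\in \text{Bimod}_k$, and $\id_\cP$ is an equivalence of $\cP$ with itself. By the equivalence \eqref{DDat_equiv} the homotopy fibre of $F$ over $\id_\cP$ is contractible; concretely, this means there is a 1-morphism of coherent dual pairs $\langle\cP,\cP^{op}\rangle\to \langle\cP,\cP\rangle$ with $s=\id_\cP$, and it is unique up to a unique invertible 2-morphism. The $t$ component of this morphism is forced to be the bimodule represented by the left dual, and the invertible 2-morphisms $\gamma_0,\delta_0$ are then uniquely determined up to contractible choice by the coherence \eqref{cusp_coherence}, \eqref{cusp_coherence2}. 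Running the same argument with the roles of source and target exchanged — equivalently, appealing to the symmetric monoidal structure of $\text{Bimod}_k$ to pass between left and right duals — produces the natural isomorphism $\gamma_1$ with the same uniqueness property.

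With $\gamma_1$ and $\delta_0$ in hand, $c_t$ is defined as the composite displayed just before the statement of the theorem. Its existence is automatic once the 1-morphisms of dual pairs exist. Canonicity follows because each of the three arrows in that composite is canonical: the top horizontal is the co-Yoneda isomorphism (the cusp for $\mathcal{Z}_0$), the right vertical is built from $\gamma_1\otimes\delta_0$, which is unique up to unique 2-isomorphism by the previous paragraph, and the bottom horizontal is the coend presentation of $\Hom_\C(\coend,\unit)^*$, i.e.\ the universal property of $\coend$. I expect the main obstacle to be conceptual clarity rather than technical difficulty: verifying rigorously that the 1-morphism of dual pairs we need actually exists requires checking the swallowtail/cusp coherences for $\Znc^\prime(-\times S^1)$, and this in turn requires a precise statement that compactifying $\Znc$ along $S^1$ produces a fully extended non-compact 2d TQFT with values in $\text{Bimod}_k$ — a claim that is plausible from the construction but that, as remarked in the text, is not proved in full generality here. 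Once this is granted, the entire argument is a formal consequence of $F$ being an equivalence of 2-groupoids.
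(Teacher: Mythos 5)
Your argument reaches the same conclusion but by a genuinely different route on the existence half. The paper does \emph{not} deduce existence formally from the equivalence \eqref{DDat_equiv}: it writes down the morphism of dual pairs explicitly as $(\id_\cP,(-)^\lor,\,t^1\circ(-)^\flat,\,(-)^\sharp)$, where $t^1$ comes from the dinatural non-degenerate pairing determined by $t_{P_\unit}(\eta_\unit,\varepsilon_\unit)=\mathscrsfs{D}^\inv$, and then verifies the cusp coherence \eqref{cusp_coherence} by an explicit diagram chase through $\Znc(\phileft)$, $\Znc(\mu)$ and $\Znc(\epsilon)$. Only the canonicity half proceeds as you describe, via contractibility of the homotopy fiber (Lemma \ref{contractibility_hofib}). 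The explicit construction is not a stylistic choice: it is what makes the \emph{strict} fiber $stF^{-1}(\id_\cP)$ nonempty, and the paper derives the isofibration property of $F$ (hence the identification of strict and homotopy fibers) \emph{from} that nonemptiness. Your claim that contractibility of the homotopy fiber ``concretely means there is a 1-morphism with $s=\id_\cP$'' is exactly the step that needs this: the formal argument only produces $(z,g)$ with $F(z)\cong\id_\cP$, and without either an explicit witness in the strict fiber or an independent proof that $F$ is an isofibration, you cannot take $s=\id_\cP$ on the nose, which the composite defining $c_t$ tacitly uses. This is repairable (insert $g$ into the composite and invoke contractibility again), so I would not call it a fatal gap for the theorem as stated, but it is a real hole in the argument as written. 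Similarly, ``the $t$ component is forced to be the left dual'' is an overstatement: contractibility determines it only up to unique isomorphism, and showing that $(-)^\lor$ together with $t^1\circ(-)^\flat$ and $(-)^\sharp$ actually \emph{is} a valid choice is precisely the content of the paper's coherence check.

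The trade-off is worth noting. Your formal route is shorter and makes canonicity transparent, but it produces $c_t$ only abstractly; the paper's explicit formulas for $\gamma_1$ and $\delta_0$ are consumed immediately in Theorem \ref{mod-tr-thm}, where $t'(f)$ is computed as $t_{P^\lor\otimes P}\bigl((\id_{P^\lor}\otimes f)\circ\text{coev}^R_P,\ \text{ev}_P\bigr)$ and shown to satisfy the partial-trace axioms. Without the explicit construction, that computation has no input. Both approaches share the same unproven hypothesis, which you correctly flag: that the right-hand column of Table \ref{1-dual_data_table} really defines an object of $\text{CohDualPair}(\text{Bimod}_k)$, i.e.\ that the swallowtail axioms hold for $\Znc^\prime(-\times S^1)$; the paper asserts this without proof as well.
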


 To do this, we will show that there exists such a 1-morphism ${\langle \cP,\cP^{op}\rangle\to \langle \cP,\cP\rangle}$ of dual pairs, and that it is unique up to a contractible choice.


We start with the former.


Define a non-degenerate pairing, dinatural in $P\in\cP$ 

\begin{equation}
    t_P\colon\Hom(\unit,P)\otimes \Hom(P,\unit)\to k.
\end{equation}

From the results discussed in Chapter \ref{finiteproj}, to define $t_P$ it is sufficient to specify $t_{P_\unit}$, which we define by:

\begin{equation*}
    \setlength{\arraycolsep}{0pt}
\renewcommand{\arraystretch}{1.2}
  \begin{array}{ c c c }
    t_{P_\unit}\colon\Hom(\unit,P_\unit)\otimes \Hom(P_\unit,\unit) & {} \to {} & k \\
    \left(\eta_\unit,  \varepsilon_\unit\right)          & {} \mapsto {} & \mathscrsfs{D}^\inv
  \end{array}
\end{equation*}

The associated copairing 

\begin{equation}
    \Omega_P\colon k\to\Hom(P,\unit)\otimes\Hom(\unit,P),
\end{equation}

is determined by:

\begin{equation*}
    \setlength{\arraycolsep}{0pt}
\renewcommand{\arraystretch}{1.2}
  \begin{array}{ c c c c }
    \Omega_{P_\unit}\colon & {} k & {} \to {} & \Hom(P_\unit,\unit)\otimes\Hom(\unit,P_\unit)  \\
            & {}  1 & {} \mapsto {} & \mathscrsfs{D}\left(\varepsilon_\unit, \eta_\unit\right)
  \end{array}
\end{equation*}

\begin{remark}
    The data of a dinatural non-degenerate pairing follows from that of a non-degenerate trace (Definition \ref{non-degen_trace}) on the category. Dinaturality follows from the cyclicity condition. 
\end{remark}

The data of $t_P$ is equivalent to that of a natural isomorphism:

 \begin{equation}
     t^1_P\colon \Hom(\unit,P)\xrightarrow{\sim} \Hom(P,\unit)^*
 \end{equation}

Using $\Omega$ we have a formula for the inverse $\Omega^1:=(t^1)^\inv$ given by the following composite:

\begin{equation}\label{omega1def}
    \Hom(P,\unit)^*\xrightarrow{\Omega_P\otimes\id}
\Hom(\unit,P)\Hom(P,\unit)\otimes\Hom(P,\unit)^*
\xrightarrow{\id\otimes\text{ev}_{vect}}
\Hom(\unit,P)
\end{equation}

A useful fact we will use later on is the following:

\begin{lemma}\label{altmu}
    $\Znc(\mu)=EV_\unit\circ(\Omega^1\otimes\id_\unit)$
\end{lemma}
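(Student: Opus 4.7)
The plan is to verify the identity by reducing to the indecomposable projectives and then directly matching both sides on $P_\unit$. First I would observe that both $\Znc(\mu)$ and $EV_\unit\circ(\Omega^1\otimes\id_\unit)$ are natural transformations between right exact functors $\cP\to\C$ (specifically from $\Hom_\C(-,\unit)^*\otimes\unit$ to the inclusion $\cP\hookrightarrow\C$), so by the results reviewed in Section \ref{finiteproj} it suffices to check equality on each projective cover $P_j$ of a simple object. This mirrors the way $\Znc(\mu)$ itself is pinned down, via its value on $P_\unit$.

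Next I would dispense with the case $P_j\not\cong P_\unit$. Since $\unit$ is simple and $\Hom_\C(P_j,L_{j'})=\delta_{jj'}\cdot k$, the space $\Hom_\C(P_j,\unit)$ vanishes for $j\neq\unit$, so the source $\Hom_\C(P_j,\unit)^*\otimes\unit$ is the zero object in $\C$ via the copower formula \eqref{copower_def}. Both sides of the claimed equality are therefore automatically zero, matching the definition $\Znc(\mu)_{P_j}=0$.

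The remaining case $P=P_\unit$ will be handled by direct computation. Unpacking $\Omega^1_{P_\unit}$ via \eqref{omega1def} together with the defining formula $\Omega_{P_\unit}(1)=\mathscrsfs{D}(\varepsilon_\unit,\eta_\unit)$, one obtains $\Omega^1_{P_\unit}(\phi)=\mathscrsfs{D}\cdot\phi(\varepsilon_\unit)\cdot\eta_\unit$ for $\phi\in\Hom_\C(P_\unit,\unit)^*$. Post-composing with $EV_\unit\colon\Hom_\C(\unit,P_\unit)\otimes\unit\to P_\unit$, which by \eqref{evaluation-counit} sends $f\otimes\id_\unit$ to $f$, yields $\mathscrsfs{D}\cdot\phi(\varepsilon_\unit)\cdot\eta_\unit$. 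On the other side, \eqref{mudef} defines $\Znc(\mu)_{P_\unit}=\mathscrsfs{D}(\mathrm{ev}_{\varepsilon_\unit}\otimes\eta_\unit)$, which evaluates on $\phi$ to the same expression, and the two composites agree.

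The only real obstacle is bookkeeping around the copower structure: the notation $\Hom_\C(P_\unit,\unit)^*\otimes\unit$ denotes the object of $\C$ defined by the universal property \eqref{copower_def}, and one must be careful to track how linear maps between the one-dimensional vector spaces $\Hom_\C(P_\unit,\unit)$ and its dual interact with the tensor factor in $\C$. Once the identifications are unravelled consistently, the equality of the two composites is immediate from the formulas above.
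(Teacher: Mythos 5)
Your proof is correct and follows the same route as the paper, which simply states that the claim "follows easily by unpacking the definitions" of $EV$ \eqref{evaluation-counit} and $\Znc(\mu)$ \eqref{mudef}. You have merely made explicit the reduction to $P_\unit$ and the computation $\Omega^1_{P_\unit}(\phi)=\mathscrsfs{D}\cdot\phi(\varepsilon_\unit)\cdot\eta_\unit$ that the paper leaves implicit.
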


\begin{proof}
   We remind that $EV$ was defined in \ref{evaluation-counit} and $\Znc(\mu)$ at \ref{mudef}. The claim follows easily by unpacking the definitions.
\end{proof}

\begin{remark}
    In fact, the data of a natural isomorphism $t^1$ (and therefore also of $t$) is equivalent to an isomoprhism $\unit\xrightarrow{\sim} \mathcal{N}(\unit)$, where $\mathcal{N}$ is the Nakayama functor. As explained in \cite{shibata_modified_2023}, such isomorphisms are equivalent to trivializations of $\mathcal{N}$.
\end{remark}

We define the morphism between the two dual pairs to be 

\begin{equation}\label{morph_dual_pairs}
    (\id_\cP, (-)^\lor, t^1\circ(-)^\flat, (-)^\sharp).
\end{equation}

More explicitly,

\begin{itemize}
    \item $\cP\xrightarrow{\id_{\cP}}\cP$;
    \item $\cP^{op}\xrightarrow{(-)^\lor}\cP$;
    \item $\Hom_\C(-,-)\xrightarrow{t^1\circ(-)^\flat}\Hom_\C(-\otimes -,\unit)^*$;
    \item $\Hom_\C(-,-)\xrightarrow{(-)^\sharp}\Hom_\C(-\otimes -,\unit)$.
\end{itemize}


\begin{prop}
    The data $(\id_\cP, (-)^\lor, t^1\circ(-)^\flat, (-)^\sharp)$ is a well defined morphism of dual pairs.
\end{prop}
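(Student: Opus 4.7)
The plan is to verify first that the natural transformations $\gamma$ and $\delta$ have the right source and target, and then to check the two coherence axioms \eqref{cusp_coherence} and \eqref{cusp_coherence2}.

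Well-definedness is nearly formal. Since $\cP$ is a (two-sided) tensor ideal in the finite tensor category $\C$, the functor $(-)^\lor$ preserves $\cP$, and for $P \in \cP$ and $Y \in \cP$ the object $P \otimes Y^\lor$ is again in $\cP$, so $\Hom_\C(P\otimes Y^\lor, \unit)^*$ makes sense. The map $(-)^\flat$ of \eqref{flat} is a natural isomorphism by rigidity, and $t^1_{P\otimes Y^\lor}$ is a natural isomorphism by construction (it is natural in $P\otimes Y^\lor$ through the cointegral/integral data fixing $\mathscrsfs{D}$). Similarly $(-)^\sharp$ of \eqref{sharp} is a natural isomorphism, so both $\gamma$ and $\delta$ are well-defined natural isomorphisms between the correct bimodules.

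The real content is the two cusp-coherence axioms. I would check \eqref{cusp_coherence} first (the other is formally symmetric). The left-hand side of the diagram is built from the snake isomorphism $\sigma$ for $\langle \cP, \cP^{op}\rangle$, which under $\mathcal{Z}_0$ is just the co-Yoneda isomorphism $\int^P \Hom_\C(-,P)\otimes \Hom_\C(P,-) \xrightarrow{\sim} \Hom_\C(-,-)$; and the right-hand side is built from $\sigma'$, which is the image under $\Znc$ of the cusp composite of \eqref{cusp}. My plan is to compute $\Znc$ on this cusp and show that, after composing with $\gamma \otimes \delta$ in the appropriate slot, the two sides are equal as natural transformations of bimodules. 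Using the computation of $\Znc(\phileft)$ from equation \eqref{phi_l} (which is the identity), the cusp reduces to $\rho \circ \breve{\lambda}$, and $\breve{\lambda}$ is $\epsilon \circ \mu \circ \lambda^\inv$. Then Lemma \ref{altmu} rewrites $\Znc(\mu)$ in terms of $\Omega^1 = (t^1)^{-1}$ and $EV_\unit$, and Proposition \ref{eps-eta_bimod} gives $\Znc^\prime(\epsilon)$ as postcomposition with $\varepsilon$. Chasing $f \otimes g \in \Hom_\C(Q,P)\otimes \Hom_\C(P^\lor, Q^\lor)$ through the right-hand side collapses, after applying $(-)^\sharp$ and $(t^1\circ(-)^\flat)$, to the snake-identity evaluation $g^\sharp \circ (\id\otimes f)$ in $\C$, which is exactly what the co-Yoneda side delivers. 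The factor of $\mathscrsfs{D}$ introduced by $\Znc(\mu)$ cancels the $\mathscrsfs{D}^{-1}$ built into the normalization of $t^1$, so no scalars are left dangling.

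The coherence \eqref{cusp_coherence2} is then handled by the same calculation after applying the pivotal isomorphism $P \cong P^{\lor\lor}$ and using that $\C$ is ribbon (so left and right (co)evaluations are compatibly linked by the twist), together with the swallowtail axioms that hold for the coherent dual pair structure of $\mathcal{Z}_0$ and $\Znc(-\times S^1)$. The main obstacle I expect is the bookkeeping in step two: keeping careful track of which side of the bimodule composition each Hom space lives on, and verifying that the $\mathscrsfs{D}$-dependence of $t^1$ matches that of $\Znc(\mu)$ (equivalently, that our choice of normalization of the integral and cointegral is consistent with the normalization fixed by the modularity relation in the proof of Theorem \ref{main_th}). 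This is essentially a scalar-matching exercise, but it is the only nontrivial numerical check; the rest is naturality and snake identities.
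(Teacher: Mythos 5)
Your verification of the main coherence \eqref{cusp_coherence} follows essentially the same route as the paper: reduce $\sigma'$ to the image of the cusp under $\Znc$, use $\Znc(\phileft)=\id$ and $\Znc(\rho)=\id$, rewrite $\Znc(\mu)$ via Lemma \ref{altmu} so that $\Omega^1=(t^1)^\inv$ cancels $t^1$, and let $(-)^\sharp$ cancel $(-)^\flat$ and the co-Yoneda map act by composition; the scalar bookkeeping you flag is exactly the $\mathscrsfs{D}$--$\mathscrsfs{D}^\inv$ cancellation built into $t^1$ and $\Omega^1$. The one place you diverge is \eqref{cusp_coherence2}: you propose to verify it by a symmetric computation using pivotality and the ribbon structure, whereas the paper simply observes that it need not be checked at all, since $\tau$ is uniquely determined by $(L,R,e,c,\sigma)$ (the unique-completion result for coherent dual pairs). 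Your route there is unnecessary extra work and is only sketched, but since the condition is automatic this does not create a gap; the paper's shortcut is the cleaner way to dispose of it. One small slip: the element you chase should live in $\displaystyle\int^{Y}\Hom_\C(Y,P)\otimes\Hom_\C(Q,Y)$, not in $\Hom_\C(Q,P)\otimes\Hom_\C(P^\lor,Q^\lor)$.
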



\begin{proof}

To prove this, we check the compatibility requirement \ref{cusp_coherence}. We remind that we do not need to check \ref{cusp_coherence2}, as there is a unique choice of $\tau$ satisfying it. 

The coherence \ref{cusp_coherence} in this case becomes:

\[\begin{tikzcd}[cramped, sep=tiny]
	&& {\C\boxtimes \C\boxtimes \C} &&&&&& {\C\boxtimes \C\boxtimes \C} \\
	\C &&&& \C && \C && {\alpha_1} && \C \\
	& {\id_\C\otimes (-)^\sharp} && {\left(t^1\circ(-)^\flat\right)\otimes \id_\C} && {=} \\
	&& {\C\boxtimes\C^{op}\boxtimes \C} &&&&&& {=} \\
	\C && {\text{Co-yoneda}} && \C && \C &&&& \C
	\arrow["{(2)}", from=1-3, to=2-5]
	\arrow["{(2)}", from=1-9, to=2-11]
	\arrow["{(1)}", from=2-1, to=1-3]
	\arrow["{(1)}", from=2-7, to=1-9]
	\arrow["{\Hom_\C(-,-)}"', curve={height=24pt}, from=2-7, to=2-11]
	\arrow["{\id_\C\otimes (-)^\lor\otimes \id_\C}"{description}, from=4-3, to=1-3]
	\arrow["{(3)}"', from=4-3, to=5-5]
	\arrow["{\id_\C}", from=5-1, to=2-1]
	\arrow["{(3)}"', from=5-1, to=4-3]
	\arrow["{\Hom_\C(-,-)}"', curve={height=24pt}, from=5-1, to=5-5]
	\arrow["{\id_\C}"', from=5-5, to=2-5]
	\arrow["{\id_\C}", from=5-7, to=2-7]
	\arrow["{\Hom_\C(-,-)}"', from=5-7, to=5-11]
	\arrow["{\id_\C}"', from=5-11, to=2-11]
\end{tikzcd}\]

where the arrows (1),(2),(3) are the bimodules:

\begin{enumerate}
    \item $=\Hom_\C(-,-)\otimes\Hom_\C(-\otimes -,\unit)$,
    \item $=\Hom_\C(-\otimes -,\unit)^*\otimes\Hom_\C(-,-)$,
    \item $=\Hom_\C(-,-)\otimes\Hom_\C(-,-)$.
\end{enumerate}

In other words, working in components $P,Q\in\cP$ 
($\Znc_0(\begin{tz}
 \node[Pants, bot] (A) at (0,0) {};
 \node[Cyl, bot, anchor=top] (B) at (A.leftleg) {};
 \node[Copants, anchor=leftleg] (C) at (A.rightleg) {};
 \node[Cyl, top, bot, anchor=bottom] (D) at (C.rightleg) {};
 \node[Cap] (E) at (A.belt) {};
 \node[Cup] (F) at (C.belt) {};
\end{tz})_P^Q$ 
and
$\Znc^\prime(\begin{tz}
 \node[Pants, bot] (A) at (0,0) {};
 \node[Cyl, bot, anchor=top] (B) at (A.leftleg) {};
 \node[Copants, anchor=leftleg] (C) at (A.rightleg) {};
 \node[Cyl, top, bot, anchor=bottom] (D) at (C.rightleg) {};
 \node[Cap] (E) at (A.belt) {};
 \node[Cup] (F) at (C.belt) {};
\end{tz})_P^Q$), we want to check the commutativity of the following diagram:

\[\begin{tikzcd}
	{\displaystyle\int^{Y\in\cP}\!\!\!\!\!\!\!\!\!\!\Hom_\C(Y,P)\otimes\Hom_\C(Q,Y)} && {\Hom_\C(Q,P) } \\
	\\
	{\displaystyle\int^{Y\in\cP}\!\!\!\!\!\!\!\!\!\!\Hom_\C\left(P\otimes Y^\lor,\unit\right)^*}\otimes\Hom_\C\left(Y^\lor\otimes Q,\unit\right)
	\arrow["{\text{co-Yoneda}}", from=1-1, to=1-3]
	\arrow["{t^1_{P\otimes Y^\lor}\circ(-)^\flat\otimes(-)^\sharp }"', from=1-1, to=3-1]
	\arrow["{\alpha_1}"', from=3-1, to=1-3]
\end{tikzcd}\]

\scalecobordisms{0.3}

We first compute $\alpha_1$. Note that by \ref{copower_def} it is true that:

\begin{equation}\label{int-hom}
    \displaystyle\int^{Y\in\cP}\!\!\!\!\!\!\!\!\!\!\Hom_\C(P\otimes Y^\lor,\unit)^*\otimes\Hom_\C(Q,Y)\cong 
\Hom_\C(Q,\displaystyle\int^{Y\in\cP}\!\!\!\!\!\!\!\!\!\!\Hom_\C(P\otimes Y^\lor,\unit)^*\otimes Y)
\end{equation}



This, together with an application of $(-)^\flat$ identifies $\Znc^\prime(\begin{tz}
 \node[Pants, bot] (A) at (0,0) {};
 \node[Cyl, bot, anchor=top] (B) at (A.leftleg) {};
 \node[Copants, anchor=leftleg] (C) at (A.rightleg) {};
 \node[Cyl, top, bot, anchor=bottom] (D) at (C.rightleg) {};
 \node[Cap] (E) at (A.belt) {};
 \node[Cup] (F) at (C.belt) {};
\end{tz})$ with $(\Znc(\begin{tz}
 \node[Pants, bot] (A) at (0,0) {};
 \node[Cyl, bot, anchor=top] (B) at (A.leftleg) {};
 \node[Copants, anchor=leftleg] (C) at (A.rightleg) {};
 \node[Cyl, top, bot, anchor=bottom] (D) at (C.rightleg) {};
 \node[Cap] (E) at (A.belt) {};
 \node[Cup] (F) at (C.belt) {};
\end{tz}))_*$: 

$$
\displaystyle\int^{Y\in\cP}\!\!\!\!\!\!\!\!\!\!\Hom_\C\left(P\otimes Y^\lor,\unit\right)^*\otimes\Hom_\C\left(Y^\lor\otimes Q,\unit\right)
\cong 
\Hom_\C(Q,\displaystyle\int^{Y\in\cP}\!\!\!\!\!\!\!\!\!\!\Hom_\C(P\otimes Y^\lor,\unit)^*\otimes Y).
$$



So we have:

\scalecobordisms{0.5}

\begin{equation}
\begin{tz}[xscale=1.4, yscale=1.5]

\node (1) at (-2,0)
{
$\begin{tikzpicture}
        \node[Pants, bot] (A) at (0,0) {};
 \node[Cyl, bot, anchor=top] (B) at (A.leftleg) {};
 \node[Copants, anchor=leftleg] (C) at (A.rightleg) {};
 \node[Cyl, top, bot, anchor=bottom] (D) at (C.rightleg) {};
 \node[Cap] (E) at (A.belt) {};
 \node[Cup] (F) at (C.belt) {};
 \selectpart[green] {(A-belt) (B-bottom) (C-belt) (D-top)};
\end{tikzpicture}$
};

\node (2) at (-2,-1.5)
{
$\begin{tikzpicture}
        \node[Copants, top, bot] (A) at (0,0) {};
        \node[Pants, bot, anchor=belt] (B) at (A.belt) {};
        \node[Cap] (C) at (A.leftleg) {};
        \node[Cup] (D) at (B.rightleg) {};
        \selectpart[green] {(A-belt) (B-leftleg) (D)}
\end{tikzpicture}$
};

\node (3) at (-2,-3)
{
$\begin{tikzpicture}
        \node[Copants, bot] (A) at (0,0) {};
        \node[Cyl, top, bot, anchor=bottom] (B) at (A.rightleg) {};
        \node[Cap] (C) at (A.leftleg) {};
        \selectpart[green] {(B-top) (B-bottom)}
\end{tikzpicture}$
};

\node (4) at (-2,-4.5)
{
$\begin{tikzpicture}
        \node[Copants, bot] (A) at (0,0) {};
        \node[Cyl, tall, bot, anchor=bottom] (B) at (A.rightleg) {};
        \node[Cap] (C) at (A.leftleg) {};
        \node[Pants, top, bot, anchor=rightleg] (D) at (B.top) {};
        \node[Cup] (E) at (D.leftleg) {};
        \selectpart[green] {(C) (D-leftleg)}
\end{tikzpicture}$
};

\node (5) at (-2,-6)
{
$\begin{tikzpicture}
        \node[Pants, top, bot] (A) at (0,0) {};
        \node[Copants, bot, anchor=leftleg] at (A.leftleg) {};
\end{tikzpicture}$
};

\node (6) at (-2,-7.5)
{
$\begin{tikzpicture}
        \node[Cyl, tall, top, bot] (A) at (0,0) {};
\end{tikzpicture}$
};

\node (7) at (2,0)
{
${\Hom_\C(Q,\displaystyle\int^{Y\in\cP}\!\!\!\!\!\!\!\!\!\!\Hom_\C(P\otimes Y^\lor,\unit)^*\otimes Y)}$
};

\node (8) at (2,-1.5)
{
$\Hom_\C(Q,\displaystyle\int^{Y\in\cP}\!\!\!\!\!\!\!\!\!\!\Hom_\C(P\otimes Y^\lor,\unit)^*\otimes Y)$
};

\node (9) at (2,-3)
{
$\Hom_\C(Q,\displaystyle\int^{Y\in\cP}\!\!\!\!\!\!\!\!\!\!\Hom_\C(P\otimes Y^\lor,\unit)^*\otimes Y)$
};

\node (10) at (2,-4.5)
{
$\Hom_\C(Q,\displaystyle\int^{Y\in\cP}\!\!\!\!\!\!\!\!\!\!\Hom_\C(P\otimes Y^\lor,\unit)^*\otimes\unit\otimes Y)$
};

\node (11) at (2,-6)
{
$\Hom_\C(Q,\displaystyle\int^{Y\in\cP}\!\!\!\!\!\!\!\!\!\!\!\!\!P\otimes Y^\lor\otimes Y)$
};

\node (12) at (2,-7.5)
{
$\Hom_\C\left(Q,P\right)$
};

\begin{scope}[double arrow scope]
    \draw (1) -- node[left] {$\phi_l$} (2);
    \draw (2) -- node[left] {$\rho$} (3);
    \draw (3) -- node[left] {$\lambda^\inv$} (4);
    \draw (4) -- node[left] {$\mu$} (5);
    \draw (5) -- node[left] {$\epsilon$} (6);
    \draw (7) -- node[left] {$\Znc(\phi_l)_*$} (8);
    \draw (8) -- node[left] {$\Znc(\rho)_*$} (9);
    \draw (9) -- node[left] {$\Znc(\lambda^\inv)_*$} (10);
    \draw (10) -- node[left] {$\Znc(\mu)_*$} (11);
    \draw (11) -- node[left] {$\Znc(\epsilon)_*$} (12);
\end{scope}
\end{tz}
\end{equation}

But as we know, $\Znc(\phileft)=\id$ and since we have been suppressing the unit (${P\otimes \unit\cong P}$), the right unitor $\Znc(\rho)$ is also the identity.

Therefore, the diagram whose commutativity we want to check becomes:

\[\begin{tikzcd}
	{\displaystyle\int^{Y\in\cP}\!\!\!\!\!\!\!\!\!\!\Hom_\C(Y,P)\otimes\Hom_\C(Q,Y)} & {\Hom_\C\left(Q,P\right)} \\
	{\displaystyle\int^{Y\in\cP}\!\!\!\!\!\!\!\!\!\!\Hom_\C\left(P\otimes Y^\lor,\unit\right)^*\otimes\Hom_\C\left(Y^\lor\otimes Q,\unit\right)} & {} \\
	{\Hom_\C(Q,\displaystyle\int^{Y\in\cP}\!\!\!\!\!\!\!\!\!\!\Hom_\C(P\otimes Y^\lor,\unit)^*\otimes Y)} \\
	{\Hom_\C(Q,\displaystyle\int^{Y\in\cP}\!\!\!\!\!\!\!\!\!\!\Hom_\C(P\otimes Y^\lor,\unit)^*\otimes\unit\otimes Y)} & {} \\
	{\Hom_\C(Q,\displaystyle\int^{Y\in\cP}\!\!\!\!\!\!\!\!\!\!\!\!\!P\otimes Y^\lor\otimes Y)} & {\Hom_\C\left(Q,P\right)}
	\arrow["{\text{co-Yoneda}}", from=1-1, to=1-2]
	\arrow["{t^1_{P\otimes Y^\lor}\circ(-)^\flat\otimes(-)^\sharp}"', from=1-1, to=2-1]
	\arrow["\id\otimes({\ref{int-hom}\circ(-)^\flat)}"', from=2-1, to=3-1]
	\arrow[no head, from=2-2, to=4-2]
	\arrow[shift left=3, no head, from=2-2, to=4-2]
	\arrow["{\Znc(\lambda^\inv)_*}"', from=3-1, to=4-1]
	\arrow["{\left(EV_\unit\circ(\Omega^1_{P\otimes Y^\lor}\otimes\id_\unit)\otimes\id_Y\right)_*}"', from=4-1, to=5-1]
	\arrow["{(\id_P\otimes\text{ev}_Y)_*}"', from=5-1, to=5-2]
\end{tikzcd}\]

This is indeed commutative, since

\begin{itemize}
    \item $(-)^\sharp$ cancels with $(-)^\flat$ applied right after;
    \item $t^1_{P\otimes Y^\lor}$ cancels with $\Omega^1_{P\otimes Y^\lor}$;
    \item The isomorphism \ref{int-hom} followed by $EV_1$ is the same as: 
    
    $${\displaystyle\int^{Y\in\cP}\!\!\!\!\!\!\!\!\!\!\Hom_\C(\unit,P\otimes Y^\lor)\otimes\Hom_\C(Q,\unit\otimes Y)\xrightarrow{\text{composition}}\displaystyle\int^{Y\in\cP}\!\!\!\!\!\!\!\!\!\!\Hom_\C(Q,P\otimes Y^\lor\otimes Y)};$$
    \item $(-)^\flat$ cancels with $\text{ev}_Y$;
    \item The co-Yoneda isomorphism is given by composition.
\end{itemize}

\end{proof}

As mentioned above, $\gamma_1=t^1\circ(-)^\natural$ is part of another morphism of dualizability data, ${\langle \mathcal{P}^{op},\mathcal{P}\rangle\to \langle \mathcal{P}, \mathcal{P}\rangle}$. The definition of this 1-morphism (as well as the proof of this fact) is entirely analogous to the above, and we omit it.

With this we have proved the existence of $c_t$. We proceed to explain why and in what sense the above choice of 1-morphism is canonical.

Let $\mathfrak{G}_1:=\Hom( \langle \mathcal{P},\mathcal{P}^{op}\rangle, \langle \mathcal{P}, \mathcal{P}\rangle)$ in $\text{CohDualPair}(\text{Bimod}_k)$ 
and $\mathfrak{G}_2:=\Hom(\mathcal{P},\mathcal{P})$ in $\left(\text{Bimod}_k^d\right)^\sim$.

Then, the equivalence \ref{DDat_equiv} gives an equivalence of 1-groupoids $F\colon \mathfrak{G}_1\to\mathfrak{G}_2$.

We are not interested in the entire category of morphisms $\mathfrak{G}_1$, but rather 1-morphisms lying over $\id_\mathcal{P}$. So, the correct notion to look at is that of the homotopy fiber $hoF^{-1}(\id_\mathcal{P})$.

\begin{definition}
    Let $\Phi\colon\mathcal{G}_1\to\mathcal{G}_2$ be a map of groupoids and $x\in\mathcal{G}_2$ an object. The \textit{homotopy fiber} $ho\Phi^{-1}(x)$ is the groupoid whose:

    \begin{itemize}
        \item Objects are pairs $(z,g)$ of an object $z\in\mathcal{G}_1$ and an isomorphism ${g\colon\Phi(z)\xrightarrow{\sim} x}$;
        \item Morphisms $(z_1,g_1)\to(z_2,g_2)$ are given by morphisms $h\colon z_1\to z_2$ in $\mathcal{G}_1$, such that $g_2\circ\Phi(h)=g_1$.
    \end{itemize}
    
\end{definition}

We are also interested in the strict fiber $stF^{-1}(\id_\mathcal{P})$.

\begin{definition}
    Let $\Phi\colon\mathcal{G}_1\to\mathcal{G}_2$ be a map of groupoids and $x\in\mathcal{G}_2$ an object. The \textit{strict fiber} $st\Phi^{-1}(x)$ is the groupoid whose:

    \begin{itemize}
        \item Objects are objects $z\in\mathcal{G}_1$ such that $\Phi(z) = x$;
        \item Morphisms $z_1\to z_2$ are given by morphisms $h\colon z_1\to z_2$ in $\mathcal{G}_1$, such that $\Phi(h)=\id_x$.
    \end{itemize}
    
\end{definition}


There is a canonical inclusion functor: 

\begin{equation*}
    \setlength{\arraycolsep}{0pt}
\renewcommand{\arraystretch}{1.2}
  \begin{array}{ c c c }
    \iota\colon st\Phi^{-1}(x) & {} \hookrightarrow {} & ho\Phi^{-1}(x) \\
    z           & {} \mapsto {} & (z,\id_x)
  \end{array}
\end{equation*}





\begin{lemma}\label{contractibility_hofib}
    Let $\Phi\colon \mathcal{G}_1\to\mathcal{G}_2$ be an equivalence of groupoids and $x\in\mathcal{G}_1$ an object. Then the homotopy fiber $ho\Phi^{-1}(x)$ is contractible.
\end{lemma}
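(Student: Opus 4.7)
The plan is to verify the two axioms that characterise a contractible groupoid: non-emptiness, and the existence of a unique morphism between any two objects. The homotopy fiber $ho\Phi^{-1}(x)$ is visibly a groupoid (any morphism in $\mathcal{G}_1$ between two lifts is automatically invertible, and the triangle condition is preserved under inversion), so it will suffice to check these two properties.

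First, I would use essential surjectivity of $\Phi$ to produce an object of $ho\Phi^{-1}(x)$: pick $z \in \mathcal{G}_1$ together with some isomorphism $g\colon \Phi(z) \xrightarrow{\sim} x$ in $\mathcal{G}_2$; then $(z,g)$ is an object of the homotopy fiber, so it is non-empty.

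Next, given two objects $(z_1,g_1)$ and $(z_2,g_2)$ of $ho\Phi^{-1}(x)$, form the isomorphism
\[
g_2^{-1}\circ g_1 \colon \Phi(z_1) \xrightarrow{\sim} \Phi(z_2)
\]
in $\mathcal{G}_2$. Since $\Phi$ is fully faithful, there exists a unique morphism $h\colon z_1 \to z_2$ in $\mathcal{G}_1$ with $\Phi(h) = g_2^{-1}\circ g_1$, and this immediately rearranges to $g_2 \circ \Phi(h) = g_1$, so $h$ defines a morphism $(z_1,g_1) \to (z_2,g_2)$ in the homotopy fiber. Conversely, any morphism $h'\colon (z_1,g_1)\to (z_2,g_2)$ in $ho\Phi^{-1}(x)$ must satisfy $\Phi(h') = g_2^{-1}\circ g_1$, so faithfulness of $\Phi$ forces $h' = h$. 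Hence the hom-set between any two objects is a singleton, which is exactly the condition that the groupoid is equivalent to the terminal groupoid, i.e.\ contractible.

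There is no real obstacle here; the argument is a direct unpacking of the definitions once we recognise that the triangle condition $g_2\circ \Phi(h) = g_1$ rigidifies $\Phi(h)$, after which full faithfulness of $\Phi$ does the rest. The only mild subtlety is the observation that trivial automorphism groups are automatic from this same argument (take $(z_1,g_1) = (z_2,g_2)$: the unique endomorphism must be $\id_{z_1}$, since $\id_{z_1}$ already satisfies the triangle).
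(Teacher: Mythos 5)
Your proof is correct and follows essentially the same argument as the paper: transport $\id_x$ to $g_2^{-1}\circ g_1$ and use full faithfulness of $\Phi$ to obtain the unique morphism $h$ satisfying the triangle condition. The only (welcome) addition is that you explicitly record non-emptiness via essential surjectivity, which the paper leaves implicit.
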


\begin{proof}
    Let $(z_1,g_1), (z_2,g_2)\in ho\Phi^{-1}(x)$. Then, we have the following:

    \begin{equation*}
    \setlength{\arraycolsep}{0pt}
\renewcommand{\arraystretch}{1.2}
  \begin{array}{ c c c c c }
    \Hom_{\mathcal{G}_2}(x,x) & {} \cong {} & \Hom_{\mathcal{G}_2}(\Phi(z_1),\Phi(z_2)) & \cong & \Hom_{\mathcal{G}_1}(z_1,z_2) \\
    \id_{x}           & {} \longmapsto {} & g_2^{-1}\circ g_1 & \longmapsto & h
  \end{array}
\end{equation*}

In other words, there is a unique $h\in \Hom_{ho\Phi^{-1}(x)}(z_1,z_2)$.
\end{proof}

\begin{remark}
    The proof of Lemma \ref{contractibility_hofib} also works if we replace $ho\Phi^{-1}(x)$ by $st\Phi^{-1}(x)$, provided that the latter is not empty. This is true for $stF^{-1}(\id_\mathcal{P})$, thanks to the existence of the 1-morphism \ref{morph_dual_pairs}.
\end{remark}

\begin{corol}
    Since $F\colon \mathfrak{G}_1\to\mathfrak{G}_2$ is an equivalence, the homotopy fiber $hoF^{-1}(\id_\mathcal{P})$ is contractible.
\end{corol}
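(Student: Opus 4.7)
The plan is to apply Lemma \ref{contractibility_hofib} directly, setting $\Phi := F$ and $x := \id_{\mathcal{P}}$. To do this I need to verify two hypotheses: that $F\colon \mathfrak{G}_1 \to \mathfrak{G}_2$ is an equivalence of groupoids, and that $\id_{\mathcal{P}}$ is an object of $\mathfrak{G}_2$. The second point will be immediate: the identity bimodule $\Hom_\mathcal{P}(-,-)$ on $\mathcal{P}$ is the identity $1$-morphism on $\mathcal{P}$ in $\text{Bimod}_k$, and the $1$-dualizability of $\mathcal{P}$ (witnessed, for example, by either of the dual pairs $\langle \mathcal{P},\mathcal{P}^{op}\rangle$ or $\langle \mathcal{P},\mathcal{P}\rangle$) ensures $\mathcal{P} \in (\text{Bimod}_k^d)^\sim$.

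For the first point, the input is the equivalence \eqref{DDat_equiv}: the forgetful $2$-functor $\text{CohDualPair}(\text{Bimod}_k) \to (\text{Bimod}_k^d)^\sim$ is an equivalence of bicategories. Since any equivalence of bicategories induces equivalences on all Hom-categories, restricting to the Hom-groupoids between $\langle \mathcal{P},\mathcal{P}^{op}\rangle$ and $\langle \mathcal{P},\mathcal{P}\rangle$ on the source side, and the corresponding Hom-groupoid between the underlying objects $\mathcal{P}$ and $\mathcal{P}$ on the target side, gives exactly the functor $F\colon \mathfrak{G}_1 \to \mathfrak{G}_2$, which is therefore an equivalence.

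With both hypotheses confirmed, Lemma \ref{contractibility_hofib} yields that $hoF^{-1}(\id_{\mathcal{P}})$ is contractible. There is no substantive obstacle here — the statement is essentially a bookkeeping consequence of the coherent dualizability theorem \eqref{DDat_equiv} combined with the already-proved contractibility lemma, and its role in the surrounding argument is to guarantee that the $1$-morphism \eqref{morph_dual_pairs} between dual pairs (which produces $c_t$) is unique up to a unique isomorphism, thereby making $c_t$ canonical and completing the proof of Theorem \ref{iso_exist_canon}.
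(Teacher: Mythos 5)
Your proposal is correct and matches the paper's argument: the corollary is an immediate application of Lemma \ref{contractibility_hofib} to $\Phi=F$ and $x=\id_{\mathcal{P}}$, with the equivalence of $F$ supplied by restricting the bicategorical equivalence \eqref{DDat_equiv} to Hom-groupoids, exactly as you describe. Your extra check that $\id_{\mathcal{P}}$ lies in $\mathfrak{G}_2$ is harmless bookkeeping the paper leaves implicit.
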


We will also show that $F\colon \mathfrak{G}_1\to\mathfrak{G}_2$ is an isofibration.

\begin{definition}
    Let $C,D$ be categories. An \textit{isofibration} is a functor $\Psi\colon C\to D$ such that for any object $y\in D$, and any isomorphism $g\colon\Psi(y)\xrightarrow{\sim} x$, there exists an isomorphism $h\colon y\xrightarrow{\sim} y^\prime$, such that $\Psi(h)=g$.
\end{definition}

\begin{lemma}
    $F\colon \mathfrak{G}_1\to\mathfrak{G}_2$ is an isofibration.
\end{lemma}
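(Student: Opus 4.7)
The forgetful 2-functor $F\colon \mathfrak{G}_1\to\mathfrak{G}_2$ sends an object $(s,t,\gamma,\delta)$ of $\mathfrak{G}_1$ --- that is, a morphism of dual pairs $\langle\mathcal{P},\mathcal{P}^{op}\rangle\to\langle\mathcal{P},\mathcal{P}\rangle$ --- to its underlying 1-morphism $s\colon\mathcal{P}\to\mathcal{P}$ in $\text{Bimod}_k$, and sends a 2-morphism $(\rho,\sigma)$ in $\mathfrak{G}_1$ to $\rho$. The plan is to show that any invertible 2-cell $g\colon s\Rightarrow s'$ in $\mathfrak{G}_2$ admits a lift to an isomorphism $(s,t,\gamma,\delta)\xrightarrow{\sim}(s',t,\gamma',\delta')$ in $\mathfrak{G}_1$ by keeping the second component $t$ fixed and transporting the coherence 2-cells $\gamma,\delta$ along $g$.

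Concretely, given $(s,t,\gamma,\delta)\in\mathfrak{G}_1$ and an invertible $g\colon s\Rightarrow s'$ in $\mathfrak{G}_2$, I define $\gamma'$ and $\delta'$ as the unique 2-cells making the pair $(g,\id_t)$ into a 2-morphism in $\text{CohDualPair}(\text{Bimod}_k)$: explicitly, $\gamma'$ is obtained from $\gamma$ by whiskering with $g^{-1}$ at the appropriate boundary of the square witnessing compatibility with the evaluation, and $\delta'$ is obtained from $\delta$ by whiskering with $g$ at the appropriate boundary of the square witnessing compatibility with the coevaluation. Since $g$, $\gamma$ and $\delta$ are invertible 2-morphisms in $\text{Bimod}_k$, so are $\gamma'$ and $\delta'$, and they fit into the required pasting squares by construction.

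What remains is to verify that $(s',t,\gamma',\delta')$ still satisfies the cusp coherence equations \eqref{cusp_coherence} and \eqref{cusp_coherence2}. I plan to do this by substituting the definitions of $\gamma'$ and $\delta'$ into these pasting diagrams, using the interchange law to move the whiskerings of $g$ and $g^{-1}$ to the boundary, and cancelling them in pairs; after cancellation, the equations for $(s',t,\gamma',\delta')$ reduce to the corresponding equations for $(s,t,\gamma,\delta)$, which hold by hypothesis. The pair $(g,\id_t)$ is then a 2-morphism in $\mathfrak{G}_1$ with $F(g,\id_t)=g$, establishing the isofibration property.

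The main obstacle is purely diagrammatic: choosing on which side of $\gamma$ and $\delta$ to whisker the inverses of $g$ so that the 2-morphism compatibility required in $\text{CohDualPair}(\text{Bimod}_k)$ is satisfied on the nose. Once the correct choice is fixed, the verification of \eqref{cusp_coherence} and \eqref{cusp_coherence2} for $(s',t,\gamma',\delta')$ is an entirely formal calculation, as is the statement that $(g,\id_t)$ is invertible in $\mathfrak{G}_1$ (its inverse being $(g^{-1},\id_t)$). In essence, the argument is an instance of the general principle that forgetful functors from categories of structured morphisms to their underlying data are isofibrations whenever the structure can be transported along invertible 2-cells.
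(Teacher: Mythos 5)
Your argument is correct in outline but takes a genuinely different route from the paper. You give a direct transport-of-structure proof: given $(s,t,\gamma,\delta)\in\mathfrak{G}_1$ and an invertible $g\colon s\Rightarrow s'$, you conjugate $\gamma$ and $\delta$ by whiskerings of $g$ and $g^{-1}$ to manufacture a new object $(s',t,\gamma',\delta')$ together with a lift $(g,\id_t)$ of $g$. The paper instead deduces the lemma abstractly from two facts already in hand: the strict fiber $stF^{-1}(\id_\mathcal{P})$ is non-empty (witnessed by the explicit morphism of dual pairs \eqref{morph_dual_pairs}), and the homotopy fiber $hoF^{-1}(\id_\mathcal{P})$ is contractible because $F$ is an equivalence (Lemma \ref{contractibility_hofib}); fixing $z_0$ in the strict fiber, the unique morphism $(z,g)\to\iota(z_0)$ in the homotopy fiber is precisely a lift of $g$, and no diagram chasing is required. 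Your approach is more elementary and more general --- it uses neither the equivalence $F$ nor the existence of a distinguished object in the strict fiber, and it lifts isomorphisms over every object of $\mathfrak{G}_2$ rather than only those with target $\id_\mathcal{P}$ --- but the entire content of your proof sits in the deferred verification that the transported data still satisfies the coherences \eqref{cusp_coherence} and \eqref{cusp_coherence2}. That interchange-and-cancellation computation is routine and will go through, but it should be written out (or at least the whiskering conventions pinned down) for the proof to be complete, whereas the paper's argument closes immediately once contractibility is established.
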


\begin{proof}
    As already mentioned, $stF^{-1}(\id_\mathcal{P})$ is not empty. Fix an object $z_0\in stF^{-1}(\id_\mathcal{P})$. Since $hoF^{-1}(\id_\mathcal{P})$ is contractible, for every object $(z,g)\in hoF^{-1}(\id_\mathcal{P})$ there exists a unique morphism ${h_0\colon (z,g)\to \iota(z_0)}$. In other words, $F$ is an isofibration.
\end{proof}

The following result is the reason why $F$ being an isofibration is important.

\begin{prop}
    If $F\colon \mathfrak{G}_1\to\mathfrak{G}_2$ is an isofibration, then the canonical inclusion 
    $\iota\colon stF^{-1}(\id_\mathcal{P})\hookrightarrow hoF^{-1}(\id_\mathcal{P})$ is an equivalence.
\end{prop}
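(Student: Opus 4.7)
The plan is to show that the inclusion $\iota$ is essentially surjective and fully faithful, which suffices for it to be an equivalence of groupoids. Fully faithfulness is essentially tautological from the definitions, so the substantive content is essential surjectivity, which is exactly where the isofibration hypothesis enters.

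First I would handle fully faithfulness. Given $z_1, z_2 \in stF^{-1}(\id_\mathcal{P})$, a morphism $\iota(z_1)\to\iota(z_2)$ in $hoF^{-1}(\id_\mathcal{P})$ is, by definition, a morphism $h\colon z_1\to z_2$ in $\mathfrak{G}_1$ satisfying $\id_{\id_\mathcal{P}}\circ F(h)=\id_{\id_\mathcal{P}}$, i.e. $F(h)=\id_{\id_\mathcal{P}}$. But this is precisely the defining condition for $h$ to be a morphism in $stF^{-1}(\id_\mathcal{P})$. Hence $\iota$ induces a bijection on Hom-sets.

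Next I would address essential surjectivity, where the isofibration hypothesis does the work. Let $(z,g)\in hoF^{-1}(\id_\mathcal{P})$, so $g\colon F(z)\xrightarrow{\sim}\id_\mathcal{P}$ is an isomorphism in $\mathfrak{G}_2$. Because $F$ is an isofibration, there exists an isomorphism $h\colon z\xrightarrow{\sim} z'$ in $\mathfrak{G}_1$ with $F(h)=g$. In particular $F(z')=\id_\mathcal{P}$, so $z'\in stF^{-1}(\id_\mathcal{P})$, and $\iota(z')=(z',\id_{\id_\mathcal{P}})$. It remains to exhibit an isomorphism $\iota(z')\xrightarrow{\sim}(z,g)$ in $hoF^{-1}(\id_\mathcal{P})$. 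The candidate is $h^{-1}\colon z'\to z$: the compatibility condition $g\circ F(h^{-1})=\id_{\id_\mathcal{P}}$ holds because $F(h^{-1})=g^{-1}$. Invertibility in the homotopy fiber is automatic from invertibility in $\mathfrak{G}_1$ (one checks the opposite compatibility symmetrically), so $\iota(z')\cong (z,g)$.

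Combining the two shows $\iota$ is an equivalence. I do not expect any serious obstacle: the only nontrivial input is the lifting property of the isofibration, and it is applied in exactly the standard way. The remaining verifications are just unpacking of definitions. One small bookkeeping point worth stating cleanly in the final write-up is that morphisms in $hoF^{-1}(\id_\mathcal{P})$ are automatically invertible (since $\mathfrak{G}_1$ is a groupoid), so there is no distinction between ``morphism'' and ``isomorphism'' in the arguments above.
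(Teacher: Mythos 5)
Your proof is correct and follows the same route as the paper's: full faithfulness is tautological from the definitions of the two fibers, and essential surjectivity is exactly the lifting property of the isofibration. You simply spell out the details (the explicit lift $h$ and the witnessing isomorphism $h^{-1}\colon\iota(z')\to(z,g)$) that the paper leaves implicit.
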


\begin{proof}
    \begin{itemize}
        \item $F$ being an isofibration gives essential surjectivity.
        \item The fact that $\iota$ is fully faithful, i.e 
        $$\Hom_{stF^{-1}(\id_\mathcal{P})}(z_1,z_2)\cong \Hom_{hoF^{-1}(\id_\mathcal{P})}((z_1,\id),(z_2,\id)),$$ 
        is true by definition.
    \end{itemize}
\end{proof}


In other words, the 1-morphism of dualizability data is canonical, in the sense that it is unique, up to a unique 2-isomorphism. By extension, the choice of $c_t$ is canonical.

\subsection{The composite is a modified trace}

In this section, we prove the main result of this chapter. 

\begin{theorem}\label{mod-tr-thm}
    The map 
    
\begin{equation}
    t^\prime\colon \displaystyle\int^{P\in\cP} \hspace{-20pt} \Hom_\C(P,P)\to k,
\end{equation}
    
   given by $t^\prime:=\Znc(\nu^\dagger\circ\epsilon)\circ c_t$ is a modified trace.
\end{theorem}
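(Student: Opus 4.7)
The plan is to unpack $t^\prime$ into an explicit string-diagrammatic expression in Bimod and verify each axiom in Definition \ref{non-degen_trace} together with the partial trace property directly. Postcomposing $\Znc$ with the 2-functor Lincat $\to$ Bimod turns an element $f \in \Hom_\C(P,P)$ into an internal string diagram on $T^2$ via $c_t$; applying $\Znc^\prime(\epsilon)$ contracts along the meridian by Proposition \ref{eps-eta_bimod}, and $\Znc^\prime(\nu^\dagger)$ evaluates the resulting loop by Proposition \ref{mudag-nudag_bimod}. Tracking the construction of $c_t$ from Section \ref{canonical-iso} through $t^1$, the isomorphism \eqref{int-hom} and the cusp, the upshot I would aim at is an explicit formula presenting $t^\prime(f)$, up to the scalar $\mathscrsfs{D}$, as the evaluation of $f$ against the non-degenerate pairing $t_P$ extended dinaturally in $P \in \cP$.

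Once this explicit description is in hand, cyclicity is immediate from the dinaturality relation defining the coend $\int^{P \in \cP} \Hom_\C(P,P)$: for $f \colon P \to Q$ and $g \colon Q \to P$, the elements $g \circ f$ and $f \circ g$ are identified inside the coend, so $t^\prime$ assigns them the same value. Non-degeneracy of the pairing $t^\prime_P(-\circ -)$ on $\Hom_\C(P,X) \times \Hom_\C(X,P)$ reduces to the non-degeneracy of the pairing $t_P$ defined in Section \ref{canonical-iso} (equivalently of $t^1_P$), combined with the fact that $c_t$ is an isomorphism of vector spaces and the dinatural extension of a non-degenerate pairing on indecomposables remains non-degenerate.

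The main obstacle is the partial trace property. I plan to verify, for $f \colon P \otimes X \to P \otimes X$ with $P \in \cP$, the identity $t^\prime_{P \otimes X}(f) = t^\prime_X(\text{tr}_R(f))$ by a topological argument inside the solid torus $S^1 \times D^2$. The element $f$ corresponds to two parallel strands labelled $P$ and $X$ running along the core circle with $f$ inserted; projectivity of $P$ allows the $P$-strand to be capped off using $\mu$ (equivalently, the morphism $\eta_\unit \circ \varepsilon_\unit$ appearing in Lemma \ref{mu_mu_dag_comp}) and then isotoped through the cap of $\tinycap$, producing exactly the right partial trace $\text{tr}_R(f)$. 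Concretely this reduces to a diagrammatic identity between composites of $\Znc^\prime(\epsilon)$, $\Znc^\prime(\nu^\dagger)$, $\Znc^\prime(\mu)$ and the rigidity morphisms, which can be checked using the Bimod-level actions collected in Section \ref{Lincat-Bimod}.

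As a sanity check and a potential shortcut, I would observe that by Remark \ref{snake-nakayama} the cusp isomorphism underlying $c_t$ provides a trivialization of the Nakayama functor, and such trivializations are known to be in bijection with modified traces by \cite{schweigert_trace_2023, shibata_modified_2023}. The construction of $t^\prime$ should recover the modified trace associated to this trivialization under that bijection, giving an a priori proof of the partial trace axiom; uniqueness of the modified trace associated to a fixed trivialization then forces the two descriptions to coincide with the explicit one computed above.
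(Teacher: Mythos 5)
Your treatment of the first two axioms matches the paper's proof: the explicit formula $t^\prime(f)=t_{P^\lor\otimes P}\bigl((\id_{P^\lor}\otimes f)\circ\text{coev}^R_P,\ \text{ev}_P\bigr)$ obtained by cancelling $(-)^\sharp$ against $(-)^\flat$ and $t^1$ against $\Omega^1$, cyclicity from the coend relations, and non-degeneracy from that of $t_P$ are all exactly what the paper does. The gap is in the partial trace axiom, which is the only non-formal part of the theorem. Your proposed mechanism --- ``projectivity of $P$ allows the $P$-strand to be capped off using $\mu$ (equivalently $\eta_\unit\circ\varepsilon_\unit$) and then isotoped through the cap'' --- is not the right move: the partial trace does not cap off the projective strand, it contracts the \emph{other} strand $X$ via $\text{ev}_X$ and $\text{coev}^R_X$, leaving an endomorphism of $P$. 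Factoring the $P$-strand through $\unit$ would change the morphism, not compute $\text{tr}_R(f)$ or $\text{tr}_L(f)$. The precise statement that makes the ``isotope the $X$-loop'' intuition rigorous is the dinaturality of the pairing $t$ with respect to the morphism $\id_{P^\lor}\otimes\text{ev}_X\otimes\id_P\colon P^\lor\otimes X^\lor\otimes X\otimes P\to P^\lor\otimes P$: this is the commuting square the paper writes down, and it transfers $\text{ev}_X$ from the second argument of the pairing to the first, producing exactly the left partial trace (the right one then follows since $\C$ is ribbon). Without identifying this dinaturality step, the ``diagrammatic identity between composites of $\Znc^\prime(\epsilon)$, $\Znc^\prime(\nu^\dagger)$, $\Znc^\prime(\mu)$ and the rigidity morphisms'' you defer to is precisely the thing that needs to be proved.

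Your fallback via Remark \ref{snake-nakayama} --- invoking the bijection between trivializations of the Nakayama functor and modified traces from \cite{schweigert_trace_2023,shibata_modified_2023} --- is a genuinely different route, but as written it also has a hole: you would need to verify that the trace $t^\prime$ produced by the composite $\Znc(\nu^\dagger\circ\epsilon)\circ c_t$ actually \emph{is} the modified trace associated to the cusp trivialization under that bijection, which is a nontrivial compatibility you only assert ``should'' hold. The paper deliberately keeps its argument independent of those references, and the cost of doing so is exactly the one dinaturality computation described above.
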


Before we prove this, let us first understand $t^\prime$ explicitly. It is equal to the following composite:

\[\begin{tikzcd}
	{\displaystyle\int^{P\in\cP} \hspace{-20pt} \Hom_\C(P,P)} && {\displaystyle\int^{P,Q\in\cP} \hspace{-20pt} \Hom_\C(P,Q)\otimes\Hom_\C(Q,P)} \\
	&& {\displaystyle\int^{P,Q\in\cP} \hspace{-20pt} \Hom_\C(\unit,P^\lor\otimes Q)\otimes\Hom_\C(P^\lor\otimes Q,\unit)} \\
	&& {\displaystyle\int^{P,Q\in\cP} \hspace{-20pt} \Hom_\C(P^\lor\otimes Q,\unit)^*\otimes\Hom_\C(P^\lor\otimes Q,\unit)} \\
	&& {\displaystyle\int^{P,Q\in\cP} \hspace{-20pt} \Hom_\C(P^\lor\otimes Q,\unit)^*\otimes\Hom_\C(Q,P)} \\
	&& {\displaystyle\int^{P\in\cP} \hspace{-20pt} \Hom_\C(P^\lor\otimes P,\unit)^*} \\
	k && {\Hom_\C(\coend,\unit)^*}
	\arrow["\sim", from=1-1, to=1-3]
	\arrow["{t^\prime}"', from=1-1, to=6-1]
	\arrow["{(-)^\natural\otimes(-)^\sharp}", from=1-3, to=2-3]
	\arrow["{t^1_{P^\lor\otimes Q}\otimes \id}", from=2-3, to=3-3]
	\arrow["{\id\otimes (-)^\flat}", from=3-3, to=4-3]
	\arrow["\sim", from=4-3, to=5-3]
	\arrow["\sim", from=5-3, to=6-3]
	\arrow["{\text{ev}_{vect}(\varepsilon)}", from=6-3, to=6-1]
\end{tikzcd}\]

The maps comprising this composite are canonical. The first map is the cusp isomorphism of $\Znc_0$. It is followed by maps that are part of the morphisms of dualizability data described in the previous section. Lastly, the (solid torus) map $\Znc(\nu^\dagger\circ\epsilon)$ is applied. Note that the isomorphism $\Znc(T^2)\cong\Znc^\prime(T^2)$ is included in the composite. We could have defined the composite considering the actions of $\Znc^\prime(\epsilon)$ and $\Znc^\prime(\nu^\dagger)$ directly, as computed in Section \ref{3-mfld-inv}.


Let $f\in\Hom_\C(P,P)$. Since $(-)^\sharp$ cancels with $(-)^\flat$, and the co-Yoneda isomorphism cancels with its inverse, we compute that 

\begin{equation}
    t^\prime(f)=t_{P^\lor\otimes P}\left((\id_{P^\lor}\otimes f)\circ\text{coev}^R_P,\text{ev}_P\right).
\end{equation}

\tikzset{every node/.style={font=\tiny}}
\tikzset{morphism/.style={draw, fill=white}}
\tikzset{box/.style={draw, font=\small, node on layer=foreground, fill=white, inner sep=0pt, minimum height=0.51cm, minimum width=1.5cm}}

Graphically,

$$t^\prime(f)=t_{P^\lor\otimes P}\left(
\begin{tz}[xscale=0.4,yscale=0.7]
\node [halfbox] at (2,0.5) {$f$};
\draw (2,0) to (2,1.25) node [above] {$P$};
\draw (0,0) to (0,1.25) node [above] {$P^\lor$};
\draw [black strand] (0,0) to [out=down, in=down, looseness=1] (2,0);
\end{tz}
,
\begin{tz}[xscale=0.4,yscale=0.7]
\draw (2,0) node [below] {$P$} to (2,1.25);
\draw (0,0) node [below] {$P^\lor$} to (0,1.25);
\draw [black strand] (0,1.25) to [out=up, in=up, looseness=1] (2,1.25);
\end{tz}
\right)$$


We can now prove Theorem \ref{mod-tr-thm}:

\begin{proof}
    We check that $t^\prime$ satisfies the necessary conditions:
    
    \begin{itemize}
        \item The cyclicity condition is automatically satisfied, since $t^\prime$ is a map out of the coend $\displaystyle\int^{P\in\cP} \hspace{-20pt} \Hom_\C(P,P)$.
        
        \item Let $P\in\cP$, $X\in\C$ and $g_1\colon P\to X$, $g_2\colon X\to P$.
        $$t^\prime(g_2\circ g_1)=
        t_{P^\lor\otimes P}\left(\left(\id_{P^\lor}\otimes(g_2\circ g_1)\right)\circ\text{coev}^R_P,\text{ev}_P\right)$$

        which is non-degenerate due to the non-degeneracy of $t$.
        
        \item Since $\C$ is ribbon, it is sufficient to check the left partial trace property. In other words, we want to show that for $h\in \text{End}_\C( X\otimes P)$:

        $$t_{X\otimes P}^\prime(h)=t_P^\prime\left((\text{ev}_X\otimes \id_P)\circ(\id_{X^\lor}\otimes h)\circ(\text{coev}^R_X\otimes \id_P)\right).$$

        We once again remind that  $(X\otimes P)^\lor\cong P^\lor\otimes X^\lor$, with dualizability morphisms given by:

        $$\text{ev}_{X\otimes P}=\text{ev}_P\circ(\id_{P^\lor}\otimes \text{ev}_X\otimes\id_P),
        \quad
        \text{coev}_{X\otimes P}=(\id_{P}\otimes \text{coev}_X\otimes\id_{P^\lor})\circ\text{coev}_P$$

        Computing, we get:

        $t_{X\otimes P}^\prime(h)=t_{P^\lor\otimes X^\lor\otimes X\otimes P}\left((\id_{P^\lor}\otimes\id_{X^\lor}\otimes h)\circ(\text{coev}^R_{X\otimes P}),\text{ev}_{X\otimes P}\right)$

        Graphically,

        $$t_{X\otimes P}^\prime(h)=t_{P^\lor\otimes X^\lor\otimes X\otimes P}\left(
\begin{tz}[xscale=0.4,yscale=0.7]
\node [2halfbox] at (2.5,0.5) {$h$};
\draw (3,0) to (3,1.25) node [above] {$P$};
\draw (2,0) to (2,1.25) node [above] {$X$};
\draw (1,0) to (1,1.25) node [above] {$X^\lor$};
\draw (0,0) to (0,1.25) node [above] {$P^\lor$};
\draw [black strand] (0,0) to [out=down, in=down, looseness=1] (3,0);
\draw [black strand] (1,0) to [out=down, in=down, looseness=1] (2,0);
\end{tz}
,
\begin{tz}[xscale=0.4,yscale=0.7]
\draw (3,0) node [below] {$P$} to (3,1.25);
\draw (2,0) node [below] {$X$} to (2,1.25);
\draw (1,0) node [below] {$X^\lor$} to (1,1.25);
\draw (0,0) node [below] {$P^\lor$} to (0,1.25);
\draw [black strand] (0,1.25) to [out=up, in=up, looseness=1] (3,1.25);
\draw [black strand] (1,1.25) to [out=up, in=up, looseness=1] (2,1.25);
\end{tz}
\right)$$

        Using the morphism 
        $P^\lor\!\otimes\! X^\lor\!\!\otimes\! X\!\otimes \!P\xrightarrow{\id_{P^\lor}\otimes \text{ev}_X\otimes\id_P} P^\lor\!\!\otimes\! P$, dinaturality of $t$ gives us that the following square commutes:

        \tikzset{every node/.style={font=\small}}

\[\!\!\!\!\!\!\!\!\!\!\!\!\!\!\!\begin{tikzcd}
	{\Hom_\C(\unit,P^\lor\!\!\otimes\!\! X^\lor\!\!\otimes\! X\!\otimes\! P)\otimes\Hom_\C(P^\lor\!\!\otimes \!P,\unit)} & {\Hom_\C(\unit,P^\lor\!\!\otimes \!P)\otimes\Hom_\C(P^\lor\!\!\otimes \!P,\unit)} \\
	{\Hom_\C(\unit,P^\lor\!\!\otimes\!\! X^\lor\!\!\otimes\! X\!\otimes\! P)\otimes\Hom_\C(P^\lor\!\!\otimes\!\! X^\lor\!\!\otimes\! X\!\otimes\! P,\unit)} & k
	\arrow["{\text{ev}_X\otimes \id}", from=1-1, to=1-2]
	\arrow["{\id\otimes\text{ev}_X}"', from=1-1, to=2-1]
	\arrow["{t_{P^\lor\otimes P}}", from=1-2, to=2-2]
	\arrow["{t_{P^\lor\otimes X^\lor\otimes X\otimes P}}"', from=2-1, to=2-2]
\end{tikzcd}\]

    In other words, 
\tikzset{every node/.style={font=\tiny}}

    $$
    t_{P^\lor\otimes X^\lor\otimes X\otimes P}\left(
\begin{tz}[xscale=0.4,yscale=0.7]
\node [2halfbox] at (2.5,0.5) {$h$};
\draw (3,0) to (3,1.25) node [above] {$P$};
\draw (2,0) to (2,1.25) node [above] {$X$};
\draw (1,0) to (1,1.25) node [above] {$X^\lor$};
\draw (0,0) to (0,1.25) node [above] {$P^\lor$};
\draw [black strand] (0,0) to [out=down, in=down, looseness=1] (3,0);
\draw [black strand] (1,0) to [out=down, in=down, looseness=1] (2,0);
\end{tz}
,
\begin{tz}[xscale=0.4,yscale=0.7]
\draw (3,0) node [below] {$P$} to (3,1.25);
\draw (2,0) node [below] {$X$} to (2,1.25);
\draw (1,0) node [below] {$X^\lor$} to (1,1.25);
\draw (0,0) node [below] {$P^\lor$} to (0,1.25);
\draw [black strand] (0,1.25) to [out=up, in=up, looseness=1] (3,1.25);
\draw [black strand] (1,1.25) to [out=up, in=up, looseness=1] (2,1.25);
\end{tz}
\right)=
t_{P^\lor\otimes P}\left(
\begin{tz}[xscale=0.4,yscale=0.7]
\node [2halfbox] at (2.5,0.5) {$h$};
\draw (3,0) to (3,1.25) node [above] {$P$};
\draw (2,0) to (2,1);
\draw (1,0) to (1,1);
\draw (0,0) to (0,1.25) node [above] {$P^\lor$};
\draw [black strand] (0,0) to [out=down, in=down, looseness=1] (3,0);
\draw [black strand] (1,0) to [out=down, in=down, looseness=1] (2,0);
\draw [black strand] (1,1) to [out=up, in=up, looseness=1] (2,1);
\end{tz}
,
\begin{tz}[xscale=0.4,yscale=0.7]
\draw (3,0) node [below] {$P$} to (3,1.25);
\draw (0,0) node [below] {$P^\lor$} to (0,1.25);
\draw [black strand] (0,1.25) to [out=up, in=up, looseness=1] (3,1.25);
\end{tz}
\right),$$

which is the desired property.
    \end{itemize}
\end{proof}


\newpage

\printbibliography

\end{document}